\numberwithin{equation}{section}
\theoremstyle{plain}
\newtheorem{theorem}{Theorem}[section]
\newtheorem{proposition}[theorem]{Proposition}
\newtheorem{lemma}[theorem]{Lemma}
\newtheorem{corollary}[theorem]{Corollary}
\theoremstyle{definition}
\newtheorem{remark}[theorem]{Remark}
\newcommand\R{\mathbb R}
\newcommand\M{\mathbb M}
\newcommand\N{\mathbb N}
\newcommand\ep{\varepsilon}
\newcommand\eps{\varepsilon}
\newcommand\curl{{\rm curl} \, }
\newcommand\dist{{\rm dist}}
\newcommand\EEE{\color{black}}
\newcommand\RRR{\color{black}}
\newcommand\RRRR{\color{black}}
\title [Two-well rigidity and sharp-interface limits for Solid-Solid phase transitions]
{Two-well rigidity and multidimensional sharp-interface limits for Solid-Solid phase transitions}
\author[E. Davoli] {Elisa Davoli} 
\address[Elisa Davoli]{Faculty of Mathematics, University of Vienna, 
Oskar-Morgenstern-Platz 1, A-1090 Vienna, Austria}
\email{elisa.davoli@univie.ac.at}
\author[M. Friedrich] {Manuel Friedrich} 
\address[Manuel Friedrich]{Institute for Computational and Applied Mathematics
  University of  M\"{u}nster, Einsteinstr.~62, D-48149 M\"{u}nster, Germany}
\email{manuel.friedrich@uni-muenster.de}
\subjclass[2010]{}
\keywords{}
\begin{document} 
\vskip .2truecm
\begin{abstract}

We establish a quantitative rigidity estimate for two-well frame-indifferent nonlinear energies, in the case in which  the two wells have exactly one rank-one connection. Building upon this novel rigidity result, we then
 analyze solid-solid phase transitions in arbitrary space dimensions, under a suitable anisotropic penalization of second variations.   By means of $\Gamma$-convergence, we show that, as the size of transition layers tends to zero,  singularly perturbed two-well problems   approach   an effective sharp-interface model. The limiting energy is finite only for deformations which have the structure of a laminate. In this case, it  is proportional to the total length of the interfaces between the two phases. 
\end{abstract}
\maketitle

\section{Introduction}
Solid-solid phase transitions are often observed in nature, both in basic phenomena (e.g., change between different ice forms under high pressure, or transformation from graphite to diamond in carbon under very elevated temperature and pressure) as well as in advanced materials such as shape-memory alloys (see, e.g., \cite{bhattacharya.kohn, cheng}). 
In this paper we contribute to the theory of solid-solid phase transitions by presenting a novel quantitative two-well rigidity estimate  and its application to singularly perturbed two-well  problems.  In particular,  we extend the results about sharp-interface limits obtained   by {\sc S.~Conti} and {\sc B.~Schweizer} \cite{conti.schweizer, conti.schweizer2} in dimension two to the higher-dimensional framework and, as a byproduct, we provide  a simplified convergence proof in the two-dimensional setting.

Assume that $\Omega\subset \mathbb{R}^d$, $d\in \mathbb{N}$,  is a bounded Lipschitz domain, denoting the reference configuration of a nonlinearly elastic material undergoing a solid-solid phase transition between two phases $A,B\in \mathbb{M}^{d\times d}$. Its behavior is then classically encoded by means of a nonlinear elastic energy functional of the form

\begin{align}\label{eq: basic energy}
y\in H^1(\Omega;\mathbb{R}^d)\to \int_{\Omega}W(\nabla y)\,dx,
\end{align}
where $W:\mathbb{M}^{d\times d}\to [0,+\infty)$ is a nonlinear, frame-indifferent, elastic energy whose zero set has the following two-well structure
\begin{equation}
\label{eq:zset}
\{F\in \mathbb{M}^{d\times d}:\, W(F)=0\}=SO(d)A\cup SO(d)B,
\end{equation}
where $SO(d)$ denotes the set of proper rotations in $\mathbb{M}^{d\times d}$.  It is well known that, in the presence of rank-one connections between $A$ and $B$, low energy sequences for generic boundary value problems exhibit infinitely fine oscillations.

 In order to remedy  the  issue of unphysical, highly oscillatory behavior, second  order perturbations may be added to \eqref{eq: basic energy}. Then, macroscopic transitions between the two wells $SO(d)A$ and $SO(d)B$ can be  described  via the minimization of a \emph{diffuse interface model} of the form
\begin{equation}
\label{eq:sol-sol}
 y\in H^2(\Omega;\R^d) \to I_{\ep}(y):=\frac{1}{\ep^2}\int_{\Omega}W(\nabla y)\,dx+\ep^2\int_{\Omega}|\nabla^2 y|^2\,dx.
\end{equation}
In the formula above,   $\ep>0$  is a smallness parameter introducing a length scale into the problem. Roughly speaking, $\eps^2$ describes the width of the transition layers between different phases (see, e.g., \cite{ball.james,bhattacharya, capella.otto2, kohn.muller2,muller}), so that, as $\ep$ tends to zero, the behavior of $I_{\ep}$ approaches that of a \emph{sharp-interface model}.  (We prefer to use  the parameter $\eps$ with exponent $2$ in the above formula since this will have notational advantages in the following.) We remark that a number of different possible higher order regularizations is used in the literature, both of diffuse and sharp-interface type. They all have the common feature that they   can be interpreted as surface  energies  penalizing the transition between different energy wells. Although the above described continuum models are   only ``phenomenological'', they have remarkable success in  predicting microstructures and material patterns.

Singularly perturbed nonconvex functionals of the form
\begin{align}\label{eq: liq-liq}
G_{\ep}(u):=\frac{1}{\ep^2}\int_{\Omega}W(u)\,dx+\ep^2\int_{\Omega}|\nabla u|^2\,dx
\end{align}
have also been extensively studied in connection  with   the theory of minimal surfaces and the modeling of liquid-liquid phase transitions. Starting from the seminal works by {\sc L.~Modica}, {\sc S.~Mortola}, and {\sc M.~E.~Gurtin} \cite{gurtin, modica, modica.mortola}, a thorough analysis of energy functionals as in \eqref{eq: liq-liq} has been performed both in scalar \cite{bouchitte,  owen.sternberg} and in vectorial  settings \cite{barroso.fonseca, fonseca.tartar, sternberg,sternberg2}.  We also refer to  \cite{kohn.sternberg} for an analysis of local minimizers and to   \cite{ambrosio, baldo} for extensions to the situation in which $W$ has more than two wells. In particular, a limiting description of the functionals $G_{\ep}$ has been identified by $\Gamma$-convergence (see \cite{Braides:02, DalMaso:93} for an overview), and shown to be  proportional to  the length of the interfaces between the different phases.

The corresponding $\Gamma$-convergence analysis in the solid-solid setting, addressing the passage from a diffuse  to a sharp-interface model, has been open until the early 2000s until a breakthrough was achieved by {\sc S. Conti}, {\sc I. Fonseca}, and {\sc G. Leoni} in \cite{conti.fonseca.leoni}, in the case in which frame-indifference is neglected. In dimension two, the analysis was extended to the frame-indifferent linearized setting by {\sc S.~Conti} and {\sc B.~Schweizer} in \cite{conti.schweizer2} who also accomplished the characterization of the fully nonlinear framework \eqref{eq:sol-sol} for $d=2$ in the two subsequent papers \cite{conti.schweizer, conti.schweizer3}. Recently, some related microscopic models for two-dimensional martensitic transformations as well as their discrete-to-continuum limits have been analyzed in \cite{kytavsev-ruland-luckhaus, kytavsev-ruland-luckhaus2}.

As highlighted, e.g., in \cite{ruland-ARMA}, when studying solid-solid diffuse models having the structure in \eqref{eq:sol-sol}, two nonlinear phenomena need to be tackled simultaneously, namely  a \emph{material} nonlinearity due to the two-well structure of the energy, and a \emph{geometric} nonlinearity, generated by the $SO(d)$-frame-indifference of the model. This, together with the PDE constraint ``$\curl = 0$'' on the admissible fields entering the nonconvex densities $W$, renders  the analysis much more delicate in comparison with liquid-liquid counterparts  as in  \eqref{eq: liq-liq}.

A preliminary crucial observation concerning the material nonlinearity is the fact that the mathematical description of the model strongly depends on the presence or the absence of rank-one connections between the two phases $A$ and $B$ in \eqref{eq:zset}.  Indeed, sequences with uniformly bounded energy \eqref{eq:sol-sol} which converge to non-affine limiting configurations (i.e.,  exhibiting   phase transitions) only exist if $A$ and $B$ are rank-one connected.  (This is often called the \emph{Hadamard compatibility condition for layered deformations}, see \cite{ball.james}.) Admissible limiting deformations are necessarily piecewise affine and interfaces are planes (see Figure \ref{fig:limiting-def}).

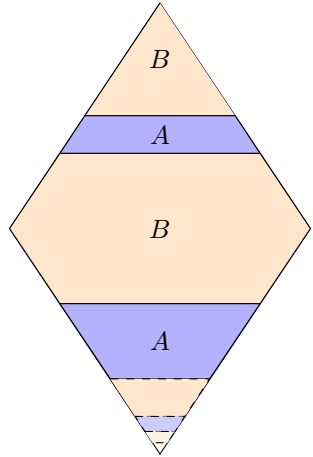
\begin{figure}[h]
\centering
\begin{tikzpicture}
\coordinate (A) at (0,3);
 \coordinate (B) at (2,0);
 \coordinate (C) at (-2,0);
 \coordinate (D) at (0,-3);
 \coordinate (E) at (1,1.5);
 \coordinate (F) at (1.33,1);
  \coordinate (G) at (1.33,-1);
  \coordinate (H) at (0.66,-2);
  \coordinate (I) at (0.33,-2.5);
  \coordinate (L) at (0.2,-2.7);
  \coordinate (M) at (0.1,-2.85);
   \coordinate (N) at (-1,1.5);
 \coordinate (O) at (-1.33,1);
  \coordinate (P) at (-1.33,-1);
  \coordinate (Q) at (-0.66,-2);
  \coordinate (R) at (-0.33,-2.5);
  \coordinate (S) at (-0.2,-2.7);
  \coordinate (T) at (-0.1,-2.85);
  \draw (A)--(B)--(D)--(C)--cycle;
  \draw (E)--(N);
  \draw (F)--(O);
  \draw (G)--(P);
  \draw[fill=orange!20] (A)--(N)--(E);
  \draw[fill=blue!30] (N)--(E)--(F)--(O)--cycle;
 \draw[fill=orange!20] (O)--(F)--(B)--(G)--(P)--(C)--cycle;
 \draw[fill=blue!30] (P)--(G)--(H)--(Q)--cycle;
  \draw[dashed,fill=orange!20] (Q)--(H)--(I)--(R);
   \draw[dashed,fill=blue!20] (R)--(I)--(L)--(S)--cycle;
 \draw[dashed,fill=orange!10] (S)--(L)--(M)--(T);
 \node at (0,0) {$B$};
 \node at (0,-1.5) {$A$};
  \node at (0,1.25) {$A$};
 \node at (0,2.25) {$B$};

\end{tikzpicture}
\caption{A limiting deformation whose gradient takes values in $\lbrace A, B\rbrace$, in the case in which $A$ and $B$ have exactly one rank-one connection.}
\label{fig:limiting-def}
\end{figure}

 Thus, the limiting sharp-interface problem is very rigid, and  hence  the analysis  seems to be simplified compared to liquid-liquid phase transitions where minimal surfaces have to be considered. However, it turns out that the  above-mentioned  PDE constraint  on   the admissible fields presents various difficulties for  the  derivation of the $\Gamma$-limsup inequality. 

In particular, in the construction of recovery sequences  approximating a limit with multiple flat interfaces, suitable quantitative two-well rigidity estimates are needed to deal with the geometric nonlinearity of the model. The main challenge appears to be the fact that for generic small-energy functions, even if one phase is predominant in a certain region, there might be small inclusions of the other phase, so-called \emph{minority islands}.  {\sc S.~Conti} and {\sc B.~Schweizer} dealt with this problem by showing  that still the deformation is \emph{$H^{1/2}$-rigid} on many lines (see \cite[Section 3.3]{conti.schweizer}). It seems, however, that their  specific geometric arguments cannot be extended easily to higher dimensions.  

 In the present paper, we overcome the issue of the dimension  by means of a novel quantitative two-well rigidity result for a model where the two wells have exactly one rank-one connection: after rotation, we may suppose without restriction that $B-A = \kappa {\rm e}_d \otimes {\rm e}_d $ for $\kappa >0$. We analyze a slightly modified version of the model in \eqref{eq:sol-sol} in which the energy is augmented by an anisotropic perturbation:
 \begin{align}\label{eq: nonlinear energy-intro}
 y\in H^2(\Omega;\R^2) \to E_{\ep,\eta}(y):=I_{\ep}(y)+ \eta^2 \int_{\Omega}\Big(  |\nabla^2 y|^2 - |\partial^2_{dd}y|^2 \Big)  \,dx
\end{align}
for $\eta>0$. We point out that the additional  anisotropic perturbation \RRR does not affect the frame-indifference of the model, and \EEE penalizes only transitions in the direction orthogonal to the rank-one connection ${\rm e}_d \otimes {\rm e}_d$. This  guarantees that the  behavior of the energies $E_{\ep,\eta}$ is qualitatively the same as that of the functionals $I_{\ep}$, see Remark \ref{rk:comparison-CS}. At the same time, from a technical point of view, it is \RRR to be expected \EEE that low-energy sequences of $E_{\ep,\eta}$ might be more rigid compared to the ones of $I_{\ep}$. We remark that  similar anisotropic perturbations have already been used in the literature for related problems (see,  e.g., \cite{kohn.muller, Zwicknagl}), and that this anisotropy is the reason why we restrict our analysis to the case of exactly one rank-one connection. 

 The additional higher order penalization situates our analysis within the framework of \emph{nonsimple} materials, whose characteristic feature is an elastic stored energy density dependent on second order derivatives of the deformations. Starting from the seminal works by {\sc R.A.~Toupin} \cite{toupin1, toupin2}, these materials have been the  subject of an intense research activity in nonlinear elasticity due to  their  enhanced compactness properties  \cite{ball-currie, mielke.roubicek, podioGuidugli}. On the one hand, the penalization factor $\eta$ will be chosen ``large enough" to \RRR ensure that \EEE the second order regularization \RRR induces sufficiently good rigidity properties. \EEE On the other hand, the factor $\eta$ will be ``small enough" to guarantee that \eqref{eq: nonlinear energy-intro} shows the same qualitative behavior as the unperturbed problem \eqref{eq:sol-sol}, at least asymptotically when passing to a linearized strain regime.  A formal asymptotic expansion, in fact, shows that, by considering deformations $y$ of the form $y={\rm id}+\ep u$, for a smooth displacement $u$, the energy $E_{\ep,\eta}(y)$ rewrites as
\begin{align}
&E_{\ep,\eta}(y)=E_{\ep,\eta}({\rm id}+\ep u)=\frac{1}{\ep^2}\int_{\Omega}W({\rm Id}+\ep \nabla u)\,dx+\ep^4 \int_{\Omega}|\nabla^2 u|^2\,dx+\eta^2\ep^2\int_{\Omega}\Big(  |\nabla^2 u|^2 - |\partial^2_{dd}u|^2 \Big)  \,dx\notag\\
&\quad\sim \frac12\int_{\Omega} D^2 W({\rm Id})\nabla u:\nabla u\,dx + {\rm O}(\ep^4) +{\rm O}(\eta^2\ep^2),\label{eq:lin-formal}
\end{align}
 where ${\rm id}$ denotes the identity function and ${\rm Id}$ its derivative. Thus, to ensure that our anisotropic penalization does not  perturb the qualitative small-strain behavior of \eqref{eq:sol-sol}, it is essential that $\eta \ll \ep^{-1}$. Let us mention that related problems in the framework of nonsimple materials have  recently been studied in the settings of viscoelasticity \cite{friedrich-kruzik} and multiwell energies \cite{alicandro.dalmaso.lazzaroni.palombaro}. There, under strong penalization of the full second gradient, rigorous counterparts of the formal linearization  \eqref{eq:lin-formal}  are performed by $\Gamma$-convergence.

 The first part of the paper is devoted to  a quantitative rigidity estimate for two-wells energies of the form \eqref{eq: nonlinear energy-intro}, see Theorem \ref{thm:rigiditythm}. Here, we formulate a simplified version illustrating the core of our result.

\begin{theorem}(Simplified statement of Theorem \ref{thm:rigiditythm})
\label{thm:rig-intro}
Let $\eta_{\ep,d}=\ep^{-1+\alpha(d)}$, where $\alpha(d)>0$ is a suitable exponent, only depending on the space dimension $d$ (see  Remark \ref{rem: afterth}(iv)  for its explicit expression). Let $\Omega = (-h,h)^d$ be a cube,  and $W = \dist^2(\cdot,SO(d)\lbrace A,B\rbrace)$.  Let $E>0$. Then  there exists a constant $C=C(h ,A,B,E)>0$ such that for every $y\in H^2(\Omega; \R^d)$ with $E_{\eps,\eta_{\ep,d}}(y) \le E$ we can find a rotation $R\in SO(d)$ and a phase indicator $\mathcal{M} \in BV(\Omega;\lbrace A,B \rbrace)$ satisfying
\begin{align}\label{eq: rigidity-intro}
\Vert\nabla y-R\mathcal{M} \Vert_{L^2( \Omega)}\leq C\ep  \ \ \ \text{and} \ \ \  |D\mathcal{M}|(\Omega) \le C. 
\end{align}
\end{theorem}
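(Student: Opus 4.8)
The plan is to exploit the anisotropic second-gradient term in order to reduce the problem, up to errors negligible on the scale $\eps$, to a one-dimensional situation in the direction ${\rm e}_d$ of the unique rank-one connection, and then to combine a Modica--Mortola-type slicing estimate with the single-well geometric rigidity estimate of Friesecke--James--M\"uller. Recall that, after rotation, $B-A=\kappa\,{\rm e}_d\otimes{\rm e}_d$, so that the first $d-1$ columns of $A$ and of $B$ coincide. From $E_{\eps,\eta_{\eps,d}}(y)\le E$ one first reads off the bounds
\begin{gather*}
\big\|\dist(\nabla y,SO(d)\{A,B\})\big\|_{L^2(\Omega)}\le \sqrt E\,\eps,\qquad
\|\nabla^2 y\|_{L^2(\Omega)}\le \sqrt E\,\eps^{-1},\\
\Big(\textstyle\sum_{(i,j)\ne(d,d)}\|\partial^2_{ij}y\|_{L^2(\Omega)}^2\Big)^{1/2}\le \sqrt E\,\eps^{\,1-\alpha(d)}
\end{gather*}
(the last one because $\eta_{\eps,d}=\eps^{-1+\alpha(d)}$). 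Since every second derivative of $y$ except $\partial^2_{dd}y$ is thus small, the Poincar\'e inequality on the cube forces the first $d-1$ columns of $\nabla y$ to be $L^2$-close to fixed constant vectors, while the last column varies, to the same order, only in the $x_d$-direction; so, up to errors of order $\eps^{1-\alpha(d)}$, $\nabla y$ has the laminate form $({\rm const}\,|\cdots|\,{\rm const}\,|\,v(x_d))$.

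\emph{Perimeter bound.} Next I would set $\mathcal M(x)\in\{A,B\}$ to be the phase whose $SO(d)$-orbit is closest to $\nabla y(x)$; the set of matrices equidistant from $SO(d)A$ and $SO(d)B$ lies at a fixed positive distance from both orbits (when $|A|\ne|B|$; the equal-norm case requires a minor variant), so a jump of $\mathcal M$ forces $W(\nabla y)$ to be bounded below there. To convert this into $|D\mathcal M|(\Omega)\le C$ I would slice $\Omega$ by lines parallel to ${\rm e}_d$: for $\mathcal H^{d-1}$-a.e.\ $x'$ the restricted energy controls $\tfrac1{\eps^2}\int W(\nabla y)\,ds+\eps^2\int|\partial^2_{dd}y|^2\,ds$, and the one-dimensional Modica--Mortola inequality applied to $s\mapsto\psi(\nabla y(x',s))$, where $\psi(F):=\dist^2(F,SO(d)A)-\dist^2(F,SO(d)B)$ is constant on each orbit with distinct values $\mp\dist^2(A,SO(d)B)$ and whose derivative along the slice is controlled — up to anisotropically small terms — by $|\partial^2_{dd}y|$, shows that each transition of $\mathcal M$ along the slice costs a fixed positive amount of slice-energy; integration in $x'$ then bounds $|D_{x_d}\mathcal M|(\Omega)$. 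Since $\psi(\nabla y)$ has $O(\eps^{1-\alpha(d)})$-small derivatives in the $x'$-directions, a companion Modica--Mortola estimate on the hyperplanes $\{x_d={\rm const}\}$ bounds $\sum_{j<d}|D_{x_j}\mathcal M|(\Omega)$; adding the contributions gives $|D\mathcal M|(\Omega)\le C$.

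\emph{The $L^2$-bound and the single rotation.} The previous step shows in particular that $\{\mathcal M=A\}$ and $\{\mathcal M=B\}$ are, up to small errors, unions of finitely many horizontal slabs, on each of which only one phase is present. Applying the geometric rigidity estimate to that phase on each slab produces a rotation $R_k\in SO(d)$ with $\|\nabla y-R_kA\|_{L^2}$ (respectively $\|\nabla y-R_kB\|_{L^2}$) of order $\eps$. Across the interface between two adjacent slabs the two limiting matrices must be approximately rank-one compatible — the Hadamard condition for the gradient $\nabla y$ — and since $A$ and $B$ admit \emph{exactly one} rank-one connection (and no two distinct rotations have rank-one difference), this forces $R_k$ and $R_{k+1}$ to agree up to order $\eps$. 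Chaining over the bounded number of slabs yields a single $R_0\in SO(d)$, and, after splitting off a controllably small exceptional set, one obtains $\|\nabla y-R_0\mathcal M\|_{L^2(\Omega)}\le C\eps$. (It is essential to run rigidity on the slabs rather than a single Poincar\'e estimate on the whole cube, the latter giving only the suboptimal rate $\eps^{1-\alpha(d)}$.)

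\emph{Main obstacle.} The delicate point throughout — and the reason for the anisotropic penalization with its precise exponent $\alpha(d)$ — is the presence of \emph{minority islands}: along a slice the minority phase may occur only as short inclusions of thickness $\lesssim\eps^2$, so that the bookkeeping ``each transition of $\mathcal M$ costs a definite amount of energy'' must be carried out with care (pairing up incomplete transitions, discarding sub-layers and charging their cost to the anisotropic term), and the same islands obstruct both the reduction to horizontal slabs and the identification of a single rotation. Making this quantitative in arbitrary space dimension — rather than through the two-dimensional $H^{1/2}$-rigidity arguments of \cite{conti.schweizer} — is precisely what pins down the scaling $\eta_{\eps,d}=\eps^{-1+\alpha(d)}$.
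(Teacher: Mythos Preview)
Your perimeter bound via a Modica--Mortola/coarea argument along $e_d$-slices, together with the anisotropic control on the transverse part, is in the right spirit and parallels what the paper does in Proposition~\ref{lemma: phases}. The genuine gap is in the $L^2$-bound with a single rotation.

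The slab-wise FJM step cannot deliver the rate~$\eps$ in $L^2$ when $\eta=\eps^{-1+\alpha(d)}$ with $\alpha(d)>0$. On a slab $Q_l$ where phase $A$ is predominant, the minority $B$-region has volume $V_l$ with $\sum_l V_l\lesssim\eps/\eta=\eps^{\alpha(d)}$ (this is exactly \eqref{eq: propertiesT2}). Single-well rigidity on $Q_l$ gives $\|\nabla y-R_lA\|_{L^2(Q_l)}\le C\|\dist(\nabla y,SO(d)A)\|_{L^2(Q_l)}$, and the right-hand side is \emph{not} $O(\eps)$: on the minority island $\dist(\nabla y,SO(d)A)\sim\kappa$, so after summing in $l$ one only gets $\sum_l\|\nabla y-R_lA\|_{L^2(Q_l)}^2\lesssim\eps^2+\eps/\eta$. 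Switching to $R_l\mathcal M$ does not help, because on the minority island $\nabla y$ is close to $SO(d)B$ with a rotation you have no way of identifying with $R_l$ (Hadamard compatibility across the \emph{island} boundary would require a trace estimate you have not established, and the energy cost of such an island is only $O(\eps/\eta)$, not $O(1)$). The paper makes this obstruction explicit in Remark~\ref{rem:curl-yes}: the argument of type \eqref{eq:no-curl} that you outline gives $\|\nabla y-R\mathcal M\|_{L^2}\le C\eps$ only if $\eta\ge\eps^{-1}$, i.e.\ $\alpha(d)=0$. Separately, the claim of a ``bounded number of slabs'' is not justified: bounded perimeter does not bound the number of components, and in the paper's own decomposition the number of cuboids $Q_l$ is $\sim\eta/\eps\to\infty$, so any chaining of rotations must cope with accumulating errors.

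The paper's route is different in kind. It introduces the \emph{incompatible} field $\gamma=\nabla y\,\chi_T+\nabla y\,B^{-1}\chi_{\Omega\setminus T}$ (after a piecewise-affine reparametrization), which is close to the \emph{single} well $SO(d)$ on all of $\Omega$; then computes the distributional curl of $\gamma$ (Lemma~\ref{lemma: curl}) and observes that its surface part $[\gamma]\otimes\nu_T-\nu_T\otimes[\gamma]$ \emph{vanishes} whenever $\nu_T=e_d$, because $[\gamma]$ is a multiple of $e_d$ (the rank-one direction). The anisotropic perimeter estimate \eqref{eq: propertiesT}(iii) then makes $|\curl\gamma|(\Omega)$ small, and the rigidity theorems for fields with nonzero curl (Lemma~\ref{lemma: Muller-Chambolle}) produce one rotation $R$ at once, with no slab-by-slab matching. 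This is what allows $\alpha(d)>0$.
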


 We point out that our rigidity result holds for general dimensions $d\in \mathbb{N}$, $d\geq 2$, for every $\eta>0$, a large class of domains $\Omega$ and energy densities $W$, and a range of integrability exponents depending on the space dimension $d$. We refer to the statement of Theorem \ref{thm:rigiditythm} for the precise assumptions. 
 
The novelty with respect to previous contributions in the literature is the presence of the \emph{phase indicator} $\mathcal{M}$ that allows to quantify  the distance of the deformation gradient from the two wells by keeping track of which phase is ``active" in each region of $\Omega$. Previous quantitative rigidity estimates for two-well or multiwell energies with rank-one connections measure the distance of $\nabla y$ from  a single matrix in \emph{one} of the wells. The sharpest results in that direction either only guarantee an $L^2$-control in terms of  $\sqrt{\ep}$ (and not of $\eps$), or require the passage to a weaker norm. Interestingly, a construction involving specific minority islands   shows that the scaling $\sqrt{\eps}$ is sharp, see Remark \ref{rem: inclusions}. Thus, introducing a phase indicator is indispensable in order to obtain the optimal $\eps$-scaling in \eqref{eq: rigidity-intro}. We refer to Subsection \ref{sec: literature} for a literature  overview on multiwell rigidity estimates and for a comparison to our result.

%

The main idea in our proof is to replace the actual gradient of the deformation $\nabla y$, which satisfies $\nabla y \approx SO(d)\lbrace A, B\rbrace$, by $\nabla y B^{-1}$ on a set of finite perimeter associated to the $B$-phase region. The resulting  field $\gamma$ then satisfies $\gamma\approx SO(d)A$, but is in general incompatible (i.e., not curl-free). Estimate \eqref{eq: rigidity-intro} is then achieved by controlling carefully the curl of $\gamma$ and using one-well rigidity estimates for incompatible fields \cite{Chambolle-Giacomini-Ponsiglione:2007, lauteri.luckhaus, Mueller-Scardia-Zeppieri}. A similar  strategy of reducing  a multiwell problem to an incompatible single-well setting has been  adopted in \cite{kytavsev-lauteri-ruland-luckhaus} for proving compactness and structure results for a discrete, frame-indifferent  multiwell  problem. The added value of our argument is the combination of rigidity estimates for fields with non-zero curl and a decomposition of the domain into phase regions (see Proposition \ref{lemma: phases}).

 It turns out that the curl  of the introduced incompatible field $\gamma$ has both a bulk and a surface part. The delicate step is to control the surface curl.  As in  \cite{kytavsev-lauteri-ruland-luckhaus}, our strategy departs from the remark that a control on  the length of the interfaces between different phases  allows to provide  a bound on the surface curl. Our further step is the proof that the surface curl can be estimated in dependence of the normal vector of the interface, see Lemma \ref{lemma: curl}.  Remarkably, it turns out that the surface curl vanishes if the normal vector coincides with the direction of the rank-one connection. This observation together with the anisotropic perturbation in \eqref{eq: nonlinear energy-intro} then  guarantees  that the surface curl of such fields $\gamma$ is of order $\ep$.

The second part of the paper is devoted to an application of Theorem \ref{thm:rigiditythm} to the $\Gamma$-convergence analysis (see \cite{Braides:02,DalMaso:93} for a comprehensive introduction to the topic) of the model in \eqref{eq: nonlinear energy-intro}\RRR, in the case in which the factor $\eta$ \RRR depends on $\eps$ and satisfies $\eta\to +\infty$ as $\ep\to 0$. \EEE In particular, we show that, as $\ep\to 0$, the behavior of the energy functionals in \eqref{eq: nonlinear energy-intro} approaches that of the sharp-interface limit  
\begin{align*}
\mathcal{E}_0(y):=\begin{cases}
 K \,  \mathcal{H}^{d-1}(J_{\nabla y})&
\text{if }\nabla y\in BV(\Omega; R \lbrace A,B\rbrace) \ \text{ for some } R\in SO(d),\\
+\infty&\text{otherwise in }L^1(\Omega;\mathbb{R}^d),\end{cases}
 \end{align*}
where $K$ corresponds to the energy of optimal transitions between the two phases (see \eqref{eq: our-k1}). We now give  the exact statement of our second main result.

\begin{theorem}[Identification of a sharp-interface limit]
 Let $\eta_{\ep,d}=\ep^{-1+\beta(d)}$, where \RRR $0 < \beta(d) < 1$ \EEE is a suitable exponent, only depending on the space dimension $d$ (see \eqref{eq:eta-ep} for its explicit expression). Let $\Omega\subset \R^d$ be a bounded strictly star-shaped Lipschitz domain. Let $W$ satisfy assumptions {\rm H1}.--{\rm H5}. Then  $E_{\eps,\eta_{\ep,d}}$   $\Gamma$-converges to $\mathcal{E}_0$ in the strong $L^1$-topology.
\end{theorem}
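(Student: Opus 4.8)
The plan is to establish the $\Gamma$-convergence statement in the standard two-part fashion: first the $\Gamma$-liminf (lower bound) inequality, then the $\Gamma$-limsup (upper bound, recovery sequence) inequality, with compactness being supplied by Theorem~\ref{thm:rigiditythm}. For the \emph{compactness and lower bound}, I would start from a sequence $y_\ep$ with $E_{\ep,\eta_{\ep,d}}(y_\ep) \le C$ converging in $L^1$ to some $y$. Applying the quantitative two-well rigidity estimate (Theorem~\ref{thm:rigiditythm}) yields rotations $R_\ep \in SO(d)$ and phase indicators $\mathcal{M}_\ep \in BV(\Omega; \{A,B\})$ with $\|\nabla y_\ep - R_\ep \mathcal{M}_\ep\|_{L^2} \le C\ep$ and $|D\mathcal{M}_\ep|(\Omega) \le C$. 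By compactness in $BV$, up to subsequences $\mathcal{M}_\ep \to \mathcal{M}$ in $L^1$ and $R_\ep \to R$, so $\nabla y_\ep \to R\mathcal{M}$ strongly in $L^1$; hence $\nabla y = R\mathcal{M} \in BV(\Omega; R\{A,B\})$, which forces $y$ to have the laminate structure and $\mathcal{E}_0(y) < \infty$. The lower bound $\liminf_\ep E_{\ep,\eta_{\ep,d}}(y_\ep) \ge \mathcal{E}_0(y) = K\,\mathcal{H}^{d-1}(J_{\nabla y})$ is then obtained by a blow-up/slicing argument localized near the jump set $J_{\nabla y}$: since $\nabla y$ jumps only across hyperplanes orthogonal to $\mathrm{e}_d$ (the unique rank-one connection), one slices in the $\mathrm{e}_d$-direction and reduces to a one-dimensional optimal-profile problem whose energy per unit area is exactly $K$ as defined in \eqref{eq: our-k1}; the anisotropic term $\eta^2(|\nabla^2 y|^2 - |\partial^2_{dd} y|^2)$ does not obstruct this because transitions across $\mathrm{e}_d$-orthogonal interfaces only involve $\partial^2_{dd}$, which is not penalized by the extra term. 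Standard lower-semicontinuity of the one-dimensional functional together with the superadditivity of the measure $\mu_\ep := (\frac{1}{\ep^2}W(\nabla y_\ep) + \ep^2|\nabla^2 y_\ep|^2 + \eta^2(\cdots))\,dx$ on disjoint open sets yields the bound.

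For the \emph{upper bound}, given $y$ with $\mathcal{E}_0(y) < \infty$, i.e.\ $\nabla y \in BV(\Omega; R\{A,B\})$, after applying the rotation $R^T$ we may assume $\nabla y \in BV(\Omega; \{A,B\})$ with $B - A = \kappa\,\mathrm{e}_d \otimes \mathrm{e}_d$, so that $J_{\nabla y}$ consists of finitely many (or countably many, with finite $\mathcal{H}^{d-1}$-measure) pieces of hyperplanes $\{x_d = c_j\}$; by a density argument I would first reduce to the case where there are finitely many such interfaces and $\Omega$ is a polyhedron, or handle a general strictly star-shaped Lipschitz domain via the scaling $y \mapsto y(\lambda x)/\lambda$ with $\lambda \uparrow 1$ to create room near the boundary. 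The recovery sequence $y_\ep$ is then built by gluing the affine pieces of $y$ together through optimal one-dimensional transition profiles of width $\sim \ep$ (rescaled minimizers of the cell problem defining $K$), mollified so as to lie in $H^2$; on each transition layer the deformation depends only on $x_d$, so $|\nabla^2 y_\ep|^2 = |\partial^2_{dd} y_\ep|^2$ there and the anisotropic perturbation contributes nothing, while away from the layers $\nabla y_\ep \in \{A,B\}$ so $W(\nabla y_\ep) = 0$. One verifies $\lim_\ep E_{\ep,\eta_{\ep,d}}(y_\ep) = K\,\mathcal{H}^{d-1}(J_{\nabla y}) = \mathcal{E}_0(y)$, the key point being that the optimal-profile energy per unit interfacial area equals $K$ and that the total area is $\mathcal{H}^{d-1}(J_{\nabla y})$.

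The \textbf{main obstacle} is the upper bound, specifically the compatibility of the recovery construction with the geometric (frame-indifference) nonlinearity and with the curl-free constraint at the \emph{junctions} where interfaces of different orientation meet, or where an interface meets $\partial\Omega$. In the nonlinear two-well setting one cannot simply interpolate gradients componentwise; the transition profile must be chosen so that the resulting field is a genuine gradient, which is why the one-dimensional laminate structure (all interfaces orthogonal to $\mathrm{e}_d$) is essential — it guarantees the profiles can be stacked along the $x_d$-axis without creating incompatibilities. The remaining delicate issue is modifying $y_\ep$ near $\partial\Omega$ (and near the finitely many places where the ordering of interfaces might be awkward for a general star-shaped domain) to control the error without introducing extra interfacial energy; this is where strict star-shapedness is used, via the inflation trick $y_\lambda(x) = \lambda^{-1} y(\lambda x)$ and a diagonal argument in $\lambda$ and $\ep$. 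I would also need to check that the chosen scaling $\eta_{\ep,d} = \ep^{-1+\beta(d)}$ with $0 < \beta(d) < 1$ is simultaneously (i) large enough for the compactness via Theorem~\ref{thm:rigiditythm} to apply in the lower bound, and (ii) small enough (indeed $\eta_{\ep,d}\ep^2 \to 0$) that the anisotropic term is negligible in the upper-bound energy estimate — but since the recovery sequence is constructed so that the anisotropic integrand vanishes identically on transition layers, (ii) is automatic and only (i) constrains $\beta(d)$.
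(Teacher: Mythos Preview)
Your lower-bound sketch is broadly in line with the paper's approach (localization to cylindrical sets containing a single interface and invoking the definition of $K$ via Proposition~\ref{prop:cell-form}), though note that the cell problem \eqref{eq: our-k1} defining $K$ is genuinely $d$-dimensional, not one-dimensional; the paper does not ``slice in the $\mathrm{e}_d$-direction'' but rather localizes to full $d$-dimensional cylinders and uses that $\mathcal{F}(\omega;h)=K\,\mathcal{H}^{d-1}(\omega)$ for every such cylinder. Compactness is obtained in the paper from Lemma~\ref{lemma:comp-def} (which predates Theorem~\ref{thm:rigiditythm}), not from the new rigidity estimate, though your alternative route via Theorem~\ref{thm:rigiditythm} would also work.

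The upper bound, however, has a real gap. You build the recovery sequence from transition layers that depend only on $x_d$, so that $|\nabla^2 y_\ep|^2=|\partial^2_{dd}y_\ep|^2$ and the anisotropic term vanishes. Such profiles are certainly admissible in the infimum \eqref{eq: our-k1}, but there is no reason they should be \emph{optimal}: a one-dimensional construction yields energy $K_{1d}\,\mathcal{H}^{d-1}(J_{\nabla y})$ per interface for some $K_{1d}\ge K$, and you have not shown $K_{1d}=K$. In the frame-indifferent setting with wells $SO(d)A$ and $SO(d)B$, near-optimal transitions may genuinely use rotations varying in the $x'$-directions and are not a priori one-dimensional.

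This is exactly the difficulty the paper is built around. Its recovery sequence starts from an \emph{arbitrary} almost-minimizing sequence for $K$ on a cylinder (Corollary~\ref{cor: layer energy}), which need not be one-dimensional and may contain minority islands away from the interface. The crucial step is then to modify this sequence so that it becomes exactly a rigid motion outside a thin layer, without raising the energy; this is where Theorem~\ref{thm:rigiditythm} enters the limsup, via Proposition~\ref{lemma: optimal profile} and Corollary~\ref{cor: Besov}, which locate $(d-1)$-dimensional slices on which $\nabla y_\ep$ is $W^{1,p_d}$-close (hence $H^{1/2}$-close) to a single rotation, with error of order~$\ep$. Only with this quantitative control can one build the transition to a rigid movement (Lemma~\ref{lemma: transition1}) and then glue across multiple interfaces. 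Your proposal bypasses this machinery by an unjustified one-dimensionality assumption; without it, you recover at best $K_{1d}\ge K$ in the limsup, which does not match the liminf.
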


We refer to Section \ref{sec:model} for the precise assumptions on the energy density $W$, and to Subsection \ref{sec: review-transition} for the definition of strictly star-shaped domains, as well as for an overview of the relevant existing results   on solid-solid phase transitions.

The proof is divided into two steps, relying first on the identification of a lower bound, the \emph{liminf inequality} (see Proposition \ref{thm:liminf}), and then on the proof of its optimality, the \emph{limsup inequality} (see Theorem \ref{thm:limsup2}).

The proof of the liminf inequality follows the strategy in \cite[Proof of Theorem 4.1]{conti.fonseca.leoni}, and is based on a $d$-dimensional analysis of the properties of the optimal-profile energy $K$ (see Proposition \ref{prop:cell-form}). The main novelty of our result lies in the proof of the optimality of the lower bound identified in Proposition \ref{thm:liminf} in any dimension $d \ge 3$. As a byproduct of our analysis, we also provide a simplified construction of recovery sequences in the two-dimensional setting. In the seminal paper \cite{conti.schweizer}, indeed, the identification of deformations  approximating energetically a limit with multiple flat interfaces relies on the notion of \emph{$H^{1/2}$-rigidity on lines} (see \cite[Section 3.3]{conti.schweizer}), which requires deeply geometrical and technical constructions currently non-available in dimension $d>2$. By means of our quantitative rigidity estimate, instead, we overcome this issue by directly obtaining a control on the $W^{1,p}$-norm of the restriction of deformations to $(d-1)$-dimensional slices, for suitable exponents $p$ in a range depending on the dimension $d$ (see Proposition  \ref{lemma: optimal profile}). Once this control on slices is established, we may follow   the lines of \cite{conti.schweizer,conti.schweizer2} to ``glue together'' several interfaces and  to construct recovery sequences. We include the statements and the proofs of these technical lemmas from  \cite{conti.schweizer,conti.schweizer2} in order to keep the paper self contained.  This allows us to develop a comprehensive argument valid in any dimension $d\geq 2$.

As a final remark, we point out that a second application of the rigidity estimate in Theorem \ref{thm:rigiditythm} will be provided in the companion paper \cite{davoli.friedrich}. There, again departing from a singularly perturbed two-well problem of the form \eqref{eq: nonlinear energy-intro}, we will perform a simultaneous sharp-interface and small-strain limit, complementing recent results about the linearization of multiwell energies \cite{alicandro.dalmaso.lazzaroni.palombaro, Schmidt:08}: we  will identify an effective linearized model defined on suitably rescaled displacement fields which measure the distance to simple laminates.  

The structure of the paper is the following: in Section \ref{sec:model} we describe the setting of the problem and collect the main constitutive assumptions. Section \ref{sec: rigidity estimate} contains an overview of quantitative multiwell rigidity estimates, as well as the exact statement and the proof of our two-well rigidity result. Section \ref{sec:applications} is devoted to the proof of the variational convergence of our diffuse model to a sharp-interface limit.

\subsection*{Notation}
 We will omit the target space of our functions whenever this is clear from the context. For $d\in \N$, we denote by ${\rm e}_1,\dots, {\rm e}_d$ the standard basis. In what follows, ${\rm Id}$ denotes the identity matrix and ${\rm e}_{ij}$, $i,j=1,\dots,d$, the standard basis in $\M^{d\times d}$.  Given two vectors $v,w\in \mathbb{R}^d$, their tensor product is denoted by $v\otimes w$ and is defined as the matrix $(v\otimes w)_{ij}=v_iw_j$ for $i,j=1,\dots,d$. For every set $S\subset \R^d$, we indicate by $\chi_S$ its characteristic function, defined as
$$\chi_S(x):=\begin{cases}1&\text{if }x\in S\\0&\text{otherwise}.\end{cases}$$ 
The $d$-dimensional Lebesgue and Hausdorff measure of a set will be indicated by $\mathcal{L}^d$ and $\mathcal{H}^d$, respectively. We use standard notation for Sobolev spaces and $BV$ functions.


\section{The model}
\label{sec:model}

In this section we introduce our model. Let $d\in \mathbb{N}$, $d\geq 2$,  and let $\Omega \subset \R^d$ be a bounded Lipschitz domain. To any deformation $y\in H^1(\Omega;\R^d)$, we associate the elastic energy $\int_\Omega W(\nabla y)\, dx$, 
where \mbox{$W : \M^{d \times d} \to [0, + \infty) $} represents a stored-energy density satisfying the following properties:

\begin{itemize}
\item[H1.](Regularity) $W$ is continuous;
\item[H2.](Frame-indifference) $W(RF)=W(F)$ for every $R\in SO(d)$ and $F\in \M^{d\times d}$;
\item[H3.](Two-well rigidity) $W(A)=W(B)=0$, where $A={\rm Id}$, and $B={\rm diag}(1,\ldots,1,1+\kappa)\in \M^{d\times d}$ for   $\kappa>0$; 
\item[H4.](Coercivity) there exists a constant $c_1>0$ such that  
\begin{align*}
 W(F) \ge c_1 \dist^2(F,SO(d)\lbrace A,B \rbrace) \ \ \ \text{for every $F\in \M^{d\times d}$.}
 \end{align*}
 \end{itemize} 
 
Assumptions H1.-H4.\ are standard requirements on stored-energy densities in nonlinear elasticity. Given two matrices $A, B \in \M^{d \times d}$ with $\det(A),\det(B)>0$ such that $ SO(d) \lbrace A,B\rbrace$ contains at least one rank-one connection, i.e., ${\rm rank}(B- RA)  = 1$ for some $R\in SO(d)$, one can always assume (after an affine change of variables) that $A= {\rm Id}$ and $B={\rm diag}(\lambda,1,\ldots,1,\mu)$ for $\lambda,\mu>0$ with $\lambda\mu \ge 1$. (See \cite[Discussion before Proposition 5.1 and Proposition 5.2]{dolzmann.muller} for a proof.) In particular, depending on the values of $\lambda$ and $\mu$, the set $ SO(d) \lbrace A,B\rbrace$ contains exactly two, one, or no rank-one connections  (up to rotations), see \cite[Proposition  5.1]{dolzmann.muller}. In the present contribution, we focus on the case of exactly one rank-one connection, see H3.: the only solution of $ B - R A = a \otimes \nu$ with $R \in SO(d)$, $a,\nu \in \R^d$, and  $|\nu|=1$ is given by $R= {\rm Id}$, $\nu= {\rm e}_d$, and $a = \kappa {\rm e}_d$.

In the following, we will call $A$ and $B$  the \emph{phases}. Regions of the domain where $\nabla y$ is in a neighborhood of $SO(d)A$ will be called the $A$-\emph{phase regions} of $y$  and \RRR likewise \EEE we will speak of the $B$-\emph{phase regions}.

 In order to model solid-solid phase transitions, we analyze a nonlinear energy given by the sum
of a suitable rescaling of the elastic energy, a singular perturbation, and a higher-order
penalization in the direction orthogonal to the rank-one connection. To be precise, for $\ep, \eta>0$ we consider the functional
 \begin{align}\label{eq: nonlinear energy}
E_{\ep,\eta}(y):=\frac{1}{\ep^2}\int_{\Omega}W(\nabla y)\,dx+\ep^2\int_{\Omega}|\nabla^2 y|^2\,dx+ \eta^2 \int_{\Omega}\Big(  |\nabla^2 y|^2 - |\partial^2_{dd}y|^2 \Big)  \,dx
\end{align}
for every $y\in H^2(\Omega;\R^d)$.

The parameter $\eps$ in the definition above represents the typical order of the strain, whereas $\eps^2$ is
related to the size of transition layers \cite{ball.james,bhattacharya, capella.otto2, kohn.muller2,muller}. The first term on the right-hand side of \eqref{eq: nonlinear energy} favors deformations $y$ whose gradient is close to the two wells of $W$, whereas the second and third terms  penalize transitions between two different values of the gradient. The choice $\eta = 0$ corresponds to the model for solid-solid phase transitions investigated by {\sc S.~Conti}  and  {\sc B.~Schweizer} \cite{conti.schweizer} in dimension two. For $\eta>0$, the third
term compels transitions to happen preferably in the direction of the rank-one connection. The basic idea of our contribution is that this additional anisotropic perturbation allows us to prove a stronger rigidity estimate and to extend the findings in \cite{conti.schweizer} to a multidimensional setting. 

 Although the  additional penalization term renders our model more specific, we emphasize that it does not affect the qualitative behavior of the sharp-interface limit obtained in \cite{conti.schweizer}, see Remark \ref{rk:comparison-CS}. We note that this anisotropy is the reason why we restrict our study to the case of exactly one rank-one connection. We also mention that anisotropic singular
perturbations have already been used in related problems, see, e.g., \cite{kohn.muller, Zwicknagl}.


\section{Two-well rigidity}\label{sec: rigidity estimate}
This section is devoted to a quantitative rigidity estimate for  the  two-well   energies in \eqref{eq: nonlinear energy}, with densities $W$ satisfying H1.-H4. We first formulate the main theorem.

\begin{theorem}[Two-well rigidity estimate]\label{thm:rigiditythm}
  (a) Let $\Omega$ be a bounded, simply connected Lipschitz  domain in $\R^2$  and let $\eta \ge \eps>0$.   Then  there exists a constant  $C=C(\Omega,\kappa, c_1)>0$  such that for every $y\in H^2(\Omega;\R^2)$ there exist  a rotation $R\in SO(2)$ and a phase indicator $\mathcal{M} \in BV(\Omega;\lbrace A,B \rbrace)$ satisfying
\begin{align*}
\Vert\nabla y-R\mathcal{M} \Vert_{L^2( \Omega )}\leq C \eps  \sqrt{E_{\ep,\eta}(y)} +  C\Big( \frac{\eps}{\eta}    + \frac{\eps^{1/2}}{\eta^{3/2}}\Big)   \, E_{\ep,\eta}(y)  \ \ \ \text{and} \ \ \  |D\mathcal{M}|(\Omega) \le CE_{\ep,\eta}(y).
\end{align*}     
  
  (b) Let $\Omega$ be a bounded  Lipschitz  domain in $\R^d$ with $d\in \mathbb{N}$, $d\geq 3$. Let $1\leq p\leq 2,\, p \neq \frac{d}{d-1}$,  and let $\eta \ge \eps>0$.  Then for each $\Omega' \subset \subset \Omega$ there exists a constant  $C=C(\Omega,\Omega', \kappa, p,c_1)>0$  such that for every $y\in H^2(\Omega;\R^d)$ there exist  a rotation $R\in SO(d)$ and a phase indicator $\mathcal{M} \in BV(\Omega;\lbrace A,B \rbrace)$ satisfying
\begin{align}\label{eq: rigidity-new}
\Vert\nabla y-R\mathcal{M} \Vert_{L^p(\Omega')}\leq C\ep \sqrt{E_{\ep,\eta}(y)} +   C\Big(   \Big( \frac{\eps}{\eta} + \frac{\eps^{1/2}}{\eta^{3/2}}         \Big)   \, E_{\ep,\eta}(y)\Big)^{r(p,d)}   \ \ \ \text{and} \ \ \  |D\mathcal{M}|(\Omega) \le CE_{\ep,\eta}(y),
\end{align}  
 where $r(p,d)=\min\{1,\frac{d}{p(d-1)}\}$.
\end{theorem}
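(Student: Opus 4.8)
\textbf{Proof strategy for Theorem \ref{thm:rigiditythm}.}

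The plan is to reduce the two-well problem to a one-well problem for an \emph{incompatible} field, and then invoke known rigidity estimates for fields with nonzero curl. First I would use the coercivity assumption H4.\ together with the bound on $\int_\Omega W(\nabla y)\,dx \le \ep^2 E_{\ep,\eta}(y)$ to conclude that $\nabla y$ is $L^2$-close to the set $SO(d)\{A,B\}$, with error controlled by $\ep\sqrt{E_{\ep,\eta}(y)}$. The next step is to decompose $\Omega$ into phase regions: using the singular perturbation terms $\ep^2\int |\nabla^2 y|^2$ and $\eta^2\int(|\nabla^2 y|^2 - |\partial^2_{dd}y|^2)$ I would apply the phase-partition result (Proposition \ref{lemma: phases}) to obtain a set of finite perimeter $E\subset\Omega$ corresponding to the $B$-phase region, with perimeter bounded by $CE_{\ep,\eta}(y)$, and I define the phase indicator $\mathcal{M} := A\chi_{\Omega\setminus E} + B\chi_E$, so that automatically $|D\mathcal{M}|(\Omega) \le C E_{\ep,\eta}(y)$. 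Then I set $\gamma := \nabla y\, \mathcal M^{-1}$ (equivalently $\nabla y$ on the $A$-region and $\nabla y\, B^{-1}$ on the $B$-region), which satisfies $\gamma \approx SO(d)A = SO(d)$ in $L^2$, but is no longer curl-free.

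The crux is to estimate $\curl\gamma$. It splits into a \emph{bulk} part, supported where $\nabla y$ varies continuously and controlled by $\|\nabla^2 y\|_{L^2}$ hence by $\ep^{-1}\sqrt{\ep^2 E_{\ep,\eta}} \cdot$(something) — more precisely of order $\ep\sqrt{E_{\ep,\eta}}$ after multiplying by the $L^2$-scale — and a \emph{surface} part, a measure concentrated on the jump set $\partial^* E$. Here I would invoke Lemma \ref{lemma: curl}: the surface curl can be bounded in terms of the interface normal, and it \emph{vanishes} when the normal is exactly ${\rm e}_d$, the rank-one direction. The anisotropic penalization $\eta^2\int(|\nabla^2 y|^2-|\partial^2_{dd}y|^2)$ forces the portion of $\partial^* E$ with normal far from $\pm{\rm e}_d$ to be small, quantitatively of order $E_{\ep,\eta}(y)/\eta^2$ in measure (with a companion $\ep^{1/2}/\eta^{3/2}$ term coming from interpolation/the way the transition-layer width interacts with the tilt). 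Combining, $\curl\gamma = \mu_{\rm bulk} + \mu_{\rm surf}$ with $\|\mu_{\rm bulk}\|$ of order $\ep\sqrt{E_{\ep,\eta}}$ (in the appropriate negative norm) and $\|\mu_{\rm surf}\|$ of order $(\ep/\eta + \ep^{1/2}/\eta^{3/2})E_{\ep,\eta}$ as a measure.

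Finally I would feed $\gamma$ into the quantitative rigidity estimate for incompatible fields: in dimension two this is the Chambolle–Giacomini–Ponsiglione / Müller–Scardia–Zeppieri type estimate giving, for $\gamma$ with $\dist(\gamma,SO(2))\in L^2$ and $\curl\gamma$ a bounded measure, a single rotation $R$ with $\|\gamma - R\|_{L^2} \le C(\|\dist(\gamma,SO(2))\|_{L^2} + |\curl\gamma|(\Omega))$; in dimension $d\ge3$ one only gets an $L^p$-estimate on interior subdomains $\Omega'\subset\subset\Omega$ for $p$ in the stated range, with the measure term entering to the power $r(p,d)=\min\{1,\frac{d}{p(d-1)}\}$ (the exponent $\frac{d}{d-1}$ excluded because the fundamental solution of $\curl$ just fails to be in $L^{d/(d-1)}$). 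Transferring this back through $\nabla y = \gamma\mathcal M$ and using $|\mathcal M|\le C$ yields $\|\nabla y - R\mathcal M\|_{L^p(\Omega')} \le C\ep\sqrt{E_{\ep,\eta}(y)} + C\big((\ep/\eta + \ep^{1/2}/\eta^{3/2})E_{\ep,\eta}(y)\big)^{r(p,d)}$, which is exactly \eqref{eq: rigidity-new}, and similarly the $d=2$ bound. The main obstacle I anticipate is the sharp control of the surface curl via the anisotropic term — i.e.\ proving Lemma \ref{lemma: curl} and quantifying how the $\eta$-penalization bounds the ``tilted'' part of the interface — since this is where the exactly-one-rank-one-connection hypothesis and the precise powers of $\eps$ and $\eta$ are genuinely used; the bulk estimate and the final application of incompatible-field rigidity are comparatively routine.
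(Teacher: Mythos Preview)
Your overall strategy---decompose into phase regions via Proposition \ref{lemma: phases}, replace $\nabla y$ by the incompatible field $\gamma=\nabla y\,\mathcal M^{-1}$, estimate $\curl\gamma$, and apply Lemma \ref{lemma: Muller-Chambolle}---matches the paper. But there is a genuine gap in the bulk-curl estimate, and you have misidentified where the difficulty lies.

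Take your $\gamma$ on $\Omega$. On the $A$-region $T$ it equals $\nabla y$, so the bulk curl vanishes there. On the $B$-region $\Omega\setminus T$, however, $\gamma=\nabla yB^{-1}$ is \emph{not} a gradient: a direct computation gives $(\curl\gamma)_{k i d}=\tfrac{-\kappa}{1+\kappa}\partial^2_{id}y_k$ for $i<d$. Hence the bulk curl is of order $\int_{\Omega\setminus T}\sum_{i<d}|\partial^2_{id}y|$, and since $\mathcal L^d(\Omega\setminus T)$ can be comparable to $\mathcal L^d(\Omega)$ (the $B$-phase may dominate), H\"older only yields $\lesssim \eta^{-1}\sqrt{E_{\ep,\eta}(y)}$. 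This is \emph{not} $\eps\sqrt{E_{\ep,\eta}}$ as you claim, and it is too large: for $\eta=\ep^{-1/3}$ it gives $\ep^{1/3}$, whereas the theorem requires $\ep$. The $\ep^{1/2}/\eta^{3/2}$ term in the statement is precisely the bulk-curl contribution, not an ``interpolation/tilt'' surface effect.

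The paper's missing ingredient is a piecewise-affine change of variables. One decomposes $\Omega$ (or a cube inside it) into thin strips $Q_l$ of height $\sim\eps/\eta$ orthogonal to $e_d$, assigns to each strip the \emph{majority} phase $M_l\in\{A,B\}$, and builds a homeomorphism $\Psi$ with $\nabla\Psi=M_l$ on $Q_l$. One then defines $\gamma$ on $U=\Psi(\Omega)$ by composing with $\Psi^{-1}$. Now on each $U_l=\Psi(Q_l)$, $\gamma$ equals the true gradient $\nabla(y\circ\Psi^{-1})$ plus a perturbation supported only on the \emph{minority} phase within $Q_l$. The total minority-phase measure is $\lesssim(\eps/\eta)E_{\ep,\eta}(y)$ by the isoperimetric estimate \eqref{eq: propertiesT2}, and H\"older then gives bulk curl $\lesssim \big((\eps/\eta)E_{\ep,\eta}\big)^{1/2}\cdot\eta^{-1}\sqrt{E_{\ep,\eta}}=\ep^{1/2}\eta^{-3/2}E_{\ep,\eta}$. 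The surface curl, by contrast, is straightforward once you observe (as you do) that $[\gamma]\otimes\nu-\nu\otimes[\gamma]$ vanishes when $\nu=\pm e_d$; Proposition \ref{lemma: phases}(iii) then gives $\lesssim(\eps/\eta)E_{\ep,\eta}$ directly. So the surface curl is the routine part, and the change of variables on strips is the idea you are missing.
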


\begin{remark}[Different exponents, sets, and simplified formulations]\label{rem: afterth}
{\normalfont
{\rm (i)} The difference in the formulations in two and arbitrary space dimensions, concerning the exponents and the assumptions on the sets, are due to the application of rigidity estimates for vector fields with nonzero curl, see Lemma  \ref{lemma: Muller-Chambolle} below.  Although we neglect the case $p=\frac{d}{d-1}$ in (b), we point out that our argument could be extended to also cover that scenario, by replacing Lemma \ref{lemma: Muller-Chambolle} below with the results for $p=\frac{d}{d-1}$ in \cite[Theorem 4]{lauteri.luckhaus}. 

{\rm (ii)} In (b), if $\Omega$ is a  paraxial  \RRR cuboid, \EEE the statement holds on the entire domain.

 {\rm (iii)} For general sets $\Omega$, we point out that for $p>\frac{d}{d-1}$ the rigidity estimates for vector fields with nonzero curl in Lemma  \ref{lemma: Muller-Chambolle}(b) hold on the whole set (see Remark \ref{rk:sets-for-lemma}). Nevertheless, the passage to a subdomain is still needed for Theorem \ref{thm:rigiditythm} due to a combination of covering and isoperimetric arguments in Step II of the proof.  We are aware of the possibility to formulate Theorem \ref{thm:rigiditythm}(b) on the whole set $\Omega$ for a more general class of sets having suitable geometrical properties. Nonetheless, we have decided not to dwell on this point, both to keep the exposition from becoming too technical, and as it is only of marginal interest for the applications that we will treat in this paper and in \cite{davoli.friedrich}. Note that the constant $C$ in the theorem is invariant under uniformly Lipschitz reparametrizations of the domain. 
}

 {\rm (iv)} Consider the special case  $\eta = \eps^{-1+4/  (3d)}$  for deformations $y\in H^2(\Omega;\R^d)$ with $E_{\ep,\eta}(y) \le E$ for some $E>0$. Then,  when $\Omega$ is a paraxial cube, the statement reduces to that of Theorem \ref{thm:rig-intro} choosing $\alpha(d)=4/(3d)$.

\end{remark}

The section is organized as follows. In Section \ref{sec: literature} we first  provide  a brief literature overview of quantitative  rigidity estimates for multiwell energies and situate Theorem \ref{thm:rigiditythm} within this context. In Section \ref{sec: curl} we then recall rigidity estimates for vector fields with nonzero curl, which are the main ingredient in our approach. Section \ref{sec: prelim-est} is devoted to some preliminary estimates concerning the decomposition of the domain into the phase regions of $A$ and $B$. Finally, Section \ref{sec: rig-proof} contains the proof of Theorem \ref{thm:rigiditythm} and some further remarks on the result.

\subsection{Theorem \ref{thm:rigiditythm} in the context of quantitative  rigidity estimates for multiwell energies}\label{sec: literature}

Theorem \ref{thm:rigiditythm} is related to a variety of quantitative rigidity estimates for multiwell energies.  Roughly speaking, all these results control the distance of the deformation gradient from a single matrix in  \emph{one} of the wells  in a suitable norm. We recall the most important theorems in that direction. In the sequel, $y$ denotes a deformation satisfying $\int_\Omega W(\nabla y) \, dx \le C\eps^2$.

 If the two wells are strongly incompatible in the sense of \cite{Matos}, it was proven in \cite{Chaudhuri, De Lellis} that there exist $R\in SO(d)$ and $ M\in \{A,B\}$ such that 
\begin{align}\label{eq: rig-mot1}
 \Vert \nabla y - R  M  \Vert_{L^2(\Omega)} \le C\eps,
 \end{align}
even without imposing a second order penalization. For multiple wells with possible rank-one connections,  it was shown in \cite[Theorem 2.3]{alicandro.dalmaso.lazzaroni.palombaro}  that  an  estimate of the form \eqref{eq: rig-mot1} still holds if a sufficiently strong second-order penalization \RRR for the full hessian (and hence, also for the direction of the rank-one connection) \EEE is assumed.  Both results, however, are not relevant for our  applications \RRR since they are derived for models which do not allow for the  formation of phase transitions  in the sharp-interface limit $\eps \to 0$. Indeed, for $\eps$ small, \eqref{eq: rig-mot1} induces that $\nabla y$ is close everywhere to a single matrix, and this is not consistent with the presence of a phase transition.  \EEE

%
%
%

Concerning two-well problems  with rank-one connections allowing for phase transitions, the first results have  been derived \RRR in \cite{Lorent}, and more generally in \cite{conti.schweizer}, \EEE in dimension two. These estimates have  been generalized later \RRR to arbitrary space dimensions for multiple wells under a well-separatedness assumption in \cite{Chermisi-Conti}, and under more general connectivity conditions in \cite{Jerrard-Lorent}\EEE. More precisely, in the case of two wells, the result is as follows: for $y \in H^2(\Omega;\R^d)$,  \RRR satisfying again the energy bound $\int_\Omega W(\nabla y) \, dx \le C\eps^2$ and additionally \EEE $ \Vert \nabla^2 y\Vert_{L^1(\Omega)} \le a$ for some small $a>0$, there exist $R \in SO(d)$ and $ M \in \lbrace A,B \rbrace$ such that 
\begin{align}\label{eq: rig-mot2}
\Vert \nabla y - R M \Vert_{L^2(\Omega')} \le C\sqrt{\ep}, 
\end{align}
where $\Omega'$ is subdomain of $\Omega$. In this context, the assumption that $a$ is small is essential since it guarantees that the $ M$-phase region is predominant. Still, it does not exclude the occurrence of phase transitions near the boundary.
 Indeed, \eqref{eq: rig-mot2} is  generally not true if $\Omega' = \Omega$. Moreover, a construction in  \cite[Example 6.1]{conti.schweizer2} shows that the scaling $\sqrt{\eps}$ is sharp, see also Remark \ref{rem: inclusions} below. The scaling $\sqrt{\eps}$ is insufficient for our applications to solid-solid phase transitions since the strain is typically of order $\eps$, see Remark \ref{rem: comparison}.

We recall that in \cite{conti.schweizer} also variants for the weak $L^1$-norm are discussed. In particular, it is shown that there exist  $R \in SO(d)$ and $M \in \lbrace A,B \rbrace$ such that 
\begin{align}\label{eq: rig-mot3}
\|\nabla y-R M\|_{w-L^1(\Omega')}\leq C\ep.
\end{align}
 Although the scaling in terms of $\eps$ corresponds to the typical order of the strain, the fact that the estimate only holds in the weak $L^1$-norm prohibits application of this estimate in Section \ref{sec:applications},  see Remark \ref{rem: comparison}.

We remark that all  the  results mentioned above follow the same strategy: one shows that the volume of the phase region different from $ M $ is asymptotically small in $\eps$. This is either induced by the incompatibility of the wells or by a second order penalization. For $1 \le p \le 2$ this yields the estimate
\begin{equation}
\label{eq:no-curl}
\Vert {\rm dist}(\nabla y,SO(d) M)\Vert_{L^p(\Omega)} \le C \Vert {\rm dist}(\nabla y,SO(  d )\lbrace A,B \rbrace)\Vert_{ L^{p}(\Omega) } +  C  V_\eps^{1/p} \le C\eps + V_\eps^{1/p},
\end{equation}
where $V_\eps$ denotes the volume of the phase region different from $ M $. Afterwards, one applies the seminal one-well rigidity estimate  \cite{FrieseckeJamesMueller:02} (cf. also \cite[Section 2.4]{conti.schweizer}) to obtain \eqref{eq: rig-mot1}-\eqref{eq: rig-mot3} in the various settings.  

Our approach is quite different as we establish a rigidity estimate which takes the presence of \emph{both} phases into account. This is reflected by the \emph{phase indicator} $\mathcal{M}$ and is inspired by piecewise rigidity results \cite{Friedrich-ARMA, Friedrich:15-4} in other settings. In   particular,   Theorem \ref{thm:rigiditythm} complements the existing results in the following ways: (1) For the derivation of rigidity results, no smallness assumption on the full second derivative is needed; (2) Identifying the different phase regions by means of $\mathcal{M}$ allows to improve the scaling in \eqref{eq: rig-mot2}, cf.\ Remark \ref{rem: afterth}{\rm (iv)} and Theorem \ref{thm:rig-intro};  (3) If the domain is two-dimensional or a paraxial \RRR cuboid \EEE in higher dimensions, the estimate holds on the entire set $\Omega$. (The  necessity  of taking a subset in higher dimensions is not due to the presence of different phases, but due to a combination of covering and isoperimetric arguments in the proof, see Remark \ref{rem: afterth}(iii) for a discussion.)

Note that for technical reasons we need to take an anisotropic penalization into account, see \eqref{eq: nonlinear energy}. This, however, does not affect the qualitative behavior of the sharp-interface limit derived in Section \ref{sec:applications}, see Remark \ref{rk:comparison-CS}.

\subsection{Rigidity estimates for vector fields with nonzero curl}\label{sec: curl}

The main idea in our approach will be the usage of rigidity estimates for vector fields with nonzero curl established  in \cite{Chambolle-Giacomini-Ponsiglione:2007, lauteri.luckhaus, Mueller-Scardia-Zeppieri}.  We first define the  curl and  recall the relevant results. Let $\gamma \in L^1(\Omega; \R^d)$. The distribution $\curl \gamma$ is formally equal to the matrix $(\partial_i \gamma_j - \partial_j \gamma_i)_{1 \le i,j \le d}$  and is defined as
\begin{align}\label{eq: curl definition}
\langle \curl \gamma, \varphi\rangle = \sum_{i,j=1}^d \int_\Omega \gamma_i(x) \partial_j (\varphi_{ij}(x) - \varphi_{ji}(x)) \, dx 
\end{align}
for all $\varphi \in C^\infty_c(\Omega; \M^{d \times d})$. If $\gamma$ is a matrix-valued vector field,   then $\curl \gamma$ is a distribution taking values in $\R^d\times \M^{d\times d}$, and formally defined as 
$(\curl \gamma)_{kij}=\partial_i \gamma_{kj}-\partial_j \gamma_{ki}$ for every $1\leq k,i,j\leq d$.

\begin{lemma}[Rigidity estimates  for vector fields with nonzero curl]\label{lemma: Muller-Chambolle}
  (a) Let $\Omega$ be a bounded, simply connected Lipschitz  domain in $\R^2$.  Then   there exists a constant $C=C(\Omega )>0$ satisfying the following property: for every $\gamma \in L^2(\Omega; \M^{2\times 2})$ such that  $\curl \gamma$ is a bounded measure there exists $R\in SO(2)$ for which
\begin{align*}
\Vert\gamma-R\Vert_{L^2(\Omega;\M^{2\times 2})}\leq C  \Big(\Vert \dist(\gamma,SO(2)) \Vert_{L^2( \Omega)} + |\curl \gamma|(\Omega)  \Big).
\end{align*}
  
  (b) Let $\Omega$ be a bounded  Lipschitz  domain in $\R^d$  with $d\in \mathbb{N}$,  $d\geq 3$,   and let $1 \le p \le 2$, $p \neq \frac{d}{d-1}$. Then for each $\Omega' \subset \subset \Omega$ there exists a constant  $C=C(\Omega, \Omega',p)>0$  satisfying the following property: for every  $\gamma \in L^{p}(\Omega; \M^{d\times d})$  such that  $\curl \gamma$ is a bounded measure, there exists $R\in SO(d)$ for which
\begin{align}\label{eq: rigidity2}
\Vert\gamma-R\Vert_{L^p(\Omega' ;\M^{d\times d})}\leq C  \Big(\Vert \dist(\gamma,SO(d)) \Vert_{L^p( \Omega)} +  (|\curl \gamma|(\Omega))^{r(p,d)} \Big),
\end{align}
   where $r(p,d)=\min\{1,\frac{d}{p(d-1)}\}$. 
\end{lemma}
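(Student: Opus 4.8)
\emph{Proof idea.} Up to cosmetic reformulations, the two estimates form a compendium of known geometric-rigidity results for fields with a measure-valued curl: part (a) is contained in \cite{Chambolle-Giacomini-Ponsiglione:2007}, while part (b) follows by combining the techniques of \cite{Mueller-Scardia-Zeppieri} with the refinements of \cite{lauteri.luckhaus}. The plan is to reduce, through a Helmholtz-type splitting, to the classical quantitative rigidity estimate for \emph{gradients} of \cite{FrieseckeJamesMueller:02} (and its $L^q$-variant, see also \cite[Section~2.4]{conti.schweizer}), and then to track how the curl enters the final bound.

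First I would set $\mu := \curl\gamma$, a finite measure supported in $\overline\Omega$, extend it by zero to $\R^d$, and solve $\curl\beta^0 = \mu$ by convolving $\mu$ with the constant-coefficient kernel built from $\nabla N$, where $N$ is the Newtonian potential of $\R^d$. Since that kernel is homogeneous of degree $1-d$, hence belongs to the Marcinkiewicz space $L^{d/(d-1),\infty}(\R^d)$, the weak Young inequality gives $\curl\beta^0 = \curl\gamma$ in $\Omega$ together with $\|\beta^0\|_{L^{d/(d-1),\infty}(\R^d)} \le C(d)\,|\curl\gamma|(\Omega)$. The field $\gamma - \beta^0$ is then curl-free, so on a simply connected domain $\Omega''$ --- all of $\Omega$ in (a), and a smooth simply connected set with $\Omega' \subset\subset \Omega'' \subset\subset \Omega$ in (b) --- it equals $\nabla u$ for some $u \in W^{1,1}(\Omega'';\R^d)$, and pointwise $\dist(\nabla u, SO(d)) \le \dist(\gamma, SO(d)) + |\beta^0|$. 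Applying geometric rigidity to $\nabla u$ on $\Omega''$ in the $L^q$-norm for a suitable $q \le 2$ --- the endpoint $q = 1$ and the critical exponent requiring the truncation refinements of \cite{lauteri.luckhaus} --- produces a \emph{single} $R \in SO(d)$ with $\|\nabla u - R\|_{L^q(\Omega'')} \le C\,\|\dist(\nabla u, SO(d))\|_{L^q(\Omega'')}$.

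It then remains to recombine $\gamma = \nabla u + \beta^0$ and to absorb $\beta^0$. If $p < \tfrac{d}{d-1}$ (which for $d \ge 3$ includes $p = 1$), the bounded-domain embedding $L^{d/(d-1),\infty}(\Omega) \hookrightarrow L^p(\Omega)$ gives $\|\beta^0\|_{L^p(\Omega)} \le C\,|\curl\gamma|(\Omega)$, so choosing $q = p$ above yields \eqref{eq: rigidity2} with $r = 1$ directly. If instead $p > \tfrac{d}{d-1}$, then $\beta^0$ belongs only to $L^q$ for $q < \tfrac{d}{d-1} < p$ and can no longer be absorbed in $L^p$ at unit cost; one runs the previous step at such a $q$ and, on the slightly smaller set $\Omega'$, recovers the $L^p$-estimate for $\gamma - R$ by a localization and rescaling of the curl measure over a Vitali-type cover --- the technical core of \cite{Mueller-Scardia-Zeppieri, lauteri.luckhaus} --- which separates the two contributions and turns the resulting loss into exactly the sub-unit power $r(p,d) = \tfrac{d}{p(d-1)}$. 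The value $p = \tfrac{d}{d-1}$ is excluded because there $\beta^0$ lies only in weak-$L^p$ and the rescaling degenerates with a logarithmic loss, a borderline case covered instead by \cite[Theorem~4]{lauteri.luckhaus}. Part (a) is the critical case $d = 2$, $p = 2 = \tfrac{d}{d-1}$: there the splitting alone is insufficient, since $\beta^0$ is only in weak-$L^2$, and one argues directly on $\gamma$ as in \cite{Chambolle-Giacomini-Ponsiglione:2007}.

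The hard part is thus not the Hilbertian bulk of the argument but the behaviour at and below the critical exponent: the endpoint $p = 1$, where geometric rigidity for gradients genuinely fails for rough fields and must be recovered through truncation; the supercritical range $p > \tfrac{d}{d-1}$, where the potential $\beta^0$ lies only in weak-$L^{d/(d-1)}$ --- this, rather than anything about the geometry of the wells, is what forces both the sub-unit exponent $r(p,d)$ and the passage to an interior subdomain $\Omega'$ --- and the two-dimensional estimate (a), which sits exactly at the critical exponent and requires the direct argument of \cite{Chambolle-Giacomini-Ponsiglione:2007} rather than the splitting scheme above.
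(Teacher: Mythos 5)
Your overall plan---treat both estimates as known rigidity results for incompatible fields, and explain them via a Helmholtz-type splitting plus the gradient rigidity of \cite{FrieseckeJamesMueller:02}---matches the paper in spirit: the paper's own proof is purely by citation plus a covering argument. But your citations are essentially swapped, and since your argument is largely a proof by citation this is a genuine defect. Part (a), the critical two-dimensional $L^2$ estimate with $|\curl \gamma|(\Omega)$ entering linearly, is exactly \cite[Theorem 3.3]{Mueller-Scardia-Zeppieri}; it is not contained in \cite{Chambolle-Giacomini-Ponsiglione:2007}, which has no such $L^2$ statement. Conversely, part (b) in the subcritical range $p<\frac{d}{d-1}$ is taken from \cite[Proposition 5.1]{Chambolle-Giacomini-Ponsiglione:2007} on cubes, and in the supercritical range $p>\frac{d}{d-1}$ from \cite[Theorem 4]{lauteri.luckhaus} or \cite[Theorem 3]{kytavsev-lauteri-ruland-luckhaus} on balls; the passage to a general $\Omega'\subset\subset\Omega$ is then made by covering $\Omega'$ with finitely many cubes (resp.\ balls) and comparing the rotations obtained on overlapping pieces---a step missing from your proposal. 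Your diagnosis of why $\Omega'$ appears is also not the paper's: by Remark \ref{rk:sets-for-lemma}, for $p>\frac{d}{d-1}$ the estimate actually holds on all of $\Omega$ (scaling invariance of \cite{kytavsev-lauteri-ruland-luckhaus}), and the subdomain is tied to the covering argument and the lack of scaling invariance in the subcritical case, not to the weak-$L^{d/(d-1)}$ integrability of the curl potential.

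In the self-contained part of your sketch there are two concrete gaps. First, extending $\mu=\curl\gamma$ by zero and convolving with $\nabla N$ does not in general give $\curl\beta^0=\mu$ in $\Omega$: the zero extension of $\curl\gamma$ need not be closed as a (matrix of) $2$-form(s)---its distributional exterior derivative picks up a contribution on $\partial\Omega$---and the solution operator is nonlocal, so the defect does not remain on the boundary. One must instead set up the splitting through a boundary value problem on $\Omega$ (as in \cite{Mueller-Scardia-Zeppieri}) or use the mollification/covering schemes of \cite{Chambolle-Giacomini-Ponsiglione:2007, lauteri.luckhaus}. Second, the endpoint $p=1$, which is part of statement (b), cannot be handled by ``applying geometric rigidity to $\nabla u$ in $L^q$ with $q=p$'': as you yourself note, $L^1$ geometric rigidity for gradients is false, and no truncation can rescue a reduction to a false estimate; the cited proofs for this range do not factor through a pure-gradient $L^1$ rigidity estimate. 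These are precisely the points where invoking the correct external results (as the paper does, together with the covering argument) rather than re-deriving them is essential.
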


\begin{proof}
Assertion (a) is proven in \cite[Theorem 3.3]{Mueller-Scardia-Zeppieri}. The proof of assertion (b)  for $p<\frac{d}{d-1}$ is essentially contained  in \cite[Proposition 5.1]{Chambolle-Giacomini-Ponsiglione:2007} if the domain is a cube. For general $\Omega' \subset \subset \Omega$, we use a standard covering argument (see, e.g., \cite[Proof of Theorem 1]{Chambolle-Conti-Francfort:2014} or  \cite[Proof of Theorem 1.1]{Friedrich:15-3}): we cover $\Omega'$ with a finite number of open cubes $\lbrace Q_i \rbrace_{i=1}^N$ and apply  \cite[Proposition 5.1]{Chambolle-Giacomini-Ponsiglione:2007} on each of the cubes to obtain rotations $\lbrace R_i \rbrace_{i=1}^N$ such that \eqref{eq: rigidity2} holds on $Q_i$ for a constant $C_i$ dependent on $Q_i$. The difference between  rotations in neighboring cubes is then controlled in terms of a constant which only depends on $d$, $N$, and $\min\lbrace  \mathcal{L}^d (Q_i \cap Q_j): \  Q_i \cap Q_j \neq \emptyset \rbrace $.  Assertion (b) for $p>\frac{d}{d-1}$ follows directly by \cite[Theorem 4]{lauteri.luckhaus} or \cite[Theorem 3]{kytavsev-lauteri-ruland-luckhaus} if $\Omega'$ is a ball, and by a covering argument analogous to the one described above for more general $\Omega'  \subset\subset  \Omega$. 
 \end{proof}
 
\begin{remark}[Role of the subdomain]
\label{rk:sets-for-lemma}
{\normalfont
As a direct consequence of the proof of Lemma \ref{lemma: Muller-Chambolle}(b), for $1 \le p < \frac{d}{d-1}$ and for $\Omega$ coinciding with a cube, we do not have to take a subset of the domain.  \RRR The same holds if $\Omega$ is  a paraxial cuboid: indeed, it can be covered by a finite number of paraxial cubes of equal size, and   the difference of the corresponding rotations can be controlled, cf.\ the proof of Lemma \ref{lemma: Muller-Chambolle}. \EEE
 Additionally, for $p>\frac{d}{d-1}$ the statement can also be proven for general Lipschitz sets $\Omega$ without passing to subdomains. This follows from the scaling invariance of the rigidity estimate for incompatible fields in \cite[Theorem 3]{kytavsev-lauteri-ruland-luckhaus} and by a classical covering argument  (see, e.g., \cite[Proof of Theorem 3.1]{FrieseckeJamesMueller:02}). The same argument does not apply to Lemma \ref{lemma: Muller-Chambolle}(b) for $1 \le p < \frac{d}{d-1}$ as the estimate in  \cite[Proposition 5.1]{Chambolle-Giacomini-Ponsiglione:2007} is not scaling invariant.}
\end{remark}

Our strategy to prove Theorem \ref{thm:rigiditythm} is to replace the gradient $\nabla y$, which satisfies $\nabla y \approx SO(d)\lbrace A, B\rbrace$, by an associated vector field $\gamma$ with $\gamma \approx SO(d)A$. This will be done by changing $\nabla y$ to $\nabla y B^{-1}$ on a set of finite perimeter associated to the $B$-phase regions. A similar strategy to replace a multiwell problem by an incompatible one-well problem has been used in \cite{kytavsev-lauteri-ruland-luckhaus}.  In contrast  to \cite{kytavsev-lauteri-ruland-luckhaus}, we provide a finer control on the curl of the incompatible vector field.  To this end, we investigate the curl of vector fields which are $SBV$ functions. We recall that $\gamma \in L^1(\Omega;\R^d)$ lies in $SBV(\Omega;\R^d)$ if its distributional derivative $D\gamma$ is an $\R^{d\times d}$-valued finite Radon measure on $\Omega$ such that
\begin{align}\label{eq: SBV}
D\gamma= \nabla \gamma\mathcal L^{d}+[\gamma]\otimes \nu_\gamma \mathcal H^{d-1}\lfloor J_\gamma,
\end{align}
where $\nabla \gamma = (\partial_1 \gamma,\ldots, \partial_d\gamma)$ denotes the approximate differential, $\nu_\gamma$ is a normal of the jump set $J_\gamma$ and $[\gamma] := \gamma^+ - \gamma^-$ with $\gamma^{\pm}$ being the one-sided limits of  $\gamma$ at $J_{\gamma}$ (see \cite[Chapter 4]{Ambrosio-Fusco-Pallara:2000}). The following lemma yields a control on $\curl\gamma$. For related curl-estimates for $SBV$ functions we refer to \cite[Theorem 3.1]{Chambolle-Giacomini-Ponsiglione:2007}.

\begin{lemma}[Curl for $SBV$ vector fields]\label{lemma: curl}
Let $\gamma = (\gamma_1,\dots,\gamma_d)\in SBV(\Omega;\R^d)$. Then, $\curl \gamma $   is a measure on $\Omega$ satisfying
 $$|\curl \gamma|(\Omega)   \le   d  \int_\Omega | (\nabla \gamma)^T - \nabla \gamma| \, dx +   \int_{J_\gamma} |[\gamma]  \otimes \nu_\gamma- \nu_\gamma \otimes [\gamma] | \, d\mathcal{H}^{d-1}.$$ 
\end{lemma}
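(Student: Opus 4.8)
The plan is to unwind the distributional definition \eqref{eq: curl definition} of $\curl\gamma$ using the fact that $\gamma\in SBV(\Omega;\R^d)$, so that its distributional derivative splits as in \eqref{eq: SBV} into an absolutely continuous part and a jump part, with no Cantor part. Fix a test function $\varphi\in C^\infty_c(\Omega;\M^{d\times d})$ with $\|\varphi\|_\infty\le 1$. For each pair $i,j$, the term $\int_\Omega \gamma_i\,\partial_j(\varphi_{ij}-\varphi_{ji})\,dx$ is, by the definition of the distributional derivative $D_j\gamma_i$ applied to the scalar test function $\varphi_{ij}-\varphi_{ji}$, equal to $-\int_{\Omega}(\varphi_{ij}-\varphi_{ji})\,dD_j\gamma_i$. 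Substituting the decomposition $D_j\gamma_i=\partial_j\gamma_i\,\mathcal L^d+[\gamma_i](\nu_\gamma)_j\,\mathcal H^{d-1}\lfloor J_\gamma$ and summing over $i,j$ yields
\begin{align*}
\langle\curl\gamma,\varphi\rangle
= -\sum_{i,j}\int_\Omega(\varphi_{ij}-\varphi_{ji})\,\partial_j\gamma_i\,dx
- \sum_{i,j}\int_{J_\gamma}(\varphi_{ij}-\varphi_{ji})\,[\gamma_i](\nu_\gamma)_j\,d\mathcal H^{d-1}.
\end{align*}
In the bulk term, relabel $(i,j)\leftrightarrow(j,i)$ in the part coming from $\varphi_{ji}$ to rewrite the integrand as $\sum_{i,j}\varphi_{ij}\big((\partial_i\gamma_j)-(\partial_j\gamma_i)\big) = \varphi:\big((\nabla\gamma)^T-\nabla\gamma\big)$ (with the convention $(\nabla\gamma)_{ij}=\partial_i\gamma_j$ used consistently with \eqref{eq: curl definition}); similarly the surface integrand becomes $\varphi:\big([\gamma]\otimes\nu_\gamma-\nu_\gamma\otimes[\gamma]\big)$. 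Hence $\langle\curl\gamma,\varphi\rangle = -\int_\Omega\varphi:\big((\nabla\gamma)^T-\nabla\gamma\big)\,dx - \int_{J_\gamma}\varphi:\big([\gamma]\otimes\nu_\gamma-\nu_\gamma\otimes[\gamma]\big)\,d\mathcal H^{d-1}$, which in particular shows that $\curl\gamma$ is represented by an $\M^{d\times d}$-valued finite Radon measure, namely $\curl\gamma = -\big((\nabla\gamma)^T-\nabla\gamma\big)\mathcal L^d - \big([\gamma]\otimes\nu_\gamma-\nu_\gamma\otimes[\gamma]\big)\mathcal H^{d-1}\lfloor J_\gamma$.

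To conclude, I take the total variation. Using $|\curl\gamma|(\Omega)=\sup\{\langle\curl\gamma,\varphi\rangle:\varphi\in C^\infty_c(\Omega;\M^{d\times d}),\ \|\varphi\|_\infty\le1\}$ and bounding each of the two integrals by its integrand's norm, one gets $|\curl\gamma|(\Omega)\le\int_\Omega|(\nabla\gamma)^T-\nabla\gamma|\,dx+\int_{J_\gamma}|[\gamma]\otimes\nu_\gamma-\nu_\gamma\otimes[\gamma]|\,d\mathcal H^{d-1}$. The factor $d$ in the statement is harmless and comes from being slightly wasteful in the matching of indices / the choice of matrix norm (e.g.\ estimating $|\langle\curl\gamma,\varphi\rangle|$ componentwise and using $\sum_{i,j}|M_{ij}|\le d\,|M|$ in the appropriate norm); since only an upper bound is claimed, I would simply carry the constant $d$ through rather than optimize it.

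The only mildly delicate point — and the one I would be most careful about — is bookkeeping of the antisymmetrization and the transpose conventions, i.e.\ making sure that the combinatorial relabelling in the bulk term produces exactly $(\nabla\gamma)^T-\nabla\gamma$ and not, say, $2$ times it or the wrong sign, given the precise convention in \eqref{eq: curl definition} where $\curl\gamma$ is "formally $(\partial_i\gamma_j-\partial_j\gamma_i)_{ij}$". There is no analytic obstacle here: the $SBV$ structure does all the work, and the absence of a Cantor part is exactly what makes the measure representation a sum of just a bulk and a jump contribution; approximation of general $\varphi$ by ones with $\|\varphi\|_\infty\le1$, and the standard fact that the $\sup$ over such $\varphi$ computes the total variation, are routine. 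Hence the proof reduces to the elementary manipulation above together with citing \cite[Chapter 4]{Ambrosio-Fusco-Pallara:2000} for the decomposition \eqref{eq: SBV}.
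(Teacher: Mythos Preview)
Your proof is correct and follows essentially the same route as the paper: both unwind the distributional definition \eqref{eq: curl definition} against the $SBV$ decomposition \eqref{eq: SBV}, then antisymmetrize in $(i,j)$ to identify the bulk and jump densities. Your observation that the factor $d$ arises only from the crude bound $\sum_{i,j}|M_{ij}|\le d\,|M|$ is exactly what the paper does in its final step.
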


\begin{proof} 
For each $\varphi \in C^\infty_c(\Omega; \M^{d \times d})$ we have by \eqref{eq: curl definition} and \eqref{eq: SBV} 
\begin{align*}
\langle \curl\gamma, \varphi\rangle & = \sum_{i,j=1}^d \int_{  \Omega  }  \gamma_i  (x)  \partial_j (\varphi_{ij}(x) - \varphi_{ji}(x))   \, dx \\
&  =  -  \sum_{i,j=1}^d \int_{J_\gamma} ([\gamma] \otimes   \nu_\gamma)_{ij} (x)  (\varphi_{ij}(x) - \varphi_{ji}(x))    \, d\mathcal{H}^{d-1}(x)  -     \sum_{i,j=1}^d \int_{\Omega} \partial_j \gamma_i(x)    (\varphi_{ij}(x) - \varphi_{ji}(x))    \, dx  \\
& =   -   \sum_{i,j=1}^d \int_{J_\gamma} ([\gamma] \otimes   \nu_\gamma-  \nu_\gamma  \otimes   [\gamma] )_{ij}(x) \, \varphi_{ij}(x)  \, d\mathcal{H}^{d-1}(x)   -    \sum_{i,j=1}^d   \int_{\Omega}  \varphi_{ij}(x)    (\partial_j \gamma_i(x) - \partial_i \gamma_j(x))    \, dx.   
\end{align*}
 This implies that
$$|\langle \curl \gamma, \varphi\rangle| \le \Vert \varphi \Vert_{L^\infty(\Omega)}  \Big( \sum\nolimits_{i,j=1}^d\int_\Omega |\partial_i \gamma_j - \partial_j \gamma_i| \, dx +   \int_{J_\gamma} |[\gamma]  \otimes \nu_\gamma- \nu_\gamma \otimes [\gamma] | \, d\mathcal{H}^{d-1}\Big)$$ 
for every $\varphi\in C^{\infty}_c(\Omega;\mathbb{M}^{d\times d})$, and concludes the proof of the lemma.  
\end{proof}

\subsection{Decomposition into phases}\label{sec: prelim-est} 
We adopt the notation $V(F) = \dist^2(F,SO(d)\lbrace A, B \rbrace)$ for brevity. We introduce the truncated geodesic distance $d_V(F,G)$ of $F,G \in \M^{d\times d}$ induced by $V$, which is defined by 
\begin{align}\label{eq: geodesic distance}
d_V(F,G) = \inf  \Bigg\{ \int_0^1 \min\lbrace\sqrt{V(g(s))},1\rbrace \, |g'(s)|\, ds: \ g \in C^1([0,1];\M^{d\times d}), \ g(0)=F,  \ g(1) = G \Bigg\}.
\end{align}
\RRR In this subsection, we will only use the fact that $|B-A| = \kappa$, see H3. The other properties of $A$ and $B$ will not be used in the proofs, not even the fact that they are rank-one connected. \EEE Clearly, we have $d_V(A,B) >0$. For later purposes, we state some elementary properties.

\begin{lemma}[Relation between \RRR Euclidean \EEE distance and geodesic distance]\label{lemma: geodesic distance}
Let $\delta>0$. There exist $C_1 \ge 1$ and $0 < C_2 < 1$ depending only on \RRR $\kappa$ and \EEE $\delta$ such that for $M \in \lbrace A,B \rbrace$
\begin{align*}
{\rm (i)} & \ \ d_V(F,SO(d)M) \le \dist(F,SO(d)M)   \ \  \ \text{for every $F\in \M^{d \times d}$},\\
{\rm (ii)} & \ \ \dist(F,SO(d)M) \le C_1 d_V(F,SO(d)M) \ \  \ \text{for every $F\in \M^{d \times d}$ such that  $d_V(F,SO(d)M)\ge \delta$.}\\
{\rm (iii)} & \ \ \dist(F,SO(d)M) \le C_2  \ \  \ \text{for every $F\in \M^{d \times d}$ such that  $d_V(F,SO(d)M) <  \delta$.}
\end{align*}
Moreover, there holds $C_2 \to 0$ when $\delta \to 0$. 
\end{lemma}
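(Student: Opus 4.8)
The plan is to prove the three inequalities essentially by compactness, exploiting only the hypothesis $|B-A|=\kappa>0$ together with the continuity of $V$ (which follows since $V = \dist^2(\cdot,SO(d)\{A,B\})$ and the distance function to a closed set is continuous). Throughout, fix $M \in \{A,B\}$ and recall that $V(G)=0$ exactly on $SO(d)A \cup SO(d)B$, so in particular $V \equiv 0$ on $SO(d)M$.

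First, for (i): given $F$ and $\delta_0>0$, pick $R\in SO(d)$ with $|F-RM|\le \dist(F,SO(d)M)+\delta_0$, and use the straight path $g(s)=RM + s(F-RM)$ as a competitor in \eqref{eq: geodesic distance}. Since $\min\{\sqrt{V(g(s))},1\}\le 1$ for all $s$ and $|g'(s)|=|F-RM|$, the integral is bounded by $|F-RM|\le \dist(F,SO(d)M)+\delta_0$. Letting $\delta_0\to 0$ gives (i). This step is routine.

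The heart of the matter is the contrapositive direction, i.e.\ controlling the Euclidean distance in terms of the geodesic one. The key observation is that $d_V$ and $\dist(\cdot,SO(d)M)$ induce comparable notions of "being near $SO(d)M$", because the two wells are at positive Euclidean distance from one another: indeed $\dist(SO(d)A,SO(d)B)\ge c(\kappa)>0$ (this uses $|B-A|=\kappa$; more precisely, for $R_1,R_2\in SO(d)$ one has $|R_1A-R_2B|$ bounded below away from zero — here one may simply invoke that $SO(d)A$ and $SO(d)B$ are disjoint compact sets up to the scaling normalization $A={\rm Id}$, or argue directly). Consequently, any $C^1$ path starting on $SO(d)M$ that reaches a point $F$ with $\dist(F,SO(d)M)$ bounded below must accumulate geodesic length bounded below: if the path stays within the region where $\dist(\cdot,SO(d)M)$ is small (smaller than $c(\kappa)/2$, say), then it stays a definite distance away from the \emph{other} well, so $V=\dist^2(\cdot,SO(d)M)$ there and the classical estimate $\dist(F,SO(d)M)\le C\, d_V(F,SO(d)M)$ for a single well applies (this is a standard comparison between a Riemannian-type distance induced by a density comparable to $\dist(\cdot,SO(d)M)$ near the smooth compact manifold $SO(d)M$ and the Euclidean distance — cf.\ the analogous estimate in \cite{FrieseckeJamesMueller:02}); if instead the path ever leaves that region, it has already traversed Euclidean length $\ge c(\kappa)/2$ through a zone where $\min\{\sqrt V,1\}$ is bounded below by a constant depending on $\kappa$, hence $d_V(F,SO(d)M)\ge c'(\kappa)>0$, a constant independent of $\delta$. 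Combining the two cases: for $d_V(F,SO(d)M)\ge \delta$ we get $\dist(F,SO(d)M)\le C_1(\kappa,\delta)\,d_V(F,SO(d)M)$, which is (ii) — note that when $d_V$ is large (say $\ge$ diameter of a neighborhood) the bound is trivial since $\dist(\cdot,SO(d)M)$ on the relevant bounded region is controlled, and for $\delta \le d_V < c'(\kappa)$ we can take $C_1 = C/1$ from the single-well estimate, while absorbing a factor $1/\delta$ only if needed for the trivial large-distance regime; in all cases $C_1$ depends only on $\kappa$ and $\delta$.

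For (iii): if $d_V(F,SO(d)M)<\delta$ with $\delta < c'(\kappa)$, then by the dichotomy above the optimal (or near-optimal) path cannot leave the region $\{\dist(\cdot,SO(d)M) < c(\kappa)/2\}$, so $\dist(F,SO(d)M)\le C\, d_V(F,SO(d)M) < C\delta =: C_2$, and taking $\delta$ small enough ensures $C_2<1$ and $C_2\to 0$ as $\delta\to 0$. The main obstacle is making the single-well comparison $\dist(F,SO(d)M)\le C\,d_V(F,SO(d)M)$ rigorous in the zone near $SO(d)M$ where $V=\dist^2(\cdot,SO(d)M)$: this is where one needs that $SO(d)M$ is a smooth compact submanifold of $\M^{d\times d}$, so that in a tubular neighborhood the Euclidean distance is Lipschitz-comparable to the intrinsic path-distance weighted by $\sqrt V \sim \dist(\cdot,SO(d)M)$; this is precisely the kind of estimate underlying \cite{FrieseckeJamesMueller:02} and can be quoted. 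Everything else is elementary compactness and path-splitting.
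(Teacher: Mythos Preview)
Your overall strategy---a straight-line competitor for (i), and for (ii)--(iii) a dichotomy based on whether near-optimal paths remain in a tube around $SO(d)M$---is the right one, and in fact goes beyond the paper, which simply defers (ii) and (iii) to \cite[Lemmas~2.5 and~2.6]{alicandro.dalmaso.lazzaroni.palombaro}. Two points, however, need repair.

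First, the ``single-well estimate'' $\dist(F,SO(d)M)\le C\,d_V(F,SO(d)M)$ that you invoke near the well is false as a \emph{linear} bound, and the citation of \cite{FrieseckeJamesMueller:02} is misplaced (that paper concerns rigidity, not geodesic--Euclidean comparison). In the tube one has $\sqrt{V}=\dist(\cdot,SO(d)M)$, and for any path $g$ from $SO(d)M$ to $F$, setting $\phi(s):=\dist(g(s),SO(d)M)$ gives $\int_0^1\sqrt{V(g)}\,|g'|\,ds\ge\int_0^1\phi\,|\phi'|\,ds\ge\tfrac12\phi(1)^2$, so only the \emph{quadratic} comparison $\dist(F,SO(d)M)\le\sqrt{2\,d_V(F,SO(d)M)}$ holds; indeed $d_V\sim\dist^2$ near the well, so no $\delta$-independent linear constant exists. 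This quadratic estimate still yields (iii) with $C_2=\sqrt{2\delta}$, and yields (ii) in the tube regime via $\sqrt{d_V}\le d_V/\sqrt{\delta}$ once $d_V\ge\delta$. Second, your treatment of (ii) for $F$ far from both wells is incomplete: the set $\{d_V\ge\delta\}$ is unbounded, so ``$\dist$ on the relevant bounded region is controlled'' does not close the argument. The missing observation is that $\{\sqrt V<1\}$ is contained in a ball $B_R$ with $R=R(\kappa)$, so that any path from $SO(d)M$ to $F$ must traverse Euclidean length at least $|F|-R$ with density equal to $1$; hence $d_V(F,SO(d)M)\ge\dist(F,SO(d)M)-C(\kappa)$, and absorbing the additive constant via $d_V\ge\delta$ gives the linear bound in this regime as well.
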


\begin{proof}
Item {\rm (i)} follows directly from the definition of the geodesic distance. For the proof of {\rm (ii)} and {\rm (iii)} we refer to \cite[Lemma 2.5 and Lemma 2.6]{alicandro.dalmaso.lazzaroni.palombaro}. The last assertion is a consequence of the proof of \cite[Lemma 2.6]{alicandro.dalmaso.lazzaroni.palombaro}.   Note that the definition of the geodesic distance in \cite{alicandro.dalmaso.lazzaroni.palombaro} is slightly different from \eqref{eq: geodesic distance}, but that \cite[Lemma 2.5 and Lemma 2.6]{alicandro.dalmaso.lazzaroni.palombaro} still hold up to very minor proof adaptations.  
\end{proof}

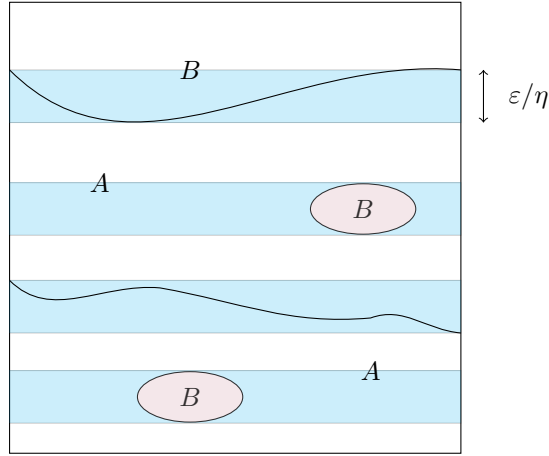
\begin{figure}[h]
\centering
\begin{tikzpicture}
\coordinate (A) at (-3,3);
 \coordinate (B) at (3,3);
 \coordinate (C) at (3,-3);
 \coordinate (D) at (-3,-3);
 \coordinate (E) at (3,2.1);
 \coordinate (F) at (3,1.4);
 \coordinate (G) at (3,0.6);
 \coordinate (H) at (3,-0.1);
 \coordinate (I) at (3,-0.7);
 \coordinate (L) at (3,-1.4);
 \coordinate (M) at (3,-1.9);
 \coordinate (N) at (3,-2.6);
  \coordinate (O) at (-3,2.1);
 \coordinate (P) at (-3,1.4);
 \coordinate (Q) at (-3,0.6);
 \coordinate (R) at (-3,-0.1);
 \coordinate (S) at (-3,-0.7);
 \coordinate (T) at (-3,-1.4);
 \coordinate (U) at (-3,-1.9);
 \coordinate (V) at (-3,-2.6);

 \draw (A)--(B)--(C)--(D)--cycle;
 \draw[fill=cyan, opacity=0.2] (O)--(E)--(F)--(P)--cycle;
 \draw [black] (-3,2.1)  to [out=-45,in=175] (3,2.1);
  \draw[fill=cyan, opacity=0.2] (Q)--(G)--(H)--(R)--cycle;
   \node[shape=ellipse, minimum height=0.1cm, minimum width=1.4cm, draw=black, fill=red!10, opacity=0.8] at (1.7, 0.25){$B$};
   \draw[fill=cyan, opacity=0.2] (S)--(I)--(L)--(T)--cycle;
    \draw [black] (-3,-0.7)  to [out=-45,in=175] (-1,-0.8) to [out=-10, in=-175] (1.8,-1.2)  to [out=20, in=-185] (3,-1.4);
    \draw[fill=cyan, opacity=0.2] (U)--(M)--(N)--(V)--cycle;
     \node[shape=ellipse, minimum height=0.1cm, minimum width=1.4cm, draw=black, fill=red!10, opacity=0.8] at (-0.6, -2.25){$B$};
  \draw[<->] (3.3,2.1)--(3.3,1.4);
 
\node at (-1.8,0.6) {$A$};
\node at  (-0.6,2.1) {$B$};
\node at  (1.8,-1.9) {$A$};
\node at (3.9,1.75) {$\ep/\eta$};
\end{tikzpicture}
\caption{ Condition (iv) for $d=2$ can be interpreted as follows: it guarantees that phase transitions occur inside cylindrical layers of height $\ep/\eta$. Additionally, $\ep/\eta$ is an upper bound on the height of minority islands in the ${\rm e}_d$-direction. In higher dimensions, a similar interpretation is possible, up to higher order terms.}
\label{fig:small-trans}
\end{figure}

The following \RRR proposition \EEE identifies the regions where the deformation gradient is near $SO(d)A$ and $SO(d)B$, respectively. Moreover, let $c_1$ be the constant of H4. For basic properties of sets of finite perimeter we refer to \cite[Section 3.3]{Ambrosio-Fusco-Pallara:2000}.  

\begin{proposition}[Decomposition into phases]\label{lemma: phases}
 Let $\eta \ge \eps$. There exist $0<\alpha < \beta \le 1/2$ and a constant $c=c(\kappa,d, c_1)>0$ such that for every $y \in H^2(\Omega;\R^d)$ there exists an associated set $T \subset \Omega$ of finite perimeter satisfying
\begin{align}\label{eq: propertiesT}
{\rm (i)} & \ \ \lbrace x\in \Omega: \  \dist(\nabla y(x),SO(d)A) \le \alpha\kappa \rbrace  \subset T \subset \lbrace x\in \Omega: \  \dist(\nabla y(x),SO(d)A) \le \beta\kappa \rbrace,\notag \\ 
{\rm (ii)}& \ \ \mathcal{H}^{d-1}(\partial^* T \cap \Omega) \le c E_{\ep,\eta}(y), \notag\\
{\rm (iii)} & \ \  \int_{\partial^* T \cap \Omega} |\langle \nu_T , {\rm e}_j \rangle| \, d\mathcal{H}^{d-1} \le c  \frac{\eps}{\eta}\, E_{\ep,\eta}(y) \ \  \text{for} \ \ j=1,\ldots, d-1, \notag \\  
{\rm (iv)} & \ \ \int_{-\infty}^\infty\mathcal{H}^{d-2}\Big( \big(\R^{d-1} \times \lbrace t \rbrace\big) \cap \partial^* T \cap \Omega\Big)  \, dt \le c \frac{\eps}{\eta}\, E_{\ep,\eta}(y),  
 \end{align}
where $\nu_T$ denotes the outer normal to $T$, $\partial^* T$ its essential boundary, and $E_{\ep,\eta}$ is the energy functional defined in \eqref{eq: nonlinear energy}.   Moreover, if $Q = x_0 + (-h,h)^d$ is a cube contained in $\Omega$ and one considers a corresponding decomposition by $(Q_l)_{l=-n}^{n-1}$ with $n =  \lfloor {\eta}/{\eps}\rfloor$,  and $Q_l := x_0 +   (lh/n)   e_d +   (-h,h)^{d-1} \times (0, h/n)$   we find  
\begin{align}\label{eq: propertiesT2}
\sum\nolimits_{l=-n}^{n-1} \min\lbrace \mathcal{L}^d(Q_l \cap T),   \mathcal{L}^d(Q_l \setminus T) \rbrace \le ch \frac{\eps}{\eta}\, E_{\ep,\eta}(y).
\end{align} 
\end{proposition}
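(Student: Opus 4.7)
The plan is to construct $T$ as an appropriate sublevel set of a truncated geodesic distance
$$\phi(x) := \min\bigl\{d_V(\nabla y(x),\, SO(d)A),\, \delta\bigr\},$$
for a small $\delta>0$ to be chosen, and to exploit the anisotropic second-order penalty in $E_{\eps,\eta}$ together with the coarea formula. Since the map $F \mapsto d_V(F, SO(d)A)$ is Lipschitz with pointwise Lipschitz constant bounded by $\min\{\sqrt{V(F)},1\}$ (directly from \eqref{eq: geodesic distance}), and $\nabla y \in H^1(\Omega;\M^{d\times d})$, the Sobolev chain rule yields $\phi \in H^1(\Omega) \subset W^{1,1}(\Omega)$ with the pointwise bound
$$|\partial_j \phi(x)| \le C \min\{\sqrt{V(\nabla y(x))},\, 1\}\, |\partial_j \nabla y(x)| \qquad \text{for a.e. } x \in \Omega, \ j = 1, \ldots, d.$$

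The key integral estimates on $\phi$ then come from Cauchy--Schwarz, splitting $\sqrt{V}\,|\partial_j \nabla y|$ and pairing the factors against different contributions to $E_{\eps,\eta}(y)$. For the full gradient, $\int_\Omega |\nabla \phi| \le C \bigl(\int V\bigr)^{1/2}\bigl(\int |\nabla^2 y|^2\bigr)^{1/2} \le C E_{\eps,\eta}(y)$, where one uses $\int V \le \eps^2 E_{\eps,\eta}(y)/c_1$ from H4 together with $\int |\nabla^2 y|^2 \le E_{\eps,\eta}(y)/\eps^2$ from the isotropic second-order term. For $j \in \{1,\ldots,d-1\}$ one instead invokes the \emph{anisotropic} term: since for $j \ne d$ the matrix $\partial_j \nabla y = (\partial^2_{ji} y_k)_{i,k}$ never has an entry of the form $\partial^2_{dd} y_k$, the pointwise inequality $|\partial_j \nabla y|^2 \le |\nabla^2 y|^2 - |\partial^2_{dd} y|^2$ holds, so that $\int |\partial_j \nabla y|^2 \le E_{\eps,\eta}(y)/\eta^2$. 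Combined with $\int V \le \eps^2 E_{\eps,\eta}(y)/c_1$, this produces the crucial refined bound $\int |\partial_j \phi| \le C(\eps/\eta)\, E_{\eps,\eta}(y)$ for each $j \in \{1,\ldots,d-1\}$.

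Next, I fix $0 < t_1 < t_2 < \delta$ (independent of $y$) and apply the coarea formula to $\phi$ over $(t_1,t_2)$:
$$\int_{t_1}^{t_2} \mathcal{H}^{d-1}(\partial^* \{\phi \le t\} \cap \Omega)\, dt \le \int |\nabla \phi|, \qquad \int_{t_1}^{t_2} \int_{\partial^* \{\phi \le t\} \cap \Omega} |\langle \nu, e_j\rangle|\, d\mathcal{H}^{d-1}\, dt \le \int |\partial_j \phi|.$$
An averaging argument against a suitably reweighted sum of all $d$ bounds produces a single level $t^* \in (t_1, t_2)$ such that $T := \{\phi \le t^*\}$ satisfies both (ii) and (iii) simultaneously. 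The inclusions (i) then follow from Lemma \ref{lemma: geodesic distance}: choosing $\alpha$ with $\alpha\kappa \le t_1$, any $x$ with $\dist(\nabla y(x), SO(d)A) \le \alpha\kappa$ forces $\phi(x) \le d_V(\nabla y(x), SO(d)A) \le \dist(\nabla y(x),SO(d)A) \le t_1 \le t^*$; conversely, choosing $\delta$ small enough that $C_2(\delta) \le \beta\kappa$ (possible since $C_2(\delta) \to 0$ as $\delta \to 0$), any $x \in T$ satisfies $d_V(\nabla y(x), SO(d)A) \le t^* < \delta$, hence $\dist(\nabla y(x),SO(d)A) \le C_2(\delta) \le \beta\kappa$ by Lemma \ref{lemma: geodesic distance}(iii).

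Property (iv) is obtained from (iii) by applying Federer's coarea formula for rectifiable sets to the Lipschitz map $x \mapsto x_d$ on $\partial^* T$:
$$\int_\R \mathcal{H}^{d-2}\bigl((\R^{d-1} \times \{t\}) \cap \partial^* T \cap \Omega\bigr)\, dt = \int_{\partial^* T \cap \Omega} \sqrt{1 - \langle \nu_T, e_d\rangle^2}\, d\mathcal{H}^{d-1} \le \sum_{j=1}^{d-1} \int_{\partial^* T \cap \Omega} |\langle \nu_T, e_j\rangle|\, d\mathcal{H}^{d-1}.$$
For \eqref{eq: propertiesT2} I would slice $Q$ in the $e_d$-direction: for $\mathcal{H}^{d-1}$-a.e.\ $x' \in (-h,h)^{d-1}$ the map $t \mapsto \chi_T(x',t)$ is $BV$ on each segment $I_{x',l}$ of length $h/n$, and a one-dimensional observation gives $\min\{|T \cap I_{x',l}|_1,\, |I_{x',l} \setminus T|_1\} \le (h/n)\cdot \mathbf 1_{\{\chi_T \text{ has a jump in } I_{x',l}\}}$; summing over $l$ and integrating over $x'$, the total number of active slabs is bounded by the number of jumps along the vertical line through $x'$, whose integral over $x'$ equals $\int_{\partial^* T \cap Q} |\langle \nu_T, e_d\rangle|\, d\mathcal{H}^{d-1} \le P(T;Q) \le c\, E_{\eps,\eta}(y)$ by (ii), and the bound $h/n \le 2h\eps/\eta$ gives the claim. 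The main technical difficulty will be the careful bookkeeping in (iii), i.e.\ verifying that the anisotropic second-order penalty is precisely what controls $\int |\partial_j \phi|$ for $j \ne d$ and that a single $t^*$ can be selected simultaneously for all $d$ bounds; once (ii) and (iii) are secured, (iv) and \eqref{eq: propertiesT2} reduce to standard coarea and slicing arguments.
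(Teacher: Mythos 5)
Your approach to items (i)--(iv) is correct and follows a genuinely different route from the paper. Where you estimate $\int_\Omega|\partial_j\phi|$ directly by Cauchy--Schwarz and then invoke the directional coarea inequality $\int_{t_1}^{t_2}\int_{\partial^*\{\phi\le t\}}|\langle\nu,e_j\rangle|\,d\mathcal{H}^{d-1}\,dt\le\int_\Omega|\partial_j\phi|$ together with a simultaneous De Giorgi selection of a single level $t^*$, the paper instead uses Young's inequality with the weight $\eps\eta$, rescales the domain by $\eta/\eps$ in the first $d-1$ variables to form $h_\eta$, selects one level $t$ by coarea for the single quantity $\mathcal{H}^{d-1}(\partial^* T_\eta\cap\Omega_\eta)$, and then recovers all the anisotropic surface estimates (ii)--(iii) from the scaling identities relating the normal components of $\partial^*T$ and $\partial^*T_\eta$. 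Both give the same conclusions; your version avoids the change of variables at the cost of invoking the directional refinement of the coarea formula and running a multi-condition averaging argument, while the paper's rescaling has the advantage of being re-used for the cuboid estimate \eqref{eq: propertiesT2}. Your pointwise observation $|\partial_j\nabla y|^2\le|\nabla^2 y|^2-|\partial^2_{dd}y|^2$ for $j\ne d$ is exactly the right way to exploit the anisotropic penalty, and your handling of (i) via the truncation at $\delta$ and Lemma~\ref{lemma: geodesic distance}(i),(iii) is sound.

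However, your proof of \eqref{eq: propertiesT2} has a genuine gap. By Fubini, $\min\{\mathcal{L}^d(Q_l\cap T),\mathcal{L}^d(Q_l\setminus T)\}=\min\bigl\{\int f_l\,dx',\,\int g_l\,dx'\bigr\}$ with $f_l(x')=\mathcal{L}^1(I_{x',l}\cap T)$ and $g_l(x')=\mathcal{L}^1(I_{x',l}\setminus T)$, whereas your slicing bound controls $\sum_l\int\min\{f_l,g_l\}\,dx'$. Since $\int\min\{f_l,g_l\}\le\min\{\int f_l,\int g_l\}$ and not the other way around, you have estimated the wrong, smaller quantity. To see concretely that the argument fails: if $T$ were chosen so that within each slab $Q_l$ a proportion $\tfrac12$ of the vertical segments $I_{x',l}$ lie entirely in $T$ and the other half lie entirely in $Q_l\setminus T$ (the split realized by a vertical hyperplane), then every $\min\{f_l,g_l\}$ vanishes a.e.\ and your right-hand side is $0$, but $\min\{\mathcal{L}^d(Q_l\cap T),\mathcal{L}^d(Q_l\setminus T)\}=\tfrac12\mathcal{L}^d(Q_l)$ for every $l$, so the left-hand side of \eqref{eq: propertiesT2} is of order $\mathcal{L}^d(Q)$. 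The missing information is precisely the smallness of the in-plane perimeter components \eqref{eq: propertiesT}(iii) (which would forbid such a $T$), and your slicing argument only uses the ${\rm e}_d$-component of the perimeter via (ii). The paper repairs this by applying the relative isoperimetric inequality to the rescaled cuboids $Q_l^\eta$, where $T_\eta$ has uniformly bounded perimeter and the cuboids are genuine small cubes of sidelength $\sim h\eps/\eta$; this automatically combines all $d$ normal components. You would need to either reproduce this rescaled isoperimetric argument, or carry out an anisotropic isoperimetric estimate directly on the flat slabs $Q_l$ using \eqref{eq: propertiesT}(iii)--(iv); a pure $e_d$-slicing does not suffice.
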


Roughly speaking, the sets $T$ and $\Omega \setminus T$ represent the  $A$ and $B$-phase regions, respectively. 
Later in the proof of Theorem \ref{thm:rigiditythm} we will introduce a vector field which differs from $\nabla y$ exactly on the set $\Omega \setminus T$. We refer to Figure \ref{fig:small-trans} for an illustration and an explanation of property \eqref{eq: propertiesT}(iv).

\begin{proof}[Proof of Proposition \ref{lemma: phases}]
We first fix some constants which will be needed in the following. Depending on $\kappa$, we choose $\delta$  in Lemma \ref{lemma: geodesic distance} so small that 
\begin{align}\label{eq: C_2}
C_2=C_2(\delta) \le \frac{\kappa}{2}.
\end{align}
Let $C_1=C_1(\delta) \ge 1$ be the corresponding constant (depending on $\delta$, and hence on $\kappa$) provided by Lemma \ref{lemma: geodesic distance}{\rm (ii)}. We define 
$$h(x) = d_V(\nabla y(x),SO(d)A)\quad\text{for every }x\in \Omega,$$
where $d_V$ is the truncated geodesic distance introduced in \eqref{eq: geodesic distance}. The main idea of the proof consists in choosing the set $T$ as a suitable level set of the map $h$, selected by performing an $\eps/\eta$-rescaling of $h$ in its first $d-1$ variables (see \eqref{eq:hdelta}).

 \noindent\emph{Step I: Definition of $T$.} We first observe that, in view of the definition of $d_V$ and by Young's inequality, we obtain 
\begin{align*}
\int_\Omega |\partial_i h(x) |\, dx \le \int_\Omega \sqrt{V(\nabla y(x))} \sum_{j=1}^d |\partial_{ji} y(x)| \, dx \le \frac{1}{2\ep\eta} \int_\Omega  V(\nabla y(x))\, dx + \frac{\ep\eta}{2} \int_\Omega\Big( \sum_{j=1}^d |\partial_{ji} y(x)| \Big)^2\, dx
\end{align*}
for $i=1,\ldots, d-1$. Thus, H4., the definition of $V$, and \eqref{eq: nonlinear energy}  imply
\begin{align}\label{eq: unrescaled-grad1}
\Vert \partial_i h \Vert_{L^1(\Omega)} \le  \Big(\frac{ 1/c_1 +   d}{2} \Big)  \frac{\eps}{\eta} \, E_{\ep,\eta}(y) \ \ \text{for} \ \ i=1,\ldots,d-1.
\end{align}
Analogously, the definition of $d_V$ and Young's inequality yield
$$\int_\Omega |\partial_d h(x) |\, dx\le \frac{1}{2\ep^2} \int_\Omega  V(\nabla y(x))\, dx+\frac{\ep^2}{2} \int_\Omega\Big( \sum_{j=1}^d |\partial_{jd} y(x)| \Big)^2\, dx,$$
which implies
\begin{align}\label{eq: unrescaled-grad2}
 \Vert \partial_d h \Vert_{L^1(\Omega)} \le   \Big(  \frac{ 1/c_1 +  d}{2} \Big)  E_{\ep,\eta}(y).
 \end{align}
We introduce the rescaled function 
\begin{equation}
\label{eq:hdelta}
h_\eta (x', x_d) := h(\eta x'/\eps ,x_d),
\end{equation} defined on 
$$\Omega_\eta := \lbrace (x',x_d): \ (\eta x'/\eps,x_d) \in \Omega \rbrace,$$ 
where for brevity we adopt the notation $x' =(x_1,\ldots,x_{d-1})$.
By the change of variables formula and \eqref{eq: unrescaled-grad1}-\eqref{eq: unrescaled-grad2} this yields 
$$ \Vert \nabla  h_\eta \Vert_{L^1(\Omega_\eta)} \le { c (\eps/\eta)^{d-1}  E_{\ep,\eta}(y) } $$
 for $c=c(d,c_1)$. Consequently, by the coarea formula we find $t \in \big(\kappa/(4C_1), \kappa/(2C_1)\big)$, where $C_1\geq 1$ is the constant introduced below \eqref{eq: C_2}, such that the set $T_\eta:=\lbrace h_\eta \le t \rbrace$ has finite perimeter, with 
\begin{align}\label{eq: perimterTdelta}
\mathcal{H}^{d-1}(\partial^* T_\eta \cap \Omega_\eta) \le  \frac{4 C_1}{\kappa} \int_{\frac{\kappa}{4C_1}}^{\frac{\kappa}{2C_1}}\mathcal{H}^{d-1}(\partial^* \{h_{\eta}\leq s\} \cap \Omega_\eta)\,ds  \le \frac{4 C_1}{\kappa} \Vert \nabla h_\eta \Vert_{L^1(\Omega_\eta)} \le c  (\eps/\eta)^{d-1}  E_{\ep,\eta}(y),
\end{align}
 where $c$ depends  on $\kappa$, $d$, and $c_1$. We define $T :=  \lbrace h  \le t \rbrace$. We claim that $T$ satisfies properties {\rm (i)}-{\rm (iv)}. 
 
 \noindent\emph{Step II: Properties of $T$.} First, since $t > \kappa/(4C_1)$, by Lemma \ref{lemma: geodesic distance}{\rm (i)} we have   that   for all $x \in \Omega$ with   $\dist(\nabla y(x),SO(d)A) \le \kappa/(4C_1),$  there holds 
$$h(x)\leq \frac{\kappa}{4C_1}<t.$$ 
This yields $x \in T$ and implies that  the first inclusion in {\rm (i)} holds with $\alpha= 1/(4C_1)$.   Note that, since $C_1\geq 1$, we have $\alpha\leq 1/4$.  

To prove the second inclusion in {\rm (i)}, suppose that $x \in T$. Let $\delta$ be as in \eqref{eq: C_2}. If $h(x)  < \delta$, there holds 
$$\dist(\nabla y(x),SO(2)A) \le C_2 \le \frac{\kappa}{2}$$ 
by Lemma \ref{lemma: geodesic distance}{\rm (iii)} and \eqref{eq: C_2}. On the other hand, if $\delta \le h(x) \le t$, we obtain
$$\dist(\nabla y(x),SO(2)A) \le C_1 h(x) \le C_1 t \le \frac{\kappa}{2}$$ by Lemma \ref{lemma: geodesic distance}{\rm (ii)} and the definition of $t$. Setting $\beta=\frac{1}{2}$, this concludes the proof of {\rm (i)}. 

 We now address properties {\rm (ii)} and {\rm (iii)}.  For each $j=1,\ldots, d$, denote by $\pi_j$ the hyperplane $\lbrace x\in \R^d: \ x_j = 0\rbrace$. We use the coarea formula  (see \cite[Theorem 2.93]{Ambrosio-Fusco-Pallara:2000} with  $E = \partial^* T \cap \Omega$, $N=d-1$, $k=d-1$, $f(x) = (x_1,\ldots,x_{j-1},x_{j+1},x_d)$)   to find
\begin{equation}
\label{eq:co-f2}
\int_{\partial^*T \cap \Omega} |\langle \nu_T, {\rm e}_j\rangle| \, d\mathcal{H}^{d-1} = \int_{\pi_j} \mathcal{H}^{0}\big( (z + \R {\rm e}_j) \cap \partial^* T \cap \Omega\big)  \, d\mathcal{H}^{d-1}(z). 
\end{equation}
Similar identities hold for $\partial^*T_\eta\cap \Omega_\eta$ in place of $\partial^* T \cap \Omega$. The transformation formula yields for $j=1,\ldots,d-1$
\begin{align}
\int_{\pi_j} \mathcal{H}^{0}\big( (z + \R {\rm e}_j) \cap \partial^* T \cap \Omega\big)  \, d\mathcal{H}^{d-1}(z) = (\eta/\eps)^{d-2}\int_{\pi_j} \mathcal{H}^{0}\big( (  z  + \R {\rm e}_j) \cap \partial^* T_\eta \cap \Omega_\eta\big)  \, d\mathcal{H}^{d-1}(z),
\end{align}
and, in a similar fashion, we obtain for $j=d$
\begin{align}\label{eq: trafo2}
\int_{\pi_d} \mathcal{H}^{0}\big( (  z  + \R {\rm e}_d) \cap \partial^* T \cap \Omega\big)  \, d\mathcal{H}^{d-1}(  z  ) = (\eta/\eps)^{d-1}\int_{\pi_d} \mathcal{H}^{0}\big( (  z  + \R {\rm e}_d) \cap \partial^* T_\eta \cap \Omega_\eta\big)  \, d\mathcal{H}^{d-1}(  z  ).
\end{align}
Combining \eqref{eq:co-f2}--\eqref{eq: trafo2} we find
\begin{align*}
\int_{\partial^*T \cap \Omega} |\langle \nu_T, {\rm e}_j\rangle| \, d\mathcal{H}^{d-1} &=  (\eta/\eps)^{d-2}\int_{\partial^*T_\eta \cap \Omega_\eta} |\langle \nu_{T_\eta}, {\rm e}_j\rangle| \, d\mathcal{H}^{d-1} \ \ \ \ \text{for } j=1,\ldots,d-1,\notag\\
\int_{\partial^*T \cap \Omega} |\langle \nu_T, {\rm e}_d\rangle| \, d\mathcal{H}^{d-1} &=  (\eta/\eps)^{d-1}\int_{\partial^*T_\eta \cap \Omega_\eta} |\langle \nu_{T_\eta}, {\rm e}_d\rangle| \, d\mathcal{H}^{d-1}. 
\end{align*}
This along with \eqref{eq: perimterTdelta} yields 
\begin{align}\label{eq: trafo3}
\int_{\partial^*T \cap \Omega} |\langle \nu_T, {\rm e}_j\rangle| \, d\mathcal{H}^{d-1} &\le   (\eta/\eps)^{d-2} \mathcal{H}^{d-1}(\partial^* T_\eta \cap \Omega_\eta) \le c \frac{\eps}{\eta}\, E_{\ep,\eta}(y) \ \ \ \ \text{for } j=1,\ldots,d-1,\notag\\
\int_{\partial^*T \cap \Omega} |\langle \nu_T, {\rm e}_d\rangle| \, d\mathcal{H}^{d-1} & \le   (\eta/\eps)^{d-1}\mathcal{H}^{d-1}(\partial^* T_\eta \cap \Omega_\eta) \le c E_{\ep,\eta}(y). 
\end{align}
The first line in  \eqref{eq: trafo3} yields property {\rm (iii)}. To see {\rm (ii)}, we also use  \eqref{eq: trafo3} and $\eta \ge \eps$,   and we   compute
\begin{align*}
\mathcal{H}^{d-1}(\partial^* T \cap \Omega) \le \sum\nolimits _{j=1}^d \int_{\partial^* T \cap \Omega} | \langle\nu_T,  {\rm e}_j\rangle| \, d\mathcal{H}^{d-1}  \le c   (1+\eps/\eta)  E_{\ep,\eta}(y) \le c  E_{\ep,\eta}(y).
\end{align*}

 To prove {\rm (iv)}, we use the coarea formula (see \cite[Theorem 2.93]{Ambrosio-Fusco-Pallara:2000} with $E = \partial^* T \cap \Omega$, $f(x) = \langle x, {\rm e}_d \rangle$, $N=d-1$, $k=1$) to find
\begin{equation*}
\int_{\partial^*T \cap \Omega} \sqrt{1 - |\langle \nu_T, {\rm e}_d\rangle|^2} \, d\mathcal{H}^{d-1} = \int_{-\infty}^\infty \mathcal{H}^{d-2}\big( (\R^{d-1} \times \lbrace t \rbrace) \cap \partial^* T \cap \Omega\big)  \, dt. 
\end{equation*}
Consequently, {\rm (iv)} follows from property {\rm (iii)}.

\noindent\emph{Step III: Proof of \eqref{eq: propertiesT2}}.  First, define $Q^\eta= \lbrace (x',x_d): (\eta x'/\eps,x_d) \in Q \rbrace$ and $Q^\eta_l = \lbrace (x',x_d):  (\eta x'/\eps,x_d) \in Q_l \rbrace$ for $l \in \lbrace -n,\ldots,n-1\rbrace$. Note that $Q_l^\eta$ are identical cuboids and each of their sidelengths lies in $[h/n, 2h\eps/\eta]$. We apply the the relative isoperimetric inequality (see \cite[Theorem 2, Section 5.6.2]{EvansGariepy92}) on each $Q_l^\eta$ to find
$$ \min\lbrace \mathcal{L}^d(Q^\eta_l \cap T_\eta),   \mathcal{L}^d(Q^\eta_l \setminus T_\eta) \rbrace \le c\frac{h\eps}{\eta} \min\lbrace (\mathcal{L}^d(Q^\eta_l \cap T_\eta))^{\frac{d-1}{d}},   (\mathcal{L}^d(Q^\eta_l \setminus T_\eta))^{\frac{d-1}{d}} \rbrace \le c\frac{h\eps}{\eta} \mathcal{H}^{d-1} (\partial^* T_\eta\cap Q^\eta_l) , $$
where $c$ depends only on the dimension $d$.  (Note  that the theorem in the reference above is stated and proved in a ball, but that the argument only relies on Poincar\'e inequalities, and thus easily extends to bounded Lipschitz domains.) Summing over all $l$ and using \eqref{eq: perimterTdelta}
we get
$$\sum\nolimits_{l=-n}^{n-1} \min\lbrace \mathcal{L}^d(Q_l^\eta \cap T_\eta),   \mathcal{L}^d(Q_l^\eta \setminus T_\eta) \rbrace \le c\frac{h\eps}{\eta} \mathcal{H}^{d-1} (\partial^* T_\eta\cap \Omega_\eta) \le  c \frac{h\eps}{\eta}\,(\eps/\eta)^{d-1}  \, E_{\ep,\eta}(y). $$
This along with the fact that $ \mathcal{L}^d(Q^\eta_l \cap T_\eta) = (\eps/\eta)^{d-1}  \mathcal{L}^d(Q_l \cap T)  $ and $\mathcal{L}^d(Q^\eta_l \setminus T_\eta) = (\eps/\eta)^{d-1}  \mathcal{L}^d(Q_l \setminus T)$ yields \eqref{eq: propertiesT2} and concludes the proof.  
\end{proof}

 We point out that the results in Proposition \ref{lemma: phases} are sharp in terms of the scaling in $\eps$ and $\eta$. We refer to Remark \ref{rem: inclusions} for some explicit examples of $A$-phase regions with small $B$-phase  inclusions.

\subsection{Proof of  Theorem \ref{thm:rigiditythm}}\label{sec: rig-proof}  

 We now   prove   Theorem \ref{thm:rigiditythm}. \RRR In contrast to the preliminary results in the previous subsection, here we  use explicitly that  \RRR $A$ and $B$ are  rank-one connected. \EEE

\begin{proof}[Proof of Theorem \ref{thm:rigiditythm}]
We start with a preliminary observation concerning the phase regions $T$ and $\Omega \setminus T$ identified in Proposition \ref{lemma: phases}. Then we proceed with the proof of case (b) on a cube and address the case of general domains afterwards. Finally, we briefly indicate the necessary adaptions for case (a).

\noindent\emph{Step I: Phases.} Let $y \in H^2(\Omega;\R^d)$. Recall the definitions $A= {\rm Id}$ and $B = {\rm diag}(1,\ldots,1,1+\kappa)$, and the fact that this implies $|A-B| = \kappa$ and $\dist(SO(d)A,SO(d)B)=\kappa$.   We apply Proposition \ref{lemma: phases} to obtain a corresponding set of finite perimeter $T$. We   claim   that
\begin{align}\label{eq: derivative on T}
{\rm (i)}& \ \ \dist(\nabla y(x),SO(d)B) \le \Big( 1 +  \frac{1}{\alpha} \Big) \dist(\nabla y(x),SO(d)\{A,B\}) \ \ \ \ \text{for a.e.\ $x \in \Omega \setminus T$,} \notag \\
{\rm (ii)} & \ \  \dist(\nabla y(x),SO(d)A) \le  \frac{1}{1-\beta} \dist(\nabla y(x),SO(d)\{A,B\}) \ \ \ \ \text{for a.e.\ $x \in T$}
\end{align}
with $0<\alpha<\beta\le 1/2$ from Proposition \ref{lemma: phases}. 
First, by Proposition \ref{lemma: phases}{\rm (i)}, for a.e.\ $x\in \Omega \setminus T$ there holds 
\begin{equation*}
\dist(\nabla y(x),SO(d)A)\geq \alpha\kappa.
\end{equation*}
 Recalling that $|A-B| = \kappa$  we find 
 \begin{align*}
 &\dist(\nabla y(x),SO(d)B) \le \dist(\nabla y(x),SO(d)A)+\kappa\le \Big(1+\frac{1}{\alpha}\Big)\dist(\nabla y(x),SO(d)A).
 \end{align*}
 This yields \eqref{eq: derivative on T}{\rm (i)}. Analogously, for a.e.\ $x \in T$, by Proposition \ref{lemma: phases}{\rm (i)} we get $\dist(\nabla y(x),SO(d)A)\leq \beta\kappa$. As $\dist(SO(d)A,SO(d)B) = \kappa$, we obtain  
 $$ \dist(\nabla y(x),SO(d)B) { \ge } (1-\beta)\kappa$$
 for a.e.\ $x\in T$, and hence
 $$\dist(\nabla y(x),SO(d)A) {  \le } \beta\kappa \le \kappa \le    (1-\beta)^{-1} \dist(\nabla y(x),SO(d)B)$$
 for a.e. $x\in T$. This yields \eqref{eq: derivative on T}{\rm (ii)}.


%

 \noindent\emph{Step II: Proof of (b) for cubes.} We first treat the case   in which   $\Omega = x_0  + (-h,h)^d$ is a cube. The main idea is to replace $\nabla y$ by a suitable incompatible vector field $\gamma$ with $\gamma \approx SO(d)A$ and then to apply Lemma \ref{lemma: Muller-Chambolle}. It turns out that one also needs to define $\gamma$ on an appropriately scaled version of $\Omega$  in order to control the curl of $\gamma$.

 Our starting point is  \eqref{eq: propertiesT2} applied for $Q = \Omega$: we find a decomposition $(Q_l)_{l=-n}^{n-1}$ of $\Omega$ with  $n = \lfloor  \eta/\eps\rfloor$.   We choose $M_l = A$ if $\mathcal{L}^d(Q_l \setminus T) \le \mathcal{L}^d(Q_l \cap T)$ and $M_l = B$ otherwise, i.e., $M_l$ indicates the predominant phase in each cuboid $Q_l$. By \eqref{eq: propertiesT2} this implies 
 \begin{align}\label{eq: propertiesT2-application}
\sum\nolimits_{l: M_l = A}  \mathcal{L}^d(Q_l \setminus T) + \sum\nolimits_{l: M_l = B}    \mathcal{L}^d(Q_l \cap T)  \le c \frac{\eps}{\eta}\, E_{\ep,\eta}(y),
\end{align}  
where $c$ depends on $h$ and thus on  $\Omega$. Let $\Psi \in H^1(\Omega; \R^d)$ be a homeomorphism with $\nabla \Psi = M_l$ on each $Q_l$. We let $U =\Psi(\Omega)$ and note that $U$ is a paraxial cuboid. In the following, we will use the notation $\bar{x} = \Psi(x)$ for $x \in \Omega$.  We also define $U_l = \Psi(Q_l)$ for all $l \in \lbrace - n, \ldots, n -1 \rbrace$.


 We consider the vector field $\gamma \in L^2(U; \M^{d \times d})$  defined by 
\begin{align}\label{eq: gamma}
\gamma := \big(\nabla y \chi_{T}   + \nabla y B^{-1}\chi_{\Omega \setminus T} \big) \circ  \Psi^{-1}. 
\end{align} 
 In view of \eqref{eq: gamma} and the fact that $\nabla \Psi^{-1} = M_l^{-1}$ on $U_l$, we obtain by the transformation formula
\begin{align}
\label{eq:est-dist1}
\Vert \dist(\gamma,SO(d)A) \Vert_{L^2(U)}^2 \le  C\int_T \dist^2(\nabla  y, SO(d)A)\,dx+ C\int_{\Omega\setminus T}\dist^2(\nabla y, SO(d)B)\,dx,
\end{align}
where $C$ only depends on $\kappa$.   Using the definition of the energy $E_{\ep,\eta}$ (see \eqref{eq: nonlinear energy}) and H4., by combining \eqref{eq: derivative on T}   and \eqref{eq:est-dist1} we conclude that
\begin{align}\label{eq: energy-v}
\Vert \dist(\gamma,SO(d)A) \Vert_{L^2(U)}^2 \le C\ep^2 E_{\ep,\eta}(y),
\end{align}
 where $C=C(c_1,\kappa)$.

 Our goal is to apply Lemma \ref{lemma: curl} and therefore we first check that $\gamma \in SBV(U;\M^{d \times d})$. As $y \in H^2(\Omega;\R^d)$, the jump set $J_\gamma$ of $\gamma$ is contained in $\lbrace \bar{x} \in U: \ \Psi^{-1}(\bar{x}) \in \partial^* T \cap \Omega \rbrace$. Without restriction, we choose the normal $\nu_\gamma$ to the jump set such that  $\nu_\gamma(\bar{x}) = \nu_T(\Psi^{-1}(\bar{x}))$ for $\mathcal{H}^{d-1}$-a.e.\ $\bar{x} \in J_\gamma$, where $\nu_T$ denotes the outer normal to $T$. An elementary calculation yields
   \begin{align}\label{eq: uniform-bound-der}
 [\gamma](\Psi(x)) = \nabla y(x) B^{-1} - \nabla y(x) A = \frac{-\kappa}{1+\kappa} \nabla y(x)  \,    { {\rm e}_{dd}} = \frac{-\kappa}{1+\kappa}  \, \partial_d y(x) \otimes { {\rm e}_d} \in \M^{d \times d}
 \end{align} 
 for $x \in \partial^* T \cap \Omega$, where $\nabla y$ on $\partial^*T \cap \Omega$ has to be understood in the sense of traces, see \cite[Theorem 3.77]{Ambrosio-Fusco-Pallara:2000}.  By  \eqref{eq: propertiesT}(i) we find $|\nabla y(x)| \le c$ for $\mathcal{L}^d$-a.e.\ $x \in T$ for a  constant $c>0$ only depending on the dimension. Therefore,  \cite[Theorem 3.77]{Ambrosio-Fusco-Pallara:2000} yields
\begin{align}\label{eq: linftysurf}
|\nabla y(x)| \le c \ \ \ \text{ for $\mathcal{H}^{d-1}$-a.e.\ $x \in \partial^* T \cap \Omega$}.
\end{align}
This along with \eqref{eq: uniform-bound-der} shows $|[\gamma](\bar{x})| \le c$ for  $\mathcal{H}^{d-1}$-a.e.\ $\bar{x} \in J_\gamma$
 and then  \cite[Theorem 3.84]{Ambrosio-Fusco-Pallara:2000} implies that $\gamma \in SBV(U;\M^{d \times d})$.

We now determine $\curl \gamma$. We first address the bulk term. The main observation is that on  each   $U_l$   the vector field $\gamma$ defined in \eqref{eq: gamma}  can be written as the sum of  a gradient and  a small perturbation. More precisely, an elementary computation shows 
\begin{align*}
\gamma = 
\nabla (y\circ \Psi^{-1}) (\chi_{T} \circ \Psi^{-1})  + \nabla (y\circ \Psi^{-1}) B^{-1} (\chi_{\Omega \setminus T} \circ \Psi^{-1})  = \nabla (y\circ \Psi^{-1}) + z_A (\chi_{\Omega \setminus T} \circ \Psi^{-1})
\end{align*}
on $U_l$ with $M_l = A$, where 
\begin{align*}
z_A: =  \nabla (y\circ \Psi^{-1}) (B^{-1}-A) = \frac{-\kappa}{1+\kappa} (\nabla y \circ \Psi^{-1})  \,     { {\rm e}_{dd}}  =  \frac{-\kappa}{1+\kappa} (\partial_d y \circ \Psi^{-1})  \otimes e_d. 
\end{align*}
In a similar fashion, we have 
\begin{align*}
\gamma =
\nabla (y\circ \Psi^{-1})B (\chi_{T} \circ \Psi^{-1})  + \nabla (y\circ \Psi^{-1})  (\chi_{\Omega \setminus T} \circ \Psi^{-1})  = \nabla (y\circ \Psi^{-1}) + z_B (\chi_{T} \circ \Psi^{-1}),
\end{align*}
on $U_l$ with $M_l = B$, where  $z_B := \kappa (\partial_d y \circ \Psi^{-1})  \otimes e_d = -(1+\kappa)z_A$.

On each $U_l$ with $M_l = A$, we compute using the transformation formula and  H\"older's inequality
\begin{align*}
\sum\nolimits_{i,j,k=1}^d \int_{U_l}|\partial_i \gamma_{kj} - \partial_j \gamma_{ki}| \,d\bar{x} &\le C \sum\nolimits_{i,j, k=1}^d \int_{Q_l \setminus T} |\delta_{dj}\, \partial^2_{ij} y_k  - \delta_{di}\, \partial^2_{ji}  y_k |  \, dx\\
& \le C  \sum\nolimits_{i=1}^{d-1}  \int_{Q_l \setminus T} |\partial^2_{id} y| \, dx   \le C   (\mathcal{L}^d(Q_l \setminus T))^{1/2} \sum\nolimits_{i=1}^{d-1} \Vert \partial^2_{id} y \Vert_{L^2(Q_l)}, 
\end{align*}
where $\delta_{id}$ denotes the Kronecker delta.  Similarly, on each $U_l$ with $M_l = B$, we deduce
\begin{align*}
\sum\nolimits_{i,j,k=1}^d \int_{U_l}|\partial_i \gamma_{kj} - \partial_j \gamma_{ki}| \,  d\bar{x} \le C \sum\nolimits_{i=1}^{d-1} \int_{Q_l \cap  T} |\partial^2_{id} y| \, dx \le C (\mathcal{L}^d(Q_l \cap T))^{1/2}\sum\nolimits_{i=1}^{d-1} \Vert \partial^2_{id} y \Vert_{L^2(Q_l)}.
\end{align*}
Then, taking the sum over all $l$, and using \eqref{eq: nonlinear energy}, \eqref{eq: propertiesT2-application}, as well as the discrete H\"older inequality we get
\begin{align}\label{eq: bulk curl}
\sum\nolimits_{i,j,k=1}^d \int_{U}|\partial_i \gamma_{kj} - \partial_j \gamma_{ki}| \,d\bar{x} \le C \Big(\frac{\eps}{\eta}E_{\eps, \eta}(y)\Big)^{1/2} \sum\nolimits_{i=1}^{d-1} \Vert \partial^2_{id} y \Vert_{L^2(\Omega)} \le C \eps^{1/2}\eta^{-3/2}  \,   E_{\eps,\eta}(y). 
\end{align}

We now estimate the surface part of  $\curl\gamma$. In view of \eqref{eq: uniform-bound-der}--\eqref{eq: linftysurf}  and the fact that $\nu_\gamma = \nu_T \circ \Psi^{-1}$, denoting by $[\gamma]_k$ the $k$-th row of $[\gamma]$, we obtain
\begin{align*}
\big|\big([\gamma]_k   \otimes \nu_\gamma -    \nu_\gamma \otimes [\gamma]_k \big) \circ \Psi\big|  = \frac{\kappa}{1+\kappa} |\partial_d y_k ({ {\rm e}_d}\otimes \nu_T-\nu_T\otimes { {\rm e}_d})|\le c \kappa  |{ {\rm e}_d}\otimes \nu_T-\nu_T\otimes { {\rm e}_d}|
\end{align*}
$\mathcal{H}^{d-1}$-a.e.\ on $\partial^* T \cap \Omega$   for every $k=1,\dots,d$,   where $c$ is the constant of \eqref{eq: linftysurf}.  This then implies by Proposition \ref{lemma: phases}{\rm (iii)} that for every $k=1,\dots,d$
\begin{align}\label{eq: surface curl}
\int_{J_\gamma}\big| [\gamma]_k  \otimes \nu_\gamma -    \nu_\gamma \otimes  [\gamma]_k\big|\, d\mathcal{H}^{d-1} \le C \sum_{j=1}^{d-1} \int_{\partial^* T \cap \Omega}|\langle \nu_T, {\rm e}_j \rangle| \, d\mathcal{H}^{d-1} \le C\frac{\eps}{\eta} \,  E_{\ep,\eta}(y).
\end{align}
Consequently,  Lemma \ref{lemma: curl} (applied on each row of the vector field $\gamma$) and  \eqref{eq: bulk curl}--\eqref{eq: surface curl} yield
\begin{align}\label{eq: curl gamma}
|\curl \gamma|(U)   \le  C \eps^{1/2}\eta^{-3/2}  \,   E_{\eps,\eta}(y) + C \eps \eta^{-1}\,  E_{\ep,\eta}(y) 
\end{align}
for $C=C(\Omega, \kappa, d, c_1)$. Consider a smaller cube $\Omega' \subset \subset \Omega$ and let $U' = \Psi(\Omega')$. Let $1 \le p \le 2$ with $p \neq \frac{d}{d-1}$. From Lemma  \ref{lemma: Muller-Chambolle}(b) we then get a rotation  $R \in SO(d)$ such that by \eqref{eq: energy-v},  \eqref{eq: curl gamma}, and H\"older's inequality 
\begin{align}\label{eq: rig-appli}
  \Vert\gamma-R\Vert_{L^p(U')}  & \leq C \Big(\Vert \dist(\gamma,SO(d)A) \Vert_{L^2(U)} +  (|\curl \gamma|(U))^{r(p,d)}   \Big)\notag \\
& \le C\ep \sqrt{E_{\ep,\eta}(y)} +   C\Big( \eps^{1/2}\eta^{-3/2}  \,     E_{\eps,\eta}(y) +  \eps \eta^{-1}\,  E_{\ep,\eta}(y)\Big)^{r(p,d)},   
\end{align}
where the constant also depends on $\Omega$, $\Omega'$, and $p$.  Let $\mathcal{M} \in BV(\Omega;\lbrace A,B\rbrace)$
 be the function defined by $\mathcal{M} = A \chi_T + B \chi_{\Omega \setminus T}$. Clearly, 
 $$|D\mathcal{M}|(\Omega) \le |A-B|\mathcal{H}^{d-1}(\partial^* T \cap \Omega) \le CE_{\ep,\eta}(y)$$ 
 by Proposition \ref{lemma: phases}{\rm (ii)}.  Recalling \eqref{eq: gamma} we compute, again using the transformation formula 
 \begin{align}\label{eq: last esti}
 \Vert \nabla y - R\mathcal{M} \Vert_{L^p(\Omega')} &=  \Vert \nabla y - R \Vert_{L^p(\Omega' \cap T)} +  \Vert \nabla yB^{-1} - R \Vert_{L^p(\Omega' \setminus T)}  \le C\Vert\gamma-R\Vert_{L^p(U')}.
 \end{align}
 This along with \eqref{eq: rig-appli} shows \eqref{eq: rigidity-new}. We conclude this part of the proof by mentioning that, taking also  Remark \ref{rk:sets-for-lemma} into account, the passage to the subcube $\Omega'$ is actually not necessary. This in turn yields Remark \ref{rem: afterth}(ii).

  \noindent\emph{Step III: Proof of (b) for general domains.}  We perform a covering argument exactly as in the proof of Lemma \ref{lemma: Muller-Chambolle}: given $\Omega' \subset \subset \Omega$, we cover $\Omega'$ with a finite number of paraxial cubes $\lbrace Q_i \rbrace_{i=1}^N$ such that smaller cubes $Q_i' \subset \subset Q_i$ still cover $\Omega'$. We apply Step II on each $Q_i$ and obtain an estimate of the form \eqref{eq: last esti} on each $Q_i'$ with a rotation $R_i$. The difference of the rotations can be controlled as explained in the proof  of Lemma \ref{lemma: Muller-Chambolle}.

    \noindent\emph{Step IV: Proof of (a).} The essential difference is that we do not apply \eqref{eq: propertiesT2}   to obtain a decomposition of $\Omega$ with property \eqref{eq: propertiesT2-application}. However, we define a decomposition into \RRR strips $(Q_l)_l$  of height  approximately  $\eps/\eta$, which are in general not rectangular, but whose upper and lower boundaries are parallel to $e_1$. We \EEE set $M_l = A$ if $\mathcal{L}^d(Q_l \setminus T) \le \mathcal{L}^d(Q_l \cap T)$ and $M_l = B$ otherwise, and observe that \eqref{eq: propertiesT2-application} follows from   \eqref{eq: propertiesT}(iv) (see Figure \ref{fig:small-trans}). The rest of the argument remains unchanged with the only difference that we use part (a) of  Lemma \ref{lemma: Muller-Chambolle} instead of part (b). 
  \end{proof}

\begin{remark}\label{rem: setT}
For later purposes, we note that   by   the construction of the phase indicator $\mathcal{M}$ in the previous proof, the set $\lbrace \mathcal{M} = A \rbrace$ coincides with the set $T$ considered in Proposition \ref{lemma: phases}. 
\end{remark}

\begin{remark}[Examples  of minority islands and their sharpness]\label{rem: inclusions}
We provide prototype configurations with a small $B$-phase region completely contained in the $A$-phase region. These illustrate sharpness of the estimates in  Proposition   \ref{lemma: phases}. We follow the $2d$-example in \cite[Remark 6.1]{conti.schweizer2} and take the \RRR opportunity \EEE to present a $d$-dimensional analog here.

Let $\Omega = (-2,2)^d$ and let $0<r<1$. Consider the polyhedron $P$ consisting of the vertices ${\rm e}_d$, $-r {\rm e}_d$, and  $(x',0)$, $x' \in \lbrace -1,1 \rbrace^{d-1}$. By $\mathcal{F}$ we denote the $2(d-1)$  faces of dimension $(d-2)$  in $[-1,1]^{d-1} \times \lbrace 0 \rbrace$ obtained by setting one of the first $(d-1)$ components equal to $\pm 1$. Observe that the polyhedron $P$ consists of $4(d-1)$ convex polyhedra with $2^{d-2}+2$ vertices each:  $2(d-1)$ polyhedra with vertex in  $0$,  vertex in ${\rm e}_d$,  and the $2^{d-2}$ vertices  of a face in  $\mathcal{F}$ (we denote their union by $P^1$), as well as  $2(d-1)$  polyhedra with vertex in  $0$, vertex in $-r {\rm e}_d$, and the $2^{d-2}$ vertices  of a face in  $\mathcal{F}$ (we denote their union by $P^2$). See Figure \ref{fig:3D-polihedron} for an illustration in dimension 3. Observe that $\mathcal{L}^d(P) \le c$ and $\mathcal{L}^d(P^2) \ge  cr$ for a dimensional constant $c>0$. 

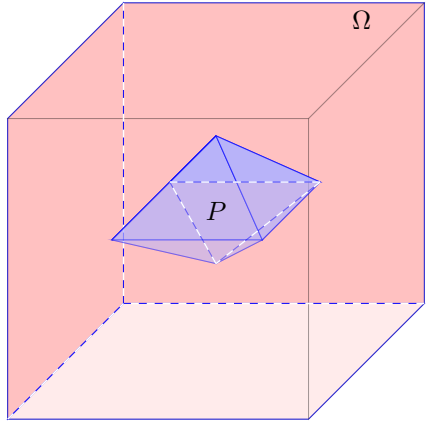
\begin{figure}[h]
\centering
\begin{tikzpicture}
\coordinate (H) at (-2,-2,-2);
\coordinate (A) at (-2,-2,2);
\coordinate (B) at (2,-2,2);
\coordinate (C) at (2,-2,-2);
\coordinate (D) at (-2, 2,-2);
\coordinate (E) at (-2, 2,2);
\coordinate (F) at (2, 2, 2);
\coordinate (G) at (2, 2,-2);

\coordinate (P) at (0,0,0); 

\coordinate (I) at (0,1,0);
\coordinate (Q) at (0,-0.7,0); 
\coordinate (L) at (-1,0,-1);
\coordinate (M) at (-1,0,1);
\coordinate (N) at (1,0,1);
\coordinate (R) at (1,0,-1);

\draw[blue, fill=red!30, opacity=0.8] (H) -- (C) -- (G) -- (D) -- cycle;
\draw[blue,fill=red!30, opacity=0.8] (H) -- (A) -- (E) -- (D) -- cycle;
\draw[blue,fill=red!10, opacity=0.8] (H) -- (A) -- (B) -- (C) -- cycle;
\draw[opacity=0.2] (D) -- (E) -- (F) -- (G) -- cycle;
\draw[opacity=0.2] (C) -- (B) -- (F) -- (G) -- cycle;
\draw[opacity=0.2] (A) -- (B) -- (F) -- (E) -- cycle;
\draw[white!, dashed, line width=0.2mm] (C)--(H);
\draw[white!, dashed, line width=0.2mm] (D)--(H);
\draw[white!, dashed, line width=0.2mm] (A)--(H);

\draw[blue,fill=blue!30, opacity=0.8] (R) -- (L) -- (I) -- cycle;
\draw[blue,fill=blue!30, opacity=0.6] (M) -- (N) -- (I) -- cycle;
\draw[blue,fill=blue!30, opacity=0.6] (R) -- (N) -- (I) -- cycle;
\draw[blue,fill=blue!30, opacity=0.6] (M) -- (L) -- (I) -- cycle;
\draw[blue,fill=blue!30, opacity=0.3] (R) -- (L) -- (Q) -- cycle;
\draw[blue,fill=blue!30, opacity=0.3] (M) -- (N) -- (Q) -- cycle;
\draw[blue,fill=blue!30, opacity=0.3] (R) -- (N) -- (Q) -- cycle;
\draw[blue,fill=blue!30, opacity=0.3] (M) -- (L) -- (Q) -- cycle;
\draw[white!, dashed, line width=0.2mm] (R)--(L);
\draw[white!, dashed, line width=0.2mm] (Q)--(L);
\draw[white!, dashed, line width=0.2mm] (R)--(Q);
\node at (P) {$P$};
\node at (1.4,2,-1.4) {$\Omega$};
\end{tikzpicture}
\caption{The set $\Omega$ and the polyhedron $P$.}
\label{fig:3D-polihedron}
\end{figure}

Set $u= 0$ outside $P$. At the origin we set $u(0) = \kappa r {\rm e}_d$ and let $u$ be affine on each of the $4(d-1)$ polyhedra contained in $P$. Define $v = {\rm id}+ u \in H^1(\R^d ; \R^d)$. In view of $B = A + \kappa {\rm e}_{dd} = {\rm Id} + \kappa {\rm e}_{dd}$, this implies $|\nabla v - A| \le cr$ on $P^1 \cup (\R^d \setminus P)$ and  $|\nabla v - B| \le c r$ on $P^2$, where $c=c(d,\kappa)>0$. In particular, for $r$ small enough we find $T = \Omega \setminus P^2$ with $T$ from Proposition \ref{lemma: phases}. In view of $\mathcal{L}^d(P^2) \ge  cr$, a short calculation yields (assuming that $W$ is smooth)
$$\int_{\R^d } W(\nabla v)\, dx \le cr^2, \ \ \ \ \ \ \min_{F \in SO(d)\lbrace A,B\rbrace}\int_{\Omega} |\nabla v - F |^2\,dx  \ge cr.$$
We now mollify $v$. To this end, denoting by $[\nabla v]$ the jump  of the gradient, we observe that 
\begin{align*}
x \in \partial P^1 \setminus \partial P^2: & \ \ |[\nabla v](x)| \le cr, \\
x \in   \partial P^2: & \ \ |[\nabla v](x){\rm e}_{dd}| \le c, \ \ \ \ |[\nabla v](x)e| \le cr \text{ for all $e \in \lbrace {\rm e}_{ij}: i,j =1,\ldots,d \rbrace \setminus \lbrace {\rm e}_{dd} \rbrace$}.
\end{align*}
We define $y = v * \rho_{\eps^2} \in H^2(\Omega;\R^d)$, where $\rho_{\eps^2}$ is a mollification kernel on the scale $\eps^2$. 
After some calculations we obtain
\begin{align}\label{eq: rig-en}
\int_{\Omega} W(\nabla y) \, dx \le c(r^2+\eps^2), \ \ \ \ \ \int_{\Omega}|\nabla^2 y|^2\,dx \le c\eps^{-2}, \ \ \ \ \  \int_{\Omega}\Big(  |\nabla^2 y|^2 - |\partial^2_{dd}y|^2 \Big)  \,dx \le cr^2\eps^{-2}
\end{align}
and 
\begin{align}\label{eq: rig-sharp}
\min_{F \in SO(d)\lbrace A,B\rbrace}\int_{\Omega} |\nabla y - F |^2\,dx  \ge cr - C\eps^2 
\end{align}
for some   $C=C(d,\kappa)>0$ sufficiently large. Therefore, recalling \eqref{eq: nonlinear energy} and using \eqref{eq: rig-en} we observe 
$$E_{\eps,\eta}(y) \le c + cr^2 \eps^{-2}(1+\eta^2)$$
 which is uniformly controlled in $\eps$ when $r(1+\eta) \le c\eps$. Thus, for all $0 \le \eta \le 1$ the critical scaling for $r$ is $r \sim \eps$. Observe that \eqref{eq: rig-sharp} (for $r = \eps$) shows that the estimate \eqref{eq: rig-mot2} obtained  in \cite{conti.schweizer} is sharp. (The model considered there corresponds to the case $\eta=0$.)

On the other hand, for $\eta >1$, in order to  have bounded energy, the critical scaling  for $r$ is $r \sim \eps/\eta$. Note that in this regime we find  $\int_{-2}^2\mathcal{H}^{d-2}\big( (\R^{d-1} \times \lbrace t \rbrace) \cap \partial^* P^2 \cap \Omega\big)  \, dt \ge c r  \sim c\eps /\eta$, which illustrates the sharpness of estimate \eqref{eq: propertiesT}{\rm (iv)}. We also mention that \eqref{eq: rig-sharp} shows that  the  scaling in an estimate of the form \eqref{eq: rig-mot2} (for the model considered in \eqref{eq: nonlinear energy}) cannot be better than
$$\Vert \nabla y - R M \Vert_{L^2(\Omega')} \le C\sqrt{\ep/\eta}.$$
Thus, for all $\eta \ll \frac{1}{\eps}$, introducing a phase indicator is indispensable to obtain the $\eps$-scaling in Theorem \ref{thm:rig-intro}. (Recalling the discussion in \eqref{eq:lin-formal}, the choice  $\eta \ll \frac{1}{\eps}$ is essential to ensure that our perturbed model has the same qualitative behavior as the unperturbed problem \eqref{eq:sol-sol}, at least asymptotically when passing to a linearized strain regime.)

\end{remark}

\begin{remark}[Necessity of the curl estimates for $p=2$]
\label{rem:curl-yes}
 Our fine estimates on the curl of incompatible vector fields are necessary in order to obtain the rigidity estimate in any dimension $d\in \mathbb{N}$, $d\geq 2$, for $p=2$, see Theorem \ref{thm:rig-intro}. Indeed, without passing to incompatible fields, by combining directly  Proposition \ref{lemma: phases} with an argument along the lines of \eqref{eq:no-curl}, one can show that an inequality of the form
\begin{equation}
\label{eq:why-curl}
\|\nabla y-R\mathcal{M}\|_{L^p(\Omega)}\leq C\Big(\ep \sqrt{ E_{\ep,\eta}(y)} +\Big(\frac{\ep}{\eta}E_{\ep,\eta}(y)\Big)^{\frac1p}\Big)
\end{equation}
 holds. For a map $y$ with bounded energy, this provides the rigidity estimate
$$\|\nabla y-R\mathcal{M}\|_{L^p(\Omega)}\leq C\ep,$$
only if $\eta\geq \ep^{1-p}$. As highlighted in the discussion above Theorem \ref{thm:rig-intro}, see \eqref{eq:lin-formal}, it is necessary to impose that $\eta \ll \frac1\ep$.
For $p<2$, estimate \eqref{eq:why-curl} would still allow to guarantee $\eta \ll \frac1\ep$, although in general providing a less sharp estimate on $\eta$ compared to the one of Theorem \ref{thm:rigiditythm}. For $p=2$, \eqref{eq:why-curl} would lead to consider $\eta\geq \frac1\ep$, which would modify the qualitative behavior of the model. \end{remark}


\section{Solid-solid phase transitions}\label{sec:applications}

 In this section we present an application of the quantitative  two-well  rigidity estimate proved in Theorem \ref{thm:rigiditythm} to the theory of solid-solid phase transitions.  We start by recalling the literature  representing  the departure point of our analysis (see Subsection \ref{sec: review-transition}) and then present a sharp-interface limit for energies of the form \eqref{eq: nonlinear energy} as $\eps$ tends to zero (see Subsection \ref{sec: main results}).  Subsection \ref{sec: proof1}, Subsection \ref{sec: cell},  and Subsection \ref{sec: proof2} contain the proofs of our results.  
 
 In the following let $d\in \mathbb{N}$, $d\geq 2$,  and let $\Omega \subset \R^d$ be  a bounded Lipschitz domain. We consider the energy functionals defined in   \eqref{eq: nonlinear energy}, with stored-energy densities $W : \M^{d \times d} \to [0, + \infty)$ satisfying  H1.--H4.\ and additionally 
 \begin{itemize}
\item[H5.] (Growth condition from above) there exists a constant $c_2>0$ such that 
$$  W(F) \le c_2 \dist^2(F,SO(d)\lbrace A,B \rbrace)  \ \ \ \text{for every $F\in \M^{d\times d}$.}$$ 
\end{itemize}

\subsection{A sharp-interface limit for a model of solid-solid phase transitions}\label{sec: review-transition}

A standard singularly perturbed two-well problem takes the form
\begin{align}\label{eq: I functional}
I_{\ep}(y):=\frac{1}{\ep^2}\int_{\Omega}W(\nabla y)\,dx+\ep^2\int_{\Omega}|\nabla^2 y|^2\,dx
\end{align}
for every $y\in H^2(\Omega;\R^d)$. This corresponds to the choice $\eta = 0$ in \eqref{eq: nonlinear energy}. The restriction of the functional to a subset $\Omega' \subset \Omega$ will be denoted by $I_\eps(y,\Omega')$.  In this subsection, we recall the results obtained by {\sc S.~Conti}  and  {\sc B.~Schweizer} \cite{conti.schweizer} about the sharp-interface limit of this model as $\eps$ tends to zero. We again concentrate on compatible wells with exactly one rank-one connection (see assumption H3.), but mention that in \cite{conti.schweizer} also the case of two rank-one connections is addressed.

 Denote by   $\mathcal{Y}(\Omega)$  the class of admissible limiting deformations, defined as 
\begin{align}\label{eq: limiting deformations}
\mathcal{Y}(\Omega):= \bigcup_{R\in SO(d)} \mathcal{Y}_R(\Omega),   \ \ \text{where}  \ \  \mathcal{Y}_R(\Omega):=\big\{& y\in H^1(\Omega;\R^d):\,\nabla y\in BV(\Omega;R\{A,B\})\big\} \ \  \text{for } R \in SO(d).
\end{align}
Analogously, for every open subset $\Omega'\subset \Omega$,  let  $\mathcal{Y}(\Omega')$   be    the corresponding set of admissible deformations on $\Omega'$. The following compactness result has been proven in \cite[Proposition 3.2]{conti.schweizer}.

\begin{lemma}[Compactness]
\label{lemma:comp-def}
Let $d\in \mathbb{N}$, $d \ge 2$, and let $\Omega\subset \R^d$ be a bounded Lipschitz domain. Let $W$ satisfy assumptions {\rm H1}.--{\rm H4}. Then, for all sequences $\{y^{\ep}\}_\eps\subset H^2(\Omega;\R^d)$ for which 
$$\sup_{\ep>0} I_{\ep}(y^{\ep})<+\infty,$$ 
there exists a map $y\in  \mathcal{Y}(\Omega)$ such that, up to the extraction of a (non-relabeled) subsequence, there holds
$$y^{\ep}-  \frac{1}{\mathcal{L}^d(\Omega)} \int_{\Omega}y^{\ep}(x)\,dx \to y\quad\text{strongly in }H^1(\Omega;\R^d).$$
\end{lemma}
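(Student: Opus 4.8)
The plan is to establish the compactness statement (Lemma~\ref{lemma:comp-def}) in two stages: first an $H^1$-precompactness argument, and then an identification of the limit as an element of $\mathcal{Y}(\Omega)$. Write $y^\eps_0 := y^\eps - \frac{1}{\mathcal{L}^d(\Omega)}\int_\Omega y^\eps\,dx$, so that $\int_\Omega y^\eps_0 = 0$. The energy bound $\sup_\eps I_\eps(y^\eps) < \infty$ gives, by the coercivity assumption H4., that $\int_\Omega \mathrm{dist}^2(\nabla y^\eps, SO(d)\{A,B\})\,dx \le c_1^{-1}\int_\Omega W(\nabla y^\eps)\,dx \le C\eps^2 \to 0$; in particular $\{\nabla y^\eps\}$ is bounded in $L^2$ (since $SO(d)\{A,B\}$ is bounded), hence by the Poincaré--Wirtinger inequality $\{y^\eps_0\}$ is bounded in $H^1(\Omega;\R^d)$, yielding a weak limit $y\in H^1(\Omega;\R^d)$ (up to subsequence) and, by Rellich, strong convergence in $L^2$. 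It remains to upgrade this to strong $H^1$-convergence and to show $\nabla y \in BV(\Omega; R\{A,B\})$ for some $R\in SO(d)$.

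The key tool is Theorem~\ref{thm:rigiditythm}, applied with $\eta = \eps$ (which is admissible since the hypothesis there only requires $\eta \ge \eps$, and $I_\eps = E_{\eps,\eps} - \eps^2\int_\Omega(|\nabla^2 y|^2 - |\partial^2_{dd}y|^2) \le E_{\eps,\eps}$ pointwise; more precisely $I_\eps(y^\eps)$ bounded implies $E_{\eps,\eps}(y^\eps)$ bounded because the anisotropic term is itself controlled by $\eps^2\int|\nabla^2 y^\eps|^2$, which is bounded). For $d=2$ use part (a) of the theorem; for $d\ge 3$ use part (b) with $p$ in the admissible range (e.g. $p=1$ or any $p<\tfrac{d}{d-1}$, on a subdomain $\Omega'$) — but note that for the compactness conclusion it is cleanest to work with the phase indicator directly. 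Theorem~\ref{thm:rigiditythm} produces rotations $R^\eps\in SO(d)$ and phase indicators $\mathcal{M}^\eps \in BV(\Omega;\{A,B\})$ with $\|\nabla y^\eps - R^\eps\mathcal{M}^\eps\|_{L^p} \le C(\eps\sqrt{E_{\eps,\eps}(y^\eps)} + (\ldots)^{r(p,d)})$, where with $\eta=\eps$ the second term is $C((1 + \eps^{-1/2})E_{\eps,\eps}(y^\eps))^{r(p,d)}$ — this does \emph{not} vanish, so the naive bound is insufficient and one must instead use that $E_{\eps,\eps}(y^\eps)$ is bounded to get $|D\mathcal{M}^\eps|(\Omega) \le C$, i.e. $\{\mathcal{M}^\eps\}$ is bounded in $BV(\Omega;\{A,B\})$, hence precompact in $L^1$; pass to a subsequence so that $\mathcal{M}^\eps \to \mathcal{M}$ in $L^1$ with $\mathcal{M}\in BV(\Omega;\{A,B\})$. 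Since $SO(d)$ is compact, also $R^\eps \to R \in SO(d)$ (subsequence). The issue is that with $\eta = \eps$ the $L^p$-error $\|\nabla y^\eps - R^\eps\mathcal{M}^\eps\|_{L^p}$ is only $O(1)$, not $o(1)$, so one cannot directly conclude $\nabla y = R\mathcal{M}$. The fix is to avoid Theorem~\ref{thm:rigiditythm}'s quantitative bound for the limit identification and instead argue that $\nabla y^\eps \to \nabla y$ strongly in $L^2$ (proved independently below) forces $\nabla y(x) \in SO(d)\{A,B\}$ a.e. (since $\mathrm{dist}^2(\nabla y^\eps, SO(d)\{A,B\}) \to 0$ in $L^1$, a subsequence converges a.e., and by lower semicontinuity of $F\mapsto \mathrm{dist}(F,SO(d)\{A,B\})$ the limit $\nabla y$ lies in the zero set); then a standard rigidity/connectedness argument — or directly the structure result that a gradient field taking values in $SO(d)A \cup SO(d)B$ with $A,B$ having a single rank-one connection must, on a connected domain, lie in $R\{A,B\}$ for a \emph{fixed} rotation $R$ (this is the Hadamard/Ball--James analysis, cf. the discussion around Figure~\ref{fig:limiting-def}) — yields $\nabla y \in BV(\Omega; R\{A,B\})$, using that the BV regularity is inherited from the uniform bound $|D\mathcal{M}^\eps|(\Omega)\le C$ together with $\nabla y^\eps - R^\eps\mathcal{M}^\eps$ small in measure.

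It remains to prove the \textbf{strong} $H^1$-convergence, which I expect to be the main obstacle — weak convergence plus $\mathrm{dist}^2(\nabla y^\eps, SO(d)\{A,B\}) \to 0$ in $L^1$ is \emph{not} automatically enough for strong $L^2$-convergence of $\nabla y^\eps$, because the set $SO(d)\{A,B\}$ is not convex. The strategy, following \cite{conti.schweizer}, is: (1) show $\nabla y^\eps \to \nabla y$ in measure — this follows since $\nabla y^\eps$ is close to $R^\eps\mathcal{M}^\eps$ in $L^p$ hence in measure (after extracting), $R^\eps \to R$, and $\mathcal{M}^\eps \to \mathcal{M}$ in $L^1$ hence in measure, so $\nabla y^\eps \to R\mathcal{M} = \nabla y$ in measure; (2) upgrade to $L^2$ via equi-integrability: $|\nabla y^\eps|^2 \le 2|\nabla y^\eps - R^\eps\mathcal{M}^\eps|^2 + 2|R^\eps\mathcal{M}^\eps|^2$, and the second term is uniformly bounded pointwise ($|\mathcal{M}^\eps| \le |B|$), while for the first term one has $|\nabla y^\eps - R^\eps\mathcal{M}^\eps|^2 \le C\,\mathrm{dist}^2(\nabla y^\eps, SO(d)\{A,B\}) + C|R^\eps M^\eps_l - R^\eps\mathcal{M}^\eps|^2$ — actually the cleanest route is to note $|\nabla y^\eps - R^\eps\mathcal{M}^\eps|$ is bounded in $L^\infty$ on the set where the predominant phase agrees with $\mathcal{M}^\eps$, combined with the volume bound \eqref{eq: propertiesT2-application} on the exceptional set, giving equi-integrability of $|\nabla y^\eps|^2$; then Vitali's theorem converts convergence in measure into strong $L^2$-convergence. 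Once $\nabla y^\eps \to \nabla y$ in $L^2$, combined with $y^\eps_0 \to y$ in $L^2$, we obtain $y^\eps_0 \to y$ strongly in $H^1(\Omega;\R^d)$, and the a.e. limit argument above places $\nabla y$ in $SO(d)\{A,B\}$; the BV bound and the single-rank-one-connection structure then give $y \in \mathcal{Y}_R(\Omega)$ for the limiting rotation $R$, completing the proof. The delicate point throughout is handling the \emph{minority islands} — small regions where $\nabla y^\eps$ sits near the wrong well — which is precisely what the volume estimate \eqref{eq: propertiesT2-application} and the phase indicator $\mathcal{M}^\eps$ are designed to control, and this is where the present approach simplifies the original two-dimensional geometric arguments of \cite{conti.schweizer}.
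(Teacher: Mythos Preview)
The paper does not prove this lemma; it simply cites \cite[Proposition~3.2]{conti.schweizer} (see the sentence immediately preceding the statement). So there is no ``paper's own proof'' to compare against --- the result predates the rigidity machinery developed here and does not rely on it.

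Your attempt to derive it from Theorem~\ref{thm:rigiditythm} with $\eta=\eps$ contains a genuine gap. You correctly flag that with $\eta=\eps$ the error term $(\eps/\eta+\eps^{1/2}/\eta^{3/2})\,E_{\eps,\eps}$ does not vanish --- but it is worse than $O(1)$: since $\eps^{1/2}/\eps^{3/2}=\eps^{-1}$ (not $\eps^{-1/2}$), the bound actually \emph{diverges}. Then in your step~(1) for strong convergence you nevertheless write ``$\nabla y^\eps$ is close to $R^\eps\mathcal{M}^\eps$ in $L^p$ hence in measure'', which is precisely the estimate you just conceded is unavailable. The argument is therefore circular: you need closeness to $R^\eps\mathcal{M}^\eps$ to get convergence in measure, but the only source for that closeness is the quantitative bound you have discarded. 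Your equi-integrability step~(2) has the same defect: you appeal to \eqref{eq: propertiesT2-application}, whose right-hand side is $c(\eps/\eta)E=cE$ when $\eta=\eps$, bounded but not small, so it does not show the exceptional set has vanishing measure. Finally, for $d\ge 3$ part~(b) of Theorem~\ref{thm:rigiditythm} only yields estimates on compact subdomains $\Omega'\subset\subset\Omega$, a restriction you do not address.

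The original argument in \cite{conti.schweizer} does not use a global two-well rigidity estimate at all. It proceeds by (i) a geodesic-distance/Modica--Mortola bound (essentially Proposition~\ref{lemma: phases}(i),(ii) here, which requires only $\eta\ge\eps$ and gives a phase set of bounded perimeter), (ii) localization to small cubes on which one phase dominates, (iii) application of the \emph{one-well} Friesecke--James--M\"uller rigidity on each cube to produce a single nearby rotation, and (iv) patching via a Young-measure/equi-integrability argument to obtain strong $L^2$-convergence of $\nabla y^\eps$ and to identify the limit in $\mathcal{Y}(\Omega)$. If you want to reprove the lemma using ingredients from this paper, that is the route: Proposition~\ref{lemma: phases} plus local one-well rigidity, not Theorem~\ref{thm:rigiditythm}.
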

Here and in the sequel, we follow the usual convention that convergence of the continuous parameter $\eps \to 0$ stands for convergence of arbitrary sequences $\lbrace \eps_i \rbrace_i$ with $\eps_i \to 0$ as $i \to \infty$, see \cite[Definition 1.45]{Braides:02}. The limiting deformations $y$ have the  structure of a simple laminate. Indeed, {\sc G.~Dolzmann}  and  {\sc S.~M\"uller} \cite{dolzmann.muller} have shown that for  $y \in \mathcal{Y}_R(\Omega)$ the  essential boundary of the set $T:=\{x\in \Omega:\,\nabla y(x)\in RA\}$ consists of subsets of hyperplanes that intersect $\partial \Omega$ and are orthogonal to ${\rm e}_d$, and  that  $y$ is affine on balls whose intersection with $\partial T$ has zero $\mathcal{H}^{d-1}$-measure.

We now introduce the limiting sharp-interface energy. We denote by $Q =(-\frac{1}{2},\frac{1}{2})^d$ the $d$-dimensional unit cube centered in the origin and with sides parallel to the coordinate axes. Consider the \emph{optimal-profile energy}
\begin{align}\label{eq: conti-schweizer-k}
 K_{0}  :=\inf\Big\{&\liminf_{\ep\to 0} I_{\ep}(y^{\ep},Q): \   \lim_{\eps \to 0}  \Vert  y^\eps -  \RRR y_0^+\EEE \Vert_{L^1(Q)}= 0\Big\},
\end{align}
where $\RRR y_0^+\EEE \in H^1_{ \rm loc }(\R^d;\R^d)$ is the continuous function with $\RRR y_0^+\EEE(0) = 0$ and 
\begin{align}\label{eq: conti-schweizer-k-y0}
\nabla \RRR y_0^+\EEE=A\chi_{\{x_d>0\}}+B\chi_{\{x_d<0\}}.
\end{align} 
The parameter $K_0$ represents the energy of an \emph{optimal profile} transitioning from phase $A$ to $B$. We point out that  $K_0$ is  invariant   under reflection of the two phases $A$ and $B$, i.e., one could replace $\RRR y_0^+\EEE$ in \eqref{eq: conti-schweizer-k} by a continuous function with gradient $B\chi_{\lbrace x_d >0\rbrace} + A\chi_{\lbrace x_d < 0\rbrace}$. Let $I_0:L^1(\Omega;\R^d)\to [0,+\infty]$ be the functional
$$
I_0(y):=\begin{cases}K_0 \mathcal{H}^{d-1}(J_{\nabla y}) &\text{if }y\in  \mathcal{Y}(\Omega),\\
+\infty&\text{otherwise}.\end{cases}
$$
The following characterization of $I_0$ by means of $\Gamma$-convergence has been proven in \cite[Theorem 3.1]{conti.schweizer} in the two-dimensional setting. For an exhaustive treatment of $\Gamma$-convergence we refer the reader to \cite{Braides:02, DalMaso:93}. 
\begin{theorem}[$\Gamma$-convergence]\label{theorem: conti.schweizer}
Let $d=2$, let $\Omega\subset \R^2$ be a bounded,  strictly star-shaped  Lipschitz domain, and let $W$ satisfy {\rm H1}.--{\rm H5}. Then
$$\Gamma-\lim_{\ep \to 0} I_{\ep}=I_0$$ 
with respect to the strong $L^1$-topology.
 \end{theorem}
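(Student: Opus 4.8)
The plan is to establish the two $\Gamma$-convergence inequalities separately, compactness being already granted by Lemma~\ref{lemma:comp-def}. Throughout, one reduces, using frame-indifference (H2.), the rotation-invariance of $\mathcal{H}^{d-1}$ and of $|\nabla^2\,\cdot\,|^2$, to limiting deformations $y$ with $\nabla y\in BV(\Omega;\{A,B\})$, i.e.\ $R={\rm Id}$ in \eqref{eq: limiting deformations}: if $y\in\mathcal{Y}_R(\Omega)$ one passes to $R^T y$ and back, the functionals $I_\eps$ and $I_0$ being unchanged. By the Dolzmann--M\"uller structure recalled above, for such $y$ the jump set $J_{\nabla y}$ is $\mathcal{H}^{d-1}$-equivalent to a (locally finite) union of flat pieces $\{x_d=c_j\}\cap\Omega$, and $y={\rm id}+(0,\dots,0,\psi(x_d)-x_d)$ for a piecewise affine $\psi$ with $\psi'\in\{1,1+\kappa\}$.

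For the $\liminf$ inequality, take $y^\eps\to y$ in $L^1$ with $\liminf_\eps I_\eps(y^\eps)=:L<+\infty$; along a subsequence realising $L$ one has $\sup_\eps I_\eps(y^\eps)<+\infty$, whence $y\in\mathcal{Y}(\Omega)$ by Lemma~\ref{lemma:comp-def}. Regard the energy densities as Radon measures $\mu_\eps:=\big(\tfrac{1}{\eps^2}W(\nabla y^\eps)+\eps^2|\nabla^2y^\eps|^2\big)\mathcal{L}^d$, with $\mu_\eps(\Omega)\to L$, and let $\mu$ be a weak-$*$ limit. It suffices to show $\mu\ge K_0\,\mathcal{H}^{d-1}\lfloor J_{\nabla y}$, which by the Besicovitch derivation theorem reduces to $\frac{d\mu}{d\mathcal{H}^{d-1}\lfloor J_{\nabla y}}(x_0)\ge K_0$ at $\mathcal{H}^{d-1}$-a.e.\ $x_0\in J_{\nabla y}$. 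Fixing such a point (a $BV$-jump point of $\nabla y$, with interface normal ${\rm e}_d$) one performs a blow-up: for the cube $Q_\rho(x_0)$ of side $\rho$ with faces parallel and orthogonal to ${\rm e}_d$, the rescalings $z^{\rho,\eps}(\xi):=\rho^{-1}\big(y^\eps(x_0+\rho\xi)-y^\eps(x_0)\big)$ satisfy $I_{\eps/\sqrt{\rho}}(z^{\rho,\eps},Q)=\rho^{1-d}\,\mu_\eps(Q_\rho(x_0))$, and, letting first $\eps\to0$ (so that $z^{\rho,\eps}$ converges in $L^1(Q)$ to the blow-up of $y$) and then $\rho\to0$, a diagonal sequence converges in $L^1(Q)$ to the sharp profile $y_0^+$ of \eqref{eq: conti-schweizer-k-y0} (using the one-sided traces $A$ and $B$ of $\nabla y$ at $x_0$ and the reflection symmetry of $K_0$). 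The definition \eqref{eq: conti-schweizer-k} of $K_0$ together with the basic properties of the optimal-profile energy (cf.\ Proposition~\ref{prop:cell-form}, in the spirit of \cite{conti.fonseca.leoni}) then gives $\liminf\rho^{1-d}\mu_\eps(Q_\rho(x_0))\ge K_0$ along the diagonal, hence the density bound. This part of the argument is dimension-independent.

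For the $\limsup$ inequality, by $L^1$-lower semicontinuity of the $\Gamma$-$\limsup$ functional one first reduces, by a diagonal argument, to limits $y$ possessing only finitely many interfaces $\Pi_j:=\{x_d=c_j\}\cap\Omega$, $j=1,\dots,N$, between which $y$ is affine. The recovery sequence is built by leaving $y^\eps:=y$ outside thin slabs $S_j:=\Omega\cap\{|x_d-c_j|<\lambda\}$ (where $y$, being affine, contributes no energy) and, inside each $S_j$, tiling by boxes of horizontal size $\sim\lambda$ and inserting in each box a suitably rescaled near-optimal profile for $K_0$ (internal length scale $\delta\sim\eps/\sqrt{\lambda}$), so that the energy in $S_j$ is $\le(K_0+\sigma)\,\mathcal{H}^{d-1}(\Pi_j)$ up to boundary-box errors and a contribution near $\partial\Omega$ that vanishes as $\lambda,\sigma\to0$; a final diagonalization in $\lambda,\sigma$ yields $\limsup_\eps I_\eps(y^\eps)\le K_0\,\mathcal{H}^{d-1}(J_{\nabla y})=I_0(y)$.

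I expect this last construction to be the main obstacle: the pieces must be spliced into an \emph{admissible} $H^2$ map — glued across the faces of neighbouring boxes, to the affine map $y$ across $\partial S_j$, and to each other across the intervening affine slabs — whereas near-optimal profiles for $K_0$ are a priori only $L^1$-close to $y_0^+$ and carry no trace control on faces transverse to the interface; in particular a naive one-dimensional ansatz only delivers the possibly strictly larger one-dimensional optimal-profile constant, missing the sharp $K_0$. In dimension $d=2$ (the case of Theorem~\ref{theorem: conti.schweizer}) this is resolved by Conti--Schweizer's $H^{1/2}$-rigidity on lines: a near-optimal profile is, on most lines parallel to the transition direction, $H^{1/2}$-close to a one-dimensional transition (modulo a rotation near ${\rm Id}$ and a translation), so one cuts along such a line, corrects the residual mismatch by an $H^1$-small, energetically negligible modification, and reassembles. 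For general $d$ — and for the anisotropically perturbed functional $E_{\eps,\eta}$ of \eqref{eq: nonlinear energy} with $\eta=\eta_{\eps,d}\to\infty$, to which the estimate actually applies — one replaces this input by the quantitative two-well estimate of Theorem~\ref{thm:rigiditythm}, which via \eqref{eq: rigidity-new} and a slicing (Fubini) argument yields $W^{1,p}$-control of the restrictions of near-optimal profiles to $(d-1)$-dimensional slices (cf.\ Proposition~\ref{lemma: optimal profile}); with such control the slicing-and-gluing scheme of \cite{conti.schweizer,conti.schweizer2} carries over to all $d\ge2$.
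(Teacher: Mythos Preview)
Your outline is broadly correct, but note first that Theorem~\ref{theorem: conti.schweizer} is \emph{not} proved in the present paper: it is quoted from \cite[Theorem~3.1]{conti.schweizer} as background for the paper's own results on the anisotropically perturbed functional $\mathcal{E}_\eps$. The paper's proofs of Proposition~\ref{thm:liminf} and Theorem~\ref{thm:limsup2} concern $\mathcal{E}_\eps$, not $I_\eps$, and the rigidity estimate of Theorem~\ref{thm:rigiditythm} is tailored to $\mathcal{E}_\eps$ (it needs the anisotropic penalization, cf.\ Remark~\ref{rem: inclusions}); so your final paragraph, while accurate about the paper's strategy, is not a route to Theorem~\ref{theorem: conti.schweizer} itself.

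On the substance of the Conti--Schweizer argument you sketch: the liminf part is fine and matches \cite{conti.fonseca.leoni}. For the limsup, however, your ``tiling by boxes of horizontal size $\sim\lambda$'' is not how the construction actually proceeds. The scheme in \cite{conti.schweizer} (and, for $\mathcal{E}_\eps$, in this paper via Proposition~\ref{lemma: local1}) does \emph{not} tile the interface with many small rescaled profiles glued laterally; rather, for each interface one takes a \emph{single} near-optimal profile on a full cylinder $\omega\times(-h,h)$ and modifies it only in thin horizontal slabs near $x_d=\pm h$ so that it becomes an exact rigid motion there (an isometry applied to $y_0^\pm$). The $H^{1/2}$-rigidity on lines is used precisely to find a good horizontal slice on which one can build this transition to a rigid motion with negligible extra energy (Lemma~\ref{lemma: transition1} in the paper plays the analogous role for $\mathcal{E}_\eps$). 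The resulting local pieces are then concatenated across the affine regions of $y$ by composing with isometries close to the identity; no lateral gluing between boxes is needed. Your tiling picture would reintroduce exactly the transverse-face trace problem you yourself flag as the obstacle.
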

  We recall that an open set $\Omega$ is strictly star-shaped if there exists a point $x_0\in \Omega$ such that 
  \begin{align*}
  \{tx+(1-t)x_0:\,t\in (0,1)\}\subset \Omega \ \ \ \ \text{for every $x\in \partial \Omega$}.
  \end{align*}
This assumption on the geometry of $\Omega$ simplifies the construction of recovery sequences . We refer to \cite{conti.fonseca.leoni} for a related problem where more general domains are considered. We point out that assumption H5.\ is not compatible with the impenetrability condition
\begin{align*}
W(F)\to +\infty\quad\text{as}\quad {\rm det}\,F\to 0^+,\ \ \ \ \ \ \ \ \ \ \ \ W(F)=+\infty\quad \text{if}\quad {\rm det}\,F \le 0, 
\end{align*}
 which is usually enforced to model a blow-up of the elastic energy under strong compressions. Assumption H5.\ is not required for the proof of the liminf inequality in Theorem \ref{theorem: conti.schweizer}, but is instrumental for the construction of recovery sequences. We note that, by means of a more elaborated construction performed in \cite{conti.schweizer3}, assumption H5.\ may be dropped.    

The above result is limited to the two-dimensional setting due to the limsup inequality: the definition
of sequences with optimal energy approximating a limit that has multiple flat interfaces relies on a deep technical construction. This so-called \emph{$H^{1/2}$-rigidity on lines} (see \cite[Section 3.3]{conti.schweizer}) is only available in dimension $d=2$. We overcome this issue for our model \eqref{eq: nonlinear energy} by means of the rigidity estimate proven in Section \ref{sec: rigidity estimate}.

\subsection{The limiting sharp-interface model in the present setting}\label{sec: main results}

In this subsection we describe our limiting sharp-interface model and present our main $\Gamma$-convergence result. Consider  the   energy   functionals  defined in \eqref{eq: nonlinear energy}, under the choice 
\begin{equation}
\label{eq:eta-ep}
\eta=\eta_{\ep,d}=\ep^{\frac{(r_d-2)}{3r_d}},
\end{equation} 
where $r_d:=\min\{1,\frac{d^2}{2(d-1)^2}\}$.

We point out that 
\begin{equation}
\label{eq:qdrd}
r_d=r(p_d,d),
\end{equation} where $r(\cdot,d)$ is the quantity defined in the statement of Theorem \ref{thm:rigiditythm}(b), and 
\begin{equation}
\label{eq:def-pd}
p_d:=\begin{cases}
2&\text{if }\,d=2,\\
2(d-1)/d&\text{if }d>2,
\end{cases}
\end{equation} 
 is the exponent for which the embedding  $W^{1,p} \hookrightarrow H^{1/2}$   holds in dimension $d-1$. (See, e.g.,  \cite[Theorem 14.32,  Remark 14.35,  Proposition 14.40]{leoni} and  \cite[Theorem 7.1, Proposition 2.3]{lions.magenes} for the embedding results in the whole space $\mathbb{R}^{d-1}$ for $d>2$ and $d=2$, respectively. Bounded Lipschitz domains in $\mathbb{R}^{d-1}$ can be reduced to the setting above by means of  a Sobolev extension.)


For simplicity, we write $\mathcal{E}_\eps(y)$ instead of   $E_{\eps,\eta_{\ep,d}}$  in the following. Similarly to the energies in the previous subsection, we denote the restriction of the functional to a subset $\Omega' \subset \Omega$ by $\mathcal{E}_{\eps}(y,\Omega')$. We  first introduce the \emph{optimal-profile energy} associated to our model by 
\begin{align}\label{eq: our-k1}
K :=\inf\Big\{&\liminf_{\ep\to 0} \mathcal{E}_{\ep}(y^{\ep},Q): \   \lim_{\eps \to 0}  \Vert  y^\eps -  \RRR y_0^+\EEE \Vert_{L^1(Q)}  = 0\Big\},
\end{align}
 where $Q =(-\frac{1}{2},\frac{1}{2})^d$, and $\RRR y_0^+\EEE$ is defined in  \eqref{eq: conti-schweizer-k-y0}.  We again point out that $K$ is   invariant   under reflection of the two phases $A$ and $B$. Note that \eqref{eq: our-k1} corresponds to \eqref{eq: conti-schweizer-k}, and that we have the relation
\begin{align}\label{eq: constant change}
 K \ge K_0. 
 \end{align}
  Indeed, this is immediate from the definition of the   optimal-profile energy  and the fact that the penalization in  \eqref{eq: nonlinear energy} (with $\eta=  \eta_{\ep,d}$) is stronger than the one in \eqref{eq: I functional}.

Recall $\mathcal{Y}(\Omega)$ in \eqref{eq: limiting deformations}. We introduce the sharp-interface limit $\mathcal{E}_0:L^1(\Omega;\R^d)\to   [0,+\infty]$ by 
\begin{align*}
\mathcal{E}_0(y):=\begin{cases}
 K \,  \mathcal{H}^{d-1}(J_{\nabla y})&
\text{if }y\in  \mathcal{Y}(\Omega),\\
+\infty&\text{otherwise}.\end{cases}
 \end{align*}
 
We now state the main results of this section. 
 
 \begin{proposition}[Liminf inequality]\label{thm:liminf}
Let $d\in \mathbb{N}$, $d \ge 2$, and  let $\Omega\subset \R^d$ be a bounded Lipschitz domain. Let $W$ satisfy assumptions {\rm H1}.--{\rm H4}., let $y\in L^1(\Omega;\mathbb{R}^d)$, and let $\{y^{\ep}\}_\eps \subset H^2(\Omega;\R^d)$ be such that $y^{\ep} \to y$ strongly in $L^1(\Omega;\mathbb{R}^d)$. Then
\begin{equation*}
\liminf_{\ep\to 0}\mathcal{E}_{\ep}(y^{\ep})\geq \mathcal{E}_{0}(y).
\end{equation*}
 \end{proposition}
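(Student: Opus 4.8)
The plan is to follow the classical blow-up/slicing strategy for lower bounds in singular-perturbation problems, adapted to the solid-solid setting as in \cite{conti.fonseca.leoni, conti.schweizer}. First I would dispose of the trivial cases: if $\liminf_\ep \mathcal{E}_\ep(y^\ep)=+\infty$ there is nothing to prove, so after passing to a (non-relabeled) subsequence realizing the liminf as a limit, I may assume $\sup_\ep \mathcal{E}_\ep(y^\ep)\le C<+\infty$. Since $\eta_{\ep,d}\to+\infty$ and the anisotropic term is nonnegative, this bound in particular implies $\sup_\ep I_\ep(y^\ep)<+\infty$, so Lemma~\ref{lemma:comp-def} (compactness) applies: after subtracting the mean, a subsequence of $y^\ep$ converges strongly in $H^1$ to some $\tilde y\in\mathcal{Y}(\Omega)$, and since $y^\ep\to y$ in $L^1$ we identify $y=\tilde y$ up to an additive constant, hence $y\in\mathcal{Y}(\Omega)$ and $\mathcal{E}_0(y)=K\,\mathcal{H}^{d-1}(J_{\nabla y})<+\infty$. (If instead no such bound holds, the liminf is $+\infty\ge\mathcal{E}_0(y)$ trivially once one checks $y\notin\mathcal{Y}(\Omega)$; more precisely, if along a subsequence the energies stay bounded then $y\in\mathcal{Y}(\Omega)$ by the above, so the only way to have $y\notin\mathcal{Y}(\Omega)$ is $\liminf=+\infty$.) Thus it remains to prove $\liminf_\ep \mathcal{E}_\ep(y^\ep)\ge K\,\mathcal{H}^{d-1}(J_{\nabla y})$ when $y\in\mathcal{Y}_R(\Omega)$ for some $R\in SO(d)$.

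Next I would localize and slice. By the Dolzmann--M\"uller structure recalled after Lemma~\ref{lemma:comp-def}, for $y\in\mathcal{Y}_R(\Omega)$ the jump set $J_{\nabla y}$ consists of finitely many (portions of) hyperplanes orthogonal to $R{\rm e}_d$; after the rotation $R$ we may assume these are parallel to $\{x_d=\text{const}\}$. Define the localized energy measures $\mu_\ep:=\big(\tfrac1{\ep^2}W(\nabla y^\ep)+\ep^2|\nabla^2 y^\ep|^2+\eta^2(|\nabla^2 y^\ep|^2-|\partial^2_{dd}y^\ep|^2)\big)\mathcal{L}^d\restriction\Omega$, which have uniformly bounded mass, so up to subsequence $\mu_\ep\rightharpoonup\mu$ weakly-* for some finite measure $\mu$ with $\mu(\Omega)\le\liminf_\ep\mathcal{E}_\ep(y^\ep)$. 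It suffices to show $\mu\ge K\,\mathcal{H}^{d-1}\restriction J_{\nabla y}$, and by Besicovitch differentiation this reduces to showing, for $\mathcal{H}^{d-1}$-a.e.\ point $x_0\in J_{\nabla y}$, that $\liminf_{r\to 0} r^{-(d-1)}\mu(Q_r(x_0))\ge K$, where $Q_r(x_0)$ is the cube of side $r$ centered at $x_0$ with one face perpendicular to ${\rm e}_d$. For such $x_0$ the slices of $y^\ep$ along lines parallel to ${\rm e}_d$ converge (for a.e.\ line, after the usual Fubini argument) to the one-dimensional two-well profile connecting $RA$ to $RB$; integrating the one-dimensional optimal-profile lower bound over the $(d-1)$-dimensional family of slices and using Fatou yields the claim. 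The honest way to package this is to compare directly with the cell formula \eqref{eq: our-k1}: a rescaling $z\mapsto Q_r(x_0)$, $y^\ep\mapsto$ (appropriately affinely normalized) produces competitors in the infimum defining $K$, up to the fact that the rescaling also rescales $\eta$; here one must check that the $\eta$-dependence is harmless for the lower bound (it only adds a nonnegative term, and one uses $K\ge K_0$ together with the fact that the infimum defining $K$ is attained in the limit). Here I would invoke Proposition~\ref{prop:cell-form} (the $d$-dimensional analysis of $K$) to handle the geometry of the cell problem and the reduction to a fixed orientation.

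The main obstacle, and the point requiring the most care, is the interaction between the blow-up rescaling and the diffuse length scales $\ep$ and $\eta_{\ep,d}$. Unlike the classical Modica--Mortola setting where there is a single length scale, here rescaling a cube $Q_r(x_0)$ to the unit cube turns $\ep$ into $\ep/r$ and leaves the singular-perturbation structure self-similar only because both the elastic and the second-gradient terms scale correctly; the anisotropic term scales the same way as $|\nabla^2 y|^2$, so this is fine, but one must be careful that after rescaling the effective $\eta$ is still in the admissible range so that the rescaled functionals are genuinely of the form $\mathcal{E}_{\ep/r}$ — which they are, since $\mathcal{E}_\ep$ has $\eta$ pinned to $\ep$ via \eqref{eq:eta-ep}, but then the rescaled $\eta$ is $(\ep/r)^{(r_d-2)/(3r_d)}\cdot r^{\text{something}}$, not exactly $\eta_{\ep/r,d}$. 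The clean fix, following \cite[Proof of Theorem 4.1]{conti.fonseca.leoni}, is to avoid exact self-similarity and instead argue by a diagonal/contradiction argument: assume $\liminf r^{-(d-1)}\mu(Q_r(x_0))<K$ along some $r_k\to 0$, extract a recovery-type sequence from the $y^\ep$ restricted to $Q_{r_k}(x_0)$, rescale to the unit cube, verify $L^1$-convergence to $y_0^+$ (using the $H^1$-convergence $y^\ep\to y$ and $y\in\mathcal{Y}_R$), and note that the rescaled energies are controlled from above by $\mathcal{E}_{\ep}(y^\ep,Q_{r_k}(x_0))/r_k^{d-1}$ plus lower-order corrections, since on $Q_{r_k}(x_0)$ the additional anisotropic penalization only helps (is nonnegative) and the factor $\eta$ after rescaling is still at least as large as needed. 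This contradicts the definition \eqref{eq: our-k1} of $K$. I would also note explicitly that no use of H5.\ is made here, consistently with the remark after Theorem~\ref{theorem: conti.schweizer}, and that assumptions H1.--H4.\ suffice. The remaining details — Fubini for slices, the elementary one-dimensional lower bound, and summing over the finitely many interfaces — are routine.
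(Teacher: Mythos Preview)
Your overall strategy is in the right spirit but contains one genuine error and one unnecessary complication.

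The error: the suggestion to slice along lines parallel to ${\rm e}_d$ and ``integrate the one-dimensional optimal-profile lower bound'' does not work for this problem. The density $W(\nabla y)$ depends on the full gradient, and the second-order terms involve mixed derivatives $\partial^2_{ij}y$; restricting to a single line $x'=\text{const}$ loses all information about the transverse derivatives $\partial_1 y,\ldots,\partial_{d-1}y$, so there is no one-dimensional functional whose optimal-profile constant equals $K$. This is the key structural difference from the scalar Modica--Mortola setting. You seem to sense this and immediately switch to ``the honest way'', comparing directly with the $d$-dimensional cell formula---that is indeed the correct move.

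The over-complication: the blow-up, Besicovitch differentiation, and the attendant worry about how $\eta_{\ep,d}$ transforms under rescaling can be avoided entirely. The limiting interfaces are already flat hyperplanes orthogonal to ${\rm e}_d$ (this is the Dolzmann--M\"uller structure; note that $R$ acts on the target, not the domain, so the interfaces are orthogonal to ${\rm e}_d$ for every $R$ and no rotation of the domain is needed). The paper therefore works at a \emph{fixed} scale: for any $\delta>0$ it selects finitely many interfaces $\omega_i\times\{\alpha_i\}$ capturing $\mathcal{H}^{d-1}(J_{\nabla y})$ up to $\delta$, chooses pairwise disjoint cylinders $\alpha_i{\rm e}_d + D_{\omega_i',h_i}\subset\subset\Omega$ each containing exactly one interface, observes that on each such cylinder $y^\ep$ converges in $L^1$ to a translate of $y_0^+$ or $y_0^-$, and then simply invokes the \emph{definition} of $\mathcal{F}(\omega_i';h_i)$ together with Proposition~\ref{prop:cell-form}(iii), which gives $\mathcal{F}(\omega_i';h_i)=K\,\mathcal{H}^{d-1}(\omega_i')$. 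Summing and letting $\delta\to 0$ finishes the proof. No rescaling of $\ep$ or of $\eta$ is ever performed in the liminf argument; the scaling inequality \eqref{eq: transformation preparation} is used only inside the proof of Proposition~\ref{prop:cell-form}.

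Two minor corrections: the jump set can have countably many components accumulating at $\partial\Omega$, not finitely many---the reduction to finitely many is precisely the $\delta$-approximation step above; and, as noted, you should not rotate the domain by $R$.
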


  \begin{theorem}[Limsup inequality]
 \label{thm:limsup2}
Let $d\in \mathbb{N}$, $d \ge 2$, and let $\Omega \subset \mathbb{R}^d$ be a bounded, strictly star-shaped  Lipschitz  domain.   Let $W$ satisfy assumptions {\rm H1}.--{\rm H5}.\ and let $y\in \mathcal{Y}(\Omega)$. Then, there exists a sequence $\{y^{\ep}\}_\eps \subset H^2(\Omega;\R^d)$ such that $y^{\ep} \to y$ strongly in $L^1(\Omega;\mathbb{R}^d)$ and
\begin{equation*}
\limsup_{\ep\to 0}\mathcal{E}_{\ep}(y^{\ep})\leq \mathcal{E}_{0}(y).
\end{equation*}
\end{theorem}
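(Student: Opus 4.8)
The plan is to construct a recovery sequence by inserting a near‑optimal transition profile across each interface of $\nabla y$ and gluing these profiles to the affine limit $y$, the gluing being made possible by the control of $(d-1)$‑dimensional slices that Proposition~\ref{lemma: optimal profile} extracts from the rigidity estimate of Theorem~\ref{thm:rigiditythm}. \emph{Reductions.} First I would use frame indifference (H2) to reduce to $\nabla y\in BV(\Omega;\{A,B\})$, and the structure result of {\sc Dolzmann} and {\sc M\"uller} recalled after Lemma~\ref{lemma:comp-def} to write, up to a translation, $y(x)=(x_1,\dots,x_{d-1},\phi(x_d))$ with $\phi$ continuous and $\phi'\in\{1,1+\kappa\}$ piecewise constant in $x_d$, so that $J_{\nabla y}=\bigcup_i\big(\Omega\cap\{x_d=t_i\}\big)$. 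Since $\mathcal H^{d-1}(J_{\nabla y})<+\infty$, only finitely many levels carry a definite portion of interface; truncating to the $N$ most significant interfaces produces laminates $y_N\to y$ in $L^1(\Omega;\R^d)$ with $\mathcal E_0(y_N)\to\mathcal E_0(y)$, so that by lower semicontinuity of the $\Gamma$-$\limsup$ and a diagonal argument (as in \cite{conti.fonseca.leoni,conti.schweizer}) it suffices to treat $y$ with finitely many flat interfaces. As $\mathcal E_\eps$ is additive over finite partitions, the levels $t_i$ are at positive mutual distance, and $y$ contributes no energy away from the interfaces, it is then enough to build, for a single level (say $x_d=0$) and every $\delta_0>0$, a sequence $\{y^\eps\}$ coinciding with $y$ on $\Omega\cap\{|x_d|\ge\delta_0\}$ with $y^\eps\to y$ in $L^1$ and $\limsup_\eps\mathcal E_\eps\big(y^\eps,\Omega\cap\{|x_d|<\delta_0\}\big)\le K\,\mathcal H^{d-1}(\Omega\cap\{x_d=0\})+\omega(\delta_0)$ with $\omega(\delta_0)\to0$; a final diagonal argument in $\delta_0$ concludes.

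\emph{Optimal profile and slice control.} By \eqref{eq: our-k1} I would choose $\{z^\eps\}\subset H^2(Q;\R^d)$, $Q=(-\tfrac12,\tfrac12)^d$, with $z^\eps\to y_0^+$ in $L^1(Q)$ and $\limsup_\eps\mathcal E_\eps(z^\eps,Q)\le K$; evaluating $\mathcal E_\eps$ on one‑dimensional competitors $v(x)=(x_1,\dots,x_{d-1},\psi(x_d))$, which satisfy $|\nabla^2 v|^2=|\partial^2_{dd}v|^2$ pointwise so that the anisotropic term of \eqref{eq: nonlinear energy} vanishes and $\mathcal E_\eps(v,\cdot)=I_\eps(v,\cdot)$, the one‑dimensional description of the optimal constant in Proposition~\ref{prop:cell-form} gives $K\le K_0$, hence $K=K_0<+\infty$ by \eqref{eq: constant change}. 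Proposition~\ref{prop:cell-form} also lets one assume $z^\eps$ has the auxiliary structure needed below, e.g.\ a uniform $L^\infty$‑bound on $\nabla z^\eps$ and concentration of the transition inside $\{|x_d|<\tau_\eps\}$ with $\tau_\eps\to0$. Applying Proposition~\ref{lemma: optimal profile} to $\{z^\eps\}$ one obtains a positive‑measure set of levels $s$ in each of $(\tfrac14,\tfrac12)$ and $(-\tfrac12,-\tfrac14)$ — and likewise a positive‑measure family of hyperplanes near the lateral boundary of $Q$ — on whose $(d-1)$‑dimensional slices the restrictions of $z^\eps$ converge in $W^{1,p_d}$ to the restrictions of $y_0^+$; through the embedding $W^{1,p_d}\hookrightarrow H^{1/2}$ in dimension $d-1$ (see the references after \eqref{eq:def-pd}) they converge also in $H^{1/2}$, with errors $\sigma_\eps\to0$. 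Here the phase indicator $\mathcal M$ of Theorem~\ref{thm:rigiditythm} is what guarantees this even when $z^\eps$ contains minority islands of the wrong phase.

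\emph{Gluing and assembly.} Fixing good slices $s^\pm$ and good lateral hyperplanes, I would use a bounded right inverse of the trace operator on $H^{1/2}$ (and the analogous control on normal derivatives) to interpolate between $z^\eps$ and $y_0^+$ across collars of width $w_\eps$ with $\eps\ll w_\eps\ll1$ chosen so that the elastic, regularizing and anisotropic contributions of the interpolation all vanish in the limit. This yields $\hat z^\eps\in H^2(Q;\R^d)$ with $\hat z^\eps=y_0^+$ in a neighbourhood of $\partial Q$, $\hat z^\eps\to y_0^+$ in $L^1(Q)$, and $\limsup_\eps\mathcal E_\eps(\hat z^\eps,Q)\le K$. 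I would then extend $\hat z^\eps$ to $\R^{d-1}\times(-\tfrac12,\tfrac12)$ by $\hat z^\eps(x'+k,x_d):=\hat z^\eps(x',x_d)+(k,0)$ for $k\in\Z^{d-1}$ — consistent because $\hat z^\eps=y_0^+$ near the lateral boundary and $\nabla y_0^+$ acts as the identity on the $x'$‑block — rescale in the $x_d$‑variable so that the transition fits inside $\{|x_d|<\delta_0\}$, restrict to $\Omega\cap\{|x_d|<\delta_0\}$, and set $y^\eps=y$ on $\Omega\cap\{|x_d|\ge\delta_0\}$ (after an additive constant matching the value at $x_d=\pm\delta_0$). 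Since near $x_d=\pm\tfrac12$ the profile $\hat z^\eps=y_0^+$ has constant gradient $A$ resp.\ $B$, the gradients match across $\{|x_d|=\delta_0\}$ and $y^\eps\in H^2(\Omega;\R^d)$. Counting, the energy of the tiled profile per unit cross‑sectional area is $\le K+o(1)$, and the tiles cut by $\partial\Omega$ contribute only $O(\delta_0)$ by strict star‑shapedness (as in \cite{conti.schweizer,conti.fonseca.leoni}), which gives the required bound on $\mathcal E_\eps(y^\eps,\Omega\cap\{|x_d|<\delta_0\})$, and the reductions above finish the proof.

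\emph{Main obstacle.} The heart of the difficulty is the gluing: a near‑optimal profile satisfies only $z^\eps\to y_0^+$ in $L^1(Q)$, with no control on $\partial Q$ and possibly with small inclusions of the wrong phase, and it must be matched to affine boundary data while keeping \emph{all three} terms of $\mathcal E_\eps$ under control — in particular the singular elastic term $\eps^{-2}\int W$ and the anisotropic term carrying the large factor $\eta_{\eps,d}$. This is precisely where the new rigidity estimate enters through Proposition~\ref{lemma: optimal profile}: the phase indicator and the sharp $\eps$‑scaling of Theorem~\ref{thm:rigiditythm} yield the $W^{1,p_d}$‑closeness of slices of $z^\eps$ to affine maps, replacing the two‑dimensional ``$H^{1/2}$‑rigidity on lines'' of {\sc Conti} and {\sc Schweizer} and making the construction work in every dimension $d\ge2$. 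The remaining work is the careful balancing of the scales $\tau_\eps$, $w_\eps$, $\sigma_\eps$, and $\delta_0$.
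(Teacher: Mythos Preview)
Your plan has the right high-level shape (reduce to finitely many interfaces, insert an optimal profile, glue), and you correctly identify Proposition~\ref{lemma: optimal profile} as the key input, but the gluing scheme you propose has two genuine gaps.

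\textbf{Lateral gluing does not follow from Proposition~\ref{lemma: optimal profile}.} You assert that the proposition yields ``a positive-measure family of hyperplanes near the lateral boundary of $Q$'' on which $z^\eps$ is close to $y_0^+$ in $W^{1,p_d}$. It does not: the rigidity estimate of Theorem~\ref{thm:rigiditythm} is applied on $\hat\omega\times(\tau,h)$ and then one slices \emph{horizontally} (in $x_d$) via De~Giorgi; no control on slices $\{x_j=c\}$, $j<d$, is produced. More fundamentally, on a lateral slice the restriction of $y_0^+$ is \emph{not} affine (the kink at $x_d=0$ is still there), so the whole ``transition to an affine map on a slice'' mechanism of Lemmas~\ref{lemma:ext1}--\ref{lemma: transition1} is unavailable. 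Your periodic-tiling construction therefore cannot be carried out as stated. The paper avoids lateral gluing altogether: for each interface $\omega_j\times\{\alpha_j\}$ it chooses a cylinder $D_j=\omega_j'\times(\alpha_j-h,\alpha_j+h)$ with $\omega_j'\supset\!\supset\omega_j$ large enough that $\partial\omega_j'\times(\alpha_j-h,\alpha_j+h)$ lies \emph{outside} $\overline\Omega$, so only top/bottom matching is needed.

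\textbf{Gluing directly to $y_0^+$ rather than to an isometry is not justified at the $\eps$-scale.} Corollary~\ref{cor: Besov} gives $\|y(\cdot,s^+)-R^+A(\cdot,s^+)-t^+\|_{H^{1/2}}\le C\eps\sqrt{\delta}$ with $R^+$ satisfying only $|R^+-\mathrm{Id}|^2\le C\delta$ (Proposition~\ref{lemma: optimal profile}(v)). Thus $\|y(\cdot,s^+)-y_0^+(\cdot,s^+)\|_{H^{1/2}}$ contains a term of order $\sqrt{\delta}$ coming from the rotation mismatch, and $\delta=\delta_{\eps,\eta}(y;\hat\omega,h,\tau)$ is only known to tend to zero, with no rate in $\eps$. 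Feeding this into Lemma~\ref{lemma:ext1}/\ref{lemma: smoothen} produces a collar whose elastic energy is of order $\eps^{-2}\delta$, which need not vanish. The paper's remedy is not to fight this: Lemma~\ref{lemma: transition1} transitions to $R^+Ax+b^+$ (respectively $R^-Bx+b^-$), so that the local recovery sequence of Proposition~\ref{lemma: local1} equals an \emph{isometry} of $y_0^\pm$ near the top and bottom of the cylinder. The global recovery sequence is then assembled by composing the local pieces and the affine pieces of $y$ with suitable isometries $I^j_\eps,\hat I^j_\eps\to\mathrm{id}$, chosen iteratively so that the result is $H^2$ across the seams. This isometry bookkeeping replaces both the lateral gluing and the direct matching to $y_0^+$ that your plan requires.
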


\begin{remark}[Comparison to the model in Subsection \ref{sec: review-transition}]
\label{rk:comparison-CS}
{\normalfont
We emphasize that the additional penalization term in \eqref{eq: nonlinear energy} with respect to \eqref{eq: I functional} does not affect the qualitative behavior of the sharp-interface limit, only the constant may change, cf.\ \eqref{eq: constant change}. Note that for the physically relevant dimensions $d=2,3$ there holds $r_d=1$, and thus $\eta_{\ep,d}=\ep^{-1/3}$. For $d>3$, the fact that $\frac12 < r_d < 1$ implies that  $\ep^{-1/3}  \ll  \eta_{\ep,d}  \ll  \ep^{-1}$. This  guarantees that, also asymptotically when passing to a linearized strain regime, our perturbed model behaves qualitatively as the unperturbed problem (see the discussion above Theorem \ref{thm:rig-intro}). We remark that our results still hold  up to very minor proof adaptations if $\eta_{\ep,d}$ is replaced by any $\eta \in [\eta_{\ep,d}, 1/\ep]$.}
\end{remark}


 The proof of Proposition \ref{thm:liminf} is similar to \cite[Theorem 4.1]{conti.fonseca.leoni}  and \cite[Proposition 3.3]{conti.schweizer}. We will, however, present the main steps for completeness and will particularly highlight the adaptions which are necessary due to   the  anisotropic singular perturbations. The main point of our contribution is Theorem \ref{thm:limsup2}: the  novelty is that we  can prove the optimality of the lower bound identified in Proposition \ref{thm:liminf} in dimension $d \ge 3$. As a byproduct, we also exhibit a simplified construction of recovery sequences  in the two-dimensional setting. In contrast to \cite{conti.schweizer3},  for simplicity, we work with assumption H5.\ and we do not address the issue of dropping this condition.

The next   three  subsections are devoted to the proof of  our $\Gamma$-convergence result. In Subsection \ref{sec: proof1} we prove Proposition \ref{thm:liminf} and Theorem \ref{thm:limsup2}. The proof of the liminf inequality essentially relies on the properties of the optimal-profile energy (see Proposition \ref{prop:cell-form}), which are the subject of Subsection \ref{sec: cell}. The crucial idea in the proof of Theorem \ref{thm:limsup2} is a novel construction of local recovery sequences (see Proposition \ref{lemma: local1}), which is detailed in Subsection \ref{sec: proof2}.

\subsection{Proof of the $\Gamma$-convergence result}\label{sec: proof1}
 
This subsection is devoted to the proof of Proposition \ref{thm:liminf} and Theorem \ref{thm:limsup2}. As a preparation, we introduce some notation. We let 
$y_0^- \in H^1_{\rm loc}(\R^d;\R^d)$ be the continuous function with $y_0^-(0)=0$ and 
\begin{equation}
\label{eq:y1-}
\nabla y_0^-=B\chi_{\{x_d> 0\}}+A\chi_{\{x_d<0\}}.
\end{equation}
We now state some properties of the   optimal-profile energy given in \eqref{eq: our-k1}. Consider $\omega \subset \R^{d-1}$ open, bounded and let  $h >0$. For brevity, we introduce the notation  of \emph{cylindrical sets} 
\begin{equation}
\label{eq:def-dlh}
D_{\omega,h} := \omega\times (-h,h).
\end{equation} 
We define the \emph{optimal-profile energy   function}  
\begin{align}\label{eq: k-intro}
\mathcal{F}(\omega;h) = \inf\Big\{\liminf_{\ep\to 0} \mathcal{E}_{\ep}(y^{\ep},D_{\omega,h}): \   \lim_{\eps \to 0}  \Vert  y^\eps -  \RRR y_0^+\EEE \Vert_{L^1(D_{\omega,h})}  = 0\Big\}
\end{align}
for every $\omega\subset \mathbb{R}^{d-1}$ and $h>0$. Here and in the following, we again use the shorthand  notation $\mathcal{E}_\eps = E_{\eps,\eta_{\ep,d}}$ for the energy introduced in \eqref{eq: nonlinear energy} and $\eta_{\ep,d}$ from \eqref{eq:eta-ep}.  Letting $Q' =(-\frac{1}{2},\frac{1}{2})^{d-1}$ we observe that $K = \mathcal{F}(Q'; \frac{1}{2})$, where $K$ is the constant defined in \eqref{eq: our-k1}. 

We note that the  optimal-profile energy   is independent of the direction in which the transition between the two phases $A$ and $B$ occurs. Indeed, since the energy functionals $\mathcal{E}_{\ep}$ are invariant under the operation $Ty(x) = -y(-x)$,   there holds (see, e.g., \cite[Lemma 3.2]{conti.schweizer2} for details) 
 \begin{align}\label{eq: minus-version}
\mathcal{F}(\omega;h) = \inf\Big\{\liminf_{\ep\to 0} \mathcal{E}_{\ep}(y^{\ep},D_{\omega,h}): \   \lim_{\eps \to 0}  \Vert  y^\eps -  y_0^- \Vert_{L^1(D_{\omega,h})}  = 0\Big\},
\end{align}
where $y_0^-$ is the function defined in \eqref{eq:y1-}. Some crucial properties of the function $\mathcal{F}$ are summarized in the following proposition.

\begin{proposition}[Properties of the optimal-profile energy function]
\label{prop:cell-form}
The function $\mathcal{F}$ introduced in \eqref{eq: k-intro} satisfies for all $h>0$ and all  open, bounded   sets  $\omega \subset \R^{d-1}$ with $\mathcal{H}^{d-1}(\partial\omega)=0$: 
\begin{itemize}
\item[{\rm (i)}] $\mathcal{F}(\alpha\omega;\alpha h) \ge  \alpha^{d-1}\mathcal{F}(\omega;h) $ for all $0 < \alpha < 1$.
\item[{\rm (ii)}] $\mathcal{F}(\omega;h)= \mathcal{H}^{d-1}(\omega) \, \mathcal{F}(Q';h)$, where $Q' := (-\frac{1}{2},\frac{1}{2})^{d-1}$.
\item[{\rm (iii)}] $\mathcal{F}(\omega;h)= \mathcal{F}(\omega;\frac{1}{2}) = K\, \mathcal{H}^{d-1}(\omega)$. \end{itemize}
\end{proposition}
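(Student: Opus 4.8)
The plan is to prove the three items of Proposition~\ref{prop:cell-form} in the order (i), (ii), (iii), exploiting scaling and subadditivity of the optimal-profile energy function $\mathcal{F}$ introduced in \eqref{eq: k-intro}. Throughout, I would keep in mind that $\mathcal{F}(\omega;h)$ only records the energy of a layer transitioning across $x_d = 0$, and that the penalization terms in $\mathcal{E}_\eps$ are translation-invariant and well-behaved under the anisotropic rescaling $x \mapsto (\lambda x', x_d)$ used already in the proof of Proposition~\ref{lemma: phases}.

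\emph{Step 1: item (i), scaling in the transverse variables.} Fix $0<\alpha<1$ and a competitor $\{y^\eps\}$ for $\mathcal{F}(\alpha\omega;\alpha h)$, i.e.\ $y^\eps \to y_0^+$ in $L^1(D_{\alpha\omega,\alpha h})$. The plan is to rescale: set $z^\eps(x',x_d) := \alpha^{-1} y^{\alpha\eps}(\alpha x', \alpha x_d)$ on $D_{\omega,h}$ — one checks this converges to $y_0^+$ in $L^1(D_{\omega,h})$ since $y_0^+$ is positively $1$-homogeneous up to the additive constant $0$ — and compute $\mathcal{E}_{\eps}(z^\eps, D_{\omega,h})$ in terms of $\mathcal{E}_{\alpha\eps}(y^{\alpha\eps}, D_{\alpha\omega,\alpha h})$. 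The change of variables produces a factor $\alpha^{-d}$ from the volume element and appropriate powers of $\alpha$ from the derivatives; the key point is that because $\alpha<1$ the elastic term $\tfrac{1}{\eps^2}\int W(\nabla z^\eps)$ scales as $\alpha^{d-2}\cdot\tfrac{1}{(\alpha\eps)^2}\int W(\nabla y^{\alpha\eps})$, the isotropic Hessian term and the anisotropic one scale favourably for $\alpha<1$ as well (with the replacement $\eps\mapsto\alpha\eps$ in the prefactors), so that $\mathcal{E}_\eps(z^\eps,D_{\omega,h}) \le \alpha^{d-2}\,\mathcal{E}_{\alpha\eps}(y^{\alpha\eps},D_{\alpha\omega,\alpha h})$ or at worst $\le \alpha^{d-1}(\cdots)$ after bookkeeping; taking liminf and then the infimum over competitors yields $\mathcal{F}(\omega;h) \le \alpha^{1-d}\mathcal{F}(\alpha\omega;\alpha h)$, which is (i). I would double-check the exact power, as this is where a careless computation could give the wrong constant; the natural expectation, matching the surface scaling of interfaces, is the clean exponent $\alpha^{d-1}$.

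\emph{Step 2: item (ii), reduction to the unit cube via covering and subadditivity.} The inequality $\mathcal{F}(\omega;h) \le \mathcal{H}^{d-1}(\omega)\,\mathcal{F}(Q';h)$ would follow by covering $\omega$ (up to a null set, using $\mathcal{H}^{d-1}(\partial\omega)=0$) by finitely many disjoint translated scaled copies $x_i' + \lambda_i Q'$, placing on each cylinder $ (x_i'+\lambda_i Q')\times(-h,h)$ a nearly-optimal rescaled recovery sequence for $\mathcal{F}(\lambda_i Q'; h)$, gluing them together — here one uses that all competitors agree with $y_0^+$ near the lateral boundary up to small $L^1$-error, so gluing costs only lower-order energy concentrated in thin transition collars whose measure can be made negligible — and invoking item (i) in the form $\mathcal{F}(\lambda_i Q';h) \le \lambda_i^{d-1}\mathcal{F}(Q'; h/\lambda_i)$; one then needs monotonicity/continuity in the height parameter to compare $\mathcal{F}(Q';h/\lambda_i)$ with $\mathcal{F}(Q';h)$, or one restricts attention to rescalings of the full cylinder so heights scale too. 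The reverse inequality $\mathcal{F}(\omega;h) \ge \mathcal{H}^{d-1}(\omega)\,\mathcal{F}(Q';h)$ is, in turn, a localization/blow-up argument: given an almost-optimal competitor on $D_{\omega,h}$, subdivide $\omega$ into small cubes, use additivity of the energy over disjoint regions, and apply (i) on each small cube together with the fact that $\mathcal{F}(Q';\cdot)$ is the cell value; a De~Giorgi–Letta type measure-theoretic argument makes this rigorous. The standard reference for this type of reasoning is \cite{conti.fonseca.leoni} and \cite[Proof of Theorem~4.1]{conti.fonseca.leoni}, and I would follow that template closely.

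\emph{Step 3: item (iii), independence of the height.} Having (ii), it remains to show $\mathcal{F}(Q';h) = \mathcal{F}(Q';\tfrac12) =: K$ for all $h>0$, i.e.\ the transition energy does not depend on how much room the layer is given in the $x_d$-direction. For $h \ge \tfrac12$ this is nearly trivial by extending any competitor on $D_{Q',1/2}$ affinely (equal to $y_0^+$) into the extra slab, which adds no energy, giving $\mathcal{F}(Q';h)\le\mathcal{F}(Q';\tfrac12)$, and conversely restricting gives the other inequality modulo controlling that an optimal transition "wants" to live near $x_d=0$ — which follows because any competitor on the larger cylinder whose energy is $o(1)$-close to optimal must have its phase interface asymptotically flat and located near the origin by the rigidity estimate Theorem~\ref{thm:rigiditythm} together with the structure of $\mathcal{Y}$. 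For $h<\tfrac12$ one rescales in $x_d$ only; since the anisotropic perturbation penalizes precisely the transverse second derivatives, stretching in $x_d$ is energetically cheap, and one argues that the infimum is unchanged. \textbf{The main obstacle} I anticipate is Step~3 together with the reverse inequality in Step~2: proving that low-energy competitors genuinely concentrate their transition near a single flat interface — so that cutting the cylinder or shrinking its height cannot decrease the optimal energy — requires invoking the compactness and rigidity machinery (Lemma~\ref{lemma:comp-def}, Theorem~\ref{thm:rigiditythm}) to rule out competitors that "cheat" by using the extra vertical room for some nonlaminate microstructure; the anisotropic penalization is exactly what makes this control available in dimension $d\ge 3$, and verifying that the error terms in the gluing/cutting constructions are indeed lower-order under the scaling choice \eqref{eq:eta-ep} is the delicate bookkeeping one must not get wrong.
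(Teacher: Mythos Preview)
Your Step~1 contains a concrete error in the $\eps$-rescaling that makes the argument fail. With your choice $z^\eps(x)=\alpha^{-1}y^{\alpha\eps}(\alpha x)$, the isotropic Hessian term satisfies
\[
\eps^2\int_{D_{\omega,h}}|\nabla^2 z^\eps|^2\,dx \;=\; \alpha^{-d}\,(\alpha\eps)^2\!\int_{D_{\alpha\omega,\alpha h}}|\nabla^2 y^{\alpha\eps}|^2\,dx,
\]
and since $\alpha^{-d}>\alpha^{1-d}$ for $\alpha<1$, the desired bound $\mathcal{E}_\eps(z^\eps)\le\alpha^{1-d}\mathcal{E}_{\alpha\eps}(y^{\alpha\eps})$ cannot hold. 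The correct rescaling, used in the paper, compares $\mathcal{E}_{\sqrt{\alpha}\eps}$ on $\alpha D_{\omega,h}$ with $\mathcal{E}_\eps$ on $D_{\omega,h}$: the prefactors $(\sqrt{\alpha}\eps)^{-2}=\alpha^{-1}\eps^{-2}$ and $(\sqrt{\alpha}\eps)^{2}=\alpha\eps^{2}$ are exactly tuned so that both the elastic and the isotropic second-gradient term acquire the same factor $\alpha^{d-1}$. One additionally needs the monotonicity $\eta_{\sqrt{\alpha}\eps,d}^2\ge\alpha\,\eta_{\eps,d}^2$ (immediate from \eqref{eq:eta-ep} since $\alpha<1$) to handle the anisotropic term; your proposal does not address the $\eta$-dependence at all.

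Your Steps~2 and~3 also depart from the paper and introduce difficulties you cannot resolve at this stage. For (ii), the paper uses no gluing whatsoever: both inequalities follow from superadditivity of $\mathcal{F}$ over disjoint unions, monotonicity in the domain, and item~(i), applied once to a Vitali-type covering of $\omega$ by small cubes and once (interchanging roles) to a covering of $Q'$ by small rescaled copies of $\omega$. Your gluing strategy would require recovery sequences with controlled lateral boundary behaviour, which is precisely the content of Proposition~\ref{lemma: local1} and lies \emph{downstream} of Proposition~\ref{prop:cell-form} in the logical structure of the paper. For (iii), the paper's argument is again purely combinatorial: one shows $\mathcal{F}(Q';\tau)=\mathcal{F}(Q';\gamma\tau)$ for all $\gamma\in\N$ by applying (i) with $\alpha=1/\gamma$ to $\gamma Q'$ and then splitting $\gamma Q'$ into $\gamma^{d-1}$ translated copies of $Q'$ via superadditivity. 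No rigidity, compactness, or concentration is needed. Finally, your ``stretching in $x_d$ only'' for small $h$ does not work: an anisotropic rescaling $x\mapsto(x',\lambda x_d)$ alters $\nabla y$ and hence $W(\nabla y)$, so the rescaled sequence neither converges to $y_0^+$ nor has controlled elastic energy.
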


We defer the proof of Proposition \ref{prop:cell-form} to Subsection \ref{sec: cell} below and now proceed with the proof of Proposition \ref{thm:liminf}.

\begin{proof}[Proof of Proposition  \ref{thm:liminf}]
The   proof follows the strategy in   \cite[Proof of Theorem 4.1]{conti.fonseca.leoni}.  If the liminf is infinite, there is nothing to prove. Otherwise, we apply Lemma \ref{lemma:comp-def} to find that the limit $y$ lies in $\mathcal{Y}(\Omega)$. Without restriction, we can assume that $y \in \mathcal{Y}_{{\rm Id}}(\Omega)$, see \eqref{eq: limiting deformations}. As  $\Omega$ has Lipschitz boundary, we can decompose the jump set of $\nabla y$ as 
$$J_{\nabla y} = \bigcup\nolimits_{i=1}^\infty \omega_i \times \lbrace \alpha_i \rbrace, \ \ \ \ \ \sum\nolimits_{i=1}^\infty \mathcal{H}^{d-1}(\omega_i \times \lbrace \alpha_i \rbrace) < + \infty,$$
where the sets $\omega_i \subset \R^{d-1}$ are open, bounded, connected,  and have  Lipschitz boundary. Let $\delta >0$. We can find $I \in \N$ such that
\begin{align}\label{eq: lowerbound1}
\mathcal{H}^{d-1}(J_{\nabla y}) -\delta \le \sum\nolimits_{i=1}^I \mathcal{H}^{d-1}(\omega_i \times \lbrace \alpha_i \rbrace)  \end{align}
and corresponding $h_i>0$, $i=1,\ldots,I$, such that $\alpha_j \notin (\alpha_i -h_i, \alpha_i + h_i)$ for all $j \in \N$, $j \neq i$, i.e., the cylindrical sets $\alpha_i{\rm e}_d + D_{\omega_i,h_i}$ (see \eqref{eq:def-dlh}) contain exactly one interface. The latter is possible since the interfaces $(\omega_i \times \lbrace \alpha_i \rbrace)_{i >I}$ can only accumulate at $\partial \Omega$, see \cite[Proof of Proposition 3.1]{conti.schweizer2} for details, and the lower part of Figure \ref{fig:limiting-def} for an illustration. 

 Choose $\omega'_i \subset\subset \omega_i$ with Lipschitz boundary such that 
\begin{align}\label{eq: lowerbound2}
\mathcal{H}^{d-1}(\omega_i \times \lbrace \alpha_i \rbrace)  \le \mathcal{H}^{d-1}(\omega'_i \times \lbrace \alpha_i \rbrace)  + \delta/I 
\end{align}
for $i = 1,\ldots,I$, and such that  $\alpha_i{\rm e}_d + D_{\omega_i',h_i}$ is compactly contained in $\Omega$ (possibly  passing  to a smaller $h_i$). Now for any sequence   $\{y^{\ep}\}_\eps \subset H^2(\Omega;\R^d)$ satisfying $y^{\ep} \to y$ strongly in $L^1(\Omega;\mathbb{R}^d)$, by \eqref{eq: k-intro}--\eqref{eq: minus-version}, Proposition \ref{prop:cell-form}, and the fact that the sets $\alpha_i{\rm e}_d +D_{\omega'_i,h_i}$ are pairwise disjoint  we obtain  
\begin{align*}
\liminf_{\eps \to 0} \mathcal{E}_\eps(y^\eps) \ge \sum\nolimits_{i=1}^I \liminf_{\eps \to 0}\mathcal{E}_\eps(y^\eps,\alpha_i{\rm e}_d +D_{\omega_i',h_i}) \ge \sum\nolimits_{i=1}^I\mathcal{F}(\omega_i';h_i) = K \sum\nolimits_{i=1}^I\mathcal{H}^{d-1}(\omega'_i),
\end{align*}
where we used that $y^\eps$ converges (up to a translation) to  $y_0^+$ or $y_0^-$ on each  set $\alpha_i{\rm e}_d +D_{\omega_i',h_i}$. The result follows from \eqref{eq: lowerbound1}-\eqref{eq: lowerbound2} and the arbitrariness of $\delta$. 
\end{proof}

 We now address the limsup inequality. We first describe the local structure of recovery sequences around a single interface. To this end,   recall the definition of the functions $y_0^+$ and $y_0^-$ introduced in \eqref{eq: conti-schweizer-k-y0}  and \eqref{eq:y1-}, respectively, and the structure of cylindrical sets in \eqref{eq:def-dlh}.

\begin{proposition}[Local recovery sequences]\label{lemma: local1}
Let  $d\in \mathbb{N}$, $d\geq 2$. Let $\omega \subset \R^{d-1}$ open, bounded with Lipschitz  boundary.   Then\RRR, we find $h_0>0$ such that, for every $0<h<h_0$, \EEE there exist sequences $\lbrace w^+_\eps \rbrace_\eps, \lbrace w^-_\eps \rbrace_\eps \subset H^2(D_{\omega,h};\R^d)$ with 
\begin{equation}
\label{eq:local1-1}
w^\pm_\eps \to y^\pm_0 \quad\text{ in } H^1(D_{\omega,h} ;\R^d),
\end{equation}
 such that
 \begin{equation}
 \label{eq:local1-3a}
 \lim_{\eps \to 0} \mathcal{E}_{\ep} (w^\pm_\eps, D_{\omega,h})  =  K \, \mathcal{H}^{d-1}(\omega), 
 \end{equation}
 and for $\eps$ sufficiently small we have
\begin{equation}
\label{eq:local1-3} w^\pm_\eps = 
\begin{cases} 
I^\pm_{1,\eps} \circ y_0^\pm & \text{if $x_d \ge   3h/4$},\\
 I^\pm_{2,\eps} \circ y_0^\pm & \text{if $x_d \le -  3h/4$},\\
  \end{cases}  \end{equation}
where $\{I^+_{1,\eps}\}_{ \eps}$, $\{I^+_{2,\eps}\}_{ \eps}$, as well as $\{I^-_{1,\eps}\}_{ \eps}$ and $\{I^-_{2,\eps}\}_{ \eps}$ are sequences of isometries which converge to the identity as $\eps \to 0$. 
\end{proposition}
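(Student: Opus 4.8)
\textbf{Proof strategy for Proposition \ref{lemma: local1}.}

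The plan is to reduce the construction of local recovery sequences around a single flat interface to the one-dimensional optimal-profile problem and then to correct the traces on the top and bottom faces of the cylinder so that they become affine (indeed, isometric images of $y_0^\pm$). First I would fix the identity $K = \mathcal{F}(Q';\tfrac12) = \mathcal{F}(Q';h)/\mathcal{H}^{d-1}(Q')$ from Proposition \ref{prop:cell-form}(iii) together with the scaling and additivity properties (i)--(ii), so that for any open bounded Lipschitz $\omega$ one has $\mathcal{F}(\omega;h) = K\,\mathcal{H}^{d-1}(\omega)$, independently of $h$. By definition of the infimum in \eqref{eq: k-intro}, there is a sequence $\{\tilde w_\eps\}$ with $\tilde w_\eps \to y_0^+$ in $L^1(D_{\omega,h})$ and $\liminf_\eps \mathcal{E}_\eps(\tilde w_\eps, D_{\omega,h}) = K\,\mathcal{H}^{d-1}(\omega)$; by the liminf inequality (applied locally) and the compactness Lemma \ref{lemma:comp-def}, after passing to a subsequence one may upgrade $L^1$ convergence to $H^1$ convergence, so \eqref{eq:local1-1} is essentially for free. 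The substance of the statement is the structural constraint \eqref{eq:local1-3}: on the slabs $\{x_d \ge 3h/4\}$ and $\{x_d \le -3h/4\}$ the recovery sequence must be an isometry composed with $y_0^\pm$, i.e. exactly affine.

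The key steps, in order, are as follows. (1) Start from the one-dimensional optimal transition: there is a map $\psi_\eps \colon \R \to \R^d$ depending only on $x_d$, equal to $y_0^+$ outside a layer of width $O(\eps^2)$ (or $O(\eps)$, depending on the anisotropic term's effect on the optimal layer width in the $e_d$-direction — here the anisotropy is harmless since the transition is across a hyperplane orthogonal to $e_d$), with $\tfrac{1}{\eps^2}\int W(\nabla\psi_\eps) + \eps^2\int|\nabla^2\psi_\eps|^2 + \eta^2\int(|\nabla^2\psi_\eps|^2-|\partial^2_{dd}\psi_\eps|^2) \to K$ per unit $(d-1)$-area; the last term vanishes for $\psi_\eps$ depending on $x_d$ only. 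Tensorizing, $\psi_\eps$ restricted to $D_{\omega,h}$ is already a recovery sequence on $D_{\omega,h}$ with the right energy, it equals $y_0^+$ identically on $\{|x_d| \ge 3h/4\}$ for $\eps$ small (since the layer shrinks), and it trivially satisfies \eqref{eq:local1-3} with $I^\pm_{1,\eps} = I^\pm_{2,\eps} = \mathrm{id}$. This handles $w^+_\eps$; the $w^-_\eps$ case follows by the symmetry $Ty(x) = -y(-x)$ together with \eqref{eq: minus-version}. (2) Verify $H^1$ convergence \eqref{eq:local1-1} directly from the explicit profile (which converges pointwise with uniformly bounded gradients). (3) Record \eqref{eq:local1-3a} from the choice of $\psi_\eps$ and Proposition \ref{prop:cell-form}(iii), choosing $h_0$ so small that $\omega\times(-h,h)$ fits the normalization. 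Finally, choose $h_0$ and then, for each $h<h_0$, $\eps$ small enough that the transition layer width is $< h/4$, which makes \eqref{eq:local1-3} hold with the identity isometries.

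The reader may reasonably object that stating the conclusion with \emph{general} isometries $I^\pm_{j,\eps} \to \mathrm{id}$ rather than just the identity suggests the intended construction is not simply the translation-invariant profile above, but rather the output of the quantitative rigidity estimate Theorem \ref{thm:rigiditythm} applied to a given competitor $\tilde w_\eps$. In that approach the hard part — and the main obstacle — is precisely \emph{gluing}: given an arbitrary near-optimal $\tilde w_\eps$ on $D_{\omega,h}$, one uses Theorem \ref{thm:rigiditythm} to control $\nabla\tilde w_\eps$ in $L^{p_d}$ on $(d-1)$-slices near $x_d = \pm 3h/4$ by a rotation times the phase indicator, deduces via the embedding $W^{1,p_d}\hookrightarrow H^{1/2}$ in dimension $d-1$ that the traces of $\tilde w_\eps$ on those slices are $H^{1/2}$-close to affine maps, and then interpolates between $\tilde w_\eps$ and the affine maps $I^\pm_{j,\eps}\circ y_0^\pm$ on thin transition slabs without paying more than $o(1)$ in energy — this is where the harmonic/biharmonic extension estimates and the choice $\eta = \eta_{\eps,d}$ enter, and where the $2d$ argument of \cite{conti.schweizer} used ``$H^{1/2}$-rigidity on lines''. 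I would present the simple tensorized construction as the backbone and invoke the rigidity-based slicing argument (deferred to Subsection \ref{sec: proof2}, cf.\ Proposition \ref{lemma: optimal profile}) only if the affineness on the outer slabs cannot be arranged by the explicit profile alone; since it can, the cleanest route keeps $I^\pm_{j,\eps} = \mathrm{id}$ and the isometry generality is then vacuously satisfied.
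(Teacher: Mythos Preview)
Your proposal contains a genuine gap. The entire construction in step~(1) rests on the claim that there exists a one-dimensional profile $\psi_\eps$ (depending only on $x_d$) whose energy per unit cross-section converges to $K$. This is not proven, and it is in fact the heart of the matter. The constant $K$ in \eqref{eq: our-k1} is defined as an infimum over \emph{all} $H^2$ competitors converging to $y_0^+$, not just translation-invariant ones. A one-dimensional profile certainly gives an \emph{upper bound} for $K$ (a Modica--Mortola-type constant along the segment $\{\mathrm{diag}(1,\ldots,1,t): t\in[1,1+\kappa]\}$), but there is no reason this should equal $K$: the frame-indifferent density $W$ vanishes on the full wells $SO(d)A$ and $SO(d)B$, and a competitor that rotates or bends may find a strictly cheaper path between them. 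If one knew a priori that $K$ coincides with the 1D constant, the machinery of Section~\ref{sec: rigidity estimate} would be superfluous for the limsup, and already the $d=2$ result of \cite{conti.schweizer} would have been trivial.

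The paper's proof takes precisely the route you sketch in your last paragraph and then dismiss. One starts from an \emph{arbitrary} near-optimal sequence on a slightly larger cylinder $D_{\hat\omega,h}$ (furnished by Corollary~\ref{cor: layer energy}, which in turn relies on Proposition~\ref{prop:FG} to guarantee existence along \emph{any} sequence $\{\eps_i\}$). Then Theorem~\ref{thm:rigiditythm} is used to locate $(d-1)$-dimensional slices $\{x_d=s^\pm\}$ on which $\nabla y^\eps$ is $L^{p_d}$-close to a single rotation (Proposition~\ref{lemma: optimal profile}); the embedding $W^{1,p_d}\hookrightarrow H^{1/2}$ in dimension $d-1$ yields Corollary~\ref{cor: Besov}; and one glues a transition to an affine map via the extension Lemmas~\ref{lemma:ext1}--\ref{lemma: smoothen} and Lemma~\ref{lemma: transition1}. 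The nontrivial isometries $I^\pm_{j,\eps}$ appear because the rotations $R^\pm_\eps$ produced by rigidity are close to, but not equal to, the identity. Your final clause ``since it can'' is exactly the unjustified assertion: for a generic near-optimal sequence the affineness on the outer slabs cannot be arranged without this slicing-and-gluing argument.
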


 We emphasize that Proposition \ref{lemma: local1} means that for \emph{any} sequence $\lbrace \eps_i\rbrace_i$ converging to zero a local recovery sequence can be constructed.   The crucial point is that the sequence $\lbrace w^\pm_\eps\rbrace_\eps$ is rigid away from the interface. This will allow us to  appropriately glue together  local recovery sequences around different interfaces. We defer the proof of Proposition \ref{lemma: local1}  to Subsection \ref{sec: proof2} below and continue with the proof of the limsup inequality. 

\begin{proof}[Proof of Theorem \ref{thm:limsup2}]
Without loss of generality, we can assume that $y\in \mathcal{Y}_{\rm Id}(\Omega)$. For convenience of the reader, we subdivide the proof of the theorem into three steps. 

 \noindent\emph{Step I: Reduction to  a  finite number of interfaces.}   Exploiting the star-shapeness of the domain (say, with respect to the origin),  one can replace $y$ by a slightly rescaled   version   $y_\rho$ defined by $y_\rho(x)=\rho y(x/\rho)$, $\rho>1$, where $\mathcal{E}_0(y_\rho) \to \mathcal{E}_0(y)$ as $\rho \to 1$. One can show that for each $\rho>1$ the jump set $J_{\nabla y_\rho}$ consists only  of a \emph{finite} number of  subsets of hyperplanes that intersect $\partial \Omega$ and are orthogonal to ${\rm e}_d$. We refer to  \cite[Proof of Proposition 5.1]{conti.schweizer2} for the details of this rescaling. The geometrical intuition is that, since infinitely many interfaces can only occur close to the boundary (see also Figure \ref{fig:limiting-def}), a rescaling allows to reduce the study to a finite number of interfaces. It suffices to construct recovery sequences for $y_\rho$ since a recovery sequence for $y$ can then be obtained by a diagonal argument. Thus, in the following it is not restrictive to assume that $J_{\nabla y}$ consists only of a finite number of interfaces.\\

\noindent\emph{Step II: Local recovery sequence.}  In view of Step I, we can suppose that $J_{\nabla y}$ has the form $J_{\nabla y} = \bigcup\nolimits_{j=1}^J (\omega_j \times \lbrace \alpha_j \rbrace)$, where $\omega_j\subset \R^{d-1} $ are open, bounded, and with Lipschitz boundary. Let $\delta>0$. As $\partial \Omega$ has Lipschitz boundary and the $J$ interfaces intersect $\partial \Omega$, we can choose $\omega_j' \supset \supset \omega_j$ open with Lipschitz boundary   and \RRR $0<h<h_0$ (where $h_0$ is the constant in the statement of Proposition \ref{lemma: local1}) \EEE such that  the sets $\partial \omega_j' \times (\alpha_j-h,\alpha_j + h)$ do not intersect $\overline{\Omega}$, the different cylindrical sets $\alpha_j{\rm e}_d +D_{\omega_j',h} = \omega_j' \times (\alpha_j-h,\alpha_j + h)$ are pairwise disjoint,  and one has
\begin{align}\label{eq: slighly larger}  
\mathcal{H}^{d-1}(\omega_j') \le \mathcal{H}^{d-1}(\omega_j) + \delta/J. 
\end{align}
We write  $D_j := \alpha_j{\rm e}_d +D_{\omega_j',h}$ for brevity. Note that on each $D_j \cap \Omega$ the function $y$ coincides with $y_0^+$ or $y_0^-$ up to a translation. Thus,   by Proposition \ref{lemma: local1} we can find $\{w_{\ep}^+\}_{\ep}$   or   $\{w_{\ep}^-\}_{\ep}$ such that \eqref{eq:local1-3a}--\eqref{eq:local1-3} are satisfied and the sequence converges to $y$ in $L^1(D_j\cap \Omega;\R^d)$.   For convenience, we denote this sequence by  $\{w_{\ep}^j\}_{\ep} \subset H^2(D_j;\R^d)$ for $j=1,\ldots,J$.

\noindent\emph{Step III: Global recovery sequence.} Using that $\Omega$ is star-shaped, we find that  $\Omega \setminus \bigcup_{j=1}^J D_j$ consists of $J+1$ components which we denote  by $\lbrace B_j \rbrace_{j=1}^{J+1}$. Applying Proposition \ref{lemma: local1}, one can select isometries $\lbrace I^j_\eps\rbrace_{j=1}^J$ and  $\lbrace\hat{I}^j_\eps\rbrace_{j=1}^{J+1}$, such that the functions $y^\eps : \Omega \to \R^d$ defined by
$$y^\eps = I_\eps^j \circ w_\eps^j  \ \ \text{on} \ \ D_j \cap \Omega, \ \ \ \ \ \ y^\eps = \hat{I}_\eps^j \circ y  \ \ \text{on} \ \ B_j$$
 are  in $H^2(\Omega;\R^d)$, and all isometries converge to the identity as $\eps \to 0$. These isometries can be chosen iteratively, and we refer to \cite[Proof of Proposition 3.5]{conti.schweizer} for details. Since $w_\eps^j$ converges to $y$ in $L^1(D_j\cap \Omega;\R^d)$ and all isometries converge to the identity, we obtain $y^\eps \to y$ in $L^1(\Omega;\R^d)$. The construction also implies that  on $\bigcup_{j=1}^{J+1} B_j$ there holds  $\nabla y^\eps \in  SO(d)\lbrace A,B \rbrace$ and $\nabla^2 y^\eps = 0$. Therefore, by Proposition \ref{lemma: local1} and \eqref{eq: slighly larger}   we   deduce  
\begin{align*}
\limsup_{\eps \to 0}\mathcal{E}_{\ep}({y}^\eps) &   \leq   \limsup_{\eps \to 0}\sum\nolimits_{j=1}^J \mathcal{E}_{\ep}(w_\eps^j,D_j)   = K \, \sum\nolimits_{j=1}^J \mathcal{H}^{d-1}(\omega'_j) \\
& \le  K \, \sum\nolimits_{j=1}^J \mathcal{H}^{d-1}(\omega_j) + K\delta = K\, \mathcal{H}^{d-1}(J_{\nabla y}) + K\delta.
\end{align*}
Letting $\delta \to 0$ and  using a standard  diagonal argument we obtain the thesis. 
\end{proof}

\subsection{Properties of optimal-profile energy}\label{sec: cell}

In this subsection we prove Proposition \ref{prop:cell-form}. Additionally, we show in Proposition \ref{prop:FG} that, \RRRR given \emph{any} sequence $\{\ep_i\}_i$, there exists a sequence of optimal profiles, i.e., sequences of deformations whose energies asymptotically converge to the value of the optimal-profile energy. \EEE As a byproduct, we also get that the energy of optimal-profile sequences  concentrates near  the interface, see Corollary \ref{cor: layer energy}.

\begin{proof}[Proof of Proposition \ref{prop:cell-form}]
We first observe that for all $h>0$
\begin{align}\label{eq: prelim-prop}
{\rm (a)} &  \ \ \mathcal{F}(x' + \omega;h) = \mathcal{F}(\omega;h) \ \ \ \text{ for all $x' \in \R^{d-1}$}\notag, \\
{\rm (b)} &  \ \ \mathcal{F}(\omega_1;h) \le \mathcal{F}(\omega_2;\tau) \ \ \ \text{ if   $\omega_1 \subset \omega_2$   and $h \le \tau$},\notag \\
{\rm (c)} &   \ \ \mathcal{F}(\omega_1 \cup \omega_2;h) \ge \mathcal{F}(\omega_1;h)  + \mathcal{F}(\omega_2;h) \ \ \ \text{ if $\omega_1 \cap \omega_2 = \emptyset$.} 
\end{align}
These elementary properties follow from the fact that $\mathcal{E}_\eps$ is nonnegative and invariant under translations, and the observation that sequences in \eqref{eq: k-intro} on $D_{\omega_2,\tau}$ are still admissible on $D_{\omega_1,h}$, whenever $\omega_1 \subset \omega_2$ and $h \le \tau$.  

As a preparation for the proof of {\rm (i)}, we perform a standard rescaling argument for a configuration $y \in H^2(\alpha D_{\omega,h};\R^d)$ with $0 < \alpha <1$. We define $\bar{y} \in H^2(D_{\omega,h};\R^d)$ by $\bar{y}(x) =  y(\alpha x)/\alpha$, and observe that $\nabla \bar{y}(x) = \nabla y (\alpha x)$ and  $\nabla^2 \bar{y}(x) = \alpha  \nabla^2y(\alpha x)$ for all $x \in D_{\omega,h}$.  The fact that the sequence $\lbrace  \eta_{\ep,d} \rbrace_\eps$ is increasing as $\eps \to 0$ (see \eqref{eq:eta-ep}) implies  $\eta^2_{\sqrt{\alpha}\eps,d} \ge \alpha\eta^2_{\ep,d}$. Thus, we obtain by \eqref{eq: nonlinear energy}  
\begin{align}\label{eq: transformation preparation}
\mathcal{E}_{\sqrt{\alpha}\ep}(y,\alpha D_{\omega,h})  &\ge  \frac{1}{\alpha\ep^2}\int_{\alpha D_{\omega,h}}W(\nabla y)\,dx+ \alpha\ep^2\int_{\alpha D_{\omega,h}}|\nabla^2 y|^2\,dx+ \alpha\eta_{\ep,d}^2  \int_{\alpha D_{\omega,h}}(|\nabla^2 y|^2-|\partial^2_{dd} y|^2)\,dx \notag \\
& =  \frac{\alpha^{d-1}}{\ep^2}\int_{D_{\omega,h}}W(\nabla \bar{y})\,dx+\alpha^{d-1} \ep^2\int_{D_{\omega,h}}|\nabla^2 \bar{y}|^2\,dx+ \alpha^{d-1} \eta_{\ep,d}^2 \int_{D_{\omega,h}}(|\nabla^2 \bar{y}|^2-|\partial^2_{dd} \bar{y}|^2)\,dx \notag\\ 
&=  \alpha^{d-1}  \mathcal{E}_{\ep}(\bar{y},D_{\omega,h}). 
\end{align}

We now prove {\rm (i)}. Let $0< \alpha < 1$.  By \eqref{eq: k-intro}, for a given $\delta>0$, we can choose sequences $\lbrace \eps_i\rbrace_{i}$ and  $\lbrace y^{\eps_i}\rbrace_i \subset H^2(\alpha D_{\omega,h};\R^d)$ with $  \Vert  y^{\eps_i} -  \RRR y_0^+\EEE \Vert_{L^1(\alpha D_{\omega,h})} \to 0$
and 
\begin{align}\label{eq: quasi opt}
\liminf_{i \to \infty} \mathcal{E}_{\sqrt{\alpha}\eps_i}(y^{\eps_i}, \alpha D_{\omega,h}) \le \mathcal{F}(\alpha\omega; \alpha h) + \delta.
\end{align}
Let $\lbrace \bar{y}^{\eps_i}\rbrace_i\subset H^2(D_{\omega,h};\R^d)$ be the rescaled functions defined before \eqref{eq: transformation preparation}. Note that $  \Vert  \bar{y}^{\eps_i} -  \RRR y_0^+\EEE \Vert_{L^1(D_{\omega,h})} \to 0$, which follows from a scaling argument and  the fact that the function  $\bar{y}_0$   defined by $\bar{y}_0(x):=  \RRR y_0^+\EEE(\alpha x)/\alpha$ coincides with $\RRR y_0^+\EEE$. The definition of $\mathcal{F}$, \eqref{eq: transformation preparation}, and \eqref{eq: quasi opt} imply
$$\delta + \mathcal{F}(\alpha \omega; \alpha h) \ge  \liminf_{i \to \infty} \mathcal{E}_{\sqrt{\alpha}\eps_i}(y^{\eps_i}, \alpha D_{\omega,h}) \ge \alpha^{d-1}\liminf_{i \to \infty} \mathcal{E}_{\ep_i}(\bar{y}^{\eps_i}, D_{\omega,h}) \ge \alpha^{d-1}\mathcal{F}(\omega;h).   $$
Since $\delta >0$ was arbitrary, {\rm (i)} follows. 

The proof of properties {\rm (ii)} and {\rm (iii)} is similar to the one in \cite[Lemma 4.3]{conti.fonseca.leoni}. We present the main steps here for convenience of the reader. We show {\rm (ii)}. We use  a covering theorem (see, e.g., \cite[Remark 1.148{\rm (ii)}]{fonseca.leoni}) to decompose $\omega = \bigcup_{i \in \N} (a_i + \delta_i Q') \cup N_0$ into pairwise disjoint sets  $a_i + \delta_i Q'$, for  $a_i \in \R^{d-1}$ and $0 < \delta_i < 1$,  where $\mathcal{H}^{d-1}(N_0) = 0$, $Q'=\big(-\tfrac12,\tfrac12\big)^{d-1}$,  and 
\begin{align}\label{eq: correct-vol}
\sum\nolimits_{i=1}^\infty \delta_i^{d-1} = \mathcal{H}^{d-1}(\omega).
\end{align}
 Then \eqref{eq: prelim-prop} and {\rm (i)} imply for all $I \in \N$
\begin{align*}
\mathcal{F}(\omega;h) \ge \sum\nolimits_{i=1}^I  \mathcal{F}(\delta_i Q'; h) \ge \sum\nolimits_{i=1}^I  \mathcal{F}(\delta_i Q'; \delta_i h) \ge \sum\nolimits_{i=1}^I  \delta_i^{d-1} \mathcal{F}(Q'; h).
\end{align*}
Letting $I \to \infty$ and using \eqref{eq: correct-vol} we conclude   that   $\mathcal{F}(\omega;h) \ge \mathcal{H}^{d-1}(\omega)\,  \mathcal{F}(Q';h)$. The reverse inequality follows by interchanging the roles of $\omega$ and $Q'$ in the above argument, see \cite[Lemma 4.3]{conti.fonseca.leoni} for details.

We finally prove {\rm (iii)}. The second identity in {\rm (iii)} follows from {\rm (ii)} and the fact that $K = \mathcal{F}(Q'; \frac{1}{2})$, see \eqref{eq: our-k1}. We show the first identity. To this end, it suffices to prove   that 
\begin{align}\label{eq: kappa-h-1}
\mathcal{F}(Q';\tau) = \mathcal{F}(Q';\gamma \tau) \ \ \ \ \text{ for all $\tau>0$ and for all $\gamma \in \N$}.
\end{align}
Indeed, by \eqref{eq: prelim-prop}(b) we get $\mathcal{F}(Q';\tau) \le \mathcal{F}(Q';\frac{1}{2}) \le \mathcal{F}(Q';\gamma \tau)$ for all $0<\tau< \frac{1}{2}$ and $\gamma \in \N$ such that $\gamma \tau \ge \frac{1}{2}$. This along with \eqref{eq: kappa-h-1} then implies $\mathcal{F}(Q';\tau)= \mathcal{F}(Q';\frac{1}{2})$ for all $0<\tau< \frac{1}{2}$. Using \eqref{eq: kappa-h-1} once more, we get $\mathcal{F}(Q';h)= \mathcal{F}(Q';\frac{1}{2})$ for all $h>0$. The statement follows with {\rm (ii)}.

Let us now show \eqref{eq: kappa-h-1}. We decompose   $\gamma Q'$ into the union  
$$\gamma Q' = \bigcup\nolimits_{i=1}^{\gamma^{d-1}} (a_i  + Q') \cup N_0$$
consisting of pairwise disjoint hypercubes, where $\mathcal{H}^{d-1}(N_0) = 0$. By  {\rm (i)} (with $\omega = \gamma Q'$,   $h = \gamma \tau$,   and  $\alpha =1/\gamma$) we find $\mathcal{F}(Q'; \tau) \ge  \gamma^{-(d-1)}\mathcal{F}(\gamma Q'; \gamma \tau)$. Thus, using \eqref{eq: prelim-prop} we compute
\begin{align*}
 \mathcal{F}(Q'; \tau) \ge  \gamma^{-(d-1)}\mathcal{F}(\gamma Q'; \gamma \tau) \ge \gamma^{-(d-1)}\sum\nolimits_{i=1}^{\gamma^{d-1}} \mathcal{F} (a_i + Q';\gamma \tau) \ge \mathcal{F} (Q';\gamma \tau) \ge  \mathcal{F}(Q'; \tau). 
\end{align*}
This concludes the proof  of the proposition.  
\end{proof}

We now show that a sequence of optimal profiles can be chosen independently of the particular choice of $\lbrace \eps_i \rbrace_i$. To this end, \RRR we first observe that the  function $\mathcal{F}$ defined in \eqref{eq: k-intro} can be written equivalently as
\begin{align}\label{eq: k-intro-NNN}
\mathcal{F}(\omega;h) = \inf_{\lbrace\eps_i\rbrace_i, \, \eps_i \to 0}  \, \inf \Big\{\liminf_{i \to \infty} \mathcal{E}_{\ep_i}(y^{\ep_i},D_{\omega,h}): \   \lim_{i \to \infty}  \Vert  y^{\eps_i} -   y_0^+ \Vert_{L^1(D_{\omega,h})}  = 0\Big\}.
\end{align}
Similar to  $\mathcal{F}$, we introduce  the function $\mathcal{G}$,   given by  
\begin{align*}
\mathcal{G}(\omega;h) = \sup_{\lbrace \eps_i\rbrace_i, \, \eps_i \to 0} \, \inf \Big\{\RRRR \limsup_{i \to \infty} \EEE \mathcal{E}_{\ep_i}(y^{\ep_i},D_{\omega,h}): \   \lim_{i \to \infty}  \Vert  y^{\eps_i} -   y_0^+ \Vert_{L^1(D_{\omega,h})}  = 0\Big\},
\end{align*}
for every $\omega\subset \mathbb{R}^{d-1}$ and $h>0$. \EEE 


\begin{proposition}[$\mathcal{F} = \mathcal{G}$]
\label{prop:FG}
\RRR There exists $\bar{h}>0$ such that \EEE $\mathcal{F}(\omega;h) = \mathcal{G}(\omega;h)$ for all $\omega \subset \R^{d-1}$ open, bounded with $\mathcal{H}^{d-1}(\partial \omega)=0$ and all \RRR $0<h<\bar{h}$. \EEE 
\end{proposition}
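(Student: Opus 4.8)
The inequality $\mathcal{F}(\omega;h)\le\mathcal{G}(\omega;h)$ holds for every $h$ and needs no argument beyond $\liminf\le\limsup$: for a fixed vanishing sequence $\{\eps_i\}$ and any $\{y^{\eps_i}\}_i$ with $\|y^{\eps_i}-y_0^+\|_{L^1(D_{\omega,h})}\to 0$ one has $\liminf_i\mathcal{E}_{\eps_i}(y^{\eps_i},D_{\omega,h})\le\limsup_i\mathcal{E}_{\eps_i}(y^{\eps_i},D_{\omega,h})$, so by \eqref{eq: k-intro-NNN} we get $\mathcal{F}(\omega;h)\le\inf\{\limsup_i\mathcal{E}_{\eps_i}(y^{\eps_i},D_{\omega,h}):y^{\eps_i}\to y_0^+\text{ in }L^1\}$ for every $\{\eps_i\}$, and taking the supremum over $\{\eps_i\}$ yields the claim. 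For the converse $\mathcal{G}(\omega;h)\le\mathcal{F}(\omega;h)$, recall $\mathcal{F}(\omega;h)=K\,\mathcal{H}^{d-1}(\omega)$ by Proposition \ref{prop:cell-form}(iii); it then suffices to prove that for \emph{every} $\{\eps_i\}\to 0$ the quantity $\inf\{\limsup_i\mathcal{E}_{\eps_i}(y^{\eps_i},D_{\omega,h}):y^{\eps_i}\to y_0^+\text{ in }L^1\}$ is at most $K\,\mathcal{H}^{d-1}(\omega)$.

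The main idea is to identify $K$ with the one-dimensional optimal-transition energy $K_1:=\lim_{\eps\to0}\inf\big\{\tfrac1{\eps^2}\int_{-1/2}^{1/2}W({\rm diag}(1,\dots,1,\psi'))\,dt+\eps^2\int_{-1/2}^{1/2}|\psi''|^2\,dt:\ \psi'(\tfrac12^-)=1,\ \psi'(-\tfrac12^+)=1+\kappa\big\}$ (a well-defined number by the classical one-dimensional $\Gamma$-convergence). First, for $\psi_\eps$ a near-optimal one-dimensional transition at scale $\eps$, the deformation $w_\eps(x)=(x_1,\dots,x_{d-1},\psi_\eps(x_d))$ satisfies $w_\eps\to y_0^+$ in $H^1(D_{\omega,h})$ along \emph{any} vanishing sequence of $\eps$'s, its Hessian has only the $dd$-component, so that $|\nabla^2 w_\eps|^2-|\partial^2_{dd}w_\eps|^2\equiv0$ and $\mathcal{E}_\eps(w_\eps,D_{\omega,h})=I_\eps(w_\eps,D_{\omega,h})\to K_1\,\mathcal{H}^{d-1}(\omega)$; using $w_\eps$ as a competitor shows both $\mathcal{F}(\omega;h)\le K_1\,\mathcal{H}^{d-1}(\omega)$ and $\mathcal{G}(\omega;h)\le K_1\,\mathcal{H}^{d-1}(\omega)$. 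Second, for any admissible sequence $y^\eps\to y_0^+$ with $\sup_\eps\mathcal{E}_\eps(y^\eps,D_{\omega,h})<\infty$, the inequality $\eta_{\eps,d}^2\int_{D_{\omega,h}}(|\nabla^2 y^\eps|^2-|\partial^2_{dd}y^\eps|^2)\,dx\le\mathcal{E}_\eps(y^\eps,D_{\omega,h})$ together with $\eta_{\eps,d}\to+\infty$ forces $\partial^2_{ij}y^\eps\to0$ in $L^2(D_{\omega,h})$ for every $(i,j)\neq(d,d)$, i.e. $\nabla y^\eps$ becomes asymptotically independent of $x'=(x_1,\dots,x_{d-1})$; replacing $\nabla y^\eps$ by its $x'$-average and using the coercivity H4.\ reduces the problem, slice by slice in $x_d$, to the one-dimensional transition problem, whence $\liminf_\eps\mathcal{E}_\eps(y^\eps,D_{\omega,h})\ge K_1\,\mathcal{H}^{d-1}(\omega)$ and therefore $\mathcal{F}(\omega;h)\ge K_1\,\mathcal{H}^{d-1}(\omega)$. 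Combining, $K=K_1$ and $\mathcal{G}(\omega;h)\le K_1\,\mathcal{H}^{d-1}(\omega)=K\,\mathcal{H}^{d-1}(\omega)=\mathcal{F}(\omega;h)$. An alternative route, avoiding the explicit identification of $K$, is to transplant a near-optimal profile from one scale to a neighbouring one by the dilation identity \eqref{eq: transformation preparation} (the energy penalty being a power of the ratio of the two scales), filling the remaining gaps in scale by the iteration scheme used to prove Proposition \ref{prop:cell-form}(iii); general $\omega$ is then reduced to cubes by the covering of Proposition \ref{prop:cell-form}(ii).

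The step I expect to be the genuine obstacle is the lower bound $\mathcal{F}(\omega;h)\ge K_1\,\mathcal{H}^{d-1}(\omega)$: turning the soft statement ``$\nabla y^\eps$ asymptotically $x'$-independent'' into an honest reduction to the one-dimensional problem requires controlling $\eps^{-2}\int_{D_{\omega,h}}W(\nabla y^\eps)$ when $\nabla y^\eps$ is replaced by its $x'$-average, and since $W$ is only continuous while it carries the weight $\eps^{-2}$ this has to be carried out with the same quantitative care as in the proof of Proposition \ref{thm:liminf} (a Poincar\'e inequality in $x'$ combined with a truncation/slicing argument and the growth bound H5.). The reduction to cubes and the replacement of the various $\liminf$'s by $\limsup$'s in passing to $\mathcal{G}$ are routine once the one-dimensional profiles $w_\eps$, which realize the value \emph{scale by scale}, are available; the smallness $0<h<\bar h$ enters only as a technical convenience in these manipulations (e.g.\ to ensure that the transition layers of width $\sim\eps^2$ and the auxiliary cut-offs stay well inside the cylinder), and does not reflect a genuine restriction.
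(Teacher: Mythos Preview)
Your easy inequality $\mathcal{F}\le\mathcal{G}$ is correct, and the one-dimensional competitors $w_\eps(x)=(x',\psi_\eps(x_d))$ do give $\mathcal{G}(\omega;h)\le K_1\,\mathcal{H}^{d-1}(\omega)$ along \emph{every} vanishing sequence, since the anisotropic term vanishes identically on them. Together with Proposition~\ref{prop:cell-form}(iii) this yields $K\le K_1$.

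The genuine gap is the lower bound $K\ge K_1$, which your averaging sketch does not deliver. From $\eta_{\eps,d}^2\int(|\nabla^2 y^\eps|^2-|\partial^2_{dd}y^\eps|^2)\le C$ and Poincar\'e in $x'$ you get $\|\nabla y^\eps-\bar F^\eps\|_{L^2}\le C\eta_{\eps,d}^{-1}$, where $\bar F^\eps$ is the $x'$-average of $\nabla y^\eps$. But $\eta_{\eps,d}^{-1}\gg\eps$ (for $d=2,3$ one has $\eta_{\eps,d}=\eps^{-1/3}$, so $\eta_{\eps,d}^{-1}=\eps^{1/3}$), and under the weight $\eps^{-2}$ the elastic energies of $\nabla y^\eps$ and $\bar F^\eps$ cannot be compared: even with the two-sided quadratic bounds H4.--H5., the discrepancy $\eps^{-2}\int|W(\nabla y^\eps)-W(\bar F^\eps)|$ is at best of order $\eps^{-2}\eta_{\eps,d}^{-1}\to\infty$. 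Thus the soft ``asymptotic $x'$-independence'' does not reduce the problem to the one-dimensional one, and the identity $K=K_1$ is neither proved nor used in the paper.

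The paper takes a different route and never identifies $K$. Given a prescribed $\{\eps_i\}$, it picks a near-optimal sequence for $\mathcal{F}$ at some \emph{other} scale $\{\tilde\eps_j\}$ and rescales it to scale $\eps_i$ via \eqref{eq: transformation preparation}; the resulting domain is a cylinder of diverging height $h_i\to\infty$. A De~Giorgi selection extracts a column of the original width with controlled energy and $H^1$-closeness to $\nabla y_0^+$. The key step, missing from your alternative route, is Lemma~\ref{lemma: intefacefind} (``zooming to the interface''): invoking the phase decomposition of Proposition~\ref{lemma: phases} and the two-well rigidity of Theorem~\ref{thm:rigiditythm}, one locates inside the tall column a cylinder of \emph{fixed} height $\mu$ on which a translated and rotated copy of the sequence converges to $y_0^+$. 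This produces, for the prescribed $\{\eps_i\}$, an admissible competitor on $D_{\mathcal{Q}',\mu}$ with $\limsup_i\mathcal{E}_{\eps_i}\le\mathcal{F}(\mathcal{Q}';1)+\delta$, whence $\mathcal{G}(\mathcal{Q}';\mu)\le\mathcal{F}(\mathcal{Q}';\mu)$; the passage from cubes to general $\omega$ is Step~I of the paper's proof. The restriction $0<h<\bar h=\mu$ originates exactly here --- it is the height output by the zooming lemma, not a cosmetic convenience.
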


For the proof of Proposition \ref{prop:FG} we need the following technical lemma. Recall $\kappa = |A-B|$ and the constant $c_1$ from H4. Recall also the definition of $E_{\eps,\eta}$ in  \eqref{eq: nonlinear energy}.

\begin{lemma}[Zooming to the interface]\label{lemma: intefacefind}
Let $\lbrace \eps_i\rbrace_i$ be an infinitesimal sequence and let  $\eta_{\ep_i}\geq \ep_i^{-\frac13}$  for every $i \in \N$. Let $\mathcal{\mathcal{Q}}' \subset \R^{d-1}$ be a cube. Let $\mathcal{Q} \subset \R^d$ be the cube whose orthogonal projection on $\R^{d-1} \times \lbrace 0 \rbrace$ is $\mathcal{Q}'$. Let  $\lbrace h_i\rbrace_i \subset \R_+$. For every $i\in \mathbb{N}$, let $y^{i} \in H^2(\mathcal{Q} \cup D_{\mathcal{Q}',h_i};\R^d)$  with $E_{\eps_i,\eta_{\ep_i}}(y^{i},\mathcal{Q} \cup D_{\mathcal{Q}',h_i}) \le M <+\infty$, and   assume that  
\begin{align}\label{eq: converg assu}
h_i^{-1}\Vert  \nabla  y^{i} - \nabla  \RRR y_0^+\EEE \Vert^2_{L^2(D_{\mathcal{Q}',h_i})} \to 0.
\end{align} 
Then \RRR for \EEE every bounded sequence $\lbrace \tau_i \rbrace_i \subset \R_+$  with 
\begin{align}\label{eq: cond on h}
 \tau_i \le \RRR h_i, \EEE  \ \ \ \ \ \tau_i\eta_{\eps_i}/\eps_i \to \infty, \ \ \ \ \ \ \ \tau_i/ \eps_i^{1+\frac{1}{d}} \to \infty 
\end{align}
 we find \RRR $\mu >0$, \EEE a sequence $\lbrace \alpha_i \rbrace_i \subset \R$ with $\alpha_i{\rm e}_d+D_{\mathcal{Q}',\RRR \mu \EEE \tau_i}\subset D_{\mathcal{Q}',h_i}$, and a sequence of isometries $\lbrace I_i\rbrace_i$ such that the  maps  $\lbrace v^i\rbrace_i \subset H^2(D_{\mathcal{Q}',\RRR \mu \EEE\tau_i};\R^d)$, defined by 
\begin{align}\label{eq: vi}
v^i(x) = I_i \circ  y^{i}  (x + \alpha_i {\rm e}_d) \ \ \ \text{ for every }\, x \in  D_{\mathcal{Q}',\RRR \mu \EEE\tau_i}, 
\end{align}
satisfy
\begin{align}\label{eq: to confirm}
\tau_i^{-1}\Vert \nabla v^i - \nabla \RRR y_0^+\EEE \Vert^2_{L^2(D_{\mathcal{Q}',\RRR \mu \EEE\tau_i})} \to 0.
\end{align} 
\end{lemma}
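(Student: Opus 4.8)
The plan is to use the two‑well rigidity estimate of Theorem~\ref{thm:rigiditythm} to replace $\nabla y^i$ by a phase indicator up to a small $L^2$‑error, then to locate, at the scale $\tau_i$, a sub‑cylinder on which this phase indicator is an honest half‑space, and finally to read off the rescaled convergence \eqref{eq: to confirm}, keeping careful track of how all errors depend on $\eps_i$, $\eta_{\eps_i}$ and $\tau_i$. First I would note that $D_{\mathcal{Q}',h_i}$ is a paraxial cuboid contained in $\mathcal{Q}\cup D_{\mathcal{Q}',h_i}$, so $E_{\eps_i,\eta_{\eps_i}}(y^i,D_{\mathcal{Q}',h_i})\le M$; applying Theorem~\ref{thm:rigiditythm} (part (a) if $d=2$, part (b) with $p=2$ and the paraxial‑cuboid version of Remark~\ref{rk:sets-for-lemma} if $d\ge 3$) I obtain $R_i\in SO(d)$ and $\mathcal{M}_i=A\chi_{T_i}+B\chi_{D_{\mathcal{Q}',h_i}\setminus T_i}$ with $\|\nabla y^i-R_i\mathcal{M}_i\|_{L^2(D_{\mathcal{Q}',h_i})}\le\rho_i$ and $|D\mathcal{M}_i|(D_{\mathcal{Q}',h_i})\le cM$. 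Since $\eta_{\eps_i}\ge\eps_i^{-1/3}$ gives $\eps_i/\eta_{\eps_i}+\eps_i^{1/2}/\eta_{\eps_i}^{3/2}\le C\eps_i$, one has $\rho_i\le C\eps_i^{r(2,d)}$, and an elementary computation shows that the hypotheses $\tau_i\eta_{\eps_i}/\eps_i\to\infty$ and $\tau_i/\eps_i^{1+1/d}\to\infty$ force $\tau_i^{-1}\rho_i^2\to 0$. By Remark~\ref{rem: setT} the set $T_i$ is exactly the one produced in Proposition~\ref{lemma: phases}, so that, with $\sigma_i:=c\,\eps_iM/\eta_{\eps_i}$, also $\mathcal{H}^{d-1}(\partial^*T_i\cap D_{\mathcal{Q}',h_i})\le cM$, the non‑${\rm e}_d$ components of $\nu_{T_i}$ satisfy $\int_{\partial^*T_i\cap D_{\mathcal{Q}',h_i}}|\langle\nu_{T_i},{\rm e}_j\rangle|\,d\mathcal{H}^{d-1}\le\sigma_i$ for $j<d$, and $\sigma_i/\tau_i\to 0$, again because $\tau_i\eta_{\eps_i}/\eps_i\to\infty$.

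The core of the argument is then to locate the interface. Since $\rho_i^2=o(\tau_i)=o(h_i)$, combining the rigidity estimate with the hypothesis $\|\nabla y^i-\nabla y_0^+\|^2_{L^2(D_{\mathcal{Q}',h_i})}=o(h_i)$ and the fact that $\dist(A,SO(d)B)=\kappa$ yields $\mathcal{L}^d(T_i\triangle\{x_d>0\})=o(h_i)$. I would then study the slice function $g_i(t):=\mathcal{L}^{d-1}(T_i\cap(\mathcal{Q}'\times\{t\}))$: by the slicing theory for sets of finite perimeter, the bound $\sigma_i$ on the non‑${\rm e}_d$ part of $\nu_{T_i}$ implies that for all $t$ outside a set of measure $o(\tau_i)$ the slice of $T_i$ has small relative perimeter in $\mathcal{Q}'$, hence by the relative isoperimetric inequality is either essentially all of $\mathcal{Q}'$ or essentially empty, so that the thresholded function $\tilde g_i:=\chi_{\{g_i>\mathcal{H}^{d-1}(\mathcal{Q}')/2\}}$ records the correct phase there; the bound $|D\mathcal{M}_i|=\kappa\,\mathcal{H}^{d-1}(\partial^*T_i)\le cM$ controls the total variation of $g_i$, hence the number of jumps of $\tilde g_i$, by a constant $L_0$; and $\mathcal{L}^d(T_i\triangle\{x_d>0\})=o(h_i)$ forces $\tilde g_i=\chi_{\{t>0\}}$ outside a set of measure $o(h_i)$. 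From this one deduces that, for $i$ large, the leftmost ($0$‑valued) and rightmost ($1$‑valued) plateaus of $\tilde g_i$ have length at least $h_i-o(h_i)$ and, after absorbing the remaining plateaus into the exceptional set, meet at a height $\alpha_i$ with $|\alpha_i|=o(h_i)$; choosing $\mu:=1/(L_0+2)$ I then get $\alpha_i{\rm e}_d+D_{\mathcal{Q}',\mu\tau_i}\subset D_{\mathcal{Q}',h_i}$ together with a uniformly‑phased layer of thickness $\ge\mu\tau_i$ above and below $\alpha_i$, whence, using once more the $o(\tau_i)$‑smallness of the exceptional height‑set and of $\sigma_i$,
\begin{equation}\label{eq:plan-hs}
\mathcal{L}^d\Big(\big(\alpha_i{\rm e}_d+D_{\mathcal{Q}',\mu\tau_i}\big)\cap\big(T_i\triangle\{x_d>\alpha_i\}\big)\Big)=o(\tau_i).
\end{equation}

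To conclude, I would take $I_i$ to be the isometry $z\mapsto R_i^{-1}z+b_i$, with $b_i$ chosen so that $v^i:=I_i\circ y^i(\cdot+\alpha_i{\rm e}_d)$ is appropriately normalized; then $\nabla v^i(x)=R_i^{-1}\nabla y^i(x+\alpha_i{\rm e}_d)$, and since $R_i^{-1}(R_i\mathcal{M}_i)=\mathcal{M}_i$, the values of $\mathcal{M}_i$ lie in $\{A,B\}$, and $\nabla y_0^+=A\chi_{\{x_d>0\}}+B\chi_{\{x_d<0\}}$, on $D_{\mathcal{Q}',\mu\tau_i}$ one has
\begin{align*}
\|\nabla v^i-\nabla y_0^+\|_{L^2(D_{\mathcal{Q}',\mu\tau_i})}
&\le\|\nabla y^i-R_i\mathcal{M}_i\|_{L^2(\alpha_i{\rm e}_d+D_{\mathcal{Q}',\mu\tau_i})}\\
&\quad+\kappa\,\mathcal{L}^d\Big(\big(\alpha_i{\rm e}_d+D_{\mathcal{Q}',\mu\tau_i}\big)\cap\big(T_i\triangle\{x_d>\alpha_i\}\big)\Big)^{1/2},
\end{align*}
which by the rigidity estimate and \eqref{eq:plan-hs} is at most $\rho_i+o(\tau_i^{1/2})$. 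Dividing by $\tau_i^{1/2}$ and using $\tau_i^{-1}\rho_i^2\to 0$ gives \eqref{eq: to confirm}.

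I expect the main obstacle to be the middle step, namely establishing \eqref{eq:plan-hs}: one has to upgrade the purely volumetric information $\mathcal{L}^d(T_i\triangle\{x_d>0\})=o(h_i)$ to a \emph{localized} half‑space approximation at the much finer scale $\tau_i$. This is precisely where the anisotropic penalization is indispensable — through the smallness of $\sigma_i$ it prevents $\partial^*T_i$ from developing non‑horizontal interface, so that $g_i$ is essentially two‑valued with boundedly many jumps — and it is also where the two scaling hypotheses are really used: $\tau_i\eta_{\eps_i}/\eps_i\to\infty$ to make $\sigma_i$ and the rigidity error $o(\tau_i)$, and $\tau_i/\eps_i^{1+1/d}\to\infty$ (together with $\eta_{\eps_i}\ge\eps_i^{-1/3}$) to control the rigidity error in the higher‑dimensional range $d\ge 4$.
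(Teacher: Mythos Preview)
Your strategy is the paper's strategy: obtain the phase set $T_i$ from the two-well rigidity machinery, read off the interface location by slicing in $x_d$ and using the anisotropic perimeter bound together with the relative isoperimetric inequality, and then conclude via the rigidity estimate and the volume bound on $T_i\triangle\{x_d>\alpha_i\}$. The slicing step you flag as the obstacle is indeed the delicate one, and the paper handles it essentially as you outline (it introduces an explicit tolerance $\lambda_i\to 0$ with $\lambda_i\tau_i\eta_{\eps_i}/\eps_i\to\infty$, proves that the thresholded indicator has at most a fixed number $N$ of jumps, and then selects the jump point and $\mu$ by a pigeonhole on the at most $N$ plateau lengths after passing to a subsequence where the combinatorics are frozen).

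There is, however, one genuine gap in your plan. You apply Theorem~\ref{thm:rigiditythm} on the full cuboid $D_{\mathcal{Q}',h_i}$ to produce \emph{both} $T_i$ and the single rotation $R_i$, with the bound $\|\nabla y^i-R_i\mathcal{M}_i\|_{L^2}\le\rho_i\le C\eps_i^{r(2,d)}$. The constant $C$ in Theorem~\ref{thm:rigiditythm} depends on the domain (see Remark~\ref{rem: afterth}(iii): invariance only under \emph{uniformly} Lipschitz reparametrizations), and nothing in the hypotheses prevents $h_i\to\infty$ --- in fact in the one place this lemma is used (Proposition~\ref{prop:FG}), $h_i=\alpha_i^{-1}\ge i^2$. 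For cuboids of unbounded aspect ratio the rigidity constant degenerates (already for the one-well FJM estimate), so your $\rho_i$-bound is not uniform in $i$, and the final inequality $\tau_i^{-1}\rho_i^2\to 0$ is unjustified.

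The paper avoids this by decoupling the two roles. The phase set $T_i$ is obtained from Proposition~\ref{lemma: phases}, whose constants depend only on $\kappa,d,c_1$ and \emph{not} on $\Omega$; this suffices for the entire interface-location step. Only \emph{after} $\alpha_i$ has been found is rigidity invoked, and then only on a cube $\tilde{\mathcal Q}_i\subset\Omega_i$ of bounded size containing $D_i$ (possible because $\{\tau_i\}$ is bounded and $\mathcal Q\subset\Omega_i$), so that the constant is uniform. Your plan is easily repaired in the same way: replace the global application of Theorem~\ref{thm:rigiditythm} by (i) Proposition~\ref{lemma: phases} on $\Omega_i$ to get $T_i$ and run your slicing argument, then (ii) Theorem~\ref{thm:rigiditythm} on a bounded cube around $D_i$ to get $R_i$. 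Incidentally, the paper applies the local rigidity with $p=\tfrac{d+1}{d}$ and then uses a truncation $E_i=\{|\nabla y^i|\le L\}$ to pass to $L^2$; your choice $p=2$ (applied locally) would also work --- your exponent check $\eps_i^{2r(2,d)}=\eps_i^{d/(d-1)}=o(\eps_i^{(d+1)/d})=o(\tau_i)$ is correct --- and would spare you that truncation.
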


Assumption \eqref{eq: converg assu} means that asymptotically a big portion of $D_{\mathcal{Q}',h_i} \cap \lbrace x_d >0 \rbrace$ and $D_{\mathcal{Q}',h_i} \cap \lbrace x_d <0 \rbrace$, respectively, is contained in the $A$ and $B$-phase region, respectively. The lemma states that one may find cylindrical sets inside $D_{\mathcal{Q}',h_i}$ with (much) smaller heights (satisfying  suitable assumptions,  cf.\ \eqref{eq: cond on h}) such that a similar property holds on these cylindrical sets, see \eqref{eq: to confirm} and  Figure \ref{fig:zoom-in}. Loosely speaking, the result shows that the interface between the $A$ and $B$-phase  regions  becomes asymptotically flat, where the nonflatness can be quantified in terms of the sequence $\lbrace \tau_i \rbrace_i$.  

\begin{figure}[h]
\centering
\begin{tikzpicture}
\coordinate (A) at (-3,3);
 \coordinate (B) at (3.5,3);
 \coordinate (E) at (3.5,2.1);
 \coordinate (F) at (3.5,1.4);
 \coordinate (G) at (3.5,0.6);
  \coordinate (O) at (-3,2.1);
 \coordinate (P) at (-3,1.4);
 \coordinate (Q) at (-3,0.6);


 \draw (A)--(B)--(G)--(Q)--cycle;
 \draw[fill=cyan, opacity=0.2] (-3,2.4)--(3.5,2.4)--(3.5,1.7)--(-3,1.7)--cycle;
 \draw [black] (-3,2.1)  to [out=-23,in=165] (3.5,2.1);
 
\node at (-1.8,1) {$A$};
\node at  (-0.6,2.1) {$B$};

 \coordinate (C) at (4,3);
  \coordinate (D) at (10,3);
   \coordinate (H) at (4,0);
    \coordinate (I) at (10,0);
    \draw[cyan, fill=cyan, opacity=0.2] (C)--(D)--(I)--(H)--cycle;
    \draw[cyan] (3.5,2.4)--(C);
     \draw[cyan] (3.5,1.7)--(H);
     \draw[black] (4,1.5)  to [out=-23,in=165] (10,1.5);
     \node at (7,2.25) {$A$};
      \node at (7,0.75) {$B$};


\end{tikzpicture}
\caption{The interface between the $A$ and $B$-phase regions becomes asymptotically flat.}
\label{fig:zoom-in}
\end{figure}
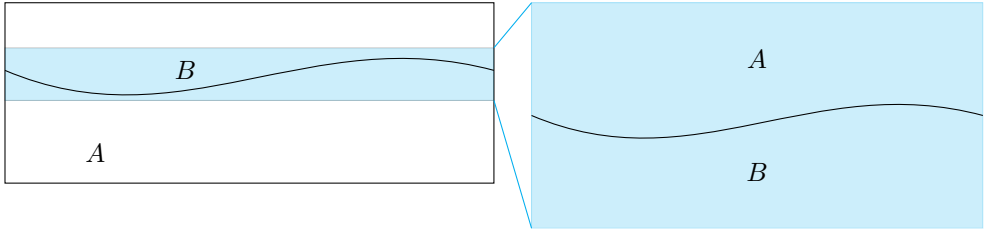

For the proof of Proposition \ref{prop:FG}, we will need this lemma only for $\tau_i \sim 1$ and $\eta_{\ep_i}=\eta_{\ep_i,d}$. However, we prefer to present this more general version  since this  will be instrumental in the companion work \cite{davoli.friedrich}.  We also remark that the assumption  $\tau_i\eta_{\eps_i}/\eps_i \to \infty$ on $\tau_i$ is sharp in order to obtain the above result.

We postpone the proof of the lemma and proceed with the proof of Proposition \ref{prop:FG}.

\begin{proof}[Proof of Proposition \ref{prop:FG}]
 For convenience of the reader, we subdivide the proof into three steps. In Step I we show how the problem can be reduced to the case in which  $\omega$ is a cube. In \RRR Steps \EEE II and III we then address this special  setting. Here, we will use Lemma \ref{lemma: intefacefind} and also some arguments inspired by \cite[Proposition 5.5]{conti.schweizer2}. 

\noindent\emph{Step I: Reduction to a cube.}  We first observe that  the essential point is to prove \RRR that there exists $\bar{h}>0$ such that \EEE
\begin{align}\label{eq: for cubes only}
\mathcal{F}(\mathcal{Q}';h) = \mathcal{G}(\mathcal{Q}';h) \ \ \ \ \text{for all cubes } \mathcal{Q}' \subset \R^{d-1} \text{ and all \RRR $0<h<\bar{h}$\EEE}. 
\end{align}
Once this is established, we may conclude as follows. Given $\omega \subset \R^{d-1}$ open, bounded, with $\mathcal{H}^{d-1}(\partial \omega)=0$, we select a cube $\mathcal{Q}' \subset \R^{d-1}$ containing $\omega$. Suppose by contradiction that the statement was wrong, i.e., \RRR that there exists $0<h<\bar{h}$ such that \EEE $\delta:= \frac{1}{\RRR 3 \EEE } (\mathcal{G}(\omega;h)- \mathcal{F}(\omega;h)) >0$.    Let  $\lbrace \eps_i \rbrace_i$ be a sequence such that for any $\lbrace v^{\eps_i} \rbrace_i \subset H^2(D_{\omega,h};\R^d)$ with $\Vert  v^{\eps_i} - \RRRR y_0^+ \EEE \Vert_{L^1(D_{\omega,h})}  \to 0$ one has 
\begin{align}\label{eq: for cubes only2}
\RRRR \limsup_{i \to \infty} \EEE \mathcal{E}_{\ep_i}(v^{\ep_i},D_{\omega,h}) \ge \mathcal{G}(\omega;h) \RRR - \delta. \EEE
\end{align}
In view of \eqref{eq: for cubes only}, for this specific sequence $\lbrace \eps_i \rbrace_i$, we can find a sequence of functions $\lbrace y^{\eps_i} \rbrace_i \subset H^2(D_{\mathcal{Q}',h};\R^d)$ such that $\Vert  y^{\eps_i} -  \RRRR y^+_0 \EEE \Vert_{L^1(D_{\mathcal{Q}',h})}  \to 0$ and 
\begin{align}\label{eq: for cubes only3}
\limsup_{i \to \infty} \mathcal{E}_{\ep_i}(y^{\ep_i},D_{\mathcal{Q}',h}) \le  \mathcal{G}(\mathcal{Q}';h) + \delta \EEE =   \mathcal{F}(\mathcal{Q}';h) +  \delta. \EEE
\end{align}
Using  \eqref{eq: k-intro-NNN}, Proposition \ref{prop:cell-form}{\rm (ii)},  \eqref{eq: for cubes only2}, and the equality $\RRR 3 \EEE \delta = \mathcal{G}(\omega;h)- \mathcal{F}(\omega;h)$ we derive
\begin{align*}
\RRRR \limsup_{i \to \infty} \EEE \mathcal{E}_{\ep_i}(y^{\ep_i},D_{\mathcal{Q}',h}) & \ge \liminf_{i \to \infty} \mathcal{E}_{\ep_i}(y^{\ep_i},D_{\mathcal{Q}',h} \setminus D_{\omega,h}) + \RRRR \limsup_{i \to \infty} \EEE \mathcal{E}_{\ep_i}(y^{\ep_i},D_{\omega,h}) \\
&  \ge  \mathcal{F}(\mathcal{Q}'\setminus \omega;h) + \mathcal{G}(\omega;h) - \RRR \delta \EEE =\mathcal{F}(\mathcal{Q}'\setminus \omega;h) + \mathcal{F}(\omega;h) + 2\delta = \mathcal{F}(\mathcal{Q}';h) + 2\delta. 
\end{align*}
This, however, contradicts \eqref{eq: for cubes only3}.

\noindent\emph{Step II: Construction of an admissible sequence on increasing cylindrical sets.} It remains to prove \eqref{eq: for cubes only}. To this end, \RRR let $\delta>0$, and  \RRRR let $\lbrace \eps_i \rbrace_i$ be a sequence   converging   to zero such that
for any $h \in \mathbb{Q} \cap (0,1)$ and any  $\lbrace v^{\eps_i} \rbrace_i \subset H^2(D_{\mathcal{Q}',h};\R^d)$ with $\Vert  v^{\eps_i} - \RRRR y_0^+  \Vert_{L^1(D_{\mathcal{Q}',h})}  \to 0$ one has 
\begin{align}\label{eq: for cubes only2XXX}
\limsup_{i \to \infty}  \mathcal{E}_{\ep_i}(v^{\ep_i},D_{\mathcal{Q}',h}) \ge \mathcal{G}(\mathcal{Q}';h) \RRR - \delta. \EEE
\end{align}
(In fact, for each $h \in \mathbb{Q} \cap (0,1)$, the existence of  such a sequence $\lbrace \eps^h_i \rbrace_i$ follows from the definition of $\mathcal{G}$. Then, by a diagonal argument one can choose a sequence $\lbrace \eps_i \rbrace_i$ converging   to zero such that for each $h$ we have  $\lbrace \eps^h_j \rbrace_{j \ge j_h} \subset \lbrace \eps_i \rbrace_i$ for $j_h \in \mathbb{N}$ sufficiently large.) \EEE Choose $\lbrace \tilde{\eps}_j\rbrace_j$ and $\lbrace \tilde{y}^{\tilde{\eps}_j} \rbrace_j \subset H^2(D_{\mathcal{Q}',\RRR 1 \EEE };\R^d)$ such that $\Vert  \tilde{y}^{\tilde{\eps}_j} -  \RRR y_0^+\EEE \Vert_{L^1(D_{\mathcal{Q}',\RRR 1 \EEE })}  \to 0$ and 
\begin{align}\label{eq: for cubes only5}
\limsup_{j \to \infty} \mathcal{E}_{\tilde{\ep}_j}(\tilde{y}^{\tilde{\ep}_j},D_{\mathcal{Q}',\RRR 1 \EEE }) \le \mathcal{F}(\mathcal{Q}';\RRR 1 \EEE ) + \delta.
\end{align}
By Lemma \ref{lemma:comp-def} we may also assume  that $\Vert  \tilde{y}^{\tilde{\eps}_j} -  \RRR y_0^+\EEE \Vert_{H^1(D_{\mathcal{Q}',\RRR 1 \EEE })}  \to 0$.  After passing to a subsequence,  we   may also suppose that $\lbrace \tilde{\eps}_j\rbrace_j$ is monotone. For each $i$, we let   $j(i)>i$  be the smallest index  such that $\tilde{\eps}_{j{ (i)}}  < \eps_i /i$. We now rescale $\tilde{y}^{\tilde{\eps}_{j{ (i)}}}$ using \eqref{eq: transformation preparation}: letting $\alpha_i = (\tilde{\eps}_{j{ (i)}}/\eps_i)^2$, we find $\bar{y}^i \in H^2(\alpha_i^{-1}D_{\mathcal{Q}',\RRR 1 \EEE };\R^d)$ such that  
\begin{equation}
\label{eq:optimality-i}
\alpha_i^{d}\Vert  \nabla \bar{y}^i- \nabla  \RRR y_0^+\EEE \Vert^{ 2 }_{L^{2}(\alpha_i^{-1} D_{\mathcal{Q}',\RRR 1 \EEE })}  \to 0
\end{equation} and
\begin{align*}
 \alpha_i^{d-1}  \mathcal{E}_{\ep_i}(\bar{y}^i,\alpha_i^{-1}D_{\mathcal{Q}',\RRR 1 \EEE }) \le \mathcal{E}_{\sqrt{\alpha_i}\ep_i}(\tilde{y}^{\tilde{\eps}_{j{ (i)}}},D_{\mathcal{Q}',\RRR 1 \EEE }) = \mathcal{E}_{\tilde{\ep}_{j{ (i)}}}(\tilde{y}^{\tilde{\ep}_{j{ (i)}}},D_{\mathcal{Q}',\RRR 1 \EEE }).
\end{align*}
We can (almost) cover $\alpha_i^{-1}D_{\mathcal{Q}',\RRR 1 \EEE }$ by $\lfloor \alpha_i^{-1} \rfloor^{d-1}$  pairwise disjoint translated copies of $D_{\mathcal{Q}',h_i}$, where we define $h_i = \alpha_i^{-1}$. This implies that  for every $i\in \mathbb{N}$   we can find   $z_i \in \R^{d-1} \times \lbrace 0 \rbrace$  such that by a De Giorgi argument   there holds  
\begin{align}\label{eq_ delta fina}
{\rm {\rm (i)}} & \ \ \mathcal{E}_{\ep_i}(\bar{y}^i, z_i  + D_{\mathcal{Q}',h_i}) \le  \frac{(1+\delta)}{ \lfloor \alpha_i^{-1} \rfloor^{d-1} } \, \mathcal{E}_{\ep_i}(\bar{y}^i,\alpha_i^{-1}D_{\mathcal{Q}',\RRR 1 \EEE }) \le  \frac{(1+\delta)}{(\lfloor \alpha_i^{-1} \rfloor  \alpha_i)^{ d-1}}  \,\mathcal{E}_{\tilde{\ep}_{j{ (i)}}}(\tilde{y}^{\tilde{\ep}_{j{ (i)}}},D_{\mathcal{Q}',\RRR 1 \EEE }),\notag \\
{\rm {\rm (ii)}} & \ \  \Vert  \nabla \bar{y}^i-  \nabla \RRR y_0^+\EEE \Vert^{ 2 }_{L^{2}(z_i+ D_{\mathcal{Q}',h_i})} \le C\delta^{-1}\alpha_i^{d-1}\Vert  \nabla \bar{y}^i- \nabla  \RRR y_0^+\EEE \Vert^{ 2 }_{L^{2}(\alpha_i^{-1} D_{\mathcal{Q}',\RRR 1 \EEE })}.
\end{align}
By the definition of $\alpha_i$ there holds  $\alpha_i^{-1} \ge i^2$,  thus  we get $\alpha_i \lfloor \alpha_i^{-1} \rfloor \to 1$. This along with \eqref{eq: for cubes only5} and  \eqref{eq_ delta fina}{\rm (i)} yields
\begin{align}\label{eq: for cubes only7}
\limsup_{i \to \infty}\mathcal{E}_{\ep_i}(\bar{y}^i,  z_i  + D_{\mathcal{Q}',h_i}) \le  (1+\delta)(\mathcal{F}(\mathcal{Q}'; \RRR 1 \EEE  ) + \delta).
\end{align}
Moreover,  by \eqref{eq:optimality-i}, \eqref{eq_ delta fina}{\rm (ii)}, and $h_i = \alpha_i^{-1}$  we obtain   $h_i^{-1}\Vert  \nabla \bar{y}^i- \nabla \RRR y_0^+\EEE \Vert^{ 2 }_{L^{2}(z_i  + D_{\mathcal{Q}',h_i})}  \to 0$. 

\noindent\emph{Step III: Construction of an admissible sequence on a fixed cylindrical set.}  
The goal is now to choose \RRR $\bar{h}>0$, and \EEE a cylindrical set of height \RRR $\bar{h}$ \EEE inside $z_i + D_{\mathcal{Q}',h_i}$ such that $\bar{y}^i$ converges to $\RRR y_0^+\EEE$  on this cylindrical set. After a translation it is not restrictive to assume that $z_i=0$ in the following.  Recall that \RRR $h_i  = \alpha_i^{-1}   \ge i^2$. \EEE We apply Lemma \ref{lemma: intefacefind} for $\lbrace \bar{y}^i \rbrace_i$,  $\lbrace h_i\rbrace_i$,  and $\tau_i = \RRR 1 \EEE $. (Note that \eqref{eq: cond on h} clearly holds  for $i$ sufficiently large in view of $h_i \to \infty$ and \eqref{eq:eta-ep}. Similarly, we find $ D_{\mathcal{Q}',h_i} \supset \mathcal{Q}$ for $i$ large enough.)   We \RRRR find $\mu \in \mathbb{Q} \cap (0,1)$,   \EEE  and  a sequence  of functions $v^i \in H^2(D_{\mathcal{Q}', \RRR \mu \EEE  }; \R^d )$   with 
\begin{align}\label{eq: for cubes only6}
{\rm {\rm (i)}} \ \ \limsup_{i \to \infty}\mathcal{E}_{\ep_i}(v^i, D_{\mathcal{Q}',\RRR \mu \EEE }) \le  \limsup_{i \to \infty}\mathcal{E}_{\ep_i}(\bar{y}^i, D_{\mathcal{Q}',h_i}), \ \ \ \ \  {\rm {\rm (ii)}} \ \    \Vert \nabla v^i - \nabla \RRR y_0^+\EEE\Vert^2_{L^2(D_{\mathcal{Q}', \RRR \mu \EEE })} \to 0. 
\end{align}
By \eqref{eq: for cubes only7}, \eqref{eq: for cubes only6}{\rm (i)}, and Lemma \ref{lemma:comp-def} we find a (non-relabeled) subsequence  and  a map  $v \in \mathcal{Y}(D_{\mathcal{Q}',\RRR \mu \EEE })$ such that, up to translations,  $v^i \to v$ in $H^1(D_{\mathcal{Q}',\RRR \mu \EEE };\R^d)$. Due to \eqref{eq: for cubes only6}{\rm (ii)}, the limit $v$ can be identified  with  $\RRR y_0^+\EEE$. As the limit is independent of the particular subsequence, we  then get that 
$\Vert v^i - \RRR y_0^+\EEE \Vert_{L^1(D_{\mathcal{Q}', \RRR \mu \EEE })} \to 0 $ for the whole sequence $\lbrace \eps_i \rbrace_i$. Thus, $\lbrace v^i \rbrace_i$ is  an admissible \RRRR sequence in \eqref{eq: for cubes only2XXX}  for {\color{red}$h=\mu$} \EEE and we find by \eqref{eq: for cubes only7}--\eqref{eq: for cubes only6} 
$${\mathcal{G}(\mathcal{Q}'; \RRR \mu \EEE ) \RRR - \delta \EEE \le \limsup_{i \to \infty} \mathcal{E}_{\ep_i}(  v^i,D_{\mathcal{Q}', \RRR \mu \EEE })   \le     (1+\delta)(\mathcal{F}(\mathcal{Q}'; \RRR 1 \EEE ) + \delta).}$$
Since $\delta >0$ was arbitrary, we conclude  that  $\mathcal{G}(\mathcal{Q}';h) \le \mathcal{F}(\mathcal{Q}';h)$ \RRR for all $h \le \mu$ by Proposition \ref{prop:cell-form}(iii), and by the fact that $\mathcal{G}(\mathcal{Q}';h)\leq \mathcal{G}(\mathcal{Q}';\mu)$ for every $h\leq \mu$. \EEE As $\mathcal{G}(\mathcal{Q}';h) \ge \mathcal{F}(\mathcal{Q}';h)$ trivially holds, the proof of \eqref{eq: for cubes only} is completed \RRR choosing $\bar{h}=\mu$. \EEE 
\end{proof}

%
%
%
%
%
%

We proceed with a consequence of Proposition \ref{prop:cell-form} and   Proposition \ref{prop:FG}, namely that the energy of optimal-profile sequences concentrates near the interface.

\begin{corollary}[Concentration of the energy near the interface]\label{cor: layer energy}
Let $\omega \subset \R^{d-1}$ open, bounded with $\mathcal{H}^{d-1}(\partial \omega)=0$ and let \RRR $0<h<\bar{h}$, where $\bar{h}$ is the constant introduced in Proposition \ref{prop:FG}. \EEE Let $\lbrace \eps_i\rbrace_i$ be an infinitesimal sequence. Then   there exists   $\lbrace y^{\eps_i}\rbrace_i \subset H^2(D_{\omega,h};\mathbb{R}^d)$ such that
$${\lim_{i\to \infty} \mathcal{E}_{\eps_i}(y^{\eps_i}, D_{\omega,h})= K\mathcal{H}^{d-1}(\omega), \ \ \   \mathcal{E}_{\eps_i}(y^{\eps_i}, D_{\omega,h} \setminus D_{\omega,h/4}) \to 0, \ \   \  \Vert  y^{\eps_i} -   \RRR y_0^+\EEE\Vert_{H^1(D_{\omega,h})} \to 0.}$$
\end{corollary}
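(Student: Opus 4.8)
The plan is to combine Proposition \ref{prop:cell-form} (which identifies the value $\mathcal{F}(\omega;h) = K\mathcal{H}^{d-1}(\omega)$) and Proposition \ref{prop:FG} (which guarantees that, for \emph{any} infinitesimal sequence $\{\eps_i\}_i$, one can extract a near-optimal recovery sequence) with a localization/cut-off argument that pushes the energy away from the top and bottom layers of the cylinder. First I would observe that, by Proposition \ref{prop:FG}, since $0<h<\bar h$ and $\mathcal{H}^{d-1}(\partial\omega)=0$, we have $\mathcal{G}(\omega;h)=\mathcal{F}(\omega;h)=K\mathcal{H}^{d-1}(\omega)$. By the definition of $\mathcal{G}$ as a supremum over infinitesimal sequences, for the \emph{given} sequence $\{\eps_i\}_i$ there exists $\{\hat y^{\eps_i}\}_i\subset H^2(D_{\omega,h};\R^d)$ with $\|\hat y^{\eps_i}-y_0^+\|_{L^1(D_{\omega,h})}\to 0$ and $\limsup_i \mathcal{E}_{\eps_i}(\hat y^{\eps_i},D_{\omega,h}) \le K\mathcal{H}^{d-1}(\omega)$; combined with the liminf bound coming from Proposition \ref{thm:liminf} (or directly from the definition of $\mathcal{F}$ together with Proposition \ref{prop:cell-form}(iii)) applied on $D_{\omega,h}$, this forces $\lim_i \mathcal{E}_{\eps_i}(\hat y^{\eps_i},D_{\omega,h}) = K\mathcal{H}^{d-1}(\omega)$. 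Moreover, by Lemma \ref{lemma:comp-def} applied on the cylinder, after subtracting the mean we may upgrade the $L^1$-convergence to $\hat y^{\eps_i}\to y_0^+$ in $H^1(D_{\omega,h};\R^d)$.

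Next I would localize the energy in the $x_d$-variable. Since $\mathcal{E}_{\eps_i}(\hat y^{\eps_i},D_{\omega,h})$ is bounded, a mean-value/De Giorgi slicing argument in the $x_d$-direction yields, for each $i$, a level $t_i\in(h/4,h/2)$ (and symmetrically $s_i\in(-h/2,-h/4)$) such that the $(d-1)$-dimensional slices $\omega\times\{t_i\}$ and $\omega\times\{s_i\}$ carry controllably small traces of $\nabla^2\hat y^{\eps_i}$ and $\nabla\hat y^{\eps_i}-\nabla y_0^+$. One then wants to modify $\hat y^{\eps_i}$ in the regions $\{x_d>t_i\}$ and $\{x_d<s_i\}$ so as to replace it there by a deformation that is affine (with gradient $A$ above, $B$ below), at no asymptotic energy cost, and so that the modified function $y^{\eps_i}$ has $\mathcal{E}_{\eps_i}(y^{\eps_i}, D_{\omega,h}\setminus D_{\omega,h/4})\to 0$ — since the modified deformation is affine outside $D_{\omega,h/2}$ and close to $y_0^+$ in the annular region $D_{\omega,h/2}\setminus D_{\omega,h/4}$, only a vanishing amount of elastic and curvature energy survives there. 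The construction uses the same interpolation/gluing technology as in Proposition \ref{lemma: local1} and \cite[Proof of Proposition 3.5]{conti.schweizer}: one matches the deformation and its gradient across $\omega\times\{t_i\}$ using isometries converging to the identity, which is possible precisely because the traces are small (this is where one invokes a slicing argument to get good slices, together with the fact that $\nabla\hat y^{\eps_i}\to\nabla y_0^+$ in $L^2$). By construction $y^{\eps_i}$ still satisfies $\|y^{\eps_i}-y_0^+\|_{H^1(D_{\omega,h})}\to 0$.

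Finally I would check the two energy statements. On the one hand, cutting to the affine profile outside $D_{\omega,h/2}$ can only decrease the energy asymptotically, so $\limsup_i \mathcal{E}_{\eps_i}(y^{\eps_i},D_{\omega,h})\le K\mathcal{H}^{d-1}(\omega)$; on the other hand, $y^{\eps_i}\to y_0^+$ in $L^1(D_{\omega,h})$ gives, via the liminf bound for $\mathcal{F}$ on $D_{\omega,h}$ (Proposition \ref{thm:liminf} plus Proposition \ref{prop:cell-form}(iii)), that $\liminf_i \mathcal{E}_{\eps_i}(y^{\eps_i},D_{\omega,h})\ge K\mathcal{H}^{d-1}(\omega)$, whence $\lim_i \mathcal{E}_{\eps_i}(y^{\eps_i},D_{\omega,h}) = K\mathcal{H}^{d-1}(\omega)$. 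Since $\nabla y^{\eps_i}\in SO(d)\{A,B\}$ and $\nabla^2 y^{\eps_i}=0$ on $D_{\omega,h}\setminus D_{\omega,h/2}$, and the energy contribution from the thin annulus $D_{\omega,h/2}\setminus D_{\omega,h/4}$ tends to zero (again by the $L^2$-convergence of the gradients there, since $y_0^+$ is itself affine on that set), we obtain $\mathcal{E}_{\eps_i}(y^{\eps_i},D_{\omega,h}\setminus D_{\omega,h/4})\to 0$. I expect the main obstacle to be the cut-off/gluing step: one must control the second-gradient (and anisotropic) energy produced by the interpolation near the slicing levels $\omega\times\{t_i\}$, $\omega\times\{s_i\}$, and this requires choosing the slices carefully (via a slicing/averaging argument) so that the traces of $\nabla\hat y^{\eps_i}-\nabla y_0^+$ and of $\nabla^2\hat y^{\eps_i}$ are small there — exactly the type of technical construction carried out in \cite{conti.schweizer2, conti.schweizer} and reused for Proposition \ref{lemma: local1}.
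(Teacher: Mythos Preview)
Your first paragraph is correct and matches the paper: Proposition~\ref{prop:FG} plus Proposition~\ref{prop:cell-form}(iii) and Lemma~\ref{lemma:comp-def} produce, for the given sequence $\{\eps_i\}_i$, functions $\hat y^{\eps_i}$ with $\lim_i \mathcal{E}_{\eps_i}(\hat y^{\eps_i},D_{\omega,h}) = K\mathcal{H}^{d-1}(\omega)$ and $\hat y^{\eps_i}\to y_0^+$ in $H^1$.

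The rest of your argument, however, is both unnecessary and flawed. The paper's proof needs \emph{no modification} of the sequence at all: since Proposition~\ref{prop:cell-form}(iii) gives $\mathcal{F}(\omega;h/4)=K\mathcal{H}^{d-1}(\omega)$ as well, the same sequence $\hat y^{\eps_i}$ (restricted to $D_{\omega,h/4}$) already satisfies $\liminf_i \mathcal{E}_{\eps_i}(\hat y^{\eps_i},D_{\omega,h/4})\ge K\mathcal{H}^{d-1}(\omega)$. Combined with $\lim_i \mathcal{E}_{\eps_i}(\hat y^{\eps_i},D_{\omega,h})= K\mathcal{H}^{d-1}(\omega)$, this forces $\mathcal{E}_{\eps_i}(\hat y^{\eps_i},D_{\omega,h}\setminus D_{\omega,h/4})\to 0$ immediately. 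You missed this one-line argument.

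Your proposed cut-off route has two concrete problems. First, it is circular: you invoke the gluing technology of Proposition~\ref{lemma: local1}, but in the paper the proof of Proposition~\ref{lemma: local1} \emph{uses} Corollary~\ref{cor: layer energy} as its starting point. Second, your claim that the energy on the annulus $D_{\omega,h/2}\setminus D_{\omega,h/4}$ vanishes ``by the $L^2$-convergence of the gradients there'' is wrong: $L^2$-convergence of $\nabla y^{\eps_i}$ to $\nabla y_0^+$ gives no control on $\eps_i^{-2}\int W(\nabla y^{\eps_i})$ (because of the diverging prefactor) nor on $\eps_i^2\int|\nabla^2 y^{\eps_i}|^2$. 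Showing that this annular energy vanishes is precisely the content of the corollary, so you cannot get it for free from $H^1$-convergence.
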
 
\begin{proof}
Using Lemma \ref{lemma:comp-def}, Proposition \ref{prop:cell-form},   Proposition \ref{prop:FG}, and a diagonal argument we let $\lbrace y^{\eps_i}\rbrace_i \subset  H^2(D_{\omega,h};\R^d)$ be a sequence with 
$$
{\lim_{i\to \infty} \mathcal{E}_{\eps_i}(y^{\eps_i}, D_{\omega,h}) = \mathcal{F}(\omega,\tfrac{1}{2}) = K\mathcal{H}^{d-1}(\omega), \ \ \ \ \ \ \ \ \ \ \Vert     y^{\eps_i} -    \RRR y_0^+\EEE\Vert_{H^1(D_{\omega,h})} \to 0.}
$$
By Proposition \ref{prop:cell-form} we also  get $\liminf_{i\to \infty} \mathcal{E}_{\eps_i}(y^{\eps_i}, D_{\omega,h/4}) \ge K\mathcal{H}^{d-1}(\omega)$. This in turn implies $\mathcal{E}_{\eps_i}(y^{\eps_i}, D_{\omega,h} \setminus D_{\omega,h/4}) \to 0$. 
\end{proof}

\begin{remark}\label{rem: layer energy}
{\normalfont
Using Lemma \ref{lemma: intefacefind}  one can also show the following generalization, whose proof is deferred to \cite{davoli.friedrich}: for \RRR $h$ as in Corollary \ref{cor: layer energy}, and for \EEE each sequence $\lbrace \tau_i \rbrace_i$ satisfying
$$\tau_i \le  h/4,  \ \ \ \ \ \tau_i\eta_{ \eps_i,d}/\eps_i \to \infty, \ \ \ \ \ \  \tau_i/ \eps_i^{1+\frac{1}{d}} \to \infty,  $$
 there exists  $\lbrace y^{\eps_i}\rbrace_i \subset H^2(D_{\omega,h};\mathbb{R}^d)$ such that
$$\lim_{i\to \infty}  \mathcal{E}_{\eps_i}   (y^{\eps_i}, D_{\omega,h})=  K\mathcal{H}^{d-1}(\omega),  \ \ \    \mathcal{E}_{\eps_i}  (y^{\eps_i}, D_{\omega,h} \setminus D_{\omega,\tau_i}) \to 0, \ \   \  \tau_i^{-1}\Vert \nabla y^{\eps_i} - \nabla \RRR y_0^+\EEE \Vert^2_{L^2(D_{\omega,4\tau_i})} \to 0.$$
This means that  the  energy is concentrated in a $\tau_i$-neighborhood around $\omega \times \lbrace 0 \rbrace$.}
\end{remark}

 To conclude the proof of Proposition \ref{prop:FG}, we need to show Lemma \ref{lemma: intefacefind}.

\begin{proof}[Proof of Lemma \ref{lemma: intefacefind}]
 We proceed in two steps. We first define the cylindrical sets and then find suitable isometries such that the functions defined in \eqref{eq: vi} satisfy \eqref{eq: to confirm}. For brevity, let $\Omega_i =  \mathcal{Q} \cup D_{\mathcal{Q}',h_i}$. Let $\lbrace \tau_i\rbrace_i$ be a \RRR sequence satisfying \eqref{eq: cond on h}. \EEE

\noindent\emph{Step I: Definition of the cylindrical sets.} In view of \eqref{eq: cond on h}, we can choose $\lbrace \lambda_i\rbrace_i \subset(0,1/4)$ such that 
\begin{align}\label{eq: cond on h2}
\lambda_i \to 0, \ \ \ \ \ \   \tau_i  \eta_{\eps_i} \lambda_i /\eps_i \to \infty.
\end{align}
We use  Proposition \ref{lemma: phases} for $ y^{i} \in H^2(\Omega_i;\R^d)$ to find a corresponding set $T_i$ with properties \eqref{eq: propertiesT}. Recall that $T_i$ corresponds to the $A$-phase regions and $\Omega_i \setminus T_i$ to the $B$-phase regions of the function $y^{i} $. Let 
\begin{align}\label{eq: good layer5}
\mathcal{T}^i_A& = \big\{ t \in (-h_i,h_i): \ \mathcal{H}^{d-1}( (\mathcal{Q}' \times \lbrace t\rbrace) \cap T_i) \ge (1- \lambda_i) \mathcal{H}^{d-1}(\mathcal{Q}')\big\}, \notag \\ 
\mathcal{T}^i_B& = \big\{ t \in (-h_i,h_i): \ \mathcal{H}^{d-1}( (\mathcal{Q}' \times \lbrace t\rbrace) \setminus T_i) \ge  (1- \lambda_i) \mathcal{H}^{d-1}(\mathcal{Q}') \big\}.
\end{align}
Note that for $i$ sufficiently large (i.e., $\lambda_i$ small) the relative isoperimetric inequality on $\mathcal{Q}' \times \lbrace t\rbrace$ in dimension $d-1$, cf.\ \cite[Theorem 2, Section 5.6.2]{EvansGariepy92}, shows that, if $\mathcal{H}^{d-2}((\mathcal{Q}' \times \lbrace t \rbrace) \cap \partial^* T_i ) \le \lambda_i\mathcal{H}^{d-1}(\mathcal{Q}')$, then $t \in \mathcal{T}^i_A \cup \mathcal{T}^i_B$. Indeed, by the relative isoperimetric inequality we get
$$\min\big\{ \mathcal{H}^{d-1} ((\mathcal{Q}'\times \{t\})\cap   T_i   ),\, \mathcal{H}^{d-1}((\mathcal{Q}'\times \{t\})\setminus   T_i   )\big\}  \le C(\lambda_i\mathcal{H}^{d-1}(\mathcal{Q}'))^{\frac{d-1}{d-2}}.$$
 (The theorem in the reference above is stated and proved in a ball, but the argument only relies on Poincar\'e inequalities, and thus easily extends to bounded Lipschitz domains. \RRR For $d=2$, the right hand side has to be interpreted as $0$.) \EEE  By \eqref{eq: cond on h2}, this  in turn implies
$$\min\big\{\mathcal{H}^{d-1}((\mathcal{Q}'\times \{t\})\cap T_i ),\,\mathcal{H}^{d-1}((\mathcal{Q}'\times \{t\})\setminus T_i )\big\}\leq \lambda_i\mathcal{H}^{d-1}(\mathcal{Q}')$$
for $i$ large enough, and gives the claim. Thus, by \eqref{eq: propertiesT}{\rm (iv)}   and $E_{\eps_i,\eta_{\ep_i}}(y^{i},\Omega_i) \le M$   we obtain
\begin{align}\label{eq: good layer3}
\mathcal{H}^{1}( (-h_i,h_i) \setminus (\mathcal{T}^i_A \cup \mathcal{T}^i_B)  ) \le cM  \eps_i \eta_{\eps_i}^{-1} (\lambda_i\mathcal{H}^{d-1}(\mathcal{Q}'))^{-1}.
\end{align}
By the coarea formula, cf.\ \eqref{eq:co-f2}, we get for $\mathcal{H}^1$-a.e.\ $t_A \in \mathcal{T}^i_A$, $t_B \in \mathcal{T}^i_B$
\begin{align*}
\mathcal{H}^{d-1}\big(\partial^*T_i \cap (\mathcal{Q}' \times (t_A,t_B))\big) &\ge  \int_{\partial^*T_i \cap (\mathcal{Q}' \times (t_A,t_B))} |\langle \nu_{T_i},   {\rm e}_d   \rangle| \, d\mathcal{H}^{d-1} \\
&= \int_{\pi_d} \mathcal{H}^{0}\big( (  z + (t_A,t_B) {\rm e}_d) \cap \partial^* T_i \cap (\mathcal{Q}'\times (t_A,t_B))\big)  \, d\mathcal{H}^{d-1}(z),
\end{align*}
where $\pi_d = \R^{d-1} \times \lbrace 0 \rbrace$. In view of \eqref{eq: good layer5}  and $\lambda_i \le \frac{1}{4}$,  it follows that 
\begin{align}\label{eq: LLL}
\int_{\pi_d} \mathcal{H}^{0}\big( ( z + (t_A,t_B) {\rm e}_d) \cap \partial^* T_i \cap   (\mathcal{Q}'\times (t_A,t_B))\big)  \, d\mathcal{H}^{d-1}( z )\geq \frac12 \mathcal{H}^{d-1}(\mathcal{Q}').
\end{align}
 Define the indicator function $\RRR\psi_i: \EEE (-h_i,h_i) \to \lbrace A,B\rbrace$ by $\psi_i(t) = A$ if $\sup \lbrace t' \le t, t' \in \mathcal{T}^i_A \cup \mathcal{T}^i_B \rbrace \in \overline{\mathcal{T}^i_A}$ and $\psi_i(t) = B$ else. By using \eqref{eq: LLL} it is elementary to see that  $\psi_i$ jumps at most
\begin{align}\label{eq: good layer1}
 N_i  := \lfloor 2\mathcal{H}^{d-1}(\partial^* T_i \cap  \Omega_i)/\mathcal{H}^{d-1}(\mathcal{Q}')\rfloor + 1 
\end{align}
times. Using \eqref{eq: propertiesT}{\rm (ii)},  we note that 
$$ N_i \le 2cM  \, (\mathcal{H}^{d-1}(\mathcal{Q}'))^{-1} + 1.$$
Hence,  we have that  $N := \sup_i N_i < +\infty$ only depends on the constant $c$ from Proposition \ref{lemma: phases},  $M$,  and $\mathcal{Q}'$.   We now show that   
\begin{align}\label{eq: good layer2}
\mathcal{H}^1(\mathcal{T}^i_A)\ge h_i/2\ \ \ \ \text{and}  \ \ \  \mathcal{H}^1(\mathcal{T}^i_B)\ge h_i/2
\end{align}
 for all  $i$ sufficiently large. In fact, we observe that $\lim_{i \to \infty} h_i^{-1}\mathcal{H}^{1}( (-h_i,h_i) \setminus (\mathcal{T}^i_A \cup \mathcal{T}^i_B)  ) = 0$ by \eqref{eq: cond on h2}, $\tau_i \le h_i$,  and  \eqref{eq: good layer3}. Using assumption \eqref{eq: converg assu}, choose $i_0 \in \N$ such that $\mathcal{H}^{1}( (-h_i,h_i) \setminus (\mathcal{T}^i_A \cup \mathcal{T}^i_B)  ) \le \frac{h_i}{4}$ and
\begin{align}\label{eq: converg assu-new}
\Vert  \nabla   y^{i}  - \nabla  \RRR y_0^+\EEE\Vert^2_{L^2(D_{\mathcal{Q}',h_i})} \le  \frac{(1-\beta)^2\kappa^2}{16}  \mathcal{H}^{d-1} (\mathcal{Q}')h_i
\end{align}
for all $i \ge i_0$, where $\beta$ is given in Proposition \ref{lemma: phases} and $\kappa = |B-A|$. Now assume by contradiction that, e.g., $\mathcal{H}^1(\mathcal{T}^i_B)< h_i/2$ for some $i \ge i_0$. We then get   $\mathcal{H}^1(\mathcal{T}^i_A) \ge  \frac{5}{4}h_i$. By \eqref{eq: good layer5} and $\lambda_i \le \frac{1}{4}$ this implies 
 $$\mathcal{L}^d(T_i \cap \lbrace x_d <0 \rbrace) \ge \frac{1}{4}h_i (1-\lambda_i)  \mathcal{H}^{d-1} (\mathcal{Q}') \ge \frac{3}{16}h_i\,   \mathcal{H}^{d-1}  (\mathcal{Q}').$$
By  \eqref{eq: propertiesT}{\rm (i)} and \eqref{eq: conti-schweizer-k-y0}  we also have $\Vert  \nabla y^{i}  - \nabla  \RRR y_0^+\EEE\Vert^2_{L^2(T_i \cap \lbrace x_d <0 \rbrace)} \ge (1-\beta)^2\kappa^2\mathcal{L}^d(T_i \cap \lbrace x_d <0 \rbrace)$. The previous two estimates contradict \eqref{eq: converg assu-new}.

\RRR  In the following, we denote by $s^i_1 < s^i_2 \ldots < \ldots < s^i_{\RRR m_i \RRR }$ the jump points of the function $\psi_i$ defined below \eqref{eq: LLL}. \RRR Note that $m_i \le N_i$, see \eqref{eq: good layer1}. \RRR  Let $\mathcal{J}_i =\lbrace 0\le j \le \RRR m_i:  \RRR \,  (s^i_{j},s^i_{j+1}) \cap \mathcal{T}^i_A = \emptyset \rbrace$, where we set $s^i_0 = -h_i$ and $s^i_{m_i+1} = h_i$. Note that for $j \in \mathcal{J}_i$ there holds $(s^i_{j-1},s^i_j) \cap \mathcal{T}^i_B = \emptyset$.  Recalling \eqref{eq: good layer1}, up to passing to a subsequence, we can assume that $\mathcal{J}_i$ and $m_i$ are independent of $i$, and we denote them by $\mathcal{J}$ and $m$, respectively, for simplicity. In view of \eqref{eq: good layer2}, possibly after a rotation by $\pi$ corresponding to the transformation $y \mapsto -y(-x)$, we can suppose that there are two indices $k_1,k_2 \in \mathcal{J}$ and a constant $\bar{c}> 0 $  independent of $i \in \N$ such that   
 \begin{align}\label{eq: middle-length}
s_{k_1}^i - s_{k_1-1}^i \ge \bar{c}\,h_i, \ \ \ \  \ \ \ s_{k_2+1}^i - s_{k_2}^i \ge \bar{c}\,h_i,
\end{align}
where we recall that $(s_{k_1-1}^i, s_{k_1}^i ) \cap \mathcal{T}^i_B = \emptyset$ and $(s_{k_2}^i, s_{k_2+1}^i ) \cap \mathcal{T}^i_A = \emptyset$. We now choose the largest index $k_1 \le j \le k_2$, $j \in \mathcal{J}$, such that   $\liminf_{i \to \infty} \tau_i^{-1}(s^i_{j} - s^i_{j-1}) >0$, and define
$$\mu' := \liminf_{i \to \infty} \tau_i^{-1}(s^i_{j} - s^i_{j-1})  \in(0,+\infty].$$  
Note that such an index exists by \eqref{eq: middle-length} and the fact that $\tau_i \le h_i$. We set $\mu = \min\lbrace \mu', \bar{c}\rbrace$, and we observe that
\begin{align}\label{eq: intervals}
\lim_{i\to\infty}\tau_i^{-1} \mathcal{H}^1\big((s^i_{j} -\mu \tau_i, s^i_{j})  \cap  \mathcal{T}^i_B\big) =0, \ \ \ \ \lim_{i\to\infty}\tau_i^{-1} \mathcal{H}^1\big((s^i_{j}, s^i_{j} + \mu \tau_i)  \cap  \mathcal{T}^i_A\big) =0.
\end{align} 
In fact, the first property follows directly from the definition. For the second part, it suffices to recall that $\liminf_{i \to \infty} \tau_i^{-1}(s^i_{l} - s^i_{l-1}) =0$ for all $l \in \mathcal{J}$ with $j < l \le k_2$.  We define $\alpha_i = s^i_j$ and  $D_i := \mathcal{Q}' \times  (\alpha_i - \mu\tau_i, \alpha_i + \mu\tau_i) = \alpha_i {\rm e}_d + D_{\mathcal{Q}',\mu\tau_i}$. By  \eqref{eq: cond on h2}--\eqref{eq: good layer3} and \eqref{eq: intervals} we get \EEE
\begin{align}\label{eq: good layer6}
\tau_i^{-1}\mathcal{L}^d( & \lbrace x \in D_i: \  x_d  \leq \EEE \alpha_i  \rbrace\setminus T_i)\notag \\
 &\le   \tau_i^{-1} \big( \RRR \mathcal{H}^1\big((s^i_{j} -\mu \tau_i, s^i_{j})  \cap  \mathcal{T}^i_B\big) + \EEE \mathcal{H}^{1}( (-h_i,h_i) \setminus (\mathcal{T}^i_A \cup \mathcal{T}^i_B)  )  + \RRR \mu \EEE \tau_i  \, \lambda_i\big)\mathcal{H}^{d-1}(\mathcal{Q}') \notag\\
& \le  \RRR \tau_i^{-1}   \mathcal{H}^1\big((s^i_{j} -\mu \tau_i, s^i_{j})  \cap  \mathcal{T}^i_B\big) \RRR  \mathcal{H}^{d-1}(\mathcal{Q}')  + \EEE C\eps_i (\eta_{\eps_i}  \tau_i   \lambda_i)^{-1} + \RRR \mu \EEE \lambda_i  \mathcal{H}^{d-1}(\mathcal{Q}')  \to 0
\end{align}
as $i \to \infty$.  In a similar fashion, we find 
\begin{align}\label{eq: good layer6-2}
 \tau_i^{-1}  \mathcal{L}^d( \lbrace x \in D_i: \  x_d  \geq \EEE \alpha_i  \rbrace\cap T_i) \to 0.
\end{align}

\noindent\emph{Step II: Construction of the maps  $v^i$.} Since $\Omega_i$ contains a cube and $\lbrace \tau_i\rbrace_i$ is a bounded sequence, we observe that $D_i$ can be covered with a bounded number of cubes contained in $\Omega_i$. Suppose first that there exists one  cube $ \tilde{\mathcal{Q}}_i  \subset \R^d$ with $D_i \subset\subset  \tilde{\mathcal{Q}}_i  \subset \Omega_i$. We apply Theorem \ref{thm:rigiditythm}   (for $p= \frac{d+1}{d} <  \frac{d}{d-1}$),    Remark \ref{rem: afterth}{\rm (ii)},  and Remark \ref{rem: setT} to find $R_i \in SO(d)$ such that 
\begin{align}\label{eq: good layer7}
\Vert\nabla  y^{i}  -R_iA \Vert_{L^{ p }(  D_i   \cap T_i)} + \Vert\nabla  y^{i}  -R_iB \Vert_{L^{ p }(  D_i   \setminus T_i)}\leq C\ep_i   +   C(\eps_i/\eta_{\eps_i})+C(\ep_i^{\frac12}/\eta_{\ep_i}^{\frac32}) \le C\eps_i
\end{align}
where in the last step we used $\eta_{\ep_i}\geq \ep_i^{-\frac13}$, and where $C$ depends on  $M$. This estimate remains true if more than one cube is needed to cover $D_i$, \RRR cf.\ Remark \ref{rem: afterth}(ii). \EEE We now   prove   \eqref{eq: to confirm}  for  isometries $I_i$ whose derivative is given by $R^T_i$. 

Let $E_i = D_i \cap \lbrace |\nabla  y^{i} | \le L \rbrace$, where $L\ge \sqrt{d}$ is sufficiently large such that  $\dist(F,SO(d)\lbrace A,B\rbrace) \ge  |F-R M|/2 $ for all $F \in \M^{d \times d}$ with $|F| \ge L$,  $R \in SO(d)$, and $ M  \in \lbrace A,B\rbrace$. 
 Using H4.\  we observe that  
\begin{align}\label{eq: bad set}
\Vert\nabla  y^{i}  -R_iA \Vert^2_{L^2(  D_i\setminus E_i  ) } + \Vert\nabla  y^{i}  -R_iB \Vert^2_{L^2(  D_i\setminus E_i  )} \le C\int_{  D_i  } W(\nabla  y^{i} )  \, dx  \le C\eps^2_i,
\end{align}
where $C$ depends on $c_1$. We now consider the behavior on $E_i$. First, we calculate by \eqref{eq: good layer7} and the definition of $E_i$
\begin{align*}
\int_{\lbrace x \in E_i: \ x_d  \le\EEE  \alpha_i\rbrace} |R_i^T \nabla  y^{i}  -A|^2 \, dx &\le \int_{E_i\cap T_i} | \nabla  y^{i}  -R_i|^2 \, dx +  \int_{\lbrace x \in E_i: \ x_d \le\EEE \alpha_i\rbrace \setminus T_i} |\nabla  y^{i}  -R_i|^2 \, dx\\
& \le  (2L)^{2-p}\int_{  D_i  \cap T_i} | \nabla  y^{i}  -R_i|^{ p } \, dx +  (2L)^2\mathcal{L}^d({\lbrace x \in D_i: \ x_d \le\EEE \alpha_i\rbrace \setminus T_i})\\
& \le C  \eps_i^p  +  (2L)^2\mathcal{L}^d({\lbrace x \in D_i: \ x_d \le\EEE \alpha_i\rbrace \setminus T_i}).
\end{align*}
The fact that  $\eps_i^p/\tau_i \to 0$   (recall $p = \frac{d+1}{d}$  and see \eqref{eq: cond on h})    and \eqref{eq: good layer6} now imply
\begin{align}
\tau_i^{-1}\int_{\lbrace x \in E_i: \ x_d \le\EEE \alpha_i\rbrace} |R_i^T \nabla  y^{i}  -A|^2 \, dx \to 0. 
\end{align}
In a similar fashion, using \eqref{eq: good layer6-2} instead of  \eqref{eq: good layer6}, we obtain 
\begin{align}\label{eq: good set2}
\tau_i^{-1}\int_{\lbrace x \in E_i: \ x_d  \ge   \alpha_i\rbrace} |R_i^T \nabla  y^{i}   -B|^2 \, dx \to 0. 
\end{align}
Combining \eqref{eq: bad set}--\eqref{eq: good set2} and using  that  $\eps_i^2 /\tau_i \to 0$, we conclude the proof of \eqref{eq: to confirm}, when we define $v^i$ as in \eqref{eq: vi} with an isometry with derivative $R^T_i$. 
\end{proof}

\subsection{Local construction of recovery sequences}\label{sec: proof2}

This subsection is devoted to the proof of Proposition \ref{lemma: local1}.   Let $h>0$ and $\omega \subset \R^{d-1}$ open, bounded with Lipschitz boundary. Our goal is to  suitably modify   functions with optimal-profile energy, see \eqref{eq: our-k1}, such that they have the structure given in \eqref{eq:local1-3}. As a preparation, we introduce the following notion for $y\in H^2(D_{\omega,h};\mathbb{R}^d)$, where $D_{\omega,h}$ denotes the cylindrical set defined in \eqref{eq:def-dlh}: for $\ep,\eta>0$  and for $0<\tau\le h/4$   we define the \emph{$(\ep,\eta)$-closeness  of $y$ to the limiting   map $y_0^+$} by
\begin{equation}
\label{eq:eta}
\delta_{\ep,\eta}(y;\omega,h,\tau): = E_{\eps,\eta}(y,  D_{\omega,h}\setminus D_{\omega,\tau})  +     (\mathcal{L}^d(D_{\omega,4\tau}))^{-1}  \Vert \nabla y -  \nabla y^+_0  \Vert^2_{L^2(D_{\omega,4\tau})},
\end{equation}
where $y_0^+$ is the map defined in \eqref{eq: conti-schweizer-k-y0}.

In the following, we will use that by Corollary \ref{cor: layer energy}, for given $\omega \subset \R^{d-1}$, \RRR $0<h<\bar{h}$, \EEE and  $\lbrace \eps_i \rbrace_i$ converging to zero, there exists a  sequence   $\lbrace y^{\eps_i}\rbrace_i \subset H^2(D_{\omega,h};\mathbb{R}^d)$ of deformations attaining the optimal-profile energy $K$ (see \eqref{eq: our-k1}) such that  
\begin{align*}
\delta_{\ep_i,\eta_{\eps_i,d}}(y^{\eps_i};\omega,h, h/4) \to 0 \ \ \ \ \text{ as } i \to \infty.
\end{align*}
More generally, the existence of such a sequence is still guaranteed when  $\tau = h/4$ is replaced by a sequence $\lbrace \tau_i \rbrace_i$ with $\tau_i\eta_{\eps_i, d}/\eps_i \to \infty$ and $\tau_i/ \eps_i^{1+1/d}   \to \infty$, see Remark \ref{rem: layer energy}.  Although we only need the case $\tau = h/4$ and $\eta= \eta_{\eps_i,d}$ for the proof of Proposition \ref{lemma: local1},  we formulate the definition of $(\ep,\eta)$-closeness and some statements below in a more general way as this will be needed in the companion paper \cite{davoli.friedrich}.  


 The proof strategy for Proposition \ref{lemma: local1} is as follows:  relying on the quantitative rigidity estimate in Theorem \ref{thm:rigiditythm}, we first show in Proposition \ref{lemma: optimal profile} and Corollary \ref{cor: Besov} that it is possible to find  two  $(d-1)$-dimensional  slices on which the energy of $y$  and the $L^{p}$-distance  of $\nabla y$  from suitable rotations of $\nabla y_0^+$ can be quantified  in terms of $\delta_{\ep,\eta}(y;\omega,h, \tau)$. In Lemma \ref{lemma: transition1}, for each of the  slices identified above  we construct a transition to a rigid movement,  where the energy can  again be quantified in terms of  $\delta_{\ep,\eta}(y;\omega,h, \tau)$.  The latter construction relies on suitable extensions and gluing of functions. These auxiliary estimates are given in Lemma \ref{lemma:ext1} and Lemma  \ref{lemma: smoothen}.

We emphasize that the main novelties of our approach are the estimates in Proposition \ref{lemma: optimal profile} and Corollary \ref{cor: Besov} which build upon the rigidity estimates of Section \ref{sec: rigidity estimate}. For the construction of the transitions we follow closely the argumentation in \cite[Section 5]{conti.schweizer2}. However, we will work out the main points of the arguments in order to (a) detail the adaptions necessary with respect to \cite{conti.schweizer, conti.schweizer2} due to anisotropic surface energies and to (b) provide a self-contained presentation.

  
 We begin by collecting the main properties of $(d-1)$-dimensional slices.  Recall $p_d$ in \eqref{eq:def-pd}, $\kappa = |A-B|$, and $c_1$ in H4.

\begin{proposition}[Properties  of $(d-1)$-dimensional slices]\label{lemma: optimal profile}
Let $d\in \mathbb{N}$, $d\geq 2$. Let $h>0$,  $0 < \tau \le h/4$,   and let  $\omega, \hat{\omega} \subset \mathbb{R}^{d-1}$ be  Lipschitz domains such that $\omega \subset \subset \hat{\omega}$. Then there  exist $\ep_0=\eps_0(\omega,\hat{\omega},h,\kappa,c_1,\tau) \in (0,1)$ and $C=C(\omega,\hat{\omega},h,\kappa,c_1)>0$  with the following properties: \\  
For all $0<\eps \le \eps_0$, for every  $\eta$ with $\eta_{\ep,d} \le \eta \le \frac{1}{\eps}$,   and for each  $y\in H^2(D_{\hat{\omega},h};\mathbb{R}^d)$ with $\delta_{\ep,\eta}(y;\hat{\omega},h,\tau) \le   (\kappa/64)^2  $ we can find two rotations $R^+,R^- \in SO(d)$  and two constants   $s^+ \in (\tau,2\tau)$, $s^- \in (-2\tau, - \tau)$ such that  
\begin{align*}
 {\rm (i)}&\ \    \int_{\Gamma^+} | \nabla y -  R^+ A|^{p} \, d\mathcal{H}^{d-1} +  \int_{\Gamma^-} | \nabla y -  R^- B|^{p} \, d\mathcal{H}^{d-1} \leq  \frac{C}{\tau}(\delta_{\ep,\eta}(y;\hat{\omega},h, \tau))^{ {p}/  2} \,\eps^{p} \  \, \text{for all }\, 1\le p \le p_d, \notag\\
 {\rm (ii)} & \ \ \Vert \nabla y - A \Vert^2_{L^2(  s^+ {\rm e}_d  + D_{\omega,\eps^2} )}   +   \Vert \nabla y -B \Vert^2_{L^2(  s^- {\rm e}_d  + D_{\omega,\eps^2} )} \le C\eps^2\delta_{\ep,\eta}(y;\hat{\omega},h, \tau),\notag\\
{\rm (iii)}& \ \   \eps^2 \int_{\Gamma^+  \cup \Gamma^-} | \nabla^2 y|^2 \, d\mathcal{H}^{d-1} + \eta^2  \int_{\Gamma^+  \cup \Gamma^-} (|\nabla^2 y|^2-|\partial^2_{dd}y|^2) \, d\mathcal{H}^{d-1} \le  \frac{C}{\tau}\delta_{\ep,\eta}(y;\hat{\omega},h, \tau)  \notag,\\
{\rm (iv)}&  \ \   E_{\ep,\eta}\big(y,  s^+ {\rm e}_d  + D_{\omega,\eps^2} \big) + E_{\ep,\eta}\big(y,  s^- {\rm e}_d  + D_{\omega,\eps^2}  \big) \le  \frac{C\eps^2}{\tau}   \delta_{\ep,\eta}(y;\hat{\omega},h, \tau), \\
{\rm (v)} & \ \  |R^+ -  {\rm Id}|^2  + |R^- - {\rm Id}|^2 \le C\delta_{\ep,\eta}(y;\hat{\omega},h, \tau), 
\end{align*}
where we set  $\Gamma^\pm=\omega \times \lbrace s^\pm \rbrace$  for brevity.  
\end{proposition}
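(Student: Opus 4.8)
\textbf{Proof plan for Proposition \ref{lemma: optimal profile}.}

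The plan is to use the quantitative rigidity estimate of Theorem \ref{thm:rigiditythm} on the subcylinder $D_{\hat\omega,\tau}$ (or a slightly smaller one, using Remark \ref{rem: afterth}(ii) and a covering by paraxial cubes), together with a Fubini/averaging argument in the $x_d$-variable to extract ``good'' slices $\Gamma^\pm=\omega\times\{s^\pm\}$ on which all the pointwise bounds hold. First I would apply Theorem \ref{thm:rigiditythm}(b) with $p=p_d$ (and Theorem \ref{thm:rigiditythm}(a) if $d=2$) to $y$ on an appropriate cube or cuboid inside $D_{\hat\omega,h}$: since $\eta\ge\eta_{\ep,d}\ge \ep^{-1/3}$, the error terms $(\ep/\eta+\ep^{1/2}/\eta^{3/2})E_{\ep,\eta}(y)$ are dominated by $C\ep$, and one obtains a rotation $R$ and a phase indicator $\mathcal M$ with $\|\nabla y - R\mathcal M\|_{L^{p_d}}\le C\ep\sqrt{E_{\ep,\eta}(y)}$ on the subdomain; by Remark \ref{rem: setT}, $\{\mathcal M=A\}=T$ with $T$ from Proposition \ref{lemma: phases}. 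The $L^2$-smallness $\delta_{\ep,\eta}(y;\hat\omega,h,\tau)\le(\kappa/64)^2$ forces, via property \eqref{eq: propertiesT}(i) and the smallness of $\|\nabla y-\nabla y_0^+\|_{L^2(D_{\hat\omega,4\tau})}$, that the $A$-phase is predominant above height $\approx\tau/2$ and the $B$-phase below $\approx-\tau/2$; this pins $R^+=R^-=R$ up to the desired closeness. For (v), combining $\dist(R,SO(d)A)$-type control with the $L^2$-bound on $\nabla y-\nabla y_0^+$ on the parts of $D_{\omega,4\tau}$ where the correct phase is active yields $|R^\pm-\mathrm{Id}|^2\le C\delta_{\ep,\eta}$.

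Next I would perform the slicing. By the coarea/Fubini theorem applied to the functions $t\mapsto\int_{\omega}(|\nabla^2 y|^2+\eta^2(|\nabla^2 y|^2-|\partial^2_{dd}y|^2))\,d\mathcal H^{d-1}$, $t\mapsto \mathcal H^{d-1}(\Gamma_t\cap\partial^*T)$, and $t\mapsto\int_\omega|\nabla y - R^\pm M|^{p}\,d\mathcal H^{d-1}$ over $t\in(\tau,2\tau)$ (resp. $(-2\tau,-\tau)$), the average of each over an interval of length $\tau$ is bounded by $\tau^{-1}$ times the corresponding cylinder integral; the latter is controlled by $\delta_{\ep,\eta}(y;\hat\omega,h,\tau)$, possibly times $\ep^{p}$ for the $L^p$-distance term by the rigidity estimate. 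A Chebyshev argument lets me choose $s^+\in(\tau,2\tau)$, $s^-\in(-2\tau,-\tau)$ simultaneously satisfying all of: (i) the $L^p$-trace bound (using $\|\nabla y - RM\|_{L^{p_d}(D)}^{p_d}\le C\ep^{p_d}\delta^{p_d/2}$ and interpolating/using Hölder to get all $1\le p\le p_d$), (iii) the second-gradient trace bound, and the bound $\mathcal H^{d-2}(\partial^*T\cap\Gamma^\pm)$ small. On these slices, the predominance of the correct phase in the fiber together with the trace theorem \cite[Theorem 3.77]{Ambrosio-Fusco-Pallara:2000} upgrades ``predominant'' to ``$\nabla y$ close to $R^\pm M$ a.e. on $\Gamma^\pm$'' in $L^p$, which is (i). For (ii) and (iv), I would also ensure that $s^\pm$ can be chosen so that the thin cylinders $s^\pm{\rm e}_d+D_{\omega,\ep^2}$ lie in the region where the correct phase is predominant and the averaged energy density is small; since $\ep^2\ll\tau$, these thin slabs sit inside a good layer, and integrating the density bounds over a slab of thickness $2\ep^2$ gives the factor $\ep^2$, while the $L^2$-distance bound in (ii) follows from (i) combined with H4 and the $L^2$-control on the bad set (where $|\nabla y|$ is large), exactly as in the proof of Lemma \ref{lemma: intefacefind}, Step II.

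The main obstacle I anticipate is the bookkeeping needed to make \emph{one} choice of $s^\pm$ work for \emph{all} five conclusions at once, i.e. showing that the union of the ``bad'' sets of heights (where any one of the averaged quantities exceeds $C/\tau$ times its cylinder value) has measure $<\tau$, so that a good height survives — this requires tracking the constants and making sure each cylinder integral really is bounded by $\delta_{\ep,\eta}$ (for (iii), (iv)) or by $\ep^p\delta_{\ep,\eta}^{p/2}$ (for (i)). A secondary technical point is the passage from the $L^{p_d}$-rigidity estimate on a fixed subcylinder to bounds on $(d-1)$-dimensional slices: this uses that $L^{p_d}$-functions have $\mathcal H^{d-1}$-a.e. well-defined traces on parallel hyperplanes with the fiberwise $L^{p_d}$-norm integrable in $t$, which is Fubini for the function $|\nabla y - RM|^{p_d}\in L^1$. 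Everything else — the identification $R^+=R^-$, property (v), the thin-slab estimates — is then routine given Theorem \ref{thm:rigiditythm}, Proposition \ref{lemma: phases}, and the definition \eqref{eq:eta} of $\delta_{\ep,\eta}$.
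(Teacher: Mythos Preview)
Your overall architecture matches the paper's: apply Theorem~\ref{thm:rigiditythm} to get $R$ and the phase set $T$, use Proposition~\ref{lemma: phases} to control slices of $\partial^*T$, then run a De~Giorgi/Chebyshev selection in $x_d$ to pick $s^\pm$ satisfying all bounds at once. However, there is a concrete domain error that would break the argument. By definition, $\delta_{\ep,\eta}(y;\hat\omega,h,\tau)$ only bounds the energy $E_{\ep,\eta}(y,\,D_{\hat\omega,h}\setminus D_{\hat\omega,\tau})$ \emph{outside} the central strip; on $D_{\hat\omega,\tau}$ itself the energy is uncontrolled (this is precisely where the interface sits). Applying the rigidity estimate on $D_{\hat\omega,\tau}$ therefore yields bounds proportional to $\sqrt{E_{\ep,\eta}(y,D_{\hat\omega,\tau})}$, not $\sqrt{\delta(y)}$, and (i) fails. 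The paper instead applies Theorem~\ref{thm:rigiditythm} on the \emph{upper} slab $F_{\hat\omega,\tau}:=\hat\omega\times(\tau,h)$ to obtain $R^+$ (and symmetrically on $\hat\omega\times(-h,-\tau)$ for $R^-$); on these slabs $E_{\ep,\eta}(y,\cdot)\le\delta(y)$ by construction, and together with $\eta\ge\eta_{\ep,d}$ the error term becomes $\le C\sqrt{\delta(y)}\,\ep$, cf.~\eqref{eq:prel-comp}--\eqref{eq: rigidity for limsup}. In particular $R^+$ and $R^-$ arise from two separate applications of rigidity and need not coincide.

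A second point you gloss over is the mechanism that forces the $A$-phase to be predominant on most slices $\omega\times\{s\}$ with $s\in(\tau,2\tau)$. The paper uses Proposition~\ref{lemma: phases}(iv) to get $\int_\tau^{2\tau}\mathcal H^{d-2}\big(\partial^*T\cap(\hat\omega\times\{t\})\big)\,dt\le C\delta(y)\,\ep/\eta$, so for most $t$ the slice boundary $\partial^*T\cap(\omega\times\{t\})$ has small $(d-2)$-measure; the \emph{relative isoperimetric inequality in dimension $d-1$} then upgrades this to ``the slice is almost entirely in $T$ or almost entirely in $T^c$'', cf.~\eqref{eq:isop-cons}. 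Only afterwards does a contradiction argument (comparing the rigidity bound on $F_{\omega,\tau}\setminus T$ with the $L^2$-smallness of $\nabla y-A$ from $\delta(y)$) rule out the $T^c$-alternative, see \eqref{eq: eps0 def} and the lines around it. Your sketch invokes the $\mathcal H^{d-2}$ bound but skips the isoperimetric step; without it you cannot pass from ``$\partial^*T$ small on the slice'' to the quantitative phase predominance $\mathcal H^{d-1}((\omega\times\{s\})\setminus T)\le C\tau^{-1}\delta(y)\ep^{p_d}$ needed in (i).
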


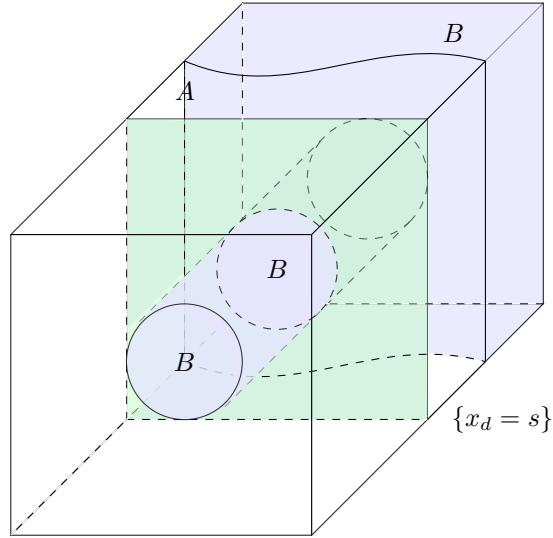
\begin{figure}[h]
\centering
\begin{tikzpicture}
\coordinate (H) at (-2,-2,-2);
\coordinate (A) at (-2,-2,2);
\coordinate (B) at (2,-2,2);
\coordinate (C) at (2,-2,-2);
\coordinate (D) at (-2, 2,-2);
\coordinate (E) at (-2, 2,2);
\coordinate (F) at (2, 2, 2);
\coordinate (G) at (2, 2,-2);

\coordinate (P) at (0,0,0); 
\coordinate (I) at (-2, 2,-6);
\coordinate (L) at (2, 2,-6);
\coordinate (M) at (2, -2,-6);
\coordinate (N) at (-2, -2,-6);
\coordinate (O) at (-2, 2,-4);
\coordinate (Q) at (2, 2,-4);
\coordinate (R) at (-2, -2,-4);
\coordinate (S) at (2, -2,-4);
\coordinate (T) at (1.1,-2,-4.3);

\draw[fill=blue!10, opacity=0.8] (N) -- (M) -- (C) -- (H) -- cycle;
\draw[white, fill=white, opacity=1] (S)--(T)--(C)--cycle; 
\draw[fill=blue!10, opacity=0.8] (I) -- (L) -- (M) -- (N) -- cycle;
\draw[fill=blue!10, opacity=0.8] (O) -- (I) -- (N) -- (R) -- cycle;
\draw[opacity=0.2] (D) -- (I) -- (L) -- (G) -- cycle;
\draw[opacity=0.2] (G) -- (L) -- (M) -- (C) -- cycle;
\begin{scope}[scale=0.8, shift={(-1,-1,-1)}]
\path (1,0,0);
\pgfgetlastxy{\cylxx}{\cylxy}
\path (0,1,0);
\pgfgetlastxy{\cylyx}{\cylyy}
\path (0,0,1);
\pgfgetlastxy{\cylzx}{\cylzy}
\pgfmathsetmacro{\cylt}{(\cylzy * \cylyx - \cylzx * \cylyy)/ (\cylzy * \cylxx - \cylzx * \cylxy)}
\pgfmathsetmacro{\ang}{atan(\cylt)}
\pgfmathsetmacro{\ct}{1/sqrt(1 + (\cylt)^2)}
\pgfmathsetmacro{\st}{\cylt * \ct}
\fill[blue!10, opacity=0.8] (\ct,\st,0) -- ++(0,0,-8) arc[start angle=\ang,delta angle=180,radius=1, dashed] -- ++(0,0,8) arc[start angle=\ang+180,delta angle=-180,radius=1];

\draw[dashed] (\ct,\st,0) -- ++(0,0,-8);
\draw[dashed] (-\ct,-\st,0) -- ++(0,0,-8);
\draw[dashed] (\ct,\st,-8) arc[start angle=\ang,delta angle=180,radius=1];
\draw[dashed] (\ct,\st,-8) arc[start angle=\ang,delta angle=-180,radius=1];
\end{scope}

 \fill[blue!10, opacity=0.8] (D)--(-2,1.2,-4)--(-2,-0.8,-4)--(-2,-0.8,-2)--cycle;
\draw[fill=green!30, opacity=0.4] (H) -- (C) -- (G) -- (D) -- cycle;
\draw[opacity=0.8] (H) -- (A) -- (E) -- (D) -- cycle;
\draw[opacity=0.8] (H) -- (A) -- (B) -- (C) -- cycle;
\draw[opacity=0.2] (D) -- (E) -- (F) -- (G) -- cycle;
\draw[opacity=0.2] (C) -- (B) -- (F) -- (G) -- cycle;

\draw[white!, dashed, line width=0.2mm] (C)--(H);
\draw[white!, dashed, line width=0.2mm] (D)--(H);
\draw[white!, dashed, line width=0.2mm] (A)--(H);
\draw[white!, dashed, line width=0.2mm] (N)--(M);
\draw[white!, dashed, line width=0.2mm] (I)--(N);
\draw[white!, dashed, line width=0.2mm] (H)--(N);
\draw (-2, 2,-4) to [out=-23,in=165] (2, 2,-4);
\draw[ dashed] (-2, -2,-4) to [out=-23,in=165] (2, -2,-4);
\draw (Q)--(S);
\draw[ dashed] (O)--(R);

\begin{scope}[scale=0.8]
\path (1,0,0);
\pgfgetlastxy{\cylxx}{\cylxy}
\path (0,1,0);
\pgfgetlastxy{\cylyx}{\cylyy}
\path (0,0,1);
\pgfgetlastxy{\cylzx}{\cylzy}
\pgfmathsetmacro{\cylt}{(\cylzy * \cylyx - \cylzx * \cylyy)/ (\cylzy * \cylxx - \cylzx * \cylxy)}
\pgfmathsetmacro{\ang}{atan(\cylt)}
\pgfmathsetmacro{\ct}{1/sqrt(1 + (\cylt)^2)}
\pgfmathsetmacro{\st}{\cylt * \ct}
\fill[blue!10, opacity=0.8] (\ct,\st,0) -- ++(0,0,-2) arc[start angle=\ang,delta angle=180,radius=1, dashed] -- ++(0,0,4) arc[start angle=\ang+180,delta angle=-180,radius=1];
\end{scope}
\draw[fill=blue!10, opacity=0.8, dashed] (0,0,-2) circle[radius=0.8];
\draw[fill=blue!10, opacity=0.8] (0,0,1.2) circle[radius=0.77];
\draw[opacity=0.2] (A) -- (B) -- (F) -- (E) -- cycle;
\draw (D)--(O);
\draw (F)--(Q);
\draw (F)--(E);
\draw (F)--(B);
\draw (C)--(S);
\node at (1,1.8,-5.5) {$B$};
\node at (0,0,-2) {$B$};
\node at (0,0,1.2) {$B$};
\node at (-1.8,1.8,-3.5) {$A$};
\node at (3,-2,-2) {\{$x_d=s$\}};
\end{tikzpicture}
\caption{The slice $\{x_d=s\}$ (in green) is contained in the  $A$-phase  region (in white) except for a small set lying in the $B$-phase region (in blue).}
\label{fig:small-set}
\end{figure}
 
\begin{proof}
 For notational convenience, we write $\delta(y)$ instead of $\delta_{\ep,\eta}(y;\hat{\omega},h,\tau)$.   Without restriction, we only select the rotation $R^+ \in SO(d)$ and  the constant $s^+ \in (\tau,2\tau)$, and establish the corresponding properties {\rm (i)}--{\rm (v)}. The selection of $R^-, s^-$ is analogous.  For convenience of the reader we subdivide the proof into three steps. We first discuss some consequences of the two-well rigidity estimate  (Step I) and identify a proportion of  $(d-1)$-dimensional slices which are contained in the $A$-phase region except for a small set (Step II), see Figure \ref{fig:small-set}. Finally, in Step III we select $s^+$ by means of a De Giorgi argument and show properties {\rm (i)}--{\rm (v)}.

\noindent\emph{Step I: Consequences of the  two-well  rigidity estimate.}  Recall the definition of $r(p,d)$ in \eqref{eq: rigidity-new} and note that $r(p,d)\geq 1/2$ for $d \ge 2$ and  $p \le 2$.   We first observe that  for all $1 \le p \le p_d$   there holds 
\begin{align}
\label{eq:prel-comp}
\Big(\frac{\ep}{\eta} \delta(y) + \frac{\sqrt{\ep}}{\eta^{3/2}} \delta(y) \Big)^{r(p,d)} & \le  C  (\delta(y))^{1/ 2} \Big(\frac{\ep}{\eta}  + \frac{\sqrt{\ep}}{\eta^{3/2}}\Big)^{r(p,d)} \leq  2   C  (\delta(y))^{1/ 2}   \Big(\frac{\sqrt{\ep}}{\eta^{3/2}_{\eps,d}}\Big)^{r(p,d)}\leq  2   C (\delta(y))^{1/ 2 } \eps^{\frac{r(p,d)}{r(p_d,d)}}\notag\\
&  \le  C(\delta(y))^{1/2} \ep. 
\end{align}
 for $C=C(\kappa)$. Here,   in the first inequality we used   $\delta(y) \le    (\kappa/64)^2 $ and  $r(p,d)\geq 1/2$. In the second one, we used $\eta_{\ep,d} \le \eta \le \frac{1}{\eps}$. In the third, we exploited that the definition of $\eta_{\ep,d}$ in \eqref{eq:eta-ep}--\eqref{eq:qdrd} implies   $\eps/\eta^3_{\ep,d} = \eps^{2/r(p_d,d)}$.  Finally, the fact that $r(p,d)$ is decreasing in $p$ implies the fourth inequality.


 For notational convenience, we define $F_{\omega,\tau} = \omega \times (\tau,h)$ and  $F_{\hat{\omega},\tau} = \hat{\omega} \times (\tau,h)$. We now apply  Theorem \ref{thm:rigiditythm} for $p=  p_d$ on $F_{\hat{\omega},\tau}$. (Note that for $d=2$ we can apply version (a) since $F_{\hat{\omega},\tau}$ is a rectangle and thus simply connected.)   In view of \eqref{eq:eta}--\eqref{eq:prel-comp}, Proposition \ref{lemma: phases}{\rm (iv)}, and Remark \ref{rem: setT},  we  find a rotation  $R^+ \in SO(d)$ and a set of finite perimeter  $T \subset F_{\hat{\omega},\tau}$  such that 
\begin{align}\label{eq: rigidity for limsup}
{\rm (i)} & \ \ \Vert \nabla y - R^+ A \Vert_{L^{p}( F_{\omega,\tau} \cap  T)} +\Vert \nabla y - R^+ B \Vert_{L^{p}( F_{\omega,\tau} \setminus  T)}   \le  C_0  (\delta(y))^{1/  2} \eps \ \  \text{ for all}\,1 \le p\le p_d,  \notag \\
{\rm (ii)} & \ \ \int_{-h}^h\mathcal{H}^{d-2}\big( (\R^{d-1} \times \lbrace t \rbrace) \cap \partial^* T \cap  F_{\hat{\omega},\tau} \big)  \, dt \le  C  \delta(y)  \eps/\eta  \le  C_0  (\delta(y))^{\frac{d-2}{d-1}} \eps^{1/r_d}  
\end{align}
for $C_0=C_0(\omega,\hat{\omega},h,\kappa,c_1,  p_d  )>0$.  Here, {\rm (i)} follows first for $p=p_d$ and then for $p<p_d$ by H\"older's inequality.  Note that the constant $C_0$ is independent of $\tau \le h/4$ since all sets  $F_{\hat{\omega},\tau}$  are uniformly Lipschitz equivalent to $\omega \times (0,h)$,  see Remark \ref{rem: afterth}(iii).  In the second inequality of {\rm (ii)} we used $\delta(y)\le  (\kappa/64)^2$, $\eta \ge \eta_{\ep,d}$, and  the definitions of $\eta_{\ep,d}$ and $r_d$ in \eqref{eq:eta-ep}. (See \eqref{eq:prel-comp} for a similar computation.)

\noindent\emph{Step II: Slices of the phase region $T$.}   We now show that, for $\eps$ sufficiently small, at least for one-half of the $s \in (\tau,2\tau)$   the set $\omega\times \lbrace s \rbrace$ `mostly lies in $T$', see Figure \ref{fig:small-set}.  More precisely, there exist $\ep_0=\ep_0(\omega,\hat{\omega},h,\kappa,c_1,\tau) \in (0,1)$ and $\bar{C}=\bar{C}(\omega,\hat{\omega},h,\kappa,c_1)>0$ such that   for all $\ep \le \ep_0$ and at least for one-half of the $s \in (\tau,2\tau)$ there holds
 \begin{equation}
 \label{eq:first-case-set}
 \mathcal{H}^{d-1}\big((\omega\times \lbrace s \rbrace) \setminus T\big) \leq \bar{C} \tau^{-1}\delta(y)\ep^{p_d},
 \end{equation} 
 where $p_d$ is defined in \eqref{eq:def-pd}.  To see this, we first   observe by  \eqref{eq: rigidity for limsup}{\rm (ii)}  that there exists $S \subset (\tau,2\tau)$ with $\mathcal{L}^1(S)\ge \frac{3}{4}\tau$ such that for all $s \in S$    there holds
 $$\mathcal{H}^{d-2}\big(\partial^* T\cap (\omega\times \lbrace s \rbrace )\big)\le 4\tau^{-1}  C_0  (\delta(y))^{\frac{d-2}{d-1}} \eps^{1/r_d}.$$
 Using $(d-2)p_d < (d-1)/r_d$, see \eqref{eq:qdrd}--\eqref{eq:def-pd}, we find  some $\eps_0' \in (0,1)$ sufficiently small depending on $\tau$ and $d$ such that  
 $$\mathcal{H}^{d-2}\big(\partial^* T\cap (\omega\times \lbrace s \rbrace )\big)\le 4\tau^{-1} C_0  (\delta(y))^{\frac{d-2}{d-1}} \eps^{1/r_d} \le  C_0 (\tau^{-1}\delta(y)\eps^{p_d})^{\frac{d-2}{d-1}},$$ 
 for all $s \in S$ and $\eps \le \eps_0'$. By applying the relative isoperimetric inequality in dimension $d-1$, cf.\ \cite[Theorem 2, Section 5.6.2]{EvansGariepy92}, we deduce that all $s\in S$ satisfy
 \begin{equation}
 \label{eq:isop-cons}
 \min\big\{\mathcal{H}^{d-1}( (\omega\times \lbrace s \rbrace ) \cap T),\, \mathcal{H}^{d-1}( (\omega\times \lbrace s \rbrace) \setminus T )\big\}  \leq  \bar{C} \tau^{-1}\delta(y)\ep^{p_d} 
 \end{equation}
for some $\bar{C}=\bar{C}(\omega,\hat{\omega},h,\kappa,c_1)>0$. (Note that the theorem in the reference above is stated and proved in a ball, but that the argument only relies on Poincar\'e inequalities, and thus easily extends to bounded Lipschitz domains.) Define $\ep_0=\ep_0(\omega,\hat{\omega},h,\kappa,c_1,\tau)>0$ by
\begin{align}\label{eq: eps0 def}
\eps_0 = \min \Big\{\frac{\tau\mathcal{H}^{d-1}(\omega)}{16 (\bar{C} \kappa^{2} + C_0)}, \eps_0', \frac{h}{2},\tau \Big\},
\end{align}
where $C_0$ is the constant from \eqref{eq: rigidity for limsup}.

We now show that for at least  one-half of the $s \in (\tau,2\tau)$ property \eqref{eq:first-case-set} holds for the  constants $\bar{C}$ and $\eps_0$. Suppose  by contradiction   that the statement was wrong. In view of \eqref{eq:isop-cons}, we get that for at least one-fourth of the $s \in (\tau,2\tau)$ there holds 
  \begin{equation*}
 \mathcal{H}^{d-1}\big(T\cap (\omega\times \lbrace s\rbrace )\big)\leq  \bar{C} \tau^{-1}\delta(y)\ep^{p_d}. 
 \end{equation*}
 Then, setting   $G_{\omega,\tau}:=\omega\times (\tau,2\tau)$,   we obtain by \eqref{eq:eta},   H\"older's inequality, and \eqref{eq: rigidity for limsup}{\rm (i)} for $p=1$
\begin{align*}
 \frac{\tau}{4}   (\mathcal{H}^{d-1}(\omega)-\bar{C} \tau^{-1}\delta(y)\ep^{p_d}) |  A - R^+B| & \le \Vert A - R^+ B \Vert_{L^{1}(G_{\omega,\tau} \setminus T)} \\& \le \Vert \nabla y - A \Vert_{L^{1}(G_{\omega,\tau})}  +  \Vert \nabla y - R^+ B \Vert_{L^{1}(G_{\omega,\tau} \setminus T)} \\
& \le   8\tau \,\mathcal{H}^{d-1}(\omega)  \,  (\delta(y))^{1/2} + C_0 (\delta(y))^{1/ 2} \eps\\& \le \big(\tau\mathcal{H}^{d-1}(\omega)/8 +C_0\eps \big) |A-B|,
\end{align*}
where in the last inequality we used  that $\delta(y) \le   (\kappa/64)^2$, and the fact that $|A-B|=\kappa$. As $|A-R^+B | \ge |A-B|$, this implies
 $$\tau \mathcal{H}^{d-1}(\omega) /8 \le \bar{C}  \delta(y)\eps^{p_d}/4 + C_0\eps \le (\bar{C} \kappa^{2} + C_0)\eps. $$
 This, however, contradicts the choice of $\eps_0$ in \eqref{eq: eps0 def} and $\eps \le \eps_0$. Thus, \eqref{eq:first-case-set} holds.

\noindent\emph{Step III: Selection of $s^+$ and proof of the statement.} In view of \eqref{eq:eta}, \eqref{eq: rigidity for limsup}(i), and \eqref{eq:first-case-set}, we can use  a De Giorgi argument \RRR (see the explanation at the beginning of \RRR the proof of  \cite[Lemma 4.3]{conti.schweizer2} for the details of this technique) \EEE to select  $s^+ \in (\tau,2\tau)$ such that 
\begin{align}\label{eq: DeGiorgi}
{\rm (i)} &  \ \  \mathcal{H}^{d-1}(\Gamma^+ \setminus T ) \leq \bar{C} \tau^{-1}\delta(y)\ep^{p},\notag\\
{\rm (ii)} & \ \ \int_{\Gamma^+ \cap T} | \nabla y - R^+ A|^p \, d\mathcal{H}^{d-1}  + \int_{\Gamma^+ \setminus T} | \nabla y - R^+ B|^p \, d\mathcal{H}^{d-1} \le C \tau^{-1} (\delta(y))^{p/ 2} \eps^{p}, \notag \\
{\rm (iii)} & \ \ (\mathcal{L}^d(D_{\omega,4\tau}))^{-1}\int_{\Gamma^+} | \nabla y -  A|^2 \, d\mathcal{H}^{d-1} +  E_{\ep,\eta}\big(y,\Gamma^+ \big) \le C \tau^{-1}\delta(y), \notag\\
{\rm (iv)} & \ \ (\mathcal{L}^d(D_{\omega,4\tau}))^{-1}\Vert \nabla y - A \Vert^2_{L^2(s^+ {\rm e}_d + D_{\omega,\eps^2} )} +  E_{\ep,\eta}\big(y, s^+ {\rm e}_d + D_{\omega,\eps^2} \big) \le  C \tau^{-1}\eps^2\delta(y)
\end{align}
for all $1 \le p \le p_d$ and $\eps \le \eps_0$, where $\Gamma^+ := \omega \times \lbrace s^+ \rbrace$. Here, we have also used that $2\tau\le h/2$ and  $\eps^2 \le h/2$ (see \eqref{eq: eps0 def}) to guarantee that $s^+ {\rm e}_d + D_{\omega,\eps^2} \subset D_{\omega,h}$. We emphasize  that the constants $C$ and $\bar{C}$ are independent of $\tau$.

Properties {\rm (ii)}--{\rm (iv)} of the statement are immediate from \eqref{eq: DeGiorgi}{\rm (iii)}--{\rm (iv)} and definition \eqref{eq: nonlinear energy}. We now show item {\rm (i)} of the statement. First, in view of  \eqref{eq: DeGiorgi}{\rm (ii)}, the integral on    $\Gamma^+ \cap T$ is controlled and we therefore only need to consider the integral on $\Gamma^+ \setminus T$.  By \eqref{eq: DeGiorgi}{\rm (i)},{\rm (ii)} we get 
 \begin{align*}
  \int_{\Gamma^+ \setminus T}|\nabla y-R^+A|^{p}\,d\mathcal{H}^{d-1}&\leq  2^{p-1 }  \int_{\Gamma^+ \setminus T}|\nabla y-R^+B|^{p}\,d\mathcal{H}^{d-1}+ 2^{p-1}  |A-B|^{p}\,\mathcal{H}^{d-1}(\Gamma^+ \setminus T) \notag \\
 & \le  2^{p-1}  C(\delta(y))^{ p/ 2}\eps^{p}\tau^{-1}+  2^{p-1} \bar{C}\kappa^{p} \delta(y) \eps^{p} \tau^{-1}
 \end{align*}
for all $1 \le p \le p_d$. This along with $\delta(y) \le  C (\delta(y))^{p/2}$ and \eqref{eq: DeGiorgi}{\rm (ii)}   shows {\rm (i)}. We finally observe that {\rm (v)} holds. \RRR Indeed, by combining  property {\rm (i)} of the statement with \eqref{eq: DeGiorgi}{\rm (iii)}, \RRR and by H\"older's inequality, we deduce the estimate
\begin{align*}
|R^+-{\rm Id}|^p&=\frac{1}{\mathcal{H}^{d-1}(\omega)}\left(\int_{\Gamma^+}|R^+-{\rm Id}|^p\,d\mathcal{H}^{d-1}\right)\\
&\leq C \left(\int_{\Gamma^+}|R^+-\nabla y|^p\,d\mathcal{H}^{d-1}+\int_{\Gamma^+}|\nabla y-{\rm Id}|^p\,d\mathcal{H}^{d-1}\right)\leq \frac{C\ep^p(\delta(y))^{p/2}}{\tau}+C(\delta(y))^{p/2}
\end{align*} 
for every $1\leq p\leq p_d$. \EEE Property {\rm (v)} follows noting that $\eps_0 \le \tau$, see \eqref{eq: eps0 def}.
\end{proof}

 \begin{remark}[Amount of ``good" slices]
By the proof of Proposition \ref{lemma: optimal profile} it follows that the statement of the proposition holds for slices  in sets $S^+ \subset (\tau,2\tau)$ and $S^- \subset (-2\tau,-\tau)$, respectively, with $\mathcal{L}^1(S^\pm) \ge c\tau$,  where $0<c<1$ is a suitable ratio.
 \end{remark}

Based on Proposition \ref{lemma: optimal profile}{\rm (i)}, one can also derive an $H^{1/2}$-estimate on the $(d-1)$-dimensional slices.

 \begin{corollary}[$H^{1/2}$-estimate]\label{cor: Besov}
Consider the   setting  of Proposition \ref{lemma: optimal profile}. Then,  there exist  $t^+, t^- \in \R^d$ such that  
\begin{align}\label{eq: besov}
\|y(\cdot,s^+)-R^+A(\cdot,s^+)^T-t^+\|^2_{H^{1/2}(\omega)} + \|y(\cdot,s^-)-R^-B(\cdot,  s^- )^T-t^-\|^2_{H^{1/2}(\omega)}\leq C\ep^2      \delta_{\ep,\eta}(y;\hat{\omega},h, \tau)   
\end{align}
 for a constant  $C=C(\omega,\hat{\omega},h,\kappa,c_1,\tau)>0$.
 \end{corollary}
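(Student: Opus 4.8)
\textbf{Proof strategy for Corollary \ref{cor: Besov}.} The plan is to deduce the $H^{1/2}$-estimate on the slice $\omega \times \{s^+\}$ (and analogously for $s^-$) from the $L^{p_d}$-control on the tangential gradient provided by Proposition \ref{lemma: optimal profile}(i), by invoking the Sobolev embedding $W^{1,p_d}(\omega) \hookrightarrow H^{1/2}(\omega)$ valid in dimension $d-1$ (recall that $p_d$ was precisely chosen in \eqref{eq:def-pd} so that this embedding holds; see the references cited after \eqref{eq:def-pd}). First I would fix attention on the upper slice and consider the trace $y(\cdot, s^+) \in H^1(\omega;\R^d)$, which is well-defined since $y \in H^2(D_{\hat\omega,h};\R^d)$. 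Its tangential gradient (the derivative with respect to the first $d-1$ variables) agrees with the corresponding $(d-1)\times d$ block of $\nabla y$ restricted to $\Gamma^+ = \omega \times \{s^+\}$. Writing $A(\cdot,s^+)^T$ for the affine map $x' \mapsto A (x', s^+)^T$, whose tangential gradient is the identity block of $A$, I would estimate
\begin{align*}
\|\nabla_{x'}\big(y(\cdot,s^+) - R^+ A(\cdot,s^+)^T\big)\|_{L^{p_d}(\omega)} \le \|\nabla y - R^+ A\|_{L^{p_d}(\Gamma^+)} \le \Big(\frac{C}{\tau}\Big)^{1/p_d} (\delta_{\ep,\eta}(y;\hat\omega,h,\tau))^{1/2}\, \eps
\end{align*}
using Proposition \ref{lemma: optimal profile}(i) with $p = p_d$.

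Next, I would apply the $(d-1)$-dimensional Sobolev–Gagliardo embedding $W^{1,p_d}(\omega) \hookrightarrow H^{1/2}(\omega)$ to the function $g^+ := y(\cdot,s^+) - R^+ A(\cdot,s^+)^T$. A Poincaré–Wirtinger type argument (subtracting the mean $t^+ := \fint_\omega g^+$) together with the embedding gives
\begin{align*}
\| g^+ - t^+ \|_{H^{1/2}(\omega)}^2 \le C(\omega)\, \| \nabla_{x'} g^+\|_{L^{p_d}(\omega)}^2 \le C(\omega,\hat\omega,h,\kappa,c_1,\tau)\, \eps^2\, \delta_{\ep,\eta}(y;\hat\omega,h,\tau),
\end{align*}
which is exactly the asserted bound for the first term in \eqref{eq: besov}. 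Here one uses that $\omega$ is a bounded Lipschitz domain in $\R^{d-1}$, so that the Sobolev extension and embedding theorems apply; the constant absorbs the embedding constant of $\omega$ and the factor $C/\tau$ from Proposition \ref{lemma: optimal profile}(i). Repeating the identical argument on the lower slice, using instead $R^- B$ in place of $R^+ A$ and the second term in Proposition \ref{lemma: optimal profile}(i), yields the bound for $\| y(\cdot,s^-) - R^- B(\cdot,s^-)^T - t^-\|_{H^{1/2}(\omega)}^2$, and summing the two contributions gives \eqref{eq: besov}.

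The only genuinely delicate point is to make sure the Sobolev embedding is applied in the correct borderline form: for $d > 2$ the exponent $p_d = 2(d-1)/d$ is strictly below the critical exponent $d-1$, so $W^{1,p_d}(\omega) \hookrightarrow L^{q}(\omega)$ with $q = (d-1)p_d/(d-1-p_d)$, and one checks that the fractional Sobolev space $H^{1/2} = W^{1/2,2}$ is continuously contained in the appropriate Besov/Sobolev scale reached from $W^{1,p_d}$ — this is precisely the content of \cite[Theorem 14.32, Remark 14.35, Proposition 14.40]{leoni}; for $d=2$ one has $p_2 = 2$ and the embedding $H^1(\omega) \hookrightarrow H^{1/2}(\omega)$ is trivial. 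Apart from this bookkeeping, the proof is a direct consequence of Proposition \ref{lemma: optimal profile}(i) and standard trace and embedding theorems, so I would keep the write-up short and merely point to the embedding references.
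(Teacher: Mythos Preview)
Your proposal is correct and follows essentially the same approach as the paper: apply Proposition \ref{lemma: optimal profile}(i) with $p=p_d$, use a $(d-1)$-dimensional Poincar\'e inequality to subtract a constant $t^+$, and then invoke the embedding $W^{1,p_d}(\omega)\hookrightarrow H^{1/2}(\omega)$. The paper's proof is just a two-sentence version of exactly this argument.
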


 \begin{proof}
We only provide the estimate on  $\omega \times \lbrace s^+ \rbrace$.  By   Proposition \ref{lemma: optimal profile}{\rm (i)} and by a $(d-1)$-dimensional Poincar\'e inequality  we find $t^+ \in \R^d$ such that  there holds
$$
\|y(\cdot,s^+)-R^+A(\cdot,s^+)^T-t^+\|_{W^{1,p_d}(\omega)}\leq \frac{C \ep  (\delta_{\ep,\eta}(y;\hat{\omega},h, \tau) )^{ 1/ 2}  }{\tau^{1/p_d}}.
$$
By the definition of $p_d$ (see \eqref{eq:def-pd}) and  by classical Besov embeddings (see, e.g., \cite[Theorem 14.32   and Remark 14.35]{leoni}   and  \cite[Theorem 7.1, Proposition 2.3]{lions.magenes}), we observe that the $H^{1/2}$-norm can be controlled in terms of the $W^{1,p_d}$-norm. This concludes the proof.  
 \end{proof}

\begin{remark}[The role of the quantitative rigidity estimate]\label{rem: comparison}
{\normalfont

The quantitative estimate in terms of $\eps$ provided by \eqref{eq: besov} is the fundamental ingredient to construct transitions to rigid movements in Lemma \ref{lemma: transition1} below. Its derivation relies on the rigidity result of Section \ref{sec: rigidity estimate}, a careful choice of $(d-1)$-dimensional slices, and an embedding into $H^{1/2}$. Let us emphasize that other quantitative two-well rigidity estimates (see Subsection \ref{sec: literature}) cannot be used in the proof of Proposition \ref{lemma: optimal profile}: applying  \eqref{eq: rig-mot2} would  lead to $\eps$ instead of $\eps^2$ on the right hand side of \eqref{eq: besov}. Although \eqref{eq: rig-mot3} would give a correct scaling in terms of $\eps$, no embedding into $H^{1/2}$  would be  possible since only the weak $L^1$-norm of the derivative is controlled. }
\end{remark}
\begin{remark}[The assumption $\eta \le \frac{1}{\eps}$]\label{rem: comparison-2}
{\normalfont

Let us mention that Proposition \ref{lemma: optimal profile} holds true also without the assumption $\eta \le \frac{1}{\eps}$. It will only be crucial in the construction of transitions, see Lemma \ref{lemma: transition1} below. However, we prefer to formulate the proposition with this slightly stronger assumption since $\eta_{\ep,d} \le \eta \le \frac{1}{\eps}$ is the interesting regime. In fact, if $\eta \ge \frac{1}{\eps}$, the proof is much simpler and no rigidity estimates are needed: property {\rm (i)} in Proposition \ref{lemma: optimal profile} can simply be derived by using   property {\rm (iii)} and a Poincar\'e inequality.

} 
\end{remark}

\begin{remark}[Sharpness of the argument]\label{rem: comparison-3}
{\normalfont

 Alternatively, an $H^{1/2}$-estimate on the traces could have been obtained without Besov embeddings by working directly with $p=2$ in Proposition \ref{lemma: optimal profile}. In this case, however, $\eta_{\eps,d}$ in \eqref{eq:eta-ep} has to be chosen larger.    We have preferred to perform the estimates for $p\leq p_d$ in order to obtain a sharpest definition of $\eta_{\eps,d}$ which leads to a sufficient $H^{1/2}$-control of the traces of the deformations. 
}
\end{remark}

The next  lemmas address suitable    $H^2$-extensions   of functions.  We point out that the proof arguments follow  closely \cite[Lemma 5.3, Lemma 5.4]{conti.schweizer2}.  We work out the main points of the proof for convenience of the reader.  In the following, we will frequently write $x' =(x_1,\ldots,x_{d-1})$ for brevity.


\begin{lemma}[Extension of functions defined on $(d-1)$-dimensional slices]
\label{lemma:ext1}
Let $\omega\subset \mathbb{R}^{d-1}$  open, bounded with Lipschitz boundary.  Let $ \eps,  \eta, \theta >0$. Let $u\in H^2(\omega;\mathbb{R}^d)$ be such that
\begin{equation}
\label{eq:need-later1}
\frac{1}{\ep^2}\|u\|_{H^{1/2}(\omega)}^2+\eta^2\|u\|_{H^2(\omega)}^2\leq \theta.
\end{equation}
Then, for any  $\tau>0$  there exists  $z\in H^2(\omega \times (0,\infty);\mathbb{R}^d)$   such that $z(x',0)=u(x')$ for all $x'\in \omega$, $z$ is constant  on $\omega \times (\tau,\infty)$ and
\begin{equation}
\label{eq:need-later2}
\frac{1}{\ep^2}\int_{\omega \times (0,\infty)}|\nabla z|^2\,dx+\eta^2\int_{\omega \times (0,\infty)}|\nabla^2 z|^2\,dx\leq C\theta
\end{equation}
for some constant $C=C(\omega,\tau)>0$.  In a similar fashion, an extension to $\omega \times (-\infty,0)$ can be constructed.  
\end{lemma}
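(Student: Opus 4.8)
The strategy is a two-scale construction: first extend $u$ ``harmonically'' (or by a standard trace extension) on the thin slab $\omega\times(0,\tau/2)$, controlling the $H^2$-energy by the $H^{1/2}$- and $H^2$-norms of the trace $u$, and then interpolate between this extension and a fixed constant vector on the remaining slab $\omega\times(\tau/2,\tau)$, so that $z$ becomes constant for $x_d\ge\tau$. After that, set $z$ equal to that constant on $\omega\times(\tau,\infty)$. The key point is that the slab on which we do the nontrivial extension has thickness comparable to $\eps^2$ (not $\tau$), which is what allows the $\frac{1}{\eps^2}\|u\|_{H^{1/2}}^2$ term to absorb the gradient contribution.

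\textbf{Step 1 (rescaled extension near the slice).} Work on the slab $S_\eps := \omega\times(0,\eps^2)$. Use a standard extension operator: let $z_0(x',x_d)$ be defined by convolving $u$ at height $x_d$ with a mollifier on scale $x_d$, i.e. $z_0(x',x_d) = (u * \rho_{x_d})(x')$ for a suitable smooth kernel, cut off so that $z_0$ lives on $\omega\times(0,\eps^2)$ (here one first extends $u$ to $\mathbb{R}^{d-1}$ via a bounded Sobolev extension $\bar u$ on a slightly larger domain $\hat\omega\supset\supset\omega$, with $\|\bar u\|_{H^{1/2}(\mathbb{R}^{d-1})}\le C\|u\|_{H^{1/2}(\omega)}$ and likewise in $H^2$; the constant depends on $\omega$). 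The classical trace theory estimates give
\begin{equation*}
\int_{S_\eps}|\nabla z_0|^2\,dx \le C\|u\|_{H^{1/2}(\omega)}^2, \qquad
\int_{S_\eps}|\nabla^2 z_0|^2\,dx \le C\big(\eps^{-2}\|u\|_{H^{1/2}(\omega)}^2 + \eps^2\|u\|_{H^2(\omega)}^2\big),
\end{equation*}
the last one since the second derivatives of the mollified function at height $t$ are controlled by $t^{-1}$ times the $H^{1/2}$-seminorm plus the $H^2$-norm, integrated over $t\in(0,\eps^2)$. Multiplying by $\frac1{\eps^2}$ and $\eta^2$ respectively and using \eqref{eq:need-later1} together with $\eta^2\eps^2\le$ (bounded, because in the regime of interest $\eta\le 1/\eps$, or one simply keeps the term and notes $\eta^2\eps^2\|u\|_{H^2}^2\le\eps^4\theta$) yields the desired bound $C\theta$ on $S_\eps$.

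\textbf{Step 2 (interpolation to a constant).} Let $\bar u := \fint_\omega u$. On the slab $\omega\times(\eps^2,2\eps^2)$ replace $z_0$ by a convex combination $z(x',x_d) = \chi(x_d)\,z_0(x',\eps^2) + (1-\chi(x_d))\,\bar u$ where $\chi$ is a fixed smooth cutoff with $\chi\equiv 1$ near $x_d=\eps^2$ and $\chi\equiv 0$ near $x_d=2\eps^2$; matching derivatives in $x_d$ at the two interfaces is achieved by the usual $C^1$-gluing (take $z_0$ to be $x_d$-independent on a sublayer, or mollify in $x_d$ as well so that $\partial_d z_0(\cdot,\eps^2)$ is small — of order $\eps^{-2}$ times the trace norm — and absorb it). The gradient of the interpolant is controlled by $\eps^{-2}\|z_0(\cdot,\eps^2)-\bar u\|_{L^2(\omega)}\le C\eps^{-2}\|u - \bar u\|_{L^2(\omega)}$ plus $\|\nabla' z_0(\cdot,\eps^2)\|_{L^2}$; by Poincaré and the $H^{1/2}$-control these contribute $\le C\eps^{-2}\|u\|_{H^{1/2}(\omega)}^2\cdot\eps^2 = C\|u\|_{H^{1/2}(\omega)}^2$ after integrating over the layer of thickness $\eps^2$. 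Similarly the second derivatives pick up an extra $\eps^{-2}$ from $\chi''$, giving the same $\eps^{-2}$-weighted $H^{1/2}$-bound as in Step 1. Then set $z\equiv\bar u$ on $\omega\times(2\eps^2,\infty)\supset\omega\times(\tau,\infty)$ for $\eps$ small (for general $\eps$, rescale the two layers above to have total thickness $\min\{\eps^2,\tau/2\}$ — only the bound $C(\omega,\tau)$ changes). Summing the contributions of the two layers gives \eqref{eq:need-later2}.

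\textbf{Main obstacle.} The delicate point is the sharp trace estimate $\int_0^{\eps^2}\|\nabla^2(u*\rho_t)\|_{L^2(\omega)}^2\,dt \le C\eps^{-2}\|u\|_{H^{1/2}}^2 + C\eps^2\|u\|_{H^2}^2$ — i.e. extracting exactly one inverse power of the layer thickness when passing from $H^{1/2}$ to the second-derivative $L^2$-norm, and verifying that the leftover $H^2$-contribution is genuinely lower order. This is the standard fractional trace/extension lemma, but it must be applied with the layer width tied to $\eps^2$ rather than to $\tau$, which is precisely the mechanism that makes the $\frac1{\eps^2}$-weighted $H^{1/2}$-term on the left of \eqref{eq:need-later1} the dominant budget. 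The $C^1$-matching at the gluing interfaces and the bookkeeping of the $\chi'$, $\chi''$ terms is routine once one mollifies in the $x_d$-variable as well. The reflected extension to $\omega\times(-\infty,0)$ is obtained by the obvious sign change $x_d\mapsto -x_d$.
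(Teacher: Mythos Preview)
Your Step~2 contains a genuine gap that cannot be repaired within the single-scale framework you propose. Consider the interpolant $z(x',x_d)=\chi(x_d)z_0(x',\eps^2)+(1-\chi(x_d))\bar u$ on the layer of width $\eps^2$, with $|\chi'|\sim\eps^{-2}$. Then
\[
\int_{\text{layer}}|\partial_d z|^2\,dx \;\sim\; \eps^{-4}\cdot\eps^2\cdot\|z_0(\cdot,\eps^2)-\bar u\|_{L^2(\omega)}^2 \;=\; \eps^{-2}\|z_0(\cdot,\eps^2)-\bar u\|_{L^2(\omega)}^2,
\]
not $\|u\|_{H^{1/2}}^2$ as you claim (you appear to drop a factor of $\eps^{-2}$ when squaring). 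Since mollification at scale $\eps^2$ leaves all Fourier modes $|\alpha|\ll\eps^{-2}$ essentially untouched, $\|z_0(\cdot,\eps^2)-\bar u\|_{L^2}$ is comparable to $\|u-\bar u\|_{L^2}$, which is only controlled by $\|u\|_{H^{1/2}}$, not by $\eps\|u\|_{H^{1/2}}$. Hence $\frac{1}{\eps^2}\int|\partial_d z|^2 \gtrsim \eps^{-4}\|u-\bar u\|_{L^2}^2$, which for a single low-frequency mode (say $u=e^{2\pi i x_1}$) blows up as $\eps\to 0$ while $\theta\sim\eps^{-2}$. Enlarging the interpolation layer to width $\sim\tau$ fixes this term but then $\frac{1}{\eps^2}\int_{\text{layer}}|\nabla' z|^2 \sim \eps^{-2}\tau\|\nabla' u\|_{L^2}^2$ fails for intermediate frequencies $|\alpha|\sim\eps^{-2}$. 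No single layer width works uniformly. Your appeal to ``$\eta^2\eps^2\le$ bounded'' is also illegitimate: the lemma assumes no relation between $\eps$ and $\eta$.

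The paper's proof avoids this by a \emph{frequency-dependent} decay: after extending $u$ periodically and writing $u=\sum_\alpha u_\alpha e^{i\alpha\cdot x'}$, one sets $z(x',x_d)=u_0+\sum_{\alpha\neq 0}u_\alpha e^{i\alpha\cdot x'}\psi(|\alpha|x_d)$ with a fixed smooth cutoff $\psi$ supported in $[0,1]$. Each mode dies on its own scale $1/|\alpha|$, which yields exactly $\int|\nabla z|^2\sim\sum|\alpha||u_\alpha|^2=\|u\|_{\dot H^{1/2}}^2$ and $\int|\nabla^2 z|^2\sim\sum|\alpha|^3|u_\alpha|^2\le\sum|\alpha|^4|u_\alpha|^2=\|u\|_{\dot H^2}^2$, the last step using the frequency gap $|\alpha|\ge 2\pi$ for $\alpha\neq 0$. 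This is precisely the multi-scale mechanism your two-layer construction lacks.
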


\begin{proof}
 We extend $u$ from $\omega$ to a cube in $\R^{d-1}$ such that \eqref{eq:need-later1} still holds up to multiplying $\theta$ with a constant depending on $\omega$. (For an extension operator in $H^{1/2}$ we refer to \cite[Theorem 5.4]{dinezza.palatucci.valdinoci}.) Without loss of generality, after scaling we can assume that the cube is the unit cube in $\R^{d-1}$ and  $\tau=1$.  Periodically extending $u$ to $\mathbb{R}^{d-1}$ and using a Fourier  representation of $u$,  we have
$$u(x')=\sum_{\alpha\in 2\pi\mathbb{Z}^{d-1}}u_{\alpha}e^{ix'\cdot\alpha} ,$$
where  the Fourier coefficients $\{u_{\alpha}\}_{\alpha}$ satisfy 
\begin{equation}
\label{eq:need-not}
\sum_{\alpha\in 2\pi\mathbb{Z}^{d-1}}\Big(\frac{|\alpha|}{\ep^2}+\eta^2|\alpha|^4\Big)|u_{\alpha}|^2\leq  C\theta 
\end{equation}
 for a constant   $C$   only depending on $\omega$.  Let $\psi:[0,+\infty)\to \mathbb{R}$  be a smooth cut-off function  satisfying $0 \le \psi \le 1$,  $\psi(0)=1$, and $\psi(t)=0$ for $t\geq 1$. Setting
$$z(x',x_d):=u_0+\sum_{\alpha\in 2\pi\mathbb{Z}^{d-1},\,\alpha\neq 0}u_{\alpha}e^{ix'\cdot\alpha}\psi(|\alpha|x_d),$$
it is immediate to see that $z(x',0)=u(x')$ for all $x'\in \omega$ and that $z(x',x_d)=u_0$ for   $x_d>1$.  Using \eqref{eq:need-not} we calculate 
\begin{align*}
& \|\nabla z\|^2_{L^2(\omega \times (0,\infty))}  \le  \hspace{-0.15cm} \sum_{\substack{\alpha\in 2\pi\mathbb{Z}^{d-1}\\ \alpha\neq 0}}\int_0^{1/|\alpha|}|\alpha|^2|u_{\alpha}|^2\big(  \psi(|\alpha|x_d)  +\psi'(|\alpha|x_d)\big)^{ 2} \,dx_d\leq C\hspace{-0.15cm}\sum_{\alpha\in 2\pi\mathbb{Z}^{d-1}}|\alpha||u_{\alpha}|^2\leq C \eps^2  \theta,
\end{align*}
 where $C$ depends on $\Vert\psi  \Vert_\infty$ and $\Vert\psi'  \Vert_\infty$, and similarly 
 
$$\eta^2\|\nabla^2 z\|^2_{L^2(\omega \times (0,\infty))}  \leq C  \eta^2\sum_{\alpha\in 2\pi\mathbb{Z}^{d-1}}|\alpha|^3|u_{\alpha}|^2  \leq  C  \eta^2\sum_{\alpha\in 2\pi\mathbb{Z}^{d-1}}|\alpha|^4|u_{\alpha}|^2\leq  C  \theta,$$
 where $C$  depends additionally on $\Vert\psi''  \Vert_\infty$. This shows property \eqref{eq:need-later2}. 
\end{proof}

 For convenience,  in the next lemmas  we use the following notation: for  $D \subset \R^d$, $\eps,\eta>0$, and $u \in H^2(D;\R^d)$ we define    
   \begin{align}\label{eq: E*}
E^*_{\ep,\eta}(u,D):=\frac{1}{\ep^2}\int_D|\nabla u|^2\,dx+\ep^2\int_{D}|\nabla^2 u|^2\,dx+ \eta^2 \int_{D}\Big(  |\nabla^2 u|^2 - |\partial^2_{dd}u|^2 \Big)  \,dx.
\end{align} 

\begin{lemma}[$H^2$-extension]\label{lemma: smoothen}
Let $h,\tau >0$ with $\tau \le h/4$ and let $\omega\subset \mathbb{R}^{d-1}$   open, bounded with   Lipschitz boundary. Let $\eta, \eps,\theta >0$ with $\eps^2 \le \tau$ and $\eps \le \eta$.  Let $u \in   H^2  (D_{\omega,h};\R^d)$ and $0 < s < 2\tau$ be such that
\begin{equation}
\label{eq:need-later-new}
\frac{1}{\ep^2}\|u(\cdot,s)\|_{H^{1/2}(\omega)}^2+\eta^2\|u(\cdot,s)\|_{H^2(\omega)}^2 + E^*_{\ep,\eta}(u, \omega \times (s,s+\eps^2))  \leq \theta.
\end{equation}
Then there exists a map $v \in H^2(\omega \times (0,\infty);\R^d)$ such that  $v = u$ on $\omega \times (0,s)$, $v$ is constant on $\omega \times (s + \tau,\infty)$, and 
\begin{align*}
E^*_{\ep,\eta}(v, \omega \times (s,\infty)  ) \le C\theta
\end{align*}
for a constant   $C=C(\omega,\tau)>0$.  If \eqref{eq:need-later-new} holds for some $  -2\tau < s < 0$,  one can construct a map $v \in H^2(\omega \times (-\infty,0);\R^d)$ in a similar fashion.  
\end{lemma}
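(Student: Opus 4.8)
\textbf{Proof strategy for Lemma \ref{lemma: smoothen}.} The plan is to glue $u$ (on $\omega \times (0,s)$) to a constant map, using the slice $\omega\times\{s\}$ as the matching interface and the extension result of Lemma \ref{lemma:ext1} to build the transition layer. First I would freeze the behaviour of $u$ at $x_d=s$: set $w(x') := u(x',s) - \ell$ where $\ell$ is a suitable rigid (constant) map to be fixed below, so that $w \in H^2(\omega;\R^d)$ and, by \eqref{eq:need-later-new}, $\tfrac{1}{\eps^2}\Vert w\Vert_{H^{1/2}(\omega)}^2 + \eta^2\Vert w\Vert_{H^2(\omega)}^2 \le \theta$. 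Apply Lemma \ref{lemma:ext1} (with the roles of $\eps,\eta,\theta,\tau$ as in the statement, and working on the half-line $\omega\times(0,\infty)$ but then translating by $s\,{\rm e}_d$) to obtain $z \in H^2(\omega\times(0,\infty);\R^d)$ with $z(\cdot,0)=w$, $z$ constant on $\omega\times(\tau,\infty)$, and $\tfrac{1}{\eps^2}\Vert\nabla z\Vert_{L^2}^2 + \eta^2\Vert\nabla^2 z\Vert_{L^2}^2 \le C\theta$.

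The subtlety is that we cannot simply define $v = z(\cdot - s\,{\rm e}_d) + \ell$ on $\omega\times(s,\infty)$ and $v=u$ on $\omega\times(0,s)$, because this is only $H^1$ across the interface $\{x_d=s\}$ (the normal derivative $\partial_d$ jumps), whereas we need $v\in H^2$. This is where the extra data in \eqref{eq:need-later-new} — the control of $E^*_{\eps,\eta}(u,\omega\times(s,s+\eps^2))$ — enters: on the thin strip $\omega\times(s,s+\eps^2)$ we have to interpolate between the two candidate definitions so that $v$, $\nabla v$ match at both $x_d=s$ and $x_d=s+\eps^2$. Concretely, I would take a cut-off $\varphi\in C^\infty([s,s+\eps^2])$ with $\varphi=1$ near $s$, $\varphi=0$ near $s+\eps^2$, $|\varphi'|\le C\eps^{-2}$, $|\varphi''|\le C\eps^{-4}$, and set $v := \varphi(x_d)\,u + (1-\varphi(x_d))\big(z(x'-0,x_d-s)+\ell\big)$ on the strip, then $v = z(x',x_d-s)+\ell$ for $x_d\ge s+\eps^2$ and $v=u$ for $x_d\le s$; one also has to blend the $x_d$-dependence of $z$ so that $\partial_d v$ is continuous, which forces $z$ near its base to be affine in $x_d$ — this is automatic in the Fourier construction of Lemma \ref{lemma:ext1} up to lower-order terms, or can be arranged by composing with a fixed affine reparametrization of the $x_d$-variable. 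The energy contribution of the strip is estimated term by term: $\eps^{-2}\int |\nabla v|^2$ picks up $\eps^{-2}\cdot\eps^2 = O(1)$ times $\Vert u - z -\ell\Vert_\infty^2$-type quantities plus $\eps^{-2}\int|\varphi'|^2|u-z-\ell|^2$, and here the crucial point is that $\eps^{-2}\int_{\omega\times(s,s+\eps^2)}|\varphi'(x_d)|^2 |u - z - \ell|^2\,dx \le C\eps^{-2}\eps^{-4}\eps^2 \sup_{x_d}\Vert u(\cdot,x_d) - u(\cdot,s)\Vert_{L^2(\omega)}^2 + \ldots$, which is controlled by $\int_{\omega\times(s,s+\eps^2)}|\partial_d u|^2$ via a one-dimensional fundamental-theorem-of-calculus estimate $\Vert u(\cdot,x_d)-u(\cdot,s)\Vert_{L^2}^2 \le \eps^2 \int_s^{s+\eps^2}\Vert\partial_d u(\cdot,t)\Vert_{L^2}^2\,dt$, and then $\eps^{-2}\cdot\eps^{-2}\cdot\eps^2 \cdot \eps^2 \int |\partial_d u|^2 = \int|\partial_d u|^2 \le \eps^2 E^*_{\eps,\eta}(u,\cdot) \le \eps^2\theta$; similarly the second-derivative terms produce factors $\eps^4|\varphi''|^2\eps^2 = \eps^2$ against $|\partial_d u|^2$ (or against $\Vert u - z\Vert$ estimated by two integrations), all absorbed into $C\theta$ using $\eps^2\le\tau$ and $\eps\le\eta$. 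The choice of $\ell$ is dictated by the requirement that $v$ be continuous: we need $z(\cdot,0)+\ell = u(\cdot,s)$, i.e. $\ell$ is the rigid map subtracted in the first step, and it must be chosen (e.g. as the affine map agreeing with the mean of $u(\cdot,s)$ and its mean gradient, or simply a constant if $u(\cdot,s)$ is already close to a constant) so that \eqref{eq:need-later1} is satisfied for $w = u(\cdot,s)-\ell$; in the application $u(\cdot,s)$ will be close to an affine function $R^\pm M(\cdot,s)^T + t^\pm$ by Corollary \ref{cor: Besov}, and one takes $\ell$ to be that affine map, but at the level of this lemma we only need \eqref{eq:need-later-new} as given, so $\ell$ is whatever constant/affine piece was already subtracted.

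Once $v$ is defined on $\omega\times(0,\infty)$ in these three regions, I would verify $v\in H^2$ by checking that traces and normal traces of $\nabla v$ match across $\{x_d=s\}$ and $\{x_d=s+\eps^2\}$ — which holds by construction of $\varphi$ and the base behaviour of $z$ — and collect the energy: $E^*_{\eps,\eta}(v,\omega\times(s,\infty))$ splits as the strip contribution ($\le C\theta$ by the estimate above) plus the contribution on $\omega\times(s+\eps^2,s+\tau)$ which equals, after translation, the Lemma \ref{lemma:ext1} bound $\le C\theta$ (noting $E^*$ differs from the quantity in \eqref{eq:need-later2} only by the nonnegative anisotropic term $\eta^2\int(|\nabla^2 v|^2 - |\partial^2_{dd}v|^2)$, which for the extension $z$ is bounded by $\eta^2\Vert\nabla^2 z\Vert^2 \le C\theta$), plus nothing beyond $s+\tau$ since $v$ is constant there. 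The case $-2\tau < s < 0$ is identical after the reflection $x_d \mapsto -x_d$, using the $\omega\times(-\infty,0)$ version of Lemma \ref{lemma:ext1}.

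\textbf{Main obstacle.} The hard part is not any single estimate but the bookkeeping of the thin-strip interpolation: one must simultaneously (a) match zeroth and first order data at both ends of the $\eps^2$-strip to land in $H^2$, (b) control the blow-up factors $|\varphi'|\sim\eps^{-2}$, $|\varphi''|\sim\eps^{-4}$ by trading them against the $L^2$-smallness of $u - u(\cdot,s)$ on a strip of width $\eps^2$ (one and two powers of a Poincaré-in-$x_d$ inequality), and (c) ensure the $z$-side of the interpolation is genuinely affine in $x_d$ near the base so that $\partial_d v$ is continuous — a point glossed over if one quotes Lemma \ref{lemma:ext1} naively, and the reason the Fourier construction there must be inspected (or a fixed smooth reparametrization of $x_d$ inserted). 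The condition $\eps\le\eta$ is exactly what makes the $\eps^{-2}\Vert\nabla v\Vert^2$ and $\eta^2\Vert\nabla^2 v\Vert^2$ terms on the strip comparable, and $\eps^2\le\tau$ guarantees the strip fits inside $\omega\times(s,s+\tau) \subset D_{\omega,h}$.
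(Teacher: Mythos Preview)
Your approach is essentially the paper's: apply Lemma~\ref{lemma:ext1} to the slice $u(\cdot,s)$, translate to get $z$ on $\omega\times(s,\infty)$, and blend $u$ into $z$ on the strip $\omega\times(s,s+\eps^2)$ via a smooth cut-off $\psi$ with $|\psi'|\le C\eps^{-2}$, $|\psi''|\le C\eps^{-4}$. Two points deserve correction, though. First, the constant $\ell$ is superfluous: the hypothesis \eqref{eq:need-later-new} already controls $\|u(\cdot,s)\|_{H^{1/2}}$ and $\|u(\cdot,s)\|_{H^2}$ themselves, so you apply Lemma~\ref{lemma:ext1} directly to $u(\cdot,s)$. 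Second, your worry that ``$z$ must be affine in $x_d$ near the base so that $\partial_d v$ is continuous'' is a red herring. If the cut-off is chosen so that $\psi\equiv 0$ on a neighborhood of $s$ and $\psi\equiv 1$ on a neighborhood of $s+\eps^2$, then $v$ coincides identically with $u$ near $\{x_d=s\}$ and with $z$ near $\{x_d=s+\eps^2\}$; the $H^2$-matching is automatic and no structural assumption on $z$ is needed.

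The one detail your accounting does not make explicit, and which is what actually closes the estimate, is that since $z(\cdot,s)=u(\cdot,s)$ as functions on $\omega$, their \emph{tangential} derivatives agree on the slice: $\partial_i z(\cdot,s)=\partial_i u(\cdot,s)$ for $i=1,\dots,d-1$. This gives a second Poincar\'e inequality in the $x_d$-direction,
\[
\int_{\omega\times(s,s+\eps^2)}|\partial_i z-\partial_i u|^2\,dx \le C\eps^4\int_{\omega\times(s,s+\eps^2)}|\partial^2_{id}z-\partial^2_{id}u|^2\,dx \le C\eps^4\eta^{-2}\theta,
\]
and this is precisely what controls the dangerous cross-term $\eta^2\int|(\partial_i z-\partial_i u)\psi'|^2 \le \eta^2\eps^{-4}\cdot C\eps^4\eta^{-2}\theta = C\theta$. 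Without this observation one would only get $\eta^2\eps^{-4}\cdot\eps^2\theta = \eta^2\eps^{-2}\theta$, which diverges. Your condition $\eps\le\eta$ is used to bound $\eps^2\|\nabla^2 z\|^2 \le \eta^2\|\nabla^2 z\|^2 \le C\theta$ when passing from \eqref{eq:need-later2} to the $E^*_{\eps,\eta}$-bound on $z$.
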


  \begin{proof}
   Let $\hat{z} \in H^2(\omega \times (0,\infty))$ be the function obtained by Lemma \ref{lemma:ext1} applied on $u(\cdot,s) \in H^2(\omega;\R^d)$ and define $z = \hat{z}(\cdot - s{\rm e}_d) \in H^2(\omega \times (s,\infty))$. We note that  $z$ is constant on  $\omega \times (s +  \tau,\infty)$, that $z(\cdot,s) = u(\cdot,s) $ on $\omega$, and that
\begin{align}\label{eq: crazy energy}
E_{\eps,\eta}^*(z,\omega \times (s,\infty)) = E_{\eps,\eta}^*(\hat{z},\omega \times (0,\infty)) \le C(1 + \eps^2\eta^{-2})\theta \le C\theta,
\end{align}
where in the last step we have used that $\eps \le \eta$.

 Let $\psi: \R \to \R$ be a smooth cut-off function with  $0 \le \psi \le 1$,  $\psi(t) = 0$ for $t \le  s $, and $\psi(t) = 1$ for $t \ge  s +  \eps^2$, and satisfying   $\Vert \psi \Vert_{L^\infty(\mathbb{R})}+ \eps^2\Vert \psi' \Vert_{L^\infty(\mathbb{R})} + \eps^4\Vert \psi'' \Vert_{L^{\infty}(\mathbb{R})} \le C$.   We define the map
  $$v(x',x_d) :=  z(x',x_d) \psi(x_d )   + u(x',x_d)(1- \psi(x_d))$$
 on $\omega \times (0,\infty)$.   Clearly, $v$ coincides with $z$ on   $\omega \times (s +  \eps^2,\infty)$ and with $u$  on $\omega \times (0,s)$. Since $\eps^2 \le \tau$ and $z$ is constant on  $\omega \times (s +  \tau,\infty)$, we get that also $v$ is constant on  $\omega \times (s +  \tau,\infty)$.  Additionally, there holds
   \begin{align*}
\nabla v(x',x_d) &= \nabla z(x',x_d) +   (\nabla u(x',x_d) - \nabla z(x',x_d))(1- \psi(x_d))  + \big( 0,  (z(x',x_d) - u(x',x_d))\,\psi'(x_d) \big),
\end{align*}
and
\begin{align*}
\partial^2_{ij} v(x',x_d) &= \partial^2_{ij} z(x',x_d)\psi(x_d) + \partial^2_{ij} u(x',x_d)(1- \psi(x_d)),\\
\partial^2_{id} v(x',x_d) &= \partial^2_{id}z(x',x_d) \psi(x_d)  + \partial^2_{id}u(x',x_d)(1- \psi(x_d))+  (\partial_i z(x',x_d) - \partial_i u(x',x_d))\,\psi'(x_d),\\
\partial^2_{dd} v(x',x_d) &= \partial^2_{dd}z(x',x_d) \psi(x_d )  + \partial^2_{dd} u(x',x_d)(1- \psi(x_d)) + 2(\partial_d z(x',x_d) - \partial_d u(x',x_d))\,\psi'(x_d) \\& \ \ \ + (z(x',x_d) -   u(x',x_d))\,\psi''(x_d),
\end{align*}
for $i,j\in \{1,\dots,d-1\}$.  We set $F_\omega^\eps:= \omega \times (s,s+\eps^2)$ for brevity.  Using the one-dimensional Poincar\'e inequality in the ${\rm e}_d$-direction for each $x'$, and exploiting the fact that   $u(\cdot,s) = z(\cdot,s)$ and $\partial_iu(\cdot,s) = \partial_i z(\cdot,s)$  for every $i=1,\dots,d-1$, we obtain
 \begin{align*}
\int_{F_\omega^\eps}|z - u|^2\, dx &\le C\eps^4 \int_{F_\omega^\eps} |\partial_d z - \partial_d u|^2 \, dx   \le   C\eps^6 \big(E^*_{\eps,\eta}(u,F_\omega^\eps) +   E^*_{\eps,\eta}(z,F_\omega^\eps)\big), \\
\int_{F_\omega^\eps}|\partial_i z - \partial_i u|^2\, dx &\le C\eps^4 \int_{F_\omega^\eps}   |\partial_{id} z - \partial_{id} u|^2   \, dx \le C
   \ep^4\eta^{-2}    \big(E^*_{\eps,\eta}(u,F_\omega^\eps) +  E^*_{\eps,\eta}(z,F_\omega^\eps)\big), 
\end{align*} 
for all $i\in \{1,\dots,d-1\}$. After some elementary,  but tedious  computations,  using \eqref{eq:need-later-new}--\eqref{eq: crazy energy} and $\eps \le \eta$, we get 
$$E^*_{\ep,\eta}(v, F_\omega^\eps ) \le CE^*_{\ep,\eta}(z, F_\omega^\eps ) + CE^*_{\ep,\eta}(u, F_\omega^\eps ) \le  C\theta.$$
The statement now follows from \eqref{eq: crazy energy} and the fact that $v = z$ on $\omega \times (s +  \eps^2,\infty)$.   
  \end{proof}

  The following lemma deals with the transition between a  $(d-1)$-dimensional slice  and a rigid movement.  Recall the  definitions  of the constants $c_1$ and $c_2$ in H4.\ and H5., respectively. 

\begin{lemma}[Transition to a rigid movement]\label{lemma: transition1}
 Let $d\in \mathbb{N}$, $d\geq 2$.  Let $h, \tau,\ep,\eta>0$ and $\omega \subset \subset \hat{\omega} \subset \R^{d-1}$ satisfy the assumptions of Proposition \ref{lemma: optimal profile}.  Assume that the elastic energy density $W$ satisfies assumptions H1.--H5.  Let $y\in H^2(D_{\hat{\omega},h};\mathbb{R}^d)$ with $\delta_{\ep,\eta}(y;\hat{\omega},h,\tau) \le  (\kappa/64)^{2} $  and let $R^+,R^- \in SO(d)$, $s^+ \in (\tau,2\tau)$, $s^- \in (-2\tau, - \tau)$ be the associated rotations and  constants  provided by Proposition \ref{lemma: optimal profile}.  
 Then there exist a map $y^A_+ \in H^2(\omega \times (0,\infty);\R^d )$ and a constant $b^A_+ \in \R^d$ such that 
\begin{align}\label{eq: trans-equ}
{\rm (i)} & \ \  y^A_+ =  y \ \ \text{on} \ \ \omega \times (0, s^+),   \ \ \ \ \
y^A_+(x) = R^+ Ax   + b^A_+ \ \  \text{ for all $x \in \omega \times (s^+ + \tau,\infty)$}, \notag\\
{\rm (ii)} & \ \  \Vert \nabla y^A_+ - R^+A \Vert^2_{L^2(\omega \times (s^+,\infty) )}  \le 
C \eps^2  \delta_{\ep,\eta}(y;\hat{\omega},h, \tau),   \notag\\
{\rm (iii)} & \ \   E_{\eps,\eta}(y^A_+, \omega \times (s^+,\infty))  \le 
C \delta_{\ep,\eta}(y;\hat{\omega},h, \tau)
\end{align}
 where $C=C(\omega,\hat{\omega},h,\tau,\kappa,c_1)>0$.  Analogously, there exist a map ${v}^B_- \in H^2(\omega \times (-\infty,0);  \R^d)$ and a constant $b^B_- \in \R^d$ for which  \eqref{eq: trans-equ} holds  with $B$, $s^-$, and $R^-$ in place of $A$, $s^+$, and $R^+$, respectively. 
\end{lemma}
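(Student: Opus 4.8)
\textbf{Proof strategy for Lemma \ref{lemma: transition1}.}
The plan is to construct the transition map $y^A_+$ by interpolating, in the $x_d$-direction, between the given deformation $y$ near the slice $\Gamma^+=\omega\times\{s^+\}$ and the affine map $x\mapsto R^+Ax+b^A_+$ for $x_d$ large, using the extension lemmas just proven. First I would record what Proposition \ref{lemma: optimal profile} and Corollary \ref{cor: Besov} give us at height $s^+$: setting $u(x):=y(x)-R^+A x^T$ (so that $u(\cdot,s^+)$ is, up to the constant $t^+$, small in $H^{1/2}(\omega)$), Corollary \ref{cor: Besov} yields $\|u(\cdot,s^+)-t^+\|_{H^{1/2}(\omega)}^2\le C\eps^2\delta$, while Proposition \ref{lemma: optimal profile}(iii)--(iv) control $\eta^2\|\nabla^2 y(\cdot,s^+)\|^2_{H^1(\omega)}$ (hence $\eta^2\|u(\cdot,s^+)\|^2_{H^2(\omega)}$, since $R^+A(\cdot)^T$ is affine and contributes nothing to the second derivatives) and the energy $E^*_{\eps,\eta}(u,\omega\times(s^+,s^++\eps^2))$, all by $C\delta$. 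Dividing through by $\eps^2$ where appropriate, these are precisely the hypotheses \eqref{eq:need-later-new} of Lemma \ref{lemma: smoothen} with $\theta\sim\delta$ (note $\eps^2\le\tau$ and $\eps\le\eta$ hold by the choice of $\eps_0$ in Proposition \ref{lemma: optimal profile} and the standing assumption $\eta\ge\eps$).

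Next I would apply Lemma \ref{lemma: smoothen} to $u$ at height $s=s^+\in(\tau,2\tau)$ to obtain $v\in H^2(\omega\times(0,\infty);\R^d)$ with $v=u$ on $\omega\times(0,s^+)$, $v$ constant $\equiv c^+\in\R^d$ on $\omega\times(s^++\tau,\infty)$, and $E^*_{\eps,\eta}(v,\omega\times(s^+,\infty))\le C\delta$. Then I set
$$y^A_+(x):=v(x)+R^+Ax^T,\qquad b^A_+:=c^+.$$
By construction $y^A_+=u+R^+A(\cdot)^T=y$ on $\omega\times(0,s^+)$, and $y^A_+(x)=c^++R^+Ax^T=R^+Ax^T+b^A_+$ for $x_d\ge s^++\tau$, which is \eqref{eq: trans-equ}(i). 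For (ii), $\nabla y^A_+-R^+A=\nabla v$, and since $\nabla^2 y^A_+=\nabla^2 v$ one has $E_{\eps,\eta}(y^A_+,\omega\times(s^+,\infty))=E^*_{\eps,\eta}(v,\omega\times(s^+,\infty))+\eps^{-2}\int W(\nabla y^A_+)$; the first term is $\le C\delta$ by Lemma \ref{lemma: smoothen}, so it remains to bound $\eps^{-2}\int_{\omega\times(s^+,\infty)}W(\nabla y^A_+)\,dx$. By H5.\ this is $\le c_2\eps^{-2}\int \dist^2(\nabla y^A_+,SO(d)\{A,B\})\le c_2\eps^{-2}\int|\nabla y^A_+-R^+A|^2=c_2\eps^{-2}\|\nabla v\|^2_{L^2(\omega\times(s^+,\infty))}$, which by the $\eps^{-2}$-weighted term in $E^*_{\eps,\eta}(v,\cdot)$ is already $\le C\delta$. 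This gives (iii); and (ii) is then the $\eps^{-2}$-weighted piece of the same bound multiplied by $\eps^2$, i.e.\ $\|\nabla v\|^2_{L^2}\le C\eps^2\delta$.

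The map ${v}^B_-$ is obtained identically, applying the ``$-$'' versions of Proposition \ref{lemma: optimal profile}, Corollary \ref{cor: Besov}, and Lemma \ref{lemma: smoothen} at the slice $s^-\in(-2\tau,-\tau)$ with the well $B$ and rotation $R^-$, working on $\omega\times(-\infty,0)$. The main obstacle, and the only genuinely delicate point, is the bookkeeping that the hypotheses of Lemma \ref{lemma: smoothen} really follow from Proposition \ref{lemma: optimal profile}: one must check that subtracting the affine field $R^+A(\cdot)^T$ turns the estimates on $\nabla y$, $\nabla^2 y$ into the required estimates on $u$ (trivial for second derivatives, and for the $H^{1/2}$-norm of the trace it is exactly Corollary \ref{cor: Besov}), and that the small parameters satisfy $\eps^2\le\tau$, $\eps\le\eta$ so that the extension lemmas apply with constants independent of $\tau$. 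Everything else is a direct concatenation of the previously established results, so no new analytic ideas are needed beyond this translation step. Strictly, one should also absorb the translation constant $t^+$ from Corollary \ref{cor: Besov} into $b^A_+$, which is harmless since constants do not affect gradients or energies.
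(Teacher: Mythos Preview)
Your approach is the same as the paper's: set $u=y-R^+Ax-t^+$, verify the hypotheses of Lemma~\ref{lemma: smoothen}, apply it to get $v$, and set $y^A_+=v+R^+Ax+t^+$. The verification of \eqref{eq: trans-equ}(i)--(iii) from the conclusion of Lemma~\ref{lemma: smoothen} and H5.\ is also exactly what the paper does. However, there is a genuine gap precisely at the bookkeeping step you flag as delicate.

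To invoke Lemma~\ref{lemma: smoothen} you need the \emph{full} norm $\eta^2\|u(\cdot,s^+)\|^2_{H^2(\omega)}\le C\delta$, not just the second-derivative seminorm. Proposition~\ref{lemma: optimal profile}(iii) controls only $\sum_{i\le d-1,\,j\le d}\|\partial^2_{ij}y(\cdot,s^+)\|^2_{L^2(\omega)}$, and ``affine contributes nothing to second derivatives'' handles the top-order piece of $\|u(\cdot,s^+)\|_{H^2}$; but the first-order piece $\|\nabla' u(\cdot,s^+)\|^2_{L^2(\omega)}=\|\nabla y(\cdot,s^+)-R^+A\|^2_{L^2(\omega)}$ is not addressed. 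The paper closes this by a $(d{-}1)$-dimensional Poincar\'e inequality on the slice, producing an auxiliary matrix $M^+$ with $\|\nabla y(\cdot,s^+)-M^+\|^2_{L^2(\omega)}\le C\eta^{-2}\delta$, and then comparing $M^+$ to $R^+A$ via the $L^1$-bound in Proposition~\ref{lemma: optimal profile}(i), yielding $|M^+-R^+A|^2\le C(\eps^2+\eta^{-2})\delta\le C\eta^{-2}\delta$ thanks to the standing assumption $\eta\le 1/\eps$. (Gagliardo--Nirenberg interpolation between $\|u(\cdot,s^+)\|_{L^2}$ and the $H^2$-seminorm would also work.)

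A related omission: claiming that (iii)--(iv) alone control $E^*_{\eps,\eta}(u,\omega\times(s^+,s^++\eps^2))$ misses the term $\eps^{-2}\int|\nabla u|^2=\eps^{-2}\int|\nabla y-R^+A|^2$. Item (iv) bounds $E_{\eps,\eta}(y,\cdot)$, whose first-order part is $\eps^{-2}\int W(\nabla y)$ and only sees $\dist^2(\nabla y,SO(d)\{A,B\})$, not the distance to the specific matrix $R^+A$. The paper combines (ii), giving $\|\nabla y-A\|^2_{L^2(s^+{\rm e}_d+D_{\omega,\eps^2})}\le C\eps^2\delta$, with (v), giving $|R^+-{\rm Id}|^2\le C\delta$, to obtain $\|\nabla y-R^+A\|^2_{L^2}\le C\eps^2\delta$ on the thin slab.
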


 \begin{proof}
 We only show the construction of the map $y^A_+$, the proof strategy for proving the existence of $y^B_-$ is analogous.  As in the proof of Proposition \ref{lemma: optimal profile}, we write $\delta(y)$ instead of $\delta_{\ep,\eta}(y;\hat{\omega},h, \tau)$ for brevity. All constants in the following may depend on $\omega$, $\hat{\omega}$, $h$, $\tau$, $c_1$, and $\kappa$.   For convenience of the reader, we subdivide the proof into two steps.

 \noindent\emph{Step I: Transition to a constant function.} Using  Proposition \ref{lemma: optimal profile}{\rm (i)} for $p=1$ and Corollary \ref{cor: Besov} we   have 
\begin{align}\label{eq: 2needed1}
\|\nabla y(\cdot,s^+)-R^+A\|^2_{L^{1}(\omega)} + \|y(\cdot,s^+)-R^+A(\cdot,s^+)^T-t^+\|^2_{H^{1/2}(\omega)}  \le C\ep^2    \delta(y). 
\end{align}
 By  a $(d-1)$-dimensional Poincar\'e inequality on $\omega$, we   find $M^+  \in \R^{d\times d}$ such that by Proposition  \ref{lemma: optimal profile}{\rm (iii)}  there holds
\begin{align}\label{eq: 2needed2}
\|\nabla y(\cdot,s^+)-  M^+\|^2_{L^{2}(\omega)} \le C\sum_{i=1}^{d-1} \sum_{j=1}^{d}   \Vert \partial^2_{ij} y(\cdot,s^+)\Vert^2_{L^2(\omega)} \le C\eta^{-2}   \delta(y). 
\end{align}
Moreover, by Proposition  \ref{lemma: optimal profile}{\rm (ii)},{\rm (v)} we also find 
\begin{align}\label{eq: 2needed2-new}
\|\nabla y-R^+ A\|^2_{L^{2}(\omega \times (s^+,s^+ +\eps^2))}  & \le C\Vert \nabla  y  - A \Vert^2_{L^{2}(s^+ {\rm e}_d + D_{\omega,\eps^2})} + C\mathcal{L}^d(D_{\omega,\eps^2}) |R^+ A-A|^2 \notag\\ &  \le C\eps^2\delta(y).
\end{align}
Using \eqref{eq: 2needed1}--\eqref{eq: 2needed2} and  the triangle inequality, we derive $|R^+ A -  M^+|^2 \le C(\ep^2 + \eta^{-2})    \delta(y)$. Thus, defining $u(x):=y(x)-R^+Ax-t^+$ for $x \in D_{\omega,h}$ we obtain by \eqref{eq: 2needed1}--\eqref{eq: 2needed2}, Proposition  \ref{lemma: optimal profile}{\rm (iii)}, and the assumption $\eps \le 1/\eta$ 
$$\|u(\cdot,s^+)\|^2_{H^{1/2}(\omega)}  \le C\ep^2     \delta(y),  \ \ \ \ \ \ \ \  \|u(\cdot,s^+)\|^2_{H^{2}(\omega)}  \le C\eta^{-2}     \delta(y).  $$
Recalling \eqref{eq: E*}, by Proposition \ref{lemma: optimal profile}{\rm (iv)} and \eqref{eq: 2needed2-new}  we deduce   that
$$E_{\eps,\eta}^*(u,\omega \times (s^+,s^+ +\eps^2)) \le C\delta(y).$$
Observe that $\eps^2 \le  \eps \le  \eps_0 \le  \tau$, see \eqref{eq: eps0 def}, and  $\eps \le   \eta$,  see \eqref{eq:eta-ep}.  Applying Lemma \ref{lemma: smoothen} to the function $u\in H^2(D_{\omega,h};\R^d)$ and $s^+ \in (0,  2\tau)$,    we obtain a map  $v\in H^2(\omega \times (0,\infty);\mathbb{R}^{d})$  such that 
\begin{align}\label{eq: affine}
v = u \ \text{ on } \omega \times (0,  s^+), \ \ \ \ \ \ \ \ \ \  v  \  \text{ is constant on $\omega \times (s^+  + \tau,\infty)$},
\end{align}  
and 
 \begin{equation}
 \label{eq:need-later3}
E^*_{\ep,\eta}(v, F^+_\omega  ) \leq C   \delta(y),  
 \end{equation}
 where for brevity we set $F^+_\omega := \omega \times (s^+,\infty)$.

\noindent\emph{Step II: Transition to a rigid movement.}  We define $y^A_+(x) :=    v(x) + R^+A x  + t^+ $ for $x \in \omega \times (0,\infty)$. Property \eqref{eq: trans-equ}{\rm (i)} follows from \eqref{eq: affine} and the fact that $u(x):=y(x)-R^+Ax-t^+$ for $x \in D_{\omega,h}$. By \eqref{eq:need-later3} we obtain 
\begin{align*}
\|\nabla y^A_+ - R^+A\|_{L^2(F^+_\omega)}^2 =\|\nabla v\|_{L^2(F^+_\omega)}^2 \le C\eps^2E^*_{\ep,\eta}(v, F^+_\omega )\le C\ep^2   \delta(y).  
\end{align*}
This yields \eqref{eq: trans-equ}{\rm (ii)}. By H5.\ and  \eqref{eq:need-later3} we derive the estimate 
\begin{align}\label{eq: z+}
\int_{F^+_\omega}W(\nabla y^A_+)\,dx\leq C\int_{F^+_\omega}{\rm dist}\,^2(\nabla y^A_+, SO(d)\{A,B\})\,dx\leq C\|\nabla y^A_+ - R^+A\|_{L^2(F^+_\omega)}^2 \le C\ep^2   \delta(y) 
\end{align}
 on the nonlinear elastic energy. Similarly, as $\nabla^2 y^A_+ = \nabla^2 v$,  by \eqref{eq:need-later3}  we get 
\begin{align}\label{eq: z+2}
\ep^2\int_{F^+_\omega}|\nabla^2 y^A_+|^2\,dx+ \eta^2 \int_{F^+_\omega}\Big(  |\nabla^2 y^A_+|^2 - |\partial^2_{dd} y^A_+|^2 \Big)  \,dx \le E^*_{\ep,\eta}(v, F^+_\omega  ) \leq C    \delta(y).   
\end{align}
Combining \eqref{eq: z+}--\eqref{eq: z+2} gives   \eqref{eq: trans-equ}{\rm (iii)} and concludes the proof of the lemma. 
\end{proof}

We are now finally in the position to prove Proposition \ref{lemma: local1}.
\begin{proof}[Proof of Proposition \ref{lemma: local1}] 
We perform the construction for  $y_0^+$.  The strategy for $y_0^-$ is analogous.   Let $h>0$  \RRR sufficiently small \EEE and let $\omega \subset \R^{d-1}$ open, bounded with Lipschitz  boundary. Let $\rho >0$ and choose a Lipschitz domain $\hat{\omega} \supset \supset \omega$ with $\mathcal{H}^{d-1}(\hat{\omega} \setminus \omega)\le \rho$.  We first observe that by  Corollary \ref{cor: layer energy}  there exists a sequence $\lbrace y^\eps \rbrace_\eps \subset  H^2(D_{ \hat{\omega}  ,h}; \R^d)$ such that 
\begin{equation}
\label{eq:was-ok}
 \lim_{\ep\to 0}\mathcal{E}_{\ep}(y^{\ep}, D_{\hat{\omega}, h}) = K\mathcal{H}^{d-1}(\hat{\omega}), \ \ \  \lim_{\eps \to 0} \mathcal{E}_{\eps}(y^{\eps}, D_{\hat{\omega},h} \setminus D_{\hat{\omega},h/4}) =0,   \ \ \ \lim_{\eps \to 0}\Vert   y^\eps  -   y_0^+ \Vert_{H^1(D_{\hat{\omega},h})} =0. 
\end{equation}
In view of  Corollary \ref{cor: layer energy}, the existence of a sequence $\{y^{\ep_i}\}_i$ satisfying \eqref{eq:was-ok} is guaranteed for every $\{\ep_i\}_i$ with $\ep_i\to 0$. Hence, in what follows, for notational simplicity we directly work with the continuous parameter $\ep$.

Fix  $\tau=h/4$. Recalling the $(\ep,\eta)$-closeness in \eqref{eq:eta} and applying \eqref{eq:was-ok}, we find that  
\begin{align}\label{eq: right assumption}
\delta_{\ep,\eta_{\eps,d}}(y^{\eps};\hat{\omega},h, \tau) \to 0
\end{align}
as $\eps \to 0$. Without loss of generality, we can assume that $\ep<\ep_0$, where $\eps_0$ is the constant from Proposition \ref{lemma: optimal profile}. Moreover, by \eqref{eq: right assumption} we may assume that $\delta_{\ep,\eta_{\eps,d}}(y^{\eps};\hat{\omega},h, \tau) \le  (\kappa/64)^{2} $.  We also observe that $\eta_{\eps,d} \le 1/\eps$ by \eqref{eq:eta-ep}. We now apply Proposition \ref{lemma: optimal profile} on $\lbrace y^{\ep}\rbrace_\eps$.    Let  $\{R^+_{\ep}\}_{\ep}$, $\{R^-_{\ep}\}_{\ep}\subset SO(d)$ and let $\{s^+_{\ep}\}_{\ep}\subset (\tau,2\tau)$, $\{s^-_{\ep}\}_{\ep}\subset (-2\tau,-\tau)$ be the associated sequences of rotations and  constants.   Additionally,  let  $y^{A,\ep}_+$ and $y^{B,\ep}_-$  be the functions provided by Lemma \ref{lemma: transition1}, and associated to $y^{\ep}$.

  Let now  $w_{\ep}^+\in H^2(D_{\omega,h};\mathbb{R}^d)$  be defined as 
\begin{align*}
{w}^+_\eps(x)= \begin{cases} y_-^{B,\eps}(x)  & \text{if $x_d \le s^-_\eps$}, \\  y^\eps(x) & \text{if $s^-_\eps \le x_d \le s^+_\eps$}, \\  y_+^{A,\eps}(x)    & \text{if $x_d \ge s^+_\eps$}. \end{cases}
\end{align*} 
 Using $\tau = h/4$,  $|s^+_\eps|,|s^-_\eps| \le 2\tau = h/2$, \eqref{eq: conti-schweizer-k-y0}, and  \eqref{eq: trans-equ}{\rm (i)}, we get that $w^+_\eps= I^+_{1,\eps} \circ y_0^+$ on $\lbrace x_d \ge \frac{3}{4}h\rbrace$ and  $w^+_\eps= I^+_{2,\eps} \circ y_0^+$ on $\lbrace x_d \le -\frac{3}{4}h\rbrace$, where $I^+_{1,\eps}$ and $I^+_{2,\eps}$ are isometries. This shows \eqref{eq:local1-3}.

By  Proposition \ref{lemma: optimal profile}(v),  \eqref{eq: trans-equ}{\rm (ii)}, \eqref{eq:was-ok}, and \eqref{eq: right assumption} we also get $\lim_{\eps \to 0}\Vert \nabla w^+_\eps -  \nabla y_0^+ \Vert^2_{L^2(D_{{\omega},h})} =0$. Using $w^+_\eps \in H^2(D_{\omega,h};\R^d)$ and again \eqref{eq:was-ok}, this yields $w^+_\eps \to y_0^+$ in $H^1(D_{{\omega},h};\R^d)$, i.e., \eqref{eq:local1-1} holds. Combining \eqref{eq:local1-1} and \eqref{eq:local1-3} we also see that the isometries  $I^+_{1,\eps}$ and $I^+_{2,\eps}$ converge to the identity as $\eps \to 0$.

It remains to prove \eqref{eq:local1-3a}. The inequality $\liminf_{\eps \to 0}  \mathcal{E}_{\ep}(w^+_\eps, D_{{\omega},h})\ge K\mathcal{H}^{d-1}({\omega}) $ is clear by Proposition \ref{prop:cell-form}. We prove the reverse inequality. By  \eqref{eq: trans-equ}{\rm (iii)}  we obtain  
\begin{align*}
 \mathcal{E}_{\ep}(w^+_\eps, D_{\omega,h})  & \le \mathcal{E}_{\ep}(y^\eps,D_{\omega,2\tau}) + \mathcal{E}_\eps(  y_-^{B,\eps},\omega \times (-\infty,s^-_\eps)) + \mathcal{E}_\eps(y_+^{A,\eps},\omega \times (s^+_\eps,\infty))\\
 & \le  \mathcal{E}_{\ep}(y^\eps,D_{\hat{\omega},h})  + C  \delta_{\ep,\eta_{\eps,d}}(y^{\eps};\hat{\omega},h, \tau). 
\end{align*} 
Using \eqref{eq:was-ok}--\eqref{eq: right assumption} and $\mathcal{H}^{d-1}(\hat{\omega} \setminus \omega)\le \rho$ we find 
$$\limsup_{\eps \to 0}  \mathcal{E}_{\ep}(w^+_\eps, D_{{\omega},h}) \le K\mathcal{H}^{d-1}(\hat{\omega}) \le K\mathcal{H}^{d-1}(\omega) + K\rho.  $$
Property \eqref{eq:local1-3a}   then follows by letting $\rho \to 0$ and using  a diagonal argument. 
\end{proof}

\begin{remark}[Independence of the two constructions above and below the interface] 
Notice that the constructions of the maps $w^{\pm}_{\ep}$ in the sets $\{x_d\geq 3h/4\}$ and $\{x_d\leq -3h/4\}$, respectively, are independent from each other.
\end{remark}

\section*{Acknowledgements}
 We would like to thank {\sc Ben Schweizer} \RRR and {\sc Roberto Alicandro} \EEE for  interesting discussions\RRR, as well as the anonymous Referee for the insightful remarks. \EEE  The support by the Alexander von Humboldt Foundation is gratefully acknowledged. This work has been funded by \RRR the DFG through project  FR 4083/1-1, \EEE the Vienna Science and Technology Fund (WWTF) through Project MA14-009, as well as by the Austrian Science Fund (FWF) project\RRR s F65, V 662 N32, and I 4052 N32, and by BMBWF through the OeAD-WTZ project CZ04/2019. \EEE We are thankful to the Erwin Schr\"odinger Institute in Vienna, where part of this work has been developed during the workshop ``New trends in the variational modeling of failure phenomena".


\begin{thebibliography}{50}

\bibitem{alicandro.dalmaso.lazzaroni.palombaro}
{\sc R.~Alicandro, G.~Dal Maso, G.~Lazzaroni, M.~Palombaro}.
\emph{Derivation of a linearised elasticity model from singularly perturbed multiwell energy functionals}.
Arch.\ Ration.\ Mech.\ Anal.\  \textbf{230} (2018), 1--45.  

 


 \bibitem{ambrosio}
 {\sc L.~Ambrosio}.
{\em Metric space valued functions of bounded variation}.
Ann.\ Scuola Norm.\ Sup.\ Pisa Cl.\ Sci.\ {\bf 17} (1990), 439--478. 


 
 


\bibitem{Ambrosio-Fusco-Pallara:2000} 
{\sc L.~Ambrosio, N.~Fusco, D.~Pallara}.
\newblock {\em Functions of bounded variation and free discontinuity problems}. 
\newblock Oxford University Press, Oxford 2000. 


 
 \bibitem{baldo}
{\sc S.~Baldo}. 
{\em Minimal interface criterion for phase transitions in mixtures of Cahn-Hilliard fluids}. 
Ann.\ Inst.\ H.\ Poicar\'e Anal.\ Non Lin\'eare {\bf 7} (1990), 67--90.

 
\bibitem{ball-currie}
{\sc J.~Ball, J.~C.~Currie, P.~L.~Olver}.
{\em Null Lagrangians, weak continuity, and variational
problems of arbitrary order}.
J.\ Funct.\ Anal.\ {\bf 41} (1981), 135--174.


\bibitem{ball.james}
 {\sc J.~Ball, R.D.~James}.
 {\em Fine phase mixtures as minimizers of the energy}. 
 Arch.\ Ration.\ Mech.\ Anal.\ \textbf{100} (1987), 13--52.

 
  \bibitem{barroso.fonseca}
{\sc A.~C.~Barroso, I.~Fonseca}. 
{\em Anisotropic singular perturbations--the vectorial case}. 
Proc.\ Roy.\ Soc.\ Edinburgh Sect.\ A {\bf 124} (1994), 527--571.
 
 

 
 \bibitem{bhattacharya}
 {\sc K.~Bhattacharya}. 
 {\em Microstructure of martensite: Why it forms and how it gives rise to the shape-memory effect}.
 \newblock Oxford University Press, Oxford 2003.
 
  \bibitem{bhattacharya.kohn}
{\sc K.~Bhattacharya, R.~V.~Kohn}.
{\em Elastic energy minimization and the recoverable
strains of polycrystalline shape-memory materials}.
Arch.\ Ration.\ Mech.\ Anal.\ {\bf 139} (1997), 99--180.

\bibitem{bouchitte}
{\sc G.~Bouchitt\'e}.
{\em Singular perturbations of variational problems arising from a two-phase transition model}.
 Appl.\ Math.\ Optim.\ {\bf 21} (1990), 289--314.
 
    \bibitem{Braides:02}
{\sc A.~Braides}.
\newblock {\em $\Gamma$-convergence for Beginners}.
\newblock Oxford University Press, Oxford 2002.


  
 
 \bibitem{capella.otto2}
 {\sc A.~Capella-Kort, F.~Otto}.
{\em A quantitative rigidity result for the cubic-to-tetragonal phase transition in the geometrically linear theory with
 interfacial energy}.
{Proc.\ Roy.\ Soc.\ Edinburgh Sect.\ A}, \textbf{142} (2012), 273--327.
    
   \bibitem{Chambolle-Conti-Francfort:2014}
{\sc A.~Chambolle, S.~Conti, G.~Francfort}.
\newblock {\em Korn-Poincar\'e inequalities for functions
with a small jump set.} 
\newblock Indiana Univ.\ Math.\ J.\
\newblock {\bf 65} (2016), 1373--1399.  
 
 \bibitem{Chambolle-Giacomini-Ponsiglione:2007}
{\sc A.~Chambolle, A.~Giacomini, M.~Ponsiglione}. 
\newblock {\em Piecewise rigidity}.
\newblock {J.\ Funct.\ Anal.}
\newblock {\bf 244} (2007), 134--153.

 

  \bibitem{Chaudhuri} 
{\sc N.~Chaudhuri, S.~M\"uller}.
\newblock {\em Rigidity Estimate for Two Incompatible Wells}. 
\newblock   Calc.\ Var.\ Partial Differential
Equations
 \newblock {\bf 19} (2004), 379–-390.

\bibitem{cheng}
{\sc S.~Z.~D.~Cheng}.
{\em Chapter 2 - Thermodynamics and Kinetics of Phase Transitions}.
Phase Transitions in Polymers, Elsevier (2008), 17--59.


  




\bibitem{Chermisi-Conti}
{\sc M.~Chermisi, S.~Conti}.
\newblock {\em Multiwell rigidity in nonlinear elasticity}. 
\newblock {SIAM J.\ Math.\ Anal.\ }
\newblock {\bf 42} (2010), 1986--2012. 
 
\bibitem{conti.fonseca.leoni}
{\sc S.~Conti, I.~Fonseca, G.~Leoni}.
{\em A {$\Gamma$}-convergence result for the two-gradient theory of phase transitions}.
Comm.\ Pure Appl.\ Math.\ \textbf{55} (2002), 857--936.
 

\bibitem{conti.schweizer}
{\sc S.~Conti, B.~Schweizer}.
{\em Rigidity and gamma convergence for solid-solid phase transitions with $SO(2)$ invariance}.
Comm.\ Pure Appl.\ Math.\ \textbf{59} (2006), 830--868.
 
\bibitem{conti.schweizer2}
{\sc S.~Conti, B.~Schweizer}.
{\em A sharp-interface limit for a two-well problem in geometrically linear elasticity}.
Arch.\ Ration.\ Mech.\ Anal.\ \textbf{179} (2006), 413--452.


\bibitem{conti.schweizer3}
{\sc S.~Conti, B.~Schweizer}.
{\em Gamma convergence for phase transitions in impenetrable elastic materials. Multi scale problems and asymptotic analysis}, 105-118, GAKUTO Internat. Ser. Math. Sci. Appl., 24, Gakkotosho, Tokyo, 2006.







\bibitem{DalMaso:93}
{\sc G.~Dal Maso}.
\newblock {\em An introduction to $\Gamma$-convergence}.
\newblock Birkh{\"a}user, Boston $\cdot$ Basel $\cdot$ Berlin 1993. 



 

\bibitem{davoli.friedrich}
{\sc E.~Davoli, M.~Friedrich}.
{\em Linearization for solid-solid phase transitions}.
In preparation.

    \bibitem{De Lellis} 
{\sc C.~De Lellis, L.~J.~Szekelyhidi}.
\newblock {\em Simple proof of two well rigidity}. 
\newblock  C.\ R.\ Math.\ Acad.\ Sci.\ Paris
 \newblock {\bf 343} (2006), 367–-370.
 


  \bibitem{dinezza.palatucci.valdinoci}
 {\sc E.~Di Nezza, G.~Palatucci, E.~Valdinoci}.
  {\em Hitchhiker's guide to the fractional {S}obolev spaces}.
  Bull.\ Sci.\ Math.\ {\bf 136} (2012), 521--573.          




\bibitem{dolzmann.muller}
{\sc G.~Dolzmann, S.~M\"uller}.
{\em Microstructures with finite surface energy: the two-well problem}.
Arch.\ Ration.\ Mech.\ Anal.\ \textbf{132} (1995), 101--141.

\bibitem{EvansGariepy92}
{\sc L.~C~Evans, R.~F.~Gariepy}. 
\newblock {\em Measure theory and fine properties of
functions}.
\newblock CRC Press, Boca Raton $\cdot$ London $\cdot$ New York $\cdot$ Washington,
D.C. 1992.


\bibitem{fonseca.leoni}
{\sc I.~Fonseca, G.~Leoni}.
{\em Modern Methods in the Calculus of Variations: $L^p$ Spaces.}
Springer Monographs in Mathematics. Springer, New York, 2007.

\bibitem{fonseca.tartar}
{\sc I.~Fonseca, L.~Tartar}. 
{\em The gradient theory of phase transitions for systems with two potential wells}. 
Proc. Roy. Soc. Edinburgh Sect. A {\bf 111} (1989), 89--102.

 


 

  \bibitem{Friedrich-ARMA}
{\sc M.~Friedrich}.
\newblock {\em A derivation of linearized Griffith energies from nonlinear models}. 
\newblock Arch.\ Ration.\ Mech.\ Anal.\  
\newblock {\bf 225} (2017), 425--467.

 
 

 \bibitem{Friedrich:15-3} 
{\sc M.~Friedrich}.
\newblock {\em A Korn-type inequality in  SBD for functions with small jump sets}. 
\newblock   Math.\ Models Methods Appl.\ Sci.\
 \newblock {\bf 27} (2017), 2461--2484.
 
 \bibitem{Friedrich:15-4}
{\sc M.~Friedrich}.
\newblock {\em A piecewise Korn inequality in SBD and applications to embedding and density results}. 
\newblock  SIAM J.\ Math.\ Anal.\
\newblock {\bf 50} (2018), 3842–-3918.

 

\bibitem{friedrich-kruzik}
{\sc M.~Friedrich, M. Kru\v{z}\'ik}.
{\em On the passage from nonlinear to linearized viscoelasticity}. 
SIAM J.\ Math.\ Anal.\ {\bf 50} (2018), 4426--4456.
 

\bibitem{FrieseckeJamesMueller:02}
{\sc G.~Friesecke, R.~D.~James, S.~M{\"u}ller}.
\newblock {\em A theorem on geometric rigidity and the derivation of nonlinear plate theory from three-dimensional elasticity}. 
\newblock {Comm.\ Pure Appl.\ Math.}
\newblock {\bf 55} (2002), 1461--1506. 
 
 
 


 \bibitem{gurtin}
{\sc M.~E.~Gurtin}. 
\newblock {\em Some Results and Conjectures in the Gradient Theory of Phase Transitions}.
\newblock In: Antman, Ericksen, Kinderlehrer, M\"uller (eds) Metastability and Incompletely Posed Problems.
\newblock  The IMA Volumes in Mathematics and Its Applications, vol 3., Springer, 1987, 135--146. 



 
 
\bibitem{Jerrard-Lorent}
{\sc R.~L.~Jerrard,  A.~Lorent}.
\newblock {\em On multiwell Liouville theorems
in higher dimension}. 
\newblock {Adv.\ Calc.\ Var.}
\newblock {\bf 6} (2013), 247--298. 


\bibitem{kytavsev-lauteri-ruland-luckhaus}
{\sc G.~Kitavtsev, G.~Lauteri, S.~Luckhaus, A.~R\"uland}.
{\em A compactness and structure result for a discrete multi-well problem with $SO(n)$ symmetry in arbitrary dimension}.
\RRR {Arch.\ Ration.\ Mech.\ Anal.} {\bf 232} (2019), 531--555.\EEE

\bibitem{kytavsev-ruland-luckhaus}
{\sc G.~Kitavtsev, S.~Luckhaus, A.~R\"uland}.
{\em Surface energies arising in microscopic modeling of martensitic transformations}.
Math.\ Models Methods Appl.\ Sci.\ \textbf{25} (2015), 647--683. 

\bibitem{kytavsev-ruland-luckhaus2}
{\sc G.~Kitavtsev, S.~Luckhaus, A.~R\"uland}.
{\em Surface energies emerging in a microscopic, two-dimensional two-well problem}. 
 Proc. Roy. Soc. Edinburgh Sect. A \textbf{147} (2017), 1041--1089.
 
\bibitem{kohn.muller}
{\sc R.~V.~Kohn, S.~M\"uller}. 
{\em Surface energy and microstructure in coherent phase transitions}. 
{Comm.\ Pure Appl.\ Math.} \textbf{47} (1994), 405--435. 




\bibitem{kohn.muller2}
{\sc R.~V.~Kohn, S.~M\"uller}. 
{\em Branching of twins near an austenite-twinned-martensite interface}. 
{Philosophical Magazine A} \textbf{66} (1992), 697--715. 







\bibitem{kohn.sternberg}
{\sc R.~V.~Kohn, P.~Sternberg}.
{\em Local minimisers and singular perturbations}. 
Proc. Roy. Soc. Edinburgh Sect. A {\bf 111} (1989), 69--84.

\bibitem{lauteri.luckhaus}
{\sc G.~Lauteri, S.~Luckhaus}.
{\em Geometric rigidity estimates for incompatible fields in dimension $\geq 3$}.
Preprint {arXiv:1703.03288v1}.

\bibitem{leoni}
{\sc G.~Leoni}.
{\em A first course in Sobolev spaces.}
 Graduate Studies in Mathematics, 105. American Mathematical Society, Providence, RI, 2009.

  \bibitem{lions.magenes}
 {\sc J.-L.~ Lions, E.~Magenes}. 
 {\em Non-homogeneous boundary value problems and applications. Vol. I.} 
 Die Grundlehren der mathematischen Wissenschaften, Band 181. Springer-Verlag, New York-Heidelberg, 1972.
 

\bibitem{Lorent}
{\sc A.~Lorent}.
\newblock {\em A two well Liouville Theorem}. 
\newblock  {ESAIM Control Optim.\ Calc.\ Var.}
\newblock {\bf 11} (2005),  310--356. 

  \bibitem{Matos} 
{\sc J.~Matos}.
\newblock {\em Young measures and the absence of fine microstructures in a class of phase transitions}. 
\newblock   European J.\ Appl.\ Math.\
 \newblock {\bf 3} (1992),  31--54.
 
 
 \bibitem{mielke.roubicek}
{\sc A.~Mielke, T.~Roub\'i\v{c}ek}.
{\em Rate-independent elastoplasticity at finite strains and its numerical
approximation}
 Math.\ Models Methods Appl.\ Sci.\  {\bf 26} (2016), 2203--2236.



 
  
\bibitem{modica}
{\sc L.~Modica}.
{\em The gradient theory of phase transitions and the minimal interface criterion}.
 Arch.\ Ration.\ Mech.\ Anal.\ {\bf 98} (1987), 123--142.

\bibitem{modica.mortola}
{\sc L.~Modica, S.~Mortola}.
{\em Un esempio di $\Gamma$-convergenza}. 
Boll.\ Un.\ Mat.\ Ital.\ B {\bf 14} (1977), 285--299.

\bibitem{muller}
{\sc S.~M\"uller}. 
{\em Variational models for microstructure and phase transitions}. In: Calculus of variations and geometric evolution problems (F. Bethuel et al., eds.), Springer Lecture Notes in Math. 1713. Springer, Berlin, 85--210, 1999.


\bibitem{Mueller-Scardia-Zeppieri}
{\sc S.~M\"uller, L.~Scardia, C.~I.~Zeppieri}. 
\newblock {\em Geometric rigidity for incompatible fields and an application to strain-gradient plasticity}.
\newblock {Indiana Univ.\ Math.\ J.}
\newblock {\bf 63} (2014), 1365--1396. 

\bibitem{owen.sternberg} 
{\sc N.~C.~Owen, P.~Sternberg}. 
{\em Nonconvex variational problems with anisotropic perturbations}.
Nonlinear Anal.\ {\bf 16} (1991), 705--719.

 

\bibitem{podioGuidugli}
{\sc P.~Podio-Guidugli}.
{\em Contact interactions, stress, and material symmetry for nonsimple elastic materials}.
Theor.\ Appl.\ Mech.\ {\bf 28-29} (2002), 261--276.

 
\bibitem{ruland-ARMA}
{\sc A.~R\"uland}.
{\em The Cubic-to-Orthorhombic Phase Transition:
Rigidity and Non-Rigidity Properties in the
Linear Theory of Elasticity}.
Arch.\ Ration.\ Mech.\ Anal.\ {\bf 221} (2016), 23-106.


%
%

\bibitem{Schmidt:08}
{\sc B.~Schmidt}. 
\newblock {\em Linear $\Gamma$-limits of multiwell energies in nonlinear elasticity theory}. 
\newblock {Continuum Mech.\ Thermodyn.}
\newblock {\bf 20} (2008), 375--396. 

 
\bibitem{sternberg} 
{\sc P.~Sternberg}.
{\em The effect of a singular perturbation on nonconvex variational problems}. 
Arch.\ Ration.\ Mech.\ Anal.\ {\bf 101} (1988), 209--260.

\bibitem{sternberg2} 
{\sc P.~Sternberg}.
{\em Vector-valued local minimizers of nonconvex variational problems}. 
Rocky Mountain J.\ Math.\ {\bf 21} (1991), 799--807.


\bibitem{toupin1}
{\sc R.~A.~Toupin}.
{\em Elastic materials with couple stresses}.
Arch.\ Ration.\ Mech.\ Anal.\ {\bf 11} (1962), 385--414.


\bibitem{toupin2}
{\sc R.~A.~Toupin}.
{\em Theory of elasticity with couple stress}.
Arch.\ Ration.\ Mech.\ Anal.\ {\bf 17} (1964), 85--112.

 
 
 \bibitem{Zwicknagl}
{\sc B.~Zwicknagl}. 
\newblock {\em Microstructures in low-hysteresis shape memory alloys: scaling regimes and optimal
needle shapes}.
\newblock Arch.\ Ration.\ Mech.\ Anal.\
\newblock {\bf 213} (2014), 355–-421. 



\end{thebibliography}
\end{document}